\documentclass[pdflatex,sn-mathphys-num,12pt]{sn-jnl}

\usepackage{graphicx}%
\usepackage{multirow}%
\usepackage{amsmath,amssymb,amsfonts}%
\usepackage{amsthm}%
\usepackage{mathrsfs}%
\usepackage[title]{appendix}%
\usepackage{xcolor}%
\usepackage{textcomp}%
\usepackage{manyfoot}%
\usepackage{booktabs}%
\usepackage{listings}%

\newtheorem{theorem}{Theorem}
\newtheorem{proposition}[theorem]{Proposition}%
\newtheorem{definition}{Definition}%

\newtheorem{lemma}{Lemma}

\newtheorem{assumption}{Assumption}
\newtheorem{statement}{Statement}[section]

\theoremstyle{thmstyletwo}%
\newtheorem{example}{Example}%
\newtheorem{remark}{Remark}%

\usepackage{geometry}
\usepackage{enumerate}
\usepackage{amssymb,mathrsfs,amsthm,multirow}
\usepackage{extarrows}
\usepackage{mathtools}
\usepackage{xcolor}
\usepackage{graphicx}
\usepackage{subfigure}
\usepackage{chngcntr}
\usepackage[inline]{enumitem}
\usepackage{amsmath,cases,multirow,url}
\usepackage{bm}
\usepackage[ruled,linesnumbered]{algorithm2e}

\newcommand{\dd}{\mathsf {d\kern -0.07em l}} 

\usepackage{color,colortbl,xcolor}
\usepackage{hyperref}
\usepackage{adjustbox}
\usepackage{dsfont}
\usepackage{ulem}
\let\ULforem\normalem

\SetAlgoNlRelativeSize{1}
\providecommand{\mathbbm}[1]{\mathds{#1}}

\def\bbe{{\mathbb{E}}}

\newcommand{\R}{{\rm I\!R}}
\newcommand{\bec}{\begin{center}}
\newcommand{\enc}{\end{center}}

\def\bgeqn#1\edeqn{\begin{align}#1\end{align}}
\def\bgeq#1\edeq{\begin{align*}#1\end{align*}}

\begin{document}

\title[Maximum Utility Split Method for Utility Preference Elicitation]{Maximum Utility Split Method for Utility Preference Elicitation}

\author[1]{\fnm{Bo} \sur{Chen}}\email{chenbo1211@stu.xjtu.edu.cn}

\author*[1]{\fnm{Jia} \sur{Liu}}\email{jialiu@xjtu.edu.cn}

\author[2]{\fnm{Huifu} \sur{Xu}}\email{hfxu@se.cuhk.edu.hk}

\affil[1]{\orgdiv{School of Mathematics and Statistics}, \orgname{Xi'an Jiaotong University},
\city{Xi'an}, \state{Shaanxi}, \country{China}}

\affil[2]{\orgdiv{Department of Systems Engineering and Engineering Management}, \orgname{The Chinese University of Hong Kong},
\city{Hong Kong}, \country{China}}

\abstract{
In this paper, we propose a new approach, called maximum utility split (MUS) scheme, which is built on the random utility split (RUS) scheme but with a notable difference: one lottery is designed with two fixed outcomes but with varying probability assigned to the outcomes, and the other has a deterministic outcome specifically chosen at the point where the range between the largest and smallest possible utility values is maximized, based on the elicited preference information. 
Consequently, the probability of the random lottery is set such that the range of the ambiguity set of utility functions is reduced by half at the point.
Under some moderate conditions, we show that MUS can successively generate a sequence of such questionnaires and effectively reduce the ambiguity set of utility functions, eventually converging to the true utility function as the number of questionnaires increases. 
The main challenge is to effectively identify the point with the largest utility range for a given ambiguity set constructed from elicited preference information. 
Based on the structure of the ambiguity set, we propose an interval-based algorithm which identifies each certain-outcome lottery by solving a sequence of linear programs.
Moreover, to deal with the case where elicitation terminates before the ambiguity set reduces to a singleton, we demonstrate how to figure out a nominal utility function by solving two optimization programs.
These identify, respectively, the smallest and largest utility functions under the Kantorovich metric within the ambiguity set, after which we identify a nominal utility function located in the middle of them. 
Finally, numerical results demonstrate the efficiency of the MUS method and the performance of a robo-advisor system based on MUS-type queries and the nominal utility elicitation approach. 
While the main discussions focus on concave utility functions, 
we also demonstrate how the MUS approach can be extended to accommodate general non-concave utility functions, particularly S-shaped ones.
}

\keywords{
Preference elicitation, 
Design of lotteries, 
Maximum utility split scheme, Piecewise linear utility function, 
Robo-advisor
}

\maketitle




\section{Introduction}\label{sec:Intro}

The expected utility theory established by \citet{von1947theory} has been a
dominant normative and descriptive model of decision-making. It states that any set of preferences that a decision maker (DM, for short) may have among uncertain/risky prospects can be characterized by an expected utility function if the preferences satisfy certain reasonable axioms (i.e., completeness, transitivity, continuity, and independence). 
Specifically, there exists a utility function $u: \R\to\R$ such that
the DM prefers the prospect $X$ over the prospect $Y$ if and only if
$
\bbe[u(X)]\geq \bbe[u(Y)]. 
$
The utility function is unique up to a positive linear transformation.

In practice, the true utility function that captures a DM's preferences is often unknown. Several preference elicitation methods have subsequently been proposed in behavioral economics, marketing, and decision analytics, see, e.g., 
\citet{ca-book,toubia2003fast,toubia2004polyhedral,saure2019ellipsoidal,toubia2007probabilistic,
hu2024distributional}
and references therein.
One of the most popular methods is to elicit the DM's preferences with paired gambling approaches for preference comparisons \citep{Far84}.
It uses the elicited information to identify the values of the utility function at a discrete set of points and construct an approximate utility function via some interpolation methods, see, for instance, \citet{ClR01}.

\citet{armbruster2015decision} argue that the classical interpolation approach for finding an approximate utility function has some drawbacks not only because it is often difficult to identify a non-parametric utility function purely based on the DM's preferences over pairwise comparison lotteries,  
but also because it could be risky to use a single approximate utility function without considering other plausible utility functions nearby. 
Consequently, they propose an alternative approach: instead of trying to find a single approximate Von Neumann-Morgenstern (VNM) utility function, they propose to use elicited information of a DM's preferences such as preferring certain lotteries over other lotteries, being risk averse over gains, and risk taking over losses, to construct an ambiguity set of plausible utility functions, and then to base the optimal decision on the worst-case utility function from the ambiguity set.
The approach is called preference robust optimization (PRO) as it follows the general philosophy of robust optimization and can be traced back to earlier work by \citet{maccheroni2002maxmin}, 
where the author derives necessary and sufficient conditions for characterizing the kind of worst-case utility-based decision-making.
\citet{
hu2015robust} propose a max-min utility-based PRO model for single-attribute decision-making under uncertainty where ambiguity and inconsistency in utility assessments occur. 
By viewing the utility function as a cumulative distribution function of a random variable, they consider the case that the ambiguity set of utility functions can be represented by a system of moment-type conditions and demonstrate that the maximin utility-based PRO problem can be reformulated as a linear programming problem. 
Over the past few years, PRO has received increasing attention in stochastic programming and risk management where information on the true utility function or risk measure representing a DM's utility/risk preferences is incomplete, see \cite{haskell2016ambiguity,
DeL18,
JLI21,
haskell2022preference, hu2017optimization, guo2024utility,hu2024distributional,
liu2025preference}. 

While PRO models effectively address the modelling risk in decision-making (by using an approximate utility function or risk measure with data fitting/interpolation), it also brings in some side effects: the decision based on the worst-case utility or risk measure may be over-conservative if the ambiguity set is large. This raises a question as to how to reduce the size of the ambiguity set {prior to or in the process of decision-making}. 
This kind of issue exists in  generic minimax/maximin robust optimization problems.
There are 
{several ways
to tackle the issue}
depending on available information. 
One is to use available empirical information, e.g., a DM is risk-taking or risk averse, to specify the shape of utility functions such as convexity, concavity, and S-shapedness, as well as the property of marginal utility via Lipschitz continuity. 
This information may help to specify a class of utility functions, some of which may be parametric, see \citet{tversky1992advances,abdellaoui2000parameter,bleichrodt2000parameter}.
The other is to elicit preference information from DM and use the elicited information to reduce the size of the ambiguity set.  
Random utility split (RUS) and random relative utility split (RRUS)  
are the most popular preference elicitation methods.
In the discrete choice models for customer's choice behavior, \citet{toubia2004polyhedral} propose a polyhedral cut method for eliciting customer's preferences characterized by multivariate linear utility functions.  
\citet{VYMDR20} propose an objective-oriented method for eliciting a multivariate linear utility function.  
In a more recent development, \citet{zhang2025modified} propose a modified polyhedral method to elicit a DM’s nonlinear univariate utility function, which does not rely on explicit information about the shape structure, Lipschitz modulus, and the inflection point of the utility.
{In the case when the true utility function belongs to a parametric family of utility functions,  a Bayesian approach can be adopted by assigning a prior distribution to the parameters and updating it using Bayes' theorem as more information about the decision maker's preferences is elicited through the learning process
see \cite{toubia2013dynamic,saure2019ellipsoidal}.
}

While preference elicitation methods described above may help to reduce the size of the ambiguity set of utility functions/risk measures, the main issue to be addressed is effectiveness. 
On the one hand, we need to establish a theoretical guarantee which ensures the convergence of an elicitation method, reducing the ambiguity set to a singleton as the number of questionnaires increases. 
On the other hand, the actual number of questionnaires should be reasonable  because in practice, one cannot ask a DM too many questionnaires to elicit the DM's preferences, as it may be either infeasible or too expensive. 
As far as we are concerned, there is no theoretical guarantee that RUS and RRUS will reduce an ambiguity set of utility functions to a singleton as the number of questionnaires increases, nor does the polyhedral cut method in \cite{zhang2025modified}. 
In robo-advisor systems, interaction between machine and user might improve the efficiency of preference elicitation, but the cost of each question is often high.
\citet{Alsabah2021}, \citet{Cui2022} and \citet{Yu2023} address the issue by considering users with a mean-risk form of preferences and restricting elicitation to the value of the risk-aversion coefficient. 
\citet{chen2026robo} extend the robo-advisor system from the mean-risk framework to the VNM utility function framework.

In this paper, we propose a maximum utility split (MUS) scheme, which is a specific adaptive preference questionnaire generation method for pairwise comparisons. 
It aims at improving the efficiency of the existing RUS method. 
We begin by considering an ambiguity set of monotonically increasing, concave, and Lipschitz continuous univariate utility functions within a non-parametric framework, where the DM's preference follows VNM expected utility theory, and then extend the discussion to non-concave 
case particularly when the true utility function is S-shaped.
The proposed MUS scheme is then utilized to adaptively determine the parameters of a specifically designed pairwise comparison question so that the resulting elicited preference information on the DM can significantly reduce the size of the ambiguity set. 
We also consider the issue of identifying a nominal utility function from the set, which serves as the ``best" representation of the DM's preference so far. 
Figure~\ref{fig:flowchart-preference-learning} 
provides a flowchart which illustrates the overall preference elicitation procedure in a single interaction with the DM. 

\begin{figure}[htbp]
    \centering
    \includegraphics[width=0.9\textwidth]{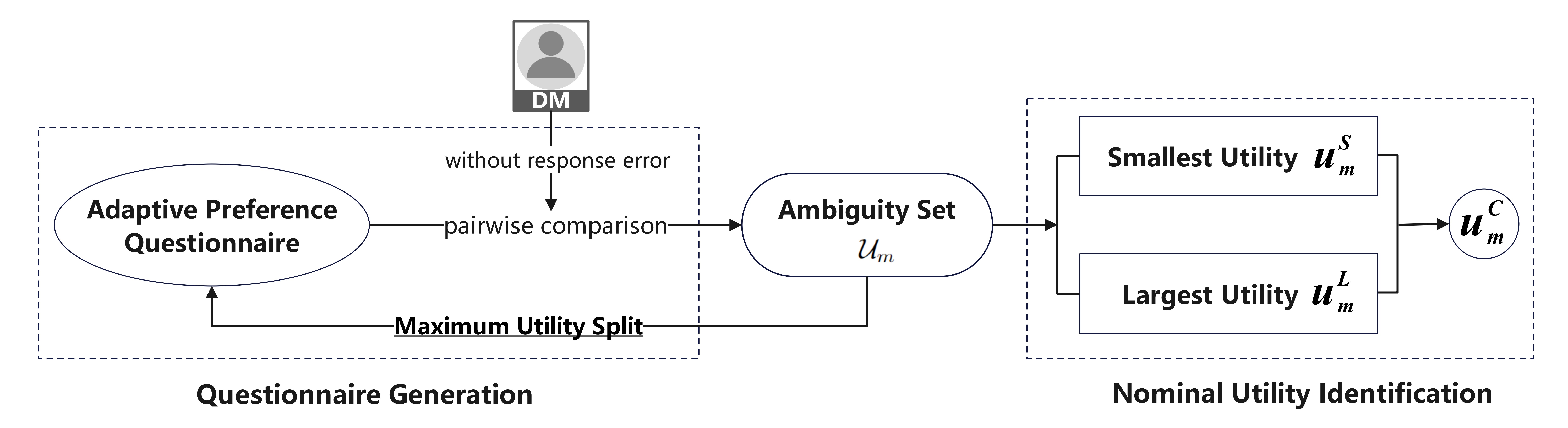}
    \caption{Flowchart of the preference elicitation process during the $m$-th interaction with the DM.}
    \label{fig:flowchart-preference-learning}
\end{figure}

The main contributions of this paper can be summarized as follows: 
\begin{itemize}
\item {\bf 
Methodology.}
We propose a new approach, called maximum utility split (MUS) scheme, which is built on RUS but with a notable difference.  
We begin with an ambiguity set of plausible utility functions, each of which could be the true utility function, and then design a pair of lotteries like the RUS scheme: 
one lottery is constructed with two fixed outcomes but with varying probabilities for the outcomes to be determined, and the other has a deterministic outcome. 
However, unlike RUS, which selects the outcome of the deterministic lottery randomly, MUS chooses the outcome specifically at 
a point  where the difference between the largest possible utility value and the smallest possible utility value among the ambiguity set of utility functions is the 
biggest. 
Consequently, the probability of the random lottery is 
designed such that the range of the ambiguity set of utility functions is reduced by half at the point. 
Under some moderate conditions, we prove that MUS may successively generate a sequence of paired lotteries and eventually reduce the ambiguity set of utility functions to the true one as the number of questionnaires increases.

\item \textbf{Tractable formulation of MUS.} 
The main challenge is to effectively identify the point with the 
biggest 
difference between the largest utility value and the smallest utility value for a given ambiguity set of utility functions constructed with elicited preference information. 
Based on the structure of the ambiguity set, we derive the semi-closed form solution of $\bar{f}(\cdot)=\max\limits_{u\in \mathcal{U}_{m}}u(\cdot)$ and $    \underline{f}(\cdot)=\min\limits_{u\in \mathcal{U}_{m}}u(\cdot)$ and then    
propose an interval-based algorithm which allows one to identify the outcome of the deterministic lottery by solving a sequence of linear programs.
Such a semi-closed-form solution is not only useful for obtaining tractable reformulations of the MUS problem, but also applies more broadly to related functionally robust optimization problems, such as pricing problems with unknown demand functions \cite{HLM19}.
Moreover, to deal with the case where elicitation terminates before the ambiguity set is reduced to a singleton, we demonstrate how to determine a nominal utility function by solving two programs: a linear minimization program and a second-order cone-constrained maximization program, which respectively identify the smallest and the largest utility functions in the ambiguity set under the Kantorovich metric 
{with respect to a zero-valued function $0$.}  
We then identify a utility function lying between them.




\item \textbf{Application in robo-advisor system.}
We have carried out some numerical tests of the proposed MUS on an academic example and 
find that the new method displays much faster reduction of the ambiguity set of utility functions in comparison with RUS, RRUS and polyhedral method. 
We have also
applied 
the MUS to a robo-advisor system
for optimal decision-making in portfolio optimization.
{In our view, the proposed
iterative preference elicitation 
process is particularly suitable for this kind of adaptive
human-machine interaction system}.
Unlike the mean-risk models \citep{Alsabah2021,Cui2022,Yu2023}, here we adopt 
the VNM expected utility framework over a general functional space 
which allows the system to cover a 
broader 
class of users.
Our preliminary numerical results show that 
the 
MUS approach outperforms existing preference elicitation approaches as observed in the numerical test on the academic example.

\end{itemize}

The rest of the paper is organized as follows. 
Section \ref{sec:questionnaire}
introduces the MUS 
for 
adaptive questionnaire generation and analyses its convergence,
Section \ref{sec-bounds}
derives a semi-closed form solution 
to a key optimization problem in MUS,
Section \ref{sec:utility}
discusses how to identify a nominal utility function at the end of MUS.
Section \ref{sec-extensions} extends the discussion to general non-convex case particularly the S-shaped true utility functions.
Section \ref{sec:numerical} 
reports numerical tests 
of the MUS method and finally
Section \ref{sec:conclusion} concludes.

\section{Adaptive preference questionnaire generation}\label{sec:questionnaire}

The design and generation of questionnaires presented to the DM, whose responses reveal the preferences, are critical to the efficiency of preference elicitation. 
Pairwise comparison (choosing between two items/lotteries) is the most widely used form of questionnaire, as it is easy to implement in the interaction with the DM~\citep{eggers2021choice}. 
In practice, preference information and pairwise comparison data from the DM are often limited. 
Thus, we construct and maintain an ambiguity set of plausible utility functions using available information about the DM.  
Moreover, \citet{mcfadden1972conditional} suggests that efficient questionnaire design should involve a sequential process that adapts to information inferred from prior responses. 
We propose a maximum utility split (MUS) scheme to generate pairwise comparison questions based on the current ambiguity set.

In this section, we first introduce how to construct the ambiguity set of plausible utility functions by using both the pairwise comparison data under the VNM expected utility framework and the available
generic information of the DM's risk-averse attitude. 
We then review two popular adaptive questionnaire generation methods for pairwise comparison---RUS and RRUS, both involving a random lottery with two outcomes and a deterministic lottery with one outcome. 
Motivated by RUS, where the random lottery has two fixed outcomes and the deterministic lottery is randomly chosen, we propose the MUS method to specifically choose the deterministic lottery at the point the greatest difference between the largest and smallest possible utility values in the current ambiguity set. 
We provide both qualitative and quantitative convergence results for the MUS method in Theorem \ref{thm:mus-converg}.

\subsection{Specification of the true utility function}

We begin by specifying a class of utility functions, each of which may be used to characterize a DM's preference based on the available information prior to preference elicitation. 
In the rest of the paper, we concentrate on a single DM and use ``the DM'' for preference elicitation unless specified otherwise. 
Let 
$\mathcal{L}^p([0,1])$ denote the set of real-valued functions $u: [0,1]\to \mathbb{R}$ integrable up to the $p$-th order. 
Let 
$$
\begin{aligned}
\mathcal{U}_{cv}:= \{ u\in \mathcal{L}^1([0,1])\mid \
&u \text{ is monotonically increasing, concave and Lipschitz continuous with }\\
&\text{modulus bounded by $L$}, 
u(0)=0,\ u(1)=1\},
\end{aligned}
$$
which can be further represented as 
\begin{align}\label{eq:U_c}
\nonumber
\mathcal{U}_{cv}:= \{ u\in\   \mathcal{L}^1([0,1])\mid \ &
 u^{'}_{+}(y)\geq0,\ u^{'}_{-}(y)\geq0,\ \forall y\in[0,1],\ u(0)=0,\ u(1)=1,\ \text{Lip}(u)\leq L,\\
& u^{'}_{+}(y)\leq u^{'}_{-}(y),\ \forall y\in[0,1],\ 
u^{'}_{-}(y_2)\leq u^{'}_{+}(y_1),\ \forall 0\leq y_1<y_2\leq1 \},
\end{align}
where $u^{'}_{+}(y)$ ($u^{'}_{-}(y)$, resp.) denotes the right (left, resp.) derivative of $u$ at $y$; $\text{Lip}(u)\leq L$ indicates that $u$ is Lipschitz continuous with modulus bounded by a constant $L>0$. 
{
The constant $L$ provides an upper bound on the marginal utility, which is a mild condition satisfied by many utility functions of practical interest. 
}
Note that if we set $\mathcal{U}_{cv}(y):=\{u(y): u\in \mathcal{U}_{cv}\}$, then $\mathcal{U}_{cv}(\cdot)$ is a set-valued mapping from $[0,1]$ to $[0,1]$. 
{Consequently 
we can define the range of $\mathcal{U}_{cv}$ at point $y$ 
by 
$\mathcal{U}_{cv}(y)= \left[\min\limits_{u\in \mathcal{U}_{cv}} u(y), \max\limits_{u\in \mathcal{U}_{cv}} u(y)\right] $, $\forall y\in[0,1]$. 
}
Our aim is to reduce the range (graph) of $\mathcal{U}_{cv}(\cdot)$ over $[0,1]$ through preference elicitation.

\begin{assumption}
\label{assump:VM}
The DM's preferences can be represented by the Von Neumann-Morgenstern (VNM) expected utility theory.
Information on the true VNM utility function $u^*$, which characterizes the DM's preferences is incomplete, and the available information before preference elicitation ensures that the true utility function lies in the set $\mathcal{U}_{cv}$.
\end{assumption}

Under Assumption \ref{assump:VM}, the true utility function is defined over $[0,1]$ and normalized over the interval. 
This is because we focus on the case that the DM's incomes are non-negative and the DM is risk averse on gains. We call $\mathcal{U}_{cv}$ the initial ambiguity set. 
The main purpose of this paper is how to design paired gambling questionnaires for the DM to choose so that the preference information inferred from the choices is effectively used to reduce the size of $\mathcal{U}_{cv}$ as quickly as possible, eventually to a singleton. 
To this end, we need to introduce proper measures that allow us to quantify the range of ${\cal U}_{cv}(\cdot)$ and its reduction as more and more preference information is obtained. 
Specifically, we introduce the pseudo distance between any two utility functions in the ambiguity set.

Let $\mathscr{G}$ be a set of bounded measurable functions defined over $[0,1]$. 
For $u,v\in\mathcal{U}_{cv}$, define the semi-distance between $u$ and $v$ by
\begin{equation}\label{eq:compute-semi-dist}
    \dd_{\mathscr{G}} (u,v):=\sup_{g \in \mathscr{G}}
 \left|\int_0^1 g(z)du(z)- \int_0^1 g(z)dv(z) \right|,
\end{equation}
where the integrals are well-defined in the Lebesgue-Stieltjes sense~\citep{hildebrandt1963introduction}. 
Here $g$ might be viewed as a test function and $\dd_{\mathscr{G}} (u,v)=0$
means that the test functions in $\mathscr{G}$ do not distinguish $u$ from $v$ even if $u\neq v$. 
Since the utility functions in $\mathcal{U}_{cv}$ are normalized with $u(0)=0, u(1)=1$, $\dd_{\mathscr{G}} (u,v)$ resembles the pseudo-metric of $\zeta$-structure in probability theory. 
In this paper, we are interested in two cases:
\begin{equation*}
\mathscr{G}=\mathscr{G}_{L}:=\left\{g: [0,1]\to\mathbb{R} \mid \ g\text{ is Lipschitz continuous with modulus bounded by 1}\right\}
\label{eq:G-kant}
\end{equation*}
and
\begin{equation*}
\mathscr{G} = \mathscr{G}_{I} :=\left\{ g:=\mathbbm{1}_{(0, z]}(\cdot) \mid \
 \mathbbm{1}_{(0, z]}(s):=
	1 \; \text{if }\; s\in (0, z] \; \text{and} \;
	0 \; \text{otherwise}
	\right\}.
\end{equation*}
The former corresponds to the Kantorovich metric, denoted by $\dd_K(u,v)$,
and the latter corresponds to the uniform Kolmogorov metric, denoted by $\dd_I(u,v)$. 
It is well-known that $\dd_{I}(u,v)=\|u-v\|_\infty$.
Since $u,v$ are in the set of normalized utility functions $\mathcal{U}_{cv}$, 
$$
\int_0^1 g(z)d u(z) - \int_0^1 g(z)d v(z) =
\int_0^1 (g(z)-g(0))d u(z) - \int_0^1 (g(z)-g(0))d v(z).
$$
We can replace $\mathscr{G}_L$ with 
\bgeqn 
\mathscr{G}_{L}:=\left\{g: [0,1]\to\mathbb{R} \mid g(0)=0,\ g\text{ is Lipschitz continuous with modulus bounded by 1}
\right\}.
\label{eq:G-kant-a}
\edeqn

\subsection{RUS and RRUS}\label{sec:RUS-RRUS}


With the initial ambiguity set $\mathcal{U}_{cv}$ in place, we now move on to discuss how to design a sequence of paired gambling questionnaires, denoted by $\{(W_{m},Y_{m})\}_{m=1,2,\dots}$, to use them to elicit the DM's preferences and subsequently update the ambiguity set sequentially.
Following the literature in behavioural economics and PRO, we consider a random lottery with two outcomes and a deterministic lottery with one outcome: 
\begin{equation}\label{eq:W-Y}
W_{m}=\left\{
\begin{aligned}
    r_1^{m}\quad & \text{ w.p. }\; 1-p^{m},\\
    r_3^{m}\quad & \text{ w.p. }\; p^{m},
\end{aligned}
\right.
\text{ and }\quad
Y_{m}=r_2^{m}\quad \text{ w.p. }\; 1,
\end{equation}
where $r_1^{m},r_2^{m},r_3^{m}\in[0,1]$ are parameters representing outcomes, and $p^{m}\in(0,1)$ is the corresponding probability. 
We can write $W_{m}$ and $Y_{m}$ as random variables mapping from $(\Omega,{\cal F}, \mathbb{P})$ to $[0,1]$. 
If we normalize the utility function such that $u(r_1^{m})=0$ and $u(r_3^{m})=1$, then 
\begin{equation*}
\mathbb{E}[u(W_{m})]=p^{m}u(r^{m}_3) + (1-p^{m})u(r^{m}_1)=p^{m}\quad \text{and} \quad\mathbb{E}[u(Y_{m})]=u(r_2^{m}), 
\end{equation*}
where the expectation is taken w.r.t.~probability measure $\mathbb{P}$ of the gambling. 
Let $\mathcal{U}_0 :=\mathcal{U}_{cv}$ {be the initial ambiguity set} and 
$\mathcal{U}_m$ be the ambiguity set updated with the paired gambling questionnaires of 
$\{(W_{k},Y_{k})\}_{k=1,2,\cdots,m}$, for $m=1,2,\cdots$. 
At the $(m+1)$-th query, the ambiguity set $\mathcal{U}_m$ is updated to
\begin{equation}\label{eq:U_m-MUS-0}
\mathcal{U}_{m+1}=\Big\{u\in\mathcal{U}_{m}\mid Z_{m+1}\cdot u(r_2^{m+1})\geq Z_{m+1}\cdot p^{m+1}\Big\},  
\end{equation}
after the DM's preference for the pair $(W_{m+1},Y_{m+1})$ is observed, 
where $Z_{m+1}=1$ if $Y_{m+1}$ is preferred and $-1$ otherwise, $m=0,1,\cdots.$ 
Throughout the paper, we make the following assumption.
\begin{assumption}
 There is no error in the process of preference elicitation, whether in the DM's response, in the observation of the DM's response by the modeler, or in data concerning $p^{m}$ and $r_2^m$ (e.g.~rounding errors). 
This means that the DM prefers $W_m$ to $Y_m$ if and only if $\mathbb{E}\left[u^*(W_m)\right] \geq \mathbb{E}\left[u^*(Y_m)\right]$. 
\end{assumption}
We refer readers to \cite{armbruster2015decision,guo2024utility}
for various ways to tackle elicitation errors 
in PRO models.

Like ${\cal U}_{cv}$, we may view ${\cal U}_m(\cdot)$ as a set-valued mapping from $[0,1]$ to $[0,1]$.
Throughout the paper, we refer to $\mathcal{U}_{m}$ as a set of utility functions and a set-valued mapping interchangeably depending on the context.
Note that since $\mathcal{U}_{m}$ is a convex set of functions, the range of $\mathcal{U}_{m}$ at point $y$ can be written as $\mathcal{U}_{m}(y)= [\min\limits_{u\in \mathcal{U}_{m}} u(y), \max\limits_{u\in \mathcal{U}_{m}} u(y)] $, $\forall y\in[0,1]$, see Figure \ref{fig:motivation} for the range of $\mathcal{U}_{0}:=\mathcal{U}_{cv}$ over [0,1].
The key question is how to set $r_1^{m+1}, r_2^{m+1}, r_3^{m+1}$, and $p^{m+1}$ such that the range of $\mathcal{U}_{m+1}$ is reduced from $\mathcal{U}_{m}$ efficiently.

\vspace{0.4cm}
\underline{\textbf{Random utility split (RUS) and Random relative utility split (RRUS).}}
The RUS scheme fixes $r^{m+1}_1$ and $r^{m+1}_3$ as the worst and best possible returns respectively for all $m\in\mathbb{Z}$, i.e., $r^{m+1}_1=0$ and $r^{m+1}_3=1$. 
In that case, $u(r^{m+1}_1)\equiv0$ and $u(r^{m+1}_3)\equiv1$ by the normalization condition in the ambiguity set. 
The RUS scheme then selects $r_2^{m+1}$ randomly from $[0,1]$ following a uniform distribution, and sets $p^{m+1}$ to reduce by half the range of $\mathcal{U}_{m}$ at $r^{m+1}_2$, making $p^{m+1}u(r^{m+1}_3)+(1-p^{m+1})u(r^{m+1}_1)$ the midpoint of the range
as in \eqref{eq:pm}. 
The randomness in setting $r_2^{m+1}$ facilitates implementation but also reduces the efficiency of the elicitation process if it is poorly selected at a point where the range of ${\cal U}_m$ is narrow.  
The RRUS scheme modifies RUS by allowing $r_1^{m+1}$ and $r_3^{m+1}$ to be selected randomly from the interval $[0,1]$ following a uniform distribution and then setting $r_2^{m+1}=(r_1^{m+1}+r_3^{m+1})/2$. 
Like RUS, the RRUS sets $p^{m+1}$ to halve the range of ${\cal U}_m$ at $r_2^{m+1}$.

\subsection{Maximum utility split (MUS)}\label{sec:MUS}

The MUS scheme to be proposed in this paper resembles RUS by setting $r^{m}_1=0$ and $r^{m}_3=1$ for all $m\in\mathbb{Z}$, thus $u(r^{m}_1)\equiv0$ and $u(r^{m}_3)\equiv1$. 
In this case,
\begin{equation}\label{eq:U_m-MUS}
\mathcal{U}_m=\{u\in\mathcal{U}_{cv}\mid Z_k\cdot u(r_2^k)\geq Z_k\cdot p^k,\ k=1,2,\ldots,m\}.  
\end{equation}
However, it differs from RUS by taking a different strategy to set $r_2^m$.
To reduce the range of $\mathcal{U}_m$ more efficiently, the MUS scheme selects $r^{m+1}_2$ where the range of ${\cal U}_m$ is largest, and this can be identified by solving the following optimization problem: 
\begin{equation}\label{eq:MUS-r2-initial}
r^{m+1}_2=\mathop{\arg\max}\limits_{r_2\in [0,1]} \left[\max\limits_{u\in \mathcal{U}_{m}}u(r_2)-\min\limits_{u\in \mathcal{U}_{m}}u(r_2)\right].
\end{equation}
Once $r_2^{m+1}$ is identified, we set the probability $p^{m+1}$ 
such that the newly added pairwise comparison constraint $Z_{m+1}\cdot u(r_2^{m+1})\geq Z_{m+1}\cdot p^{m+1}$ in \eqref{eq:U_m-MUS-0} halves the range $\{u(r^{m+1}_2)\mid u\in \mathcal{U}_{m}\}$, i.e., 
\bgeqn \label{eq:pm}
p^{m+1}=\frac{1}{2}\left[\max\limits_{u\in \mathcal{U}_{m}} u(r^{m+1}_2) + \min\limits_{u\in \mathcal{U}_{m}}u(r^{m+1}_2)\right].
\edeqn
This step is the same as RUS and RRUS.
Note that the maximum range can be represented by the Kolmogorov distance, that is, $\max\limits_{u_1,u_2 \in \mathcal{U}_m} \dd_I(u_1,u_2)$.
Let
\bgeqn 
\label{eq:Kom-Kant-distance}
\dd_I(\mathcal{U}_m):= \max\limits_{u_1,u_2 \in \mathcal{U}_m} \dd_I(u_1,u_2)
\quad \text{and} \quad
\dd_K(\mathcal{U}_m):= \max\limits_{u_1,u_2 \in \mathcal{U}_m} \dd_K(u_1,u_2).
\edeqn

\begin{example} 
We use a simple example to illustrate the difference between RUS and MUS. 
Consider the case that the DM's true utility function is $u(y)=\frac{1}{1-e^{-9.5}}(1-e^{-9.5y})$, see the black curve in Figure \ref{fig:motivation}. 
The function is monotonically increasing, concave, and Lipschitz continuous with modulus $L=10$, and it is normalized over $[0,1]$. 
This information allows us to identify the initial ambiguity set ${\cal U}_0:=\mathcal{U}_{cv}$. 
We can determine the lower bound and the upper bound of utility functions in the ambiguity set ${\cal U}_0$, see the green line OP and the blue line OTP in Figure \ref{fig:motivation}.

Under the MUS scheme, the largest gap between the upper bound and the lower bound is attained at point $y=0.1$.
Thus, we set $r_2^{\rm MUS}=0.1$ and $p^{\rm MUS}=(1+0.1)/2=0.55$. 
See point M(0.1,0.55) in Figure \ref{fig:motivation}. 
On the other hand, under the RUS scheme, we randomly choose $r_2^{\rm RUS}=0.7$,
resulting in $p^{\rm RUS}=(1+0.7)/2=0.85$. 
See point R(0.7,0.85) in Figure \ref{fig:motivation}. 
From the figure, we can see that the MUS scheme moves the lower bound OP up to OMP, whereas the RUS scheme moves the lower bound OP to ORP. 
Obviously, the MUS scheme reduces the range of ${\cal U}_0$ (from triangle area OTPO to area OTPMO) by a significantly larger portion compared to RUS (from OTPO to area OTPRO). 

\begin{figure}[htbp]
    \centering
    \includegraphics[width=0.9\linewidth]{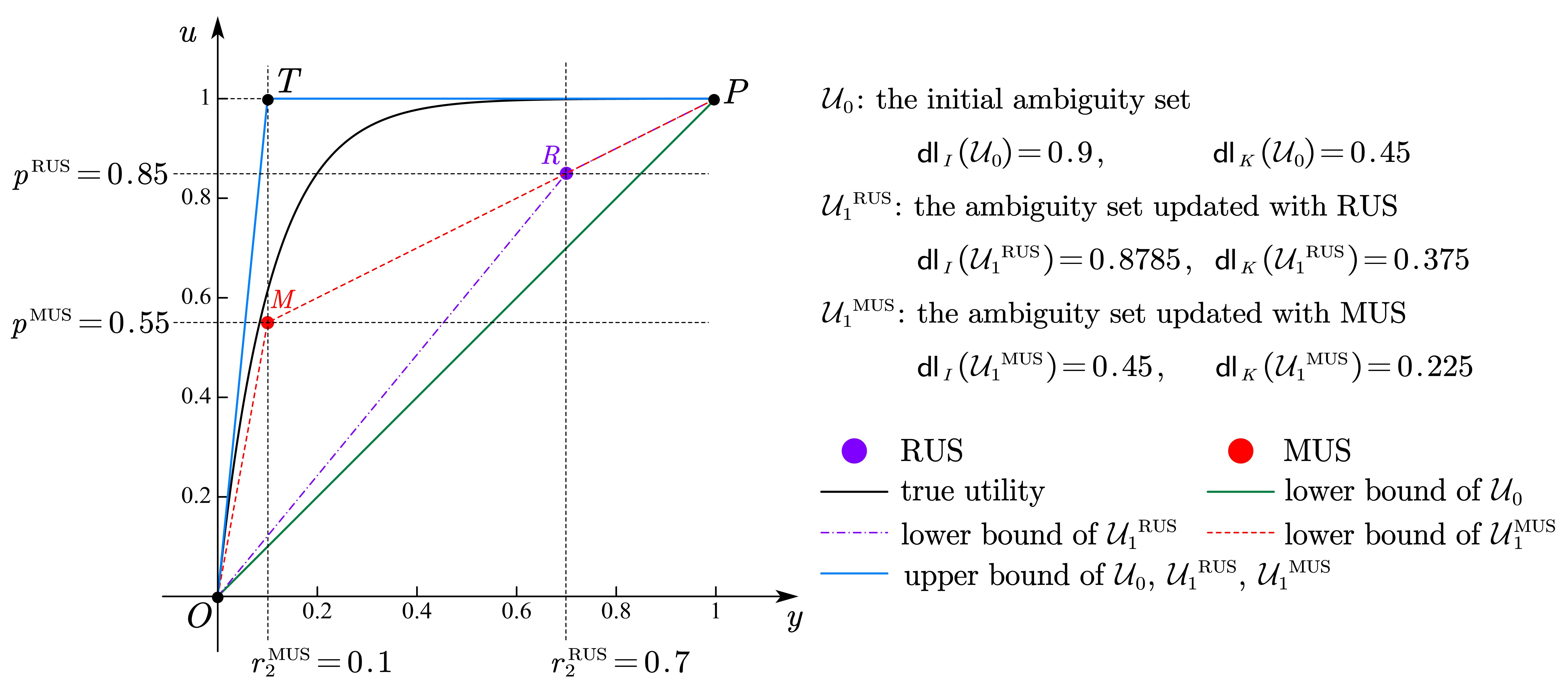}
    \caption{
    Illustration of MUS. 
    } 
    \label{fig:motivation}
\end{figure}
\end{example}

Parts (i) and (ii) of the next theorem characterize the structural properties of ${\cal U}_m$ under the Kolmogorov norm and Kantorovich norm.
We need them in theoretical analysis in Sections \ref{sec-bounds} and \ref{sec:utility}. 
Part (iii) states the convergence of the MUS scheme deterministically, whereas  Part (iv) quantifies the rate of convergence.

\begin{theorem}\label{thm:mus-converg}
Let $\{W_m,Y_m\}_{m=1}^{\infty}$ be a sequence of queries for pairwise comparisons structured as in \eqref{eq:W-Y} and generated by the MUS scheme,  
let
\begin{equation*}
\mathcal{U}_m:=\{u\in\mathcal{U}_{cv}\mid Z_k\cdot u(r_2^k)\geq Z_k\cdot p^k,\ k=1,\ldots,m\},
\end{equation*}
where $Z_k$ is the choice between $W_k$ and $Y_m$ according to the true VNM utility $u^*$ of the DM with no response error.
Then the following assertions hold.
\begin{itemize}

\item[(i)] $\mathcal{U}_m$ is compact under the topology induced by the Kolmogorov (infinity) norm.

\item[(ii)]
For any $u,v\in {\cal U}_m$, $\dd_K(u,v)$ is the area between curves of $u$ and $v$. 

\item[(iii)]  
The sequence of the ambiguity sets $\{{\cal U}_m\}$ converges to the singleton $\{u^*\}$ under the Hausdorff distance equipped with the Kolmogorov norm.

\item[(iv)]For any ${\epsilon}>0$, there exists 
$s_0:=(3\lceil 4L/\epsilon\rceil+2)-(2\lceil4L/\epsilon\rceil+1)\lfloor\log_2(\epsilon)\rfloor$
such that 
\begin{align*}
\sup_{u\in \mathcal{U}_{s-1}} \|u-u^*\|_\infty < \epsilon,\; \forall s\geq s_0, 
\end{align*}
where $\lceil \cdot \rceil$ and $\lfloor \cdot \rfloor$ denote respectively
rounding up and down to the nearest integer. 

\end{itemize}

\end{theorem}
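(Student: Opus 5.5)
For (i), I would use Arzelà--Ascoli. Every $u\in\mathcal{U}_{cv}$ is $L$-Lipschitz with $u(0)=0$ and $u(1)=1$, so $\mathcal{U}_m$ is uniformly bounded and equicontinuous on $[0,1]$, hence relatively compact in the sup norm. For closedness, take $u_n\to u$ uniformly with $u_n\in\mathcal{U}_m$. The limit $u$ is still nondecreasing, concave and $L$-Lipschitz, since each of these is defined by pointwise inequalities preserved under pointwise limits, and it still satisfies $u(0)=0$, $u(1)=1$. The query constraints $Z_k u(r_2^k)\ge Z_k p^k$ pass to the limit by pointwise convergence at $r_2^k$. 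So $\mathcal{U}_m$ is closed, and therefore compact.

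For (ii), I would take $g\in\mathscr{G}_L$ with $g(0)=0$ as in \eqref{eq:G-kant-a} and integrate by parts in the Lebesgue--Stieltjes sense:
$$\int_0^1 g\,du-\int_0^1 g\,dv=\big[g(u-v)\big]_0^1-\int_0^1 (u-v)\,dg=-\int_0^1 (u(z)-v(z))g'(z)\,dz.$$
The boundary term vanishes because $u-v=0$ at both endpoints, and $g$ is absolutely continuous with $|g'|\le 1$ a.e. Taking the supremum over $g$ gives at most $\int_0^1|u-v|\,dz$. Equality is attained by choosing $g'=-\mathrm{sign}(u-v)$, which is admissible since $u-v$ is continuous. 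Hence $\dd_K(u,v)=\int_0^1|u-v|$, the area between the two curves.

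For (iii) and (iv), I would track the upper and lower envelopes $\bar f_m(y)=\max_{\mathcal{U}_m}u(y)$ and $\underline f_m(y)=\min_{\mathcal{U}_m}u(y)$, both attained by (i), and the gap $\delta_m=\max_y(\bar f_m-\underline f_m)=\dd_I(\mathcal{U}_m)$. Since $u^*\in\mathcal{U}_m$ for all $m$ (there is no response error), it suffices to show $\delta_m\to0$, because $\sup_{u\in\mathcal{U}_m}\|u-u^*\|_\infty\le\delta_m$ and the sets are nested.

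\textbf{Key local fact.} At the queried point $r=r_2^{m+1}$, the gap at $r$ in $\mathcal{U}_{m+1}$ is exactly $\delta_m/2$. Every $u\in\mathcal{U}_{m+1}$ is $L$-Lipschitz, so on the interval $|y-r|\le\eta$ the new gap is at most $\delta_m/2+2L\eta$. Thus one query halves the gap only in a neighbourhood of width about $\delta_m/(2L)$; elsewhere the gap may stay large.

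\textbf{Counting argument.} I would partition $[0,1]$ into $N=\lceil 4L/\epsilon\rceil$ cells of width $\epsilon/(4L)$. Suppose the current maximal gap exceeds the threshold $\tau\ge\epsilon$. The MUS point then lies in some cell. After that query, the gap at every point of the cell is at most about $\delta/2+2L\cdot\epsilon/(4L)=\delta/2+\epsilon/2$, up to an endpoint adjustment. So the gap on that cell drops below the threshold at the next dyadic level, and gaps never increase because the sets are nested. Hence each dyadic level $\tau=2^{-j}$, down to $\epsilon$, needs at most about $2N+1$ queries, since each cell can be "hit" at most a bounded number of times before its gap falls under the level. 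The number of levels is $-\lfloor\log_2\epsilon\rfloor$, plus an initial phase of $3N+2$ queries. This matches $s_0=(3N+2)-(2N+1)\lfloor\log_2\epsilon\rfloor$. Part (iii) follows from (iv) by letting $\epsilon\downarrow 0$, using the Hausdorff distance to $\{u^*\}$, which equals $\sup_{u}\|u-u^*\|_\infty$.

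The main obstacle is the bookkeeping in the counting step. I need to show rigorously that a query in a cell reduces the gap on the whole cell below the next level; the Lipschitz slack $2L\eta$ must be absorbed, which is why the cell width is $\epsilon/(4L)$ and why the per-level count is $2N+1$ rather than $N$. Handling the case where the maximizer sits on a cell boundary (hence the factor 2) is also part of this step. I would make this precise by induction on levels, with the invariant "after level $j$, $\delta\le \max(2^{-j},\epsilon)$ plus a Lipschitz correction," then verify that the constants close to give exactly $s_0$.
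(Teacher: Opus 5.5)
Parts (i) and (ii) are fine. Part (i) is the paper's argument, and your integration by parts in (ii) is a self-contained substitute for the paper's appeal to the CDF representation of the Kantorovich metric. The gap is in the counting step for (iii)--(iv), and it is more than bookkeeping. Your claim that one query in a cell pushes the gap on that cell ``below the threshold at the next dyadic level'' is false. Your own bound is $\delta/2+2L\eta$, which exceeds $\tau/2$ whenever $\delta$ is near $\tau$. So the levels cannot be $2^{-j}$. They must obey an affine recursion $\tau_{j+1}=\tau_j/2+c$, whose iterates from $\tau_0=1$ are $\tau_j=2^{-j}+2c(1-2^{-j})$. With your two-sided slack and cell width $1/N\le\epsilon/(4L)$, you have $c=2L/N$. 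This equals $\epsilon/2$ whenever $4L/\epsilon$ is an integer, and then $\tau_j=\epsilon+2^{-j}(1-\epsilon)>\epsilon$ for every $j$, since the fixed point of the recursion is $\epsilon$ itself. So your proposed invariant cannot deliver $\sup_{u}\|u-u^*\|_\infty<\epsilon$ with these constants, however many levels you run.

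The missing ingredient is the one-sided estimate behind the paper's \eqref{proof:eq-D-diff}. Both envelopes are nondecreasing and $L$-Lipschitz. For $y>r$, use $\bar f(y)\le\bar f(r)+L(y-r)$ together with $\underline f(y)\ge\underline f(r)$, and argue symmetrically for $y<r$. This gives $D(y)\le D(r)+L|y-r|$ for the gap $D=\bar f_m-\underline f_m$, and hence $c=\epsilon/4$. The count is then simpler than you expect. Once a query lands in a closed cell, every point of that cell has gap at most $\tau_j/2+\epsilon/4=\tau_{j+1}$ from then on, because the sets are nested. Hence no later MUS point with gap above $\tau_{j+1}$ can lie in that cell. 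Each cell is therefore hit at most once per level, so $N=\lceil 4L/\epsilon\rceil$ queries per level suffice. With $k=\lceil 1-\log_2\epsilon\rceil$ levels you get $\delta_{kN}\le 2^{-k}+\tfrac{\epsilon}{2}(1-2^{-k})<\epsilon$, which is the claim for all $s\ge kN+1$. Since $kN+1\le s_0=(2N+1)k+N+1$, this implies (iv), and (iii) follows as you say. Do not try to read $3N+2$ and $2N+1$ off a cell argument. They come from the paper's temporal pigeonhole: each batch of $M=N+1$ consecutive query points contains two points within $\epsilon/(4L)$. This gives $D\le\epsilon/4+\tfrac12 D$ along indices that start within $M$ and are spaced at most $2M-1$ apart. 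With this repair your spatial covering is a valid alternative with a slightly sharper threshold, and it gives (iii) as a corollary, whereas the paper proves (iii) separately by a subsequence contradiction.
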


\begin{proof}

\underline{Part (i)}. By definition, $u\in\mathcal{U}_m$ is non-decreasing over $[0, 1]$ with $u(0) = 0$, $u(1) = 1$, and $u$ is globally Lipschitz continuous with a uniformly bounded Lipschitz modulus $L$. The monotonically increasing property and the normalization condition ensure the uniform boundedness of the utility functions in the set; the globally Lipschitz continuity guarantees equicontinuity of the class of functions. By the Arzelà-Ascoli Theorem (see e.g.~\citet[Theorem 2.3]{brown1993topological}), $\mathcal{U}_m$ is a 
relatively 
compact set, that is, it is contained by a compact set in the space of continuous functions. To show the compactness of the set, it suffices to show that the set is closed. 
Let $\{u_k\} \subseteq \mathcal{U}_m$ be a sequence converging to $u$ under some norm topology in the $\mathcal{L}^1$ space. The uniform convergence ensures continuity of $u$. For any fixed points $x, y \in [0, 1]$,
$$
|u_k(x) - u_k(y)| \leq L |x - y|, \; \forall k,
$$
where $L$ is the global Lipschitz modulus. 
By driving $k$ to infinity, we obtain
$$
|u(x) - u(y)| \leq L |x - y|,
$$
which means that $u$ is also Lipschitz continuous with modulus bounded by $L$. Moreover, since $u_k$ is a monotonically increasing, concave, Lipschitz continuous, and normalized function, and satisfies all pairwise comparison constraints in $\mathcal{U}_m$, which are all linear, its limit is also a function in $\mathcal{U}_{cv}$ satisfying these linear constraints. This shows $u \in \mathcal{U}_m$, and hence $\mathcal{U}_m$ is closed.

\underline{Part (ii)}. Since $u$ and $v$ are normalized utility functions with $u(0)=v(0)=0$ and $u(1)=v(1)=1$, and the utility functions are non-decreasing, we may view them as cumulative distribution functions of some random variables distributed over $[0,1]$. 
The conclusion then follows from the fact that the Kantorovich distance between two probability distributions is equal to the area of the cdfs of the two distributions, see 
discussions at page 424 in~\citet{gibbs2002choosing}.

\underline{Part (iii)}. 
It suffices to show 
$$
\lim_{m\to\infty}\sup_{u\in \mathcal{U}_{m-1}} \|u-u^*\|_\infty =0
$$
or equivalently
$$
\lim_{m\to\infty} \sup_{r_2\in [0,1]} 
D^m(r_2) :=\max\limits_{u\in \mathcal{U}_{m-1}}u(r_2)-\min\limits_{u\in \mathcal{U}_{m-1}}u(r_2)=0. 
$$
Under the maximum utility split scheme, 
\bgeq 
r_2^m=\mathop{\arg\max}\limits_{r_2\in[0,1]} D^m(r_2)
\quad \text{and} \quad 
D^m(r^m_2)\geq D^m(r_2)\geq 0,\;
\forall r_2\in [0,1].
\edeq
Thus it is enough to show that $D^m(r^m_2)\to0$ as $m\to\infty$. 
By definition
\begin{equation}\label{eq-mono-dm}
0\leq D^{m+1}(r_2)\leq D^m(r_2)< 1,\;
\forall r_2\in[0,1] 
\end{equation} 
for $m\in\mathbb{Z}_{+}$.
By the definition of $r_2^m$,
$$
0\leq D^{m+1}(r_2^{m+1})\leq D^m(r_2^{m+1})\leq \max\limits_{r_2\in[0,1]}D^m(r_2)=D^m(r_2^m)<1,\ \forall m\in\mathbb{Z}_+, 
$$ 
which implies that the sequence $\{D^m(r^m_2)\}_{m=1}^{\infty}$ is bounded and monotonically decreasing. 
Let $\varepsilon$ denote its limit. We show next that $\varepsilon=0$.
Assume for the sake of a contradiction that $\varepsilon>0$. 
Then 
\begin{equation}\label{proof:eq-contradiction}
D^{m}(r_2^m)\geq\varepsilon>0,\ \forall m \in\mathbb{Z}_{+}.
\end{equation}
Since the sequence $\{r_2^m\}_{m=1}^{\infty}$ is bounded (lies in the interval $[0,1]$), there exists a converging subsequence $\{r_2^{m_k}\}_{k=1}^{\infty}\subseteq\{r_2^m\}_{m=1}^{\infty}$, and a number $s\in\mathbb{Z}_{+}$ such that
\begin{equation*}
\left|r_2^{m_{k+1}}-r_2^{m_{k}}\right|<\frac{\varepsilon}{4L},\ \forall k \geq s,
\end{equation*}
where $L>0$ is the Lipschitz modulus of the utility functions defined as in \eqref{eq:U_c}. 
Let 
\bgeq 
I_1^m(r_2):=\min\limits_{u\in\mathcal{U}_{m-1}}u(r_2)
\quad \text{and} \quad  I_2^m(r_2):=\max\limits_{u\in\mathcal{U}_{m-1}}u(r_2),\ \forall m \in\mathbb{Z}_{+}.
\edeq
Then $D^m(r_2)=I_2^m(r_2)-I_1^m(r_2)$, $\forall r_2\in[0, 1]$, for $m \in\mathbb{Z}_{+}$. 
We consider two cases of each adjacent pair in the converging subsequence $\{r_2^{m_k}\}_{k=1}^{\infty}\subseteq\{r_2^m\}_{m=1}^{\infty}$.

Case 1: $r_2^{m_{k+1}}\geq r_2^{m_{k}}$. By the compactness of $\mathcal{U}_{m_{k+1}-1}$, we can find $\hat{u}\in\mathop{\arg\min}\limits_{u\in\mathcal{U}_{m_{k+1}-1}}u(r_2^{m_{k+1}})$ such that $\hat{u}(r_2^{m_{k+1}})=I_1^{m_{k+1}}(r_2^{m_{k+1}})$.
By the monotonicity of $\hat{u}\in\mathcal{U}_{m_{k+1}-1}$, we have
\begin{equation}\label{eq-proof-le1-mon}
I_1^{m_{k+1}}(r_2^{m_{k+1}})=\hat{u}(r_2^{m_{k+1}}) \geq \hat{u}(r_2^{m_{k}}) \geq \min\limits_{u\in\mathcal{U}_{m_{k+1}-1}}u(r_2^{m_{k}}) = I_1^{m_{k+1}}(r_2^{m_{k}}). 
\end{equation}
Moreover, we can find $\tilde{u}\in\mathop{\arg\max}\limits_{u\in\mathcal{U}_{m_{k+1}-1}}u(r_2^{m_{k+1}})$ such that $\tilde{u}(r_2^{m_{k+1}})=I_2^{m_{k+1}}(r_2^{m_{k+1}})$.
By the Lipschitz continuity of $\tilde{u}$, $\tilde{u}(r_2^{m_{k+1}}) - \tilde{u}(r_2^{m_{k}}) \leq L (r_2^{m_{k+1}}- r_2^{m_{k}})$. 
On the other hand, by the definition of $I_2^{m_{k+1}}(\cdot)$, 
$I_2^{m_{k+1}}(r_2^{m_{k}})=\max\limits_{u\in\mathcal{U}_{m_{k+1}-1}} u(r_2^{m_{k}}) \geq \tilde{u}(r_2^{m_{k}})$. 
Thus
\begin{equation}\label{eq-proof-le1-lip}
I_2^{m_{k+1}}(r_2^{m_{k+1}}) - I_2^{m_{k+1}}(r_2^{m_{k}}) =  \tilde{u}(r_2^{m_{k+1}}) - \tilde{u}(r_2^{m_{k}}) + \tilde{u}(r_2^{m_{k}})  - I_2^{m_{k+1}}(r_2^{m_{k}}) \leq L (r_2^{m_{k+1}}- r_2^{m_{k}}).
\end{equation}
Combining \eqref{eq-proof-le1-mon} and \eqref{eq-proof-le1-lip}, we obtain
\begin{align*}
 D^{m_{k+1}}(r_2^{m_{k+1}}) & = I_2^{m_{k+1}}(r_2^{m_{k+1}}) -I_1^{m_{k+1}}(r_2^{m_{k+1}}) \\
&\leq I_2^{m_{k+1}}(r_2^{m_{k}})+L(r_2^{m_{k+1}}-r_2^{m_{k}})-I_1^{m_{k+1}}(r_2^{m_{k}}) = D^{m_{k+1}}(r_2^{m_{k}})+L(r_2^{m_{k+1}}-r_2^{m_{k}}). 
\end{align*}

Case 2: $r_2^{m_{k+1}}<r_2^{m_{k}}$. 
By the compactness of $\mathcal{U}_{m_{k+1}-1}$, and  monotonicity and Lipschitz continuity of every utility function in the set, we can derive as in Case 1 that 
\begin{equation*}
I_2^{m_{k+1}}(r_2^{m_{k+1}})\leq I_2^{m_{k+1}}(r_2^{m_{k}}), 
\quad {\rm and}\quad
I_1^{m_{k+1}}(r_2^{m_{k+1}})\geq I_1^{m_{k+1}}(r_2^{m_{k}})-L(r_2^{m_{k}}-r_2^{m_{k+1}}),
\end{equation*}
which give rise to
\begin{equation*}
D^{m_{k+1}}(r_2^{m_{k+1}})\leq I_2^{m_{k+1}}(r_2^{m_{k}})-\left[I_1^{m_{k+1}}(r_2^{m_{k}})-L(r_2^{m_{k}}-r_2^{m_{k+1}})\right] = D^{m_{k+1}}(r_2^{m_{k}})+L(r_2^{m_{k}}-r_2^{m_{k+1}}). 
\end{equation*}
Summarizing the discussions of the two cases, we conclude that
\begin{equation}\label{proof:eq-D-diff}
0\leq D^{m_{k+1}}(r_2^{m_{k+1}})-D^{m_{k+1}}(r_2^{m_{k}})\leq L\left|r_2^{m_{k+1}}-r_2^{m_{k}}\right|<\frac{\varepsilon}{4},\ \forall k\geq s.
\end{equation}

Under the maximum utility split scheme, we can set $p^m :=\frac{1}{2}\left[I_1^m(u(r^{m}_2) +I_1^m(u(r^{m}_2)\right]$ to halve the range of utility values $\{u(r^m_2)\mid u\in \mathcal{U}_{m-1}\}$ at point $r^{m}_2$. Combining this with (\ref{eq:U_m-MUS}), we have 
\begin{equation}\label{proof:eq-cut-half}
D^{m+1}(r_2^m)=\frac{1}{2}D^m(r_2^m),
\; {\rm for} \;m\in\mathbb{Z}_{+}.
\end{equation}
Combining \eqref{eq-mono-dm}, \eqref{proof:eq-D-diff} and \eqref{proof:eq-cut-half}, we have
\begin{align}
D^{m_{k+1}}(r_2^{m_{k+1}}) 
<\frac{\varepsilon}{4} + D^{m_{k+1}}\left(r_2^{m_{k}}\right)  
\leq \frac{\varepsilon}{4} + D^{m_{k}+1}\left(r_2^{m_{k}}\right)
=\frac{\varepsilon}{4}+\frac{1}{2}D^{m_{k}}\left(r_2^{m_{k}}\right), \label{eq-deduce}
\end{align}
where the second inequality follows from \eqref{eq-mono-dm} and the fact that $ m_{k+1} \geq m_{k}+1 $. 
Thus, we can deduce from \eqref{eq-deduce} that 
\begin{align*}
D^{m_{k+1}}(r_2^{m_{k+1}}) 
&<\frac{\varepsilon}{4}+\frac{1}{2}D^{m_{k}}\left(r_2^{m_{k}}\right)\nonumber\\
&< \frac{\varepsilon}{4}+\frac{1}{2}\left[\frac{\varepsilon}{4}+\frac{1}{2}D^{m_{k-1}}\left(r_2^{m_{k-1}}\right)\right]\nonumber\\
&=\frac{\varepsilon}{4}+\frac{\varepsilon}{4\times 2}+\frac{1}{2^2}D^{m_{k-1}}\left(r_2^{m_{k-1}}\right)\nonumber\\
&< \frac{\varepsilon}{4}+\frac{\varepsilon}{4\times 2}+\cdots+\frac{\varepsilon}{4\times 2^{k-s}}+\frac{1}{2^{k-s+1}}D^{m_s}\left(r_2^{m_s}\right)\nonumber\\
&< \frac{\varepsilon}{4}+\frac{\varepsilon}{4\times 2}+\cdots+\frac{\varepsilon}{4\times 2^{k-s}}+\frac{1}{2^{k-s+1}} \nonumber\\
&<(1-\frac{1}{2^{k-s+1}})\frac{\varepsilon}{2}+\frac{1}{2^{k-s+1}}=\frac{\varepsilon}{2} +(1-\frac{\varepsilon}{2})\frac{1}{2^{k-s+1}}, 
\end{align*}
for any $k\geq s$, which implies that
\begin{equation*}
D^{m_{k+1}}(r_2^{m_{k+1}})<\frac{\varepsilon}{2} +(1-\frac{\varepsilon}{2})\frac{1}{2^{k-s+1}}<\varepsilon,\ \forall k > \max\{\log_2{\frac{2-\varepsilon}{\varepsilon}}+s-1,\ s\}, \label{eq-contradiction}
\end{equation*}
a contradiction to \eqref{proof:eq-contradiction}.

\underline{Part (iv)}
We begin by summarizing some key results established in the proof of Part (iii). 
\begin{itemize}

\item 
$\{D^m(r_2^{m})\}$ monotonically decreases and converges to 0 with 
\begin{equation}\label{eq-mono-dm-1}
0\leq D^{m_1}(r_2)\leq D^{m_2}(r_2)< 1, 
\forall r_2\in[0,1],\ \text{for } m_1>m_2, 
\end{equation} 
\begin{equation}\label{eq-mono-dm-2}
0\leq D^{m_1}(r_2^{m_1})\leq D^{m_2}(r_2^{m_2})<1,\ \forall m_1>m_2.
\end{equation}

\item 
For any two integers $m_1$ and $m_2$, we have 
\begin{equation}
\label{proof:eq-D-diff-1}
0\leq D^{m_{1}}(r_2^{m_{1}})-D^{m_{1}}(r_2^{m_{2}})\leq L\left|r_2^{m_{1}}-r_2^{m_{2}}\right|.
\end{equation}

\item Under the MUS scheme, for any integer $m>0$, 
\begin{equation}
\label{proof:eq-cut-half-a}
D^{m+1}(r_2^m)=\frac{1}{2}D^m(r_2^m). 
\end{equation}

\end{itemize}

Next, we construct two sub-sequences for later use of proof.

\begin{statement}\label{state:cong-rate}
Let $\{r_2^m\}_{m=1}^{\infty}\subset [0,1]$ be a given sequence.
For any fixed positive numbers $L,\delta>0$ with $\delta/L<1$,
there exist two sub-sequences $\{r_2^{m^1_k}\}_{k=1}^{\infty},
\{r_2^{m^2_k}\}_{k=1}^{\infty}\subseteq\{r_2^m\}_{m=1}^{\infty}$ such that 
\begin{equation}
\label{proof:eq-r-convg-2}
m^1_{k}<m^2_{k}<m^1_{k+1}, 
\;
m^1_1\leq \lceil L/\delta\rceil+1,
\; 
m^1_{k+1}-m^1_{k}\leq 2\lceil L/\delta\rceil+1,
\ \forall k\in\mathbb{Z}_+
\end{equation}
and
\begin{equation}\label{proof:eq-r-convg}
\left|r_2^{m^1_{k}}-r_2^{m^2_{k}}\right|\leq\frac{\delta}{L},\ \forall k\in\mathbb{Z}_+.
\end{equation}
\end{statement}

\noindent {\bf Proof of the statement.} 
We begin by dividing the sequence $\{r_2^m\}_{m=1}^{\infty}$ into non-overlapping 
batches/groups of equal size $M:=(\lceil L/\delta\rceil+1)$ as follows: 
\begin{equation*}
{\cal R}^{k} :=\left\{r_2^{(k-1)M+1},r_2^{(k-1)M+2},\ldots,r_2^{kM}\right\},\ \forall k\in\mathbb{Z}_+,
\end{equation*}
which corresponds to a series of index sets $[1,2,\ldots,M]$, $[M+1,\ldots,2M]$, $\cdots$.
In other words, ${\cal R}^{1}$
is composed of the first $M$ numbers of sequence
$\{r_2^m\}_{m=1}^{\infty}$, and 
${\cal R}^{2}$
is composed of the next $M$ numbers of the sequence, etc. 
By the definition, we can see immediately: 
(a)  ${\cal R}^{k}\cap  {\cal R}^{j}=\emptyset$,
(b) $\left| {\cal R}^{k}\right|=M$, where $\left|{\cal S}\right|$ denotes the cardinality of the set ${\cal S}$,
(c) $\cup_{k=1}^\infty {\cal R}^{k} = \cup_{m=1}^\infty \left\{r_2^m\right\}$,
(d) for any two elements
$r_2^{i}, r_2^{j}\in 
{\cal R}^{k}$,
$|i - j|\leq M.$


Next, we claim that there exist $r_2^{i}$ and $r_2^{j}$ ($i\neq j$) from ${\cal R}^{k}$ 
such that 
$\left|r_2^{i}-r_2^{j}\right|\leq\frac{\delta}{ L}.
$
To see how such $i$ and $j$ 
may exist, we label the points in ${\cal R}^{k}$ in an increasing order of value of $r_2$ and denote them by 
$\{a_i\}_{i=1}^{M
}$ for the simplicity of notation, where $0\leq a_i\leq1$. The minimum distance between any two consecutive points must not exceed $\frac{1}{M-1
}$, because, otherwise, we would have 
$$
a_{M}=\sum_{i=1}^{M-1}(a_{i+1}-a_i)+a_1\geq\sum_{i=1}^{M-1}(a_{i+1}-a_i)>(M-1)\cdot\frac{1}{M-1}=1,
$$
a contradiction! 

Now, we label the first point in $\{i,j\}$ as $m_k^1$ and the second point as $m_k^2$. 
By repeating the process described as above for $k=1,2,\cdots$,
we generate 
two sub-sequences 
$\{r_2^{m^1_k}\}_{k=1}^{\infty},
\{r_2^{m^2_k}\}_{k=1}^{\infty}\subseteq\{r_2^m\}_{m=1}^{\infty}$.

Let $r_2^{m_k^1}, r_2^{m_k^2}
\in {\cal R}^{k}$,
and $r_2^{m_{k+1}^1},
r_2^{m_{k+1}^2}
\in {\cal R}^{k+1}$. 
By properties stated in (a)-(d),
we have
\begin{equation*}
m^1_{k+1}-m^1_{k}\leq 2M-1, 
\quad \text{and} \quad
m^1_k<m^2_k<m^1_{k+1}.
\end{equation*}
This completes the proof of Statement \ref{state:cong-rate}. 
\hfill $\Box$

By Statement \ref{state:cong-rate}, 
$$
m^1_k<m^1_k+1\leq m^2_k<m^1_{k+1}.
$$
Thus by the results established in the proof of Part (iii),
we have 
\begin{align}
D^{m^1_{k+1}}\Big(r_2^{m^1_{k+1}}\Big) 
& \leq D^{m^2_{k}}\Big(r_2^{m^{2}_{k}}\Big)  &\text{(by \eqref{eq-mono-dm-2})} \nonumber\\
&\leq\delta + D^{m^2_{k}}\Big(r_2^{m^{1}_{k}}\Big)    &\text{(by \eqref{proof:eq-D-diff-1} and \eqref{proof:eq-r-convg})}\nonumber\\
&\leq \delta + D^{m^1_{k}+1}\Big(r_2^{m^{1}_{k}}\Big)   &\text{(by \eqref{eq-mono-dm-1}} \nonumber\\
&=\delta+\frac{1}{2}D^{m^1_{k}}\Big(r_2^{m^1_{k}}\Big).  &\text{(by \eqref{proof:eq-cut-half-a})}  \label{eq-deduce-a}
\end{align}
By deducing from \eqref{eq-deduce-a}, we have 
\begin{align*}
D^{m^1_{k+1}}(r_2^{m^1_{k+1}}) 
&\leq\delta+\frac{1}{2}D^{m^1_{k}}\Big(r_2^{m^1_{k}}\Big)\nonumber\\
&\leq\delta+\frac{1}{2}\left[\delta+\frac{1}{2}D^{m^1_{k-1}}\Big(r_2^{m^1_{k-1}}\Big)\right] \nonumber\\
&=\delta+\frac{1}{2}\delta+\frac{1}{2^2}D^{m^1_{k-1}}\Big(r_2^{m^1_{k-1}}\Big)\nonumber\\
&\leq\delta+\frac{1}{2}\delta+\cdots+\frac{1}{2^{k-1}}\delta+\frac{1}{2^k}D^{m^1_{1}}\Big(r_2^{m^1_{1}}\Big)\nonumber\\
&\leq \Big(2-\frac{1}{2^{k-1}}\Big)\delta+\frac{1}{2^k}, 
\end{align*}
for $k\in\mathbb{Z}_+$. 
By \eqref{proof:eq-r-convg-2}, we can estimate that 
\begin{equation*}
m^1_{k+1}\leq (2k+1)\lceil L/\delta\rceil+k+1,\ \forall k\in\mathbb{Z}_+. 
\end{equation*}
Together with \eqref{eq-mono-dm-2}, we obtain
\begin{align*}
D^{(2k+1)\lceil L/\delta\rceil+k+1}\Big(r_2^{(2k+1)\lceil L/\delta\rceil+k+1}\Big) 
\leq D^{m^1_{k+1}}(r_2^{m^1_{k+1}}) 
&\leq \left(2-\frac{1}{2^{k-1}}\right)\delta+\frac{1}{2^k}. 
\end{align*}
For any ${\epsilon}>0$, let $\delta=\frac{\epsilon}{4}$ and $k=\lceil1-\log_2(\epsilon)\rceil$.
Then for any $s\geq(2k+1)\lceil L/\delta\rceil+k+1=(3\lceil 4L/\epsilon\rceil+2)-(2\lceil 4L/\epsilon\rceil+1)\lfloor\log_2(\epsilon)\rfloor$, 
we have 
\begin{align*}
D^{s}(r_2^{s}) \leq
D^{m^1_{k+1}}\Big(r_2^{m^1_{k+1}}\Big) 
&\leq \left(2-\frac{1}{2^{k-1}}\right)\delta+\frac{1}{2^k}< 2\delta+\frac{1}{2^k}\leq 2\cdot\frac{\epsilon}{4}+\frac{1}{2^{1-\log_2(\epsilon)}}
=\epsilon.
\end{align*}
The proof is complete.
\end{proof}

\section{Solution of the optimization problem \eqref{eq:MUS-r2-initial}}\label{sec-bounds}

A key step in the MUS scheme is to solve 
problem \eqref{eq:MUS-r2-initial} as follows: 
\begin{equation}\label{eq:mus-f(r_2)-initial}
 \mathop{\max}\limits_{r_2\in [0,1]} f(r_2):= \left[\max\limits_{u\in \mathcal{U}_{m}}u(r_2)-\min\limits_{u\in \mathcal{U}_{m}}u(r_2)\right]. 
\end{equation}
For each $r_2\in[0,1]$, 
\begin{equation}\label{eq:mus-upper-lower-def}
    \bar{f}(r_2)=\max\limits_{u\in \mathcal{U}_{m}}u(r_2),
    \;\text{ and }\;
    \underline{f}(r_2)=\min\limits_{u\in \mathcal{U}_{m}}u(r_2). 
\end{equation}
By the definition, the graphs of $\bar{f}$ and $ \underline{f}$ correspond to the upper and lower bounds of the graph of ${\cal U}_m$. 
Since every $u\in {\cal U}_m$ is concave, $\underline{f}$ is concave (Theorem \ref{thm:mus-lower-bound}). 
However, $\bar{f}$ is not necessarily concave, and $f(\cdot)$ is generally non-concave.
Moreover, the inner maximization and minimization problems are non-parametric 
with respect to $u\in\mathcal{U}_m$.
Although $\bar{f}$ is not globally concave over $[0,1]$, we will show in this section that $\bar{f}$ is piecewise locally concave within $N_m-1$ sub-intervals (Theorem \ref{thm:mus-upper-bound}) and this will enable us to derive a semi-closed form of $f(\cdot)$ (Theorem \ref{thm:mus-semi-f}) by solving at most $O(N_m)$ sub-problems (see \eqref{eq:calcu-max-alpha}--\eqref{eq:calcu-min-alpha} and \eqref{eq:calcu-max-beta}--\eqref{eq:calcu-min-beta}). 
Consequently, we will propose a highly efficient algorithm (Algorithm \ref{alg:mus-2}) to solve the non-concave, non-parametric min-max optimization problem \eqref{eq:mus-f(r_2)-initial}.

\subsection{Guidance on the main results and developments}\label{sec-MUS-guid}

To facilitate reading, we provide guidance on the main results and the developments leading to these results, before moving on to the detailed proofs. 
Below are the key setups leading to the main results/findings.

\begin{itemize}

\item 
Consider the MUS and the ambiguity set $\mathcal{U}_m$ defined as in \eqref{eq:U_m-MUS}. Let $\mathcal{S}_m :=\{0\}\cup \bigcup\limits_{k=1}^{m}\{r_2^k\}\cup\{1\}$
and $N_m:=|\mathcal{S}_m|\leq m+2$. 
By sorting the elements of $\mathcal{S}_m$ in an increasing order, we use 
\begin{equation}
\label{eq:Y_m}
\mathbb{Y}_m:= \{{y}^m_{j}\}_{j=1,\ldots,N_m}
\end{equation}
to denote the set of points with sorted increasing order such that for $k=1,\ldots,m$,
$y^m_{j_k}=r_2^k$, for some $j_k\in\{1,2,\ldots,N_m\}$, see the horizontal coordinates in Figure \ref{fig:mus-illustration}.
To ease the discussion, we call $\mathcal{S}_m$ and $\mathbb{Y}_m$ the set of elicited points.

\item 
Denote the maximum/minimum utility values attained by the utility functions in ${\cal U}_m$ at each $y^m_k$ in $\mathbb{Y}_m$:  
\begin{equation}\label{eq:mus-def-alpha-max}
\bar{\alpha}^m_{k}:=\max\limits_{u\in\mathcal{U}_m}u(y^m_{k}),\ \underline{\alpha}^m_k:=\min\limits_{u\in\mathcal{U}_m}u(y^m_k),\ 
\text{ for }\;
k=1,\ldots,N_m. 
\end{equation}
The normalization condition of $u$ in $\mathcal{U}_m$ ensures that $\bar{\alpha}^m_1=0$, $\underline{\alpha}^m_1=0$, $\bar{\alpha}^m_{N_m}=1$ and $\underline{\alpha}^m_{N_m}=1$, 
see the red points and purple points in Figure \ref{fig:mus-illustration}.

\item 
Define the subset of utility functions that attain the maximum value $\bar{\alpha}^m_k$ at $y^m_k$, i.e., 
\begin{equation}\label{eq:mus-def-U_m^k}
\mathcal{U}_m^k:=\{u\in\mathcal{U}_m\mid u(y^m_k)=\bar{\alpha}^m_k\}, \ k=1,\ldots,N_m.
\end{equation}
Define respectively the maximum right derivative and the minimum left derivative of the utility functions in $\mathcal{U}^k_m$ at point $y^m_k$:
\begin{equation}\label{eq:mus-def-beta-max-min}
\bar{\beta}^m_k:=\max\limits_{u\in\mathcal{U}^k_m} u^{'}_{+}(y^m_k),\ 
\underline{\beta}^m_{k+1}:=\min\limits_{u\in\mathcal{U}^{k+1}_m} u^{'}_{-}(y^m_{k+1}), \ k=1,\ldots,N_m-1.
\end{equation}

\end{itemize}

\begin{figure}[htbp]
    \centering
    \subfigure
[$\bar{\beta}^m_k=\underline{\beta}^m_{k+1}=\frac{\bar{\alpha}^m_{k+1}-\bar{\alpha}^m_k}{y^m_{k+1}-y^m_k}$]
    {
    \label{fig:one-piece}
    \includegraphics[width=0.45\linewidth]{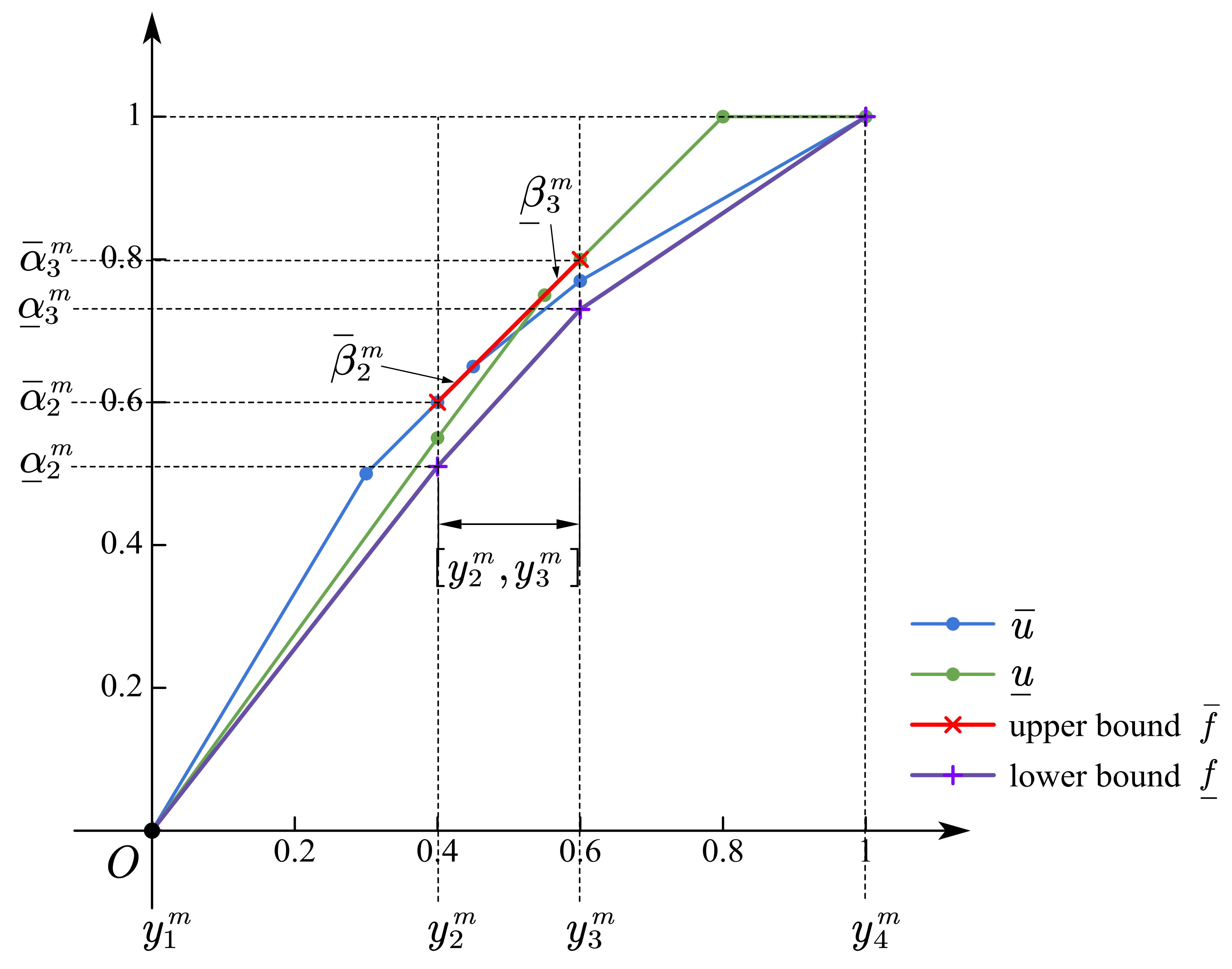}
    }
    \quad
    \subfigure
    [$\bar{\beta}^m_k>\frac{\bar{\alpha}^m_{k+1}-\bar{\alpha}^m_k}{y^m_{k+1}-y^m_k}>\underline{\beta}^m_{k+1}$]
    {
    \label{fig:two-pieces}
    \includegraphics[width=0.45\linewidth]{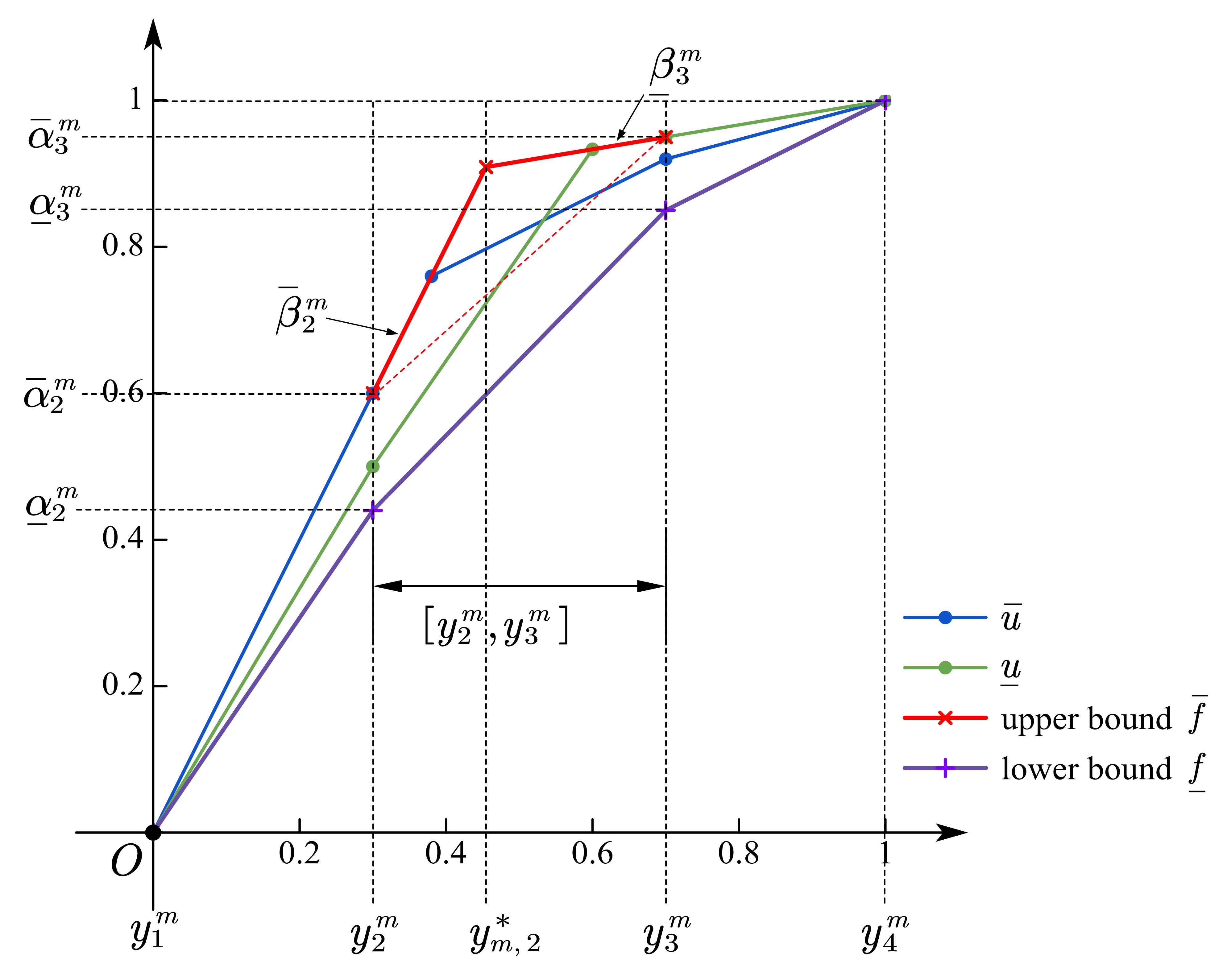}
    }
    \caption{Illustration of the graphs 
    of the lower bound function $\underline{f}$ and the upper bound function $\bar{f}$ over interval 
    $[y_2^m,y_3^m]$. $\bar{f}$ is either linear 
    or concave comprising two linear pieces over the interval depending on the relationship between $\bar{\beta}^m_k$ and $\underline{\beta}^m_{k+1}$. 
    The red segment is the graph of $\bar{f}$ restricted to interval $[y^m_2, y^m_3]$, which is constructed from $\bar{u}$ and $\underline{u}$.}
    \label{fig:mus-illustration}
\end{figure}

\noindent
\textbf{Main findings/results}

\begin{itemize}

\item By connecting the minimum utility values $\underline{\alpha}^m_k$ at elicited points $y_k^m$ for $k=1,\ldots,N_m$ (defined in \eqref{eq:mus-def-alpha-max}), we can obtain a piecewise linear utility function that is contained in $\mathcal{U}_m$ and is defined as in \eqref{eq:mus-min-utility},  which is exactly the lower bound $\underline{f}$ in \eqref{eq:mus-upper-lower-def}. This result will be stated in Theorem \ref{thm:mus-lower-bound}. The result is in alignment with the intuition that the minimum of all concave functions in the ambiguity set $\mathcal{U}_m$ is still concave. 
See the purple line in Figure \ref{fig:one-piece} and Figure \ref{fig:two-pieces}.

\item Unlike the lower bound function $\underline{f}$, there may not exist a concave utility function in $\mathcal{U}_m$ which coincides with $\bar{f}$ over $[0,1]$. 
However, Theorem \ref{thm:mus-upper-bound} states that $\bar{f}$ is piecewise concave with one or two linear pieces within each interval  $\left[y^m_{k},y^m_{k+1}\right]$, $k=1,\ldots,N_m-1$. 
The specific structure within each interval depends on the relationship between $\bar{\beta}^m_k$ and $\underline{\beta}^m_{k+1}$:
\begin{itemize}

\item 
If $\bar{\beta}^m_k=\underline{\beta}^m_{k+1}=\frac{\bar{\alpha}^m_{k+1}-\bar{\alpha}^m_k}{y^m_{k+1}-y^m_k}$, then by Lemma \ref{lemma:mus-beta-equal}, the two linear pieces coincide in $\left[y^m_{k},y^m_{k+1}\right]$. 
Consequently, $\bar{f}$ over $\left[y^m_{k},y^m_{k+1}\right]$ can be represented by a segment connecting the maximum utility values $\bar{\alpha}_k^m$ and $\bar{\alpha}_{k+1}^m$ at $y^m_k$ and $y^m_{k+1}$, see the red segment in Figure \ref{fig:one-piece}. 

\item 
If $\bar{\beta}^m_k>\frac{\bar{\alpha}^m_{k+1}-\bar{\alpha}^m_k}{y^m_{k+1}-y^m_k}>\underline{\beta}^m_{k+1}$, then Lemma \ref{lemma:mus-beta-inequal} guarantees that the two linear pieces intersect at a point $y^*_{m,k}\in \left(y^m_{k},y^m_{k+1}\right)$. 
In this case, $\bar{f}$ exhibits piecewise linear concavity over $[y^m_{k},y^m_{k+1}]$, see the red segment in Figure \ref{fig:two-pieces}. 

\end{itemize} 

\end{itemize}

To this end, we need an intermediate technical result on the property of $\mathcal{U}_m$, which relies on the particular structure of the lotteries as defined in \eqref{eq:W-Y}. 
It plays an important role in the proof of the main findings above.

\begin{lemma}\label{lemma:mus-u-U_m}

For any fixed $u\in\mathcal{U}_{cv}$, $u\in\mathcal{U}_m$ if and only if at each $y^m_{j}$, $j=1,\ldots,N_m$, there exist two utility functions $u^1_j,u^2_j\in\mathcal{U}_m$ (depending on $y^m_{j}$) such that 
\bgeqn 
\label{eq:u_1uu_2atr_2-lem4.1}
u^1_j(y^m_{j})\leq u(y^m_j)\leq u^2_j(y^m_{j}). 
\edeqn 
In a particular case, if there exists $\hat{u}\in {\cal U}_m$ such that $u(y_j^m)= \hat{u}(y_j^m)$ for $j=1,\cdots, N_m$, then $u\in {\cal U}_m$.

\end{lemma}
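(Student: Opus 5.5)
The proof rests on the structure of $\mathcal{U}_m$ in \eqref{eq:U_m-MUS}. Membership of a function $u\in\mathcal{U}_{cv}$ in $\mathcal{U}_m$ is decided only by the finitely many linear constraints $Z_k\, u(r_2^k)\geq Z_k\, p^k$, $k=1,\ldots,m$. Each constraint involves only the value of $u$ at an elicited point $r_2^k=y^m_{j_k}$. The normalization constraints at $y^m_1=0$ and $y^m_{N_m}=1$ are already built into $\mathcal{U}_{cv}$. So I will show that, for $u\in\mathcal{U}_{cv}$, the sandwich condition \eqref{eq:u_1uu_2atr_2-lem4.1} at every elicited point forces all $m$ pairwise comparison constraints to hold.

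The necessity direction is trivial. If $u\in\mathcal{U}_m$, take $u^1_j=u^2_j=u$ for every $j$; then \eqref{eq:u_1uu_2atr_2-lem4.1} holds with equality.

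For sufficiency, fix $k\in\{1,\ldots,m\}$ and let $j=j_k$, so that $y^m_j=r_2^k$. Since $u^1_j,u^2_j\in\mathcal{U}_m$, both satisfy the $k$-th constraint, which gives two cases.
\begin{itemize}
\item If $Z_k=1$, then $u(r_2^k)\geq u^1_j(r_2^k)\geq p^k$.
\item If $Z_k=-1$, then $u(r_2^k)\leq u^2_j(r_2^k)\leq p^k$.
\end{itemize}
In either case $Z_k\, u(r_2^k)\geq Z_k\, p^k$. Because $u\in\mathcal{U}_{cv}$ by hypothesis and $k$ was arbitrary, $u\in\mathcal{U}_m$.

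One minor subtlety remains: a point $y^m_j$ may coincide with several $r_2^k$ if the scheme repeats a point. This causes no difficulty, since the argument above is applied separately to each $k$ with the same pair $u^1_j,u^2_j$.

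The particular case is immediate from the general statement. If $\hat u\in\mathcal{U}_m$ agrees with $u$ at every $y^m_j$, take $u^1_j=u^2_j=\hat u$ for all $j$; then \eqref{eq:u_1uu_2atr_2-lem4.1} holds with equality, and the sufficiency direction gives $u\in\mathcal{U}_m$.

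I do not expect any real obstacle here. The only point requiring care is to stress that $u\in\mathcal{U}_{cv}$ is part of the hypothesis. The sandwich condition controls only the values at the elicited points, so concavity, monotonicity and the Lipschitz bound must come from elsewhere, and the ambiguity set has no other constraints on $u$.
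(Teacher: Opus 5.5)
Your proof is correct and matches the paper's argument essentially verbatim. Necessity uses the choice $u^1_j=u^2_j=u$, and sufficiency is a case split on $Z_k=\pm1$ that invokes the lower sandwich function $u^1_{j_k}$ when $Z_k=1$ and the upper one $u^2_{j_k}$ when $Z_k=-1$; your derivation of the particular case via $u^1_j=u^2_j=\hat u$ simply makes explicit a step the paper leaves implicit.
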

\begin{proof}
\underline{The if part}.
By definition (see \eqref{eq:U_m-MUS}), 
it suffices to show that
\begin{equation}
\label{eq:ambiguity-set-mus-lemma-pf}
Z_k\cdot u(r_2^k)\geq Z_k\cdot p^k,\; {\rm for}\;  k=1,\ldots,m.
\end{equation}
Let $\mathbb{Y}_m$ be defined as in \eqref{eq:Y_m} and $y^m_{j_k}\in \mathbb{Y}_m$ be such that $r_2^k=y^m_{j_k}$ for some $j_k\in\{1,2,\ldots,N_m\}$. 
\underline{Case 1}: $Z_k=1$.
Since $u^1_{j_k}\in\mathcal{U}_m$, $u^1_{j_k}(y^m_{j_k})=u^1_{j_k}(r_2^k)\geq p^k,\; {\rm for} \;
k=1,\ldots,m.$ 
By \eqref{eq:u_1uu_2atr_2-lem4.1}, 
i.e., $u^1_{j_k}(y^m_{{j_k}})\leq u(y^m_{j_k})\leq u^2_{j_k}(y^m_{{j_k}})$, we have 
\bgeqn 
\label{eq:ambiguity-set-mus-lemma-pf-c}
u(r_2^k)=u(y^m_{j_k})\geq u^1_{j_k}(y^m_{j_k})=u^1_{j_k}(r_2^k)\geq p^k. 
\edeqn 
\underline{Case 2}: $Z_k=-1$.
Since $u^2_{j_k}\in\mathcal{U}_m$, 
$u^2_{j_k}(y^m_{j_k})=u^2_{j_k}(r_2^k) \leq p^k\; {\rm for}\; k=1,\ldots,m.$
By \eqref{eq:u_1uu_2atr_2-lem4.1}, i.e., $u^1_{j_k}(y^m_{{j_k}})\leq u(y^m_{j_k})\leq u^2_{j_k}(y^m_{{j_k}})$, we have 
\bgeqn 
\label{eq:ambiguity-set-mus-lemma-pf-d}
u(r_2^k)=u(y^m_{j_k})\leq u^2_{j_k}(y^m_{j_k})=u^2_{j_k}(r_2^k) \leq p^k.
\edeqn 
A combination of \eqref{eq:ambiguity-set-mus-lemma-pf-c} and \eqref{eq:ambiguity-set-mus-lemma-pf-d}
gives rise to \eqref{eq:ambiguity-set-mus-lemma-pf}.

\underline{The only if part} can be observed by taking $u^1_j=u^2_j=u$, for all $j=1,\ldots,N_m$. 
\end{proof}

{Let $I^{max}_{m,j} := [\min_{u\in {\cal U}_m} u(y^m_{j}), \max_{u\in {\cal U}_m} u(y^m_{j})]$ and ${\cal U}_j^m :=
 \{u\in{\cal U}_{cv}:
 u(y^m_j)\in I^{max}_{m,j}\}$.
Then ${\cal U}_m = \cap_j {\cal U}_j^m$.
We may view $u\in {\cal U}_j^m$ as a ``feasibility check'' of whether $u\in {\cal U}_m$ at point $y_j^m$.
Let
$I_{m,j}:=[u^1_j(y^m_{j}), u^2_j(y^m_{j})]$. 
Since $I_{m,j}\subseteq I_{m,j}^{\max}$, then
$
\{u\in{\cal U}_{cv}: u(y^m_j)\in I_{m,j}\}\subseteq {\cal U}_j^m. 
$
The lemma states that $u\in {\cal U}_m$ if and only if 
it passes the ``feasibility check'' at each $y^m_{j}$, $j=1,\ldots,N_m$.
}


\subsection{Lower bound function $\underline{f}$}\label{sec:lower-bound-f}

We begin by deriving a semi-closed form of $\underline{f}$, corresponding to the first bullet point of the main findings.
The next theorem states this.

\begin{theorem}[Semi-closed form of $\underline{f}$]
\label{thm:mus-lower-bound}

Consider a piecewise linear function $\underline{u}_{N_m}$ whose graph is the connection of two consecutive points $(y^m_j,\underline{\alpha}^m_j)$ and $(y^m_{j+1},\underline{\alpha}^m_{j+1})$ with a line segment for $j=1,\ldots,N_m$, i.e., 
\begin{equation}\label{eq:mus-min-utility}
    \underline{u}_{N_m}(y)
    =\left\{ 
    \begin{aligned}
    &0 && {\rm for} \; y=0,\\
    &\frac{\underline{\alpha}^m_{j+1}-\underline{\alpha}^m_{j}}{y^m_{j+1}-y^m_{j}}(y-y^m_{j})+\underline{\alpha}^m_{j} && {\rm for} \;y^m_{j} < y \leq y^m_{j+1},\ j=1,\ldots,N_m-1,\\
    & 1 && {\rm for} \;y=1.\\
    \end{aligned}
    \right.
\end{equation}
Then $\underline{u}_{N_m}\in \mathcal{U}_m$ and $\underline{f}=\underline{u}_{N_m}$, i.e., $\underline{f}(r_2)=\underline{u}_{N_m}(r_2)$ for all $r_2\in[0,1]$. 
\end{theorem}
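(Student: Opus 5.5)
The proof splits into two parts. First we show that $\underline{u}_{N_m}\in\mathcal{U}_m$. Then we show $\underline{f}=\underline{u}_{N_m}$ by a two-sided comparison: $\underline{f}\le \underline{u}_{N_m}$ follows from membership, and $\underline{f}\ge \underline{u}_{N_m}$ from concavity of each $u\in\mathcal{U}_m$.

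\textbf{Membership.} By Lemma~\ref{lemma:mus-u-U_m}, it suffices to check two things. First, $\underline{u}_{N_m}\in\mathcal{U}_{cv}$. Second, at every node $y^m_j$ there is some $u^1_j,u^2_j\in\mathcal{U}_m$ bracketing $\underline{u}_{N_m}(y^m_j)=\underline{\alpha}^m_j$. The second point is immediate: by compactness (Theorem~\ref{thm:mus-converg}(i)) the minimum in \eqref{eq:mus-def-alpha-max} is attained by some $u_j\in\mathcal{U}_m$, and we take $u^1_j=u^2_j=u_j$.

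For $\underline{u}_{N_m}\in\mathcal{U}_{cv}$, normalization ($\underline{\alpha}^m_1=0$, $\underline{\alpha}^m_{N_m}=1$) and continuity are clear. The remaining properties all reduce to facts about the slopes $s_j:=(\underline{\alpha}^m_{j+1}-\underline{\alpha}^m_j)/(y^m_{j+1}-y^m_j)$: monotonicity needs $s_j\ge 0$, the Lipschitz bound needs $s_j\le L$, and concavity needs $s_j$ nonincreasing in $j$.

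\begin{itemize}
\item \emph{Monotonicity.} Let $u_{j+1}$ attain $\underline{\alpha}^m_{j+1}$. Then $\underline{\alpha}^m_{j+1}=u_{j+1}(y^m_{j+1})\ge u_{j+1}(y^m_j)\ge \underline{\alpha}^m_j$, so $s_j\ge0$.
\item \emph{Lipschitz bound.} Let $u_j$ attain $\underline{\alpha}^m_j$. Then $\underline{\alpha}^m_{j+1}\le u_j(y^m_{j+1})\le u_j(y^m_j)+L(y^m_{j+1}-y^m_j)$, so $s_j\le L$.
\item \emph{Concavity.} Fix three consecutive nodes $y^m_{j-1}<y^m_j<y^m_{j+1}$ and let $u_j$ attain the minimum at the middle node. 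Concavity of $u_j$ gives $\underline{\alpha}^m_j=u_j(y^m_j)\ge \lambda u_j(y^m_{j-1})+(1-\lambda)u_j(y^m_{j+1})\ge \lambda\underline{\alpha}^m_{j-1}+(1-\lambda)\underline{\alpha}^m_{j+1}$, with $\lambda$ the barycentric weight. This is exactly $s_{j-1}\ge s_j$.
\end{itemize}

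\textbf{Equality with $\underline{f}$.} Since $\underline{u}_{N_m}\in\mathcal{U}_m$, we get $\underline{f}(r_2)\le \underline{u}_{N_m}(r_2)$ for all $r_2$. Conversely, take any $u\in\mathcal{U}_m$ and $y\in[y^m_j,y^m_{j+1}]$. Concavity of $u$ and the definition of $\underline{\alpha}$ give
\[
u(y)\ge \lambda u(y^m_j)+(1-\lambda)u(y^m_{j+1})\ge \lambda\underline{\alpha}^m_j+(1-\lambda)\underline{\alpha}^m_{j+1}=\underline{u}_{N_m}(y).
\]
Taking the minimum over $u$ yields $\underline{f}\ge\underline{u}_{N_m}$.

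The only point needing care is the membership step. This is where Lemma~\ref{lemma:mus-u-U_m} is essential: it ensures the constraints defining $\mathcal{U}_m$ only test values at the nodes $y^m_j$, so a function built pointwise from different minimizers at different nodes is still feasible. Concavity of this glued function is the one check needing an argument, and the three-point inequality above, which uses the minimizer at the middle node, handles it.
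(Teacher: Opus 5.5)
Your proposal is correct and follows the paper's proof essentially step by step. You establish membership via Lemma~\ref{lemma:mus-u-U_m} with $u^1_j=u^2_j$ a minimizer at $y^m_j$, after checking monotonicity, the Lipschitz bound and concavity of the interpolant, and then use the two-sided comparison for $\underline{f}=\underline{u}_{N_m}$. The only difference is cosmetic: the paper first notes that $\underline{f}$ is concave as a pointwise minimum of concave functions and uses this twice, whereas you apply concavity of individual members of $\mathcal{U}_m$ directly. That is the same argument unrolled, and it makes the paper's one-line concavity claim for the interpolant explicit.
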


\begin{proof}
We proceed with the proof in two steps:
first $\underline{u}_{N_m}\in\mathcal{U}_m$, and then $\underline{f}=\underline{u}_{N_m}$. 

\noindent
\underline{Step 1}.
Since each $u\in {\cal U}_m$ is a concave utility function and the min operation preserves the concavity, $\underline{f}$ is concave over $[0,1]$. 
The concavity of $\underline{f}$ ensures concavity of $\underline{u}_{N_m}$ because the two functions coincide at breakpoints $y^m_j$. 
The min operation also preserves the monotonicity of $u$, which means that
$\underline{\alpha}^m_{j}\leq \underline{\alpha}^m_{j+1}$. This, in turn, ensures monotonicity of $\underline{u}_{N_m}$. 
Moreover, since 
\begin{align*}
|\underline{\alpha}^m_{k+1}-\underline{\alpha}^m_k|
&=
\left|\min\limits_{u\in\mathcal{U}_m}u(y^m_{k+1})-
\min\limits_{u\in\mathcal{U}_m}u(y^m_k)\right|\\
&\leq
\max\limits_{u\in\mathcal{U}_m}\left|u(y^m_{k+1})-
u(y^m_k)\right| \leq L|y^m_{k+1}-y^m_{k}|,
\end{align*}
$\underline{u}_{N_m}$ is Lipschitz continuous with modulus $L$. 
This shows $\underline{u}_{N_m}\in\mathcal{U}_{cv}$. 
Furthermore, let $u^1_j,u^2_j\in\arg\min_{u\in\mathcal{U}_m}u(y^m_j)$. 
Then $u^1_j(y^m_{j})=\underline{u}_{N_m}(y^m_{j})= u^2_j(y^m_{j})$, for $j=1,\ldots,N_m$.  
By Lemma \ref{lemma:mus-u-U_m}, $\underline{u}_{N_m}\in\mathcal{U}_m$.

\noindent
\underline{Step 2}.
Since $\underline{u}_{N_m}\in\mathcal{U}_m$, we have $\underline{f}(r_2)=\min\limits_{u\in\mathcal{U}_m}u(r_2)\leq \underline{u}_{N_m}(r_2)$, $\forall r_2\in[0,1]$. 
Thus it suffices to show that $\underline{u}_{N_m}(r_2)\leq \underline{f}(r_2)$.
For any $r_2\in[y^m_j, y^m_{j+1}]$, let $\lambda'\in[0,1]$ be such that $r_2=\lambda' y^m_j+(1-\lambda')y^m_{j+1}$. 
By the concavity of $\underline{f}$, 
\begin{align*}
\underline{f}(r_2)=\underline{f}(\lambda' y^m_j+(1-\lambda')y^m_{j+1})
&\geq \lambda'\underline{f}(y^m_j)+(1-\lambda')\underline{f}(y^m_{j+1})\\
&=\lambda' \underline{u}_{N_m}(y^m_j)+(1-\lambda')\underline{u}_{N_m}(y^m_{j+1})\\
&=\underline{u}_{N_m}(\lambda' y^m_j+(1-\lambda')y^m_{j+1})=\underline{u}_{N_m}(r_2). 
\end{align*}
The inequality holds for $j=1,\ldots,N_m-1$.
\end{proof}

From \eqref{eq:mus-min-utility}, we can see $\underline{f}$ is concave and piecewise linear. We say it is a semi-closed form because the values of $ \underline{\alpha}^m_k$ are implicitly defined. 

\subsection{Upper bound function $\overline{f}$}
\label{sec:upper-bound-f}

The specification of $\overline{f}$ relies on determining the maximum utility values $\bar{\alpha}^m_k$ in \eqref{eq:mus-def-alpha-max}, and the maximum right derivative $\bar{\beta}^m_k$ and minimum left derivative $\underline{\beta}^m_{k+1}$ in \eqref{eq:mus-def-beta-max-min}. 
There are inherent relationships among $\bar{\alpha}^m_k$, $\bar{\alpha}^m_{k+1}$, $\bar{\beta}^m_k$, and $\underline{\beta}^m_{k+1}$ for $k=1,\ldots,N_m-1$, which arise from the monotonicity, concavity, and Lipschitz continuity of utility functions in $\mathcal{U}_m$. 
Lemma \ref{lemma:mus-beta-inequal} establishes an important inequality that bounds the two derivatives from below and above based on the 
connection of adjacent points with maximum utility values. 
Lemma \ref{lemma:mus-beta-equal} further characterizes the case when equality holds.

\begin{lemma}
\label{lemma:mus-beta-inequal}

Let $\bar{\beta}^m_k$ and $\underline{\beta}^m_{k+1}$ be defined as in \eqref{eq:mus-def-beta-max-min}.
Then 
$$
\bar{\beta}^m_k
\geq\frac{\bar{\alpha}^m_{k+1}-\bar{\alpha}^m_{k}}{y^m_{k+1}-y^m_k}\geq \underline{\beta}^m_{k+1},
\; \text{ for }\;  k=1,\ldots,N_m-1.
$$

\end{lemma}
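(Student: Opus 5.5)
We prove the two inequalities separately, using concavity of the functions in $\mathcal{U}^k_m$ and $\mathcal{U}^{k+1}_m$ together with the maximality of $\bar{\alpha}^m_{k+1}$ and $\bar{\alpha}^m_k$. Throughout, $\mathcal{U}^k_m$ is nonempty, because $\mathcal{U}_m$ is compact by Theorem \ref{thm:mus-converg}(i) and the maximum in \eqref{eq:mus-def-alpha-max} is therefore attained. Also $y^m_k<y^m_{k+1}$, so the slope
\[
s_k:=\frac{\bar{\alpha}^m_{k+1}-\bar{\alpha}^m_{k}}{y^m_{k+1}-y^m_k}
\]
is well defined.

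\textbf{First inequality.} Fix any $u\in\mathcal{U}^k_m$, so that $u(y^m_k)=\bar{\alpha}^m_k$. Since $u$ is concave, its right derivative at $y^m_k$ dominates every forward chord slope from $y^m_k$. In particular,
\[
u'_+(y^m_k)\geq \frac{u(y^m_{k+1})-u(y^m_k)}{y^m_{k+1}-y^m_k}=\frac{u(y^m_{k+1})-\bar{\alpha}^m_k}{y^m_{k+1}-y^m_k}.
\]
This bound uses $u(y^m_{k+1})$ rather than $\bar{\alpha}^m_{k+1}$, and in general $u(y^m_{k+1})\le\bar{\alpha}^m_{k+1}$. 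So we cannot conclude from an arbitrary $u$. Instead we take $u_1\in\mathcal{U}^k_m$ and $u_2\in\mathcal{U}^{k+1}_m$ and form the convex combinations $u_\lambda=(1-\lambda)u_1+\lambda u_2$. The set $\mathcal{U}_m$ is convex (it is $\mathcal{U}_{cv}$ intersected with linear half-space constraints), so $u_\lambda\in\mathcal{U}_m$ and hence $u_\lambda(y^m_k)\le\bar{\alpha}^m_k$. By concavity of $u_2$, for $h\in(0,y^m_{k+1}-y^m_k]$,
\[
u_2(y^m_k+h)\ge u_2(y^m_k)+h\,\frac{\bar{\alpha}^m_{k+1}-u_2(y^m_k)}{y^m_{k+1}-y^m_k}.
\]
This gives $u_2$ a forward slope of at least $s_k$ whenever $u_2(y^m_k)=\bar{\alpha}^m_k$. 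When $u_2(y^m_k)<\bar{\alpha}^m_k$, we instead construct $\hat u:=\min\{u_1,\ \ell\}$, where $\ell$ is the line through $(y^m_{k+1},\bar{\alpha}^m_{k+1})$ with a suitable steep slope. A cleaner choice is $\hat u:=\max\{u_1,u_2\}$ restricted to the interval $[y^m_k,y^m_{k+1}]$, patched with concave extensions.

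\textbf{Main obstacle.} Producing a function in $\mathcal{U}^k_m$ with large right derivative requires a construction that stays in $\mathcal{U}_m$. Our plan is to use Lemma \ref{lemma:mus-u-U_m}, which reduces membership to checking values at the elicited points only. Define $\tilde u$ as the concave piecewise-linear interpolant of the points $(y^m_j,\tilde\alpha_j)$, where
\[
\tilde\alpha_j=\max\{u_1(y^m_j),u_2(y^m_j)\}\quad\text{for } j\le k,\ j\ge k+1 \text{ appropriately},
\]
more precisely $\tilde u:=$ the pointwise maximum of $u_1$ on $[0,y^m_k]$, the chord on $[y^m_k,y^m_{k+1}]$, and $u_2$ on $[y^m_{k+1},1]$. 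We then check four things:
\begin{itemize}
\item[(a)] $\tilde u$ is concave. This holds because the chord slope $s_k$ lies between $u_{1,-}'(y^m_k)$ and $u_{2,+}'(y^m_{k+1})$; otherwise the lemma's conclusion would hold trivially by the derivatives of $u_1$, $u_2$ themselves.
\item[(b)] $\tilde u$ is monotone and $L$-Lipschitz. This holds because $0\le s_k$ and $s_k\le L$, by the Lipschitz bound as in the proof of Theorem \ref{thm:mus-lower-bound}.
\item[(c)] $\tilde u(y^m_j)$ lies between the values of members of $\mathcal{U}_m$ at each elicited point. This holds because at each $y^m_j$ the value $\tilde u(y^m_j)$ equals $u_1(y^m_j)$ or $u_2(y^m_j)$, so Lemma \ref{lemma:mus-u-U_m} yields $\tilde u\in\mathcal{U}_m$.
\item[(d)] $\tilde u\in\mathcal{U}^k_m\cap\mathcal{U}^{k+1}_m$ and $\tilde u'_+(y^m_k)=s_k$.
\end{itemize}
Point (d) gives $\bar\beta^m_k\ge s_k$ directly.

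\textbf{Second inequality.} The same $\tilde u$ satisfies $\tilde u'_-(y^m_{k+1})=s_k$, so $\underline\beta^m_{k+1}\le s_k$. The delicate point, and the step we expect to be hardest, is (a): concavity at the junctions $y^m_k$ and $y^m_{k+1}$. This needs $u_{1,-}'(y^m_k)\ge s_k\ge u_{2,+}'(y^m_{k+1})$.

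We would prove the first of these by contradiction. Suppose $u_{1,-}'(y^m_k)<s_k$. Then the left tangent of $u_1$ at $y^m_k$ lies below the chord at $y^m_{k+1}$, which forces $u_2(y^m_k)\ge \bar\alpha^m_k$ by concavity of $u_2$ and comparison. If this inequality is strict, it contradicts maximality of $\bar\alpha^m_k$. If it is an equality, then $u_2\in\mathcal{U}^k_m$ and $u_2$ has forward chord slope $s_k$, which gives the bound directly.

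The condition at $y^m_{k+1}$ is handled symmetrically. If a clean contradiction fails, we fall back on the pointwise maximum $\max\{u_1,u_2\}$ over the elicited points, re-interpolated concavely and verified via Lemma \ref{lemma:mus-u-U_m}.
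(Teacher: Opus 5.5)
There is a genuine gap at step (a), and everything else in your plan rests on it. Your $\tilde u$ ($u_1$ on $[0,y^m_k]$, the chord on $[y^m_k,y^m_{k+1}]$, $u_2$ on $[y^m_{k+1},1]$) is concave only if $u'_{1,-}(y^m_k)\ge s_k\ge u'_{2,+}(y^m_{k+1})$. For arbitrary $u_1\in\mathcal{U}^k_m$ and $u_2\in\mathcal{U}^{k+1}_m$ the first inequality can fail.

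Your claim that $u'_{1,-}(y^m_k)<s_k$ ``forces'' $u_2(y^m_k)\ge\bar\alpha^m_k$ has no mechanism behind it, since $u_1$ and $u_2$ are unrelated functions, and it is false. Take elicited points $0,0.4,0.6,1$ with constraints $u(0.4)\le 0.5$, $u(0.6)\le 0.7$ and $L=10$; your argument never uses how the points were generated. Then $\bar\alpha^m_2=0.5$, $\bar\alpha^m_3=0.7$ and $s_2=1$. The function $u_1=\min\{2y,\tfrac56 y+\tfrac16\}$ lies in $\mathcal{U}^2_m$ with $u'_{1,\pm}(0.4)=\tfrac56<1$. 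The function $u_2=\min\{\tfrac76 y,\tfrac34 y+\tfrac14\}$ lies in $\mathcal{U}^3_m$ with $u_2(0.4)=\tfrac{7}{15}<\tfrac12$. So your $\tilde u$ has slope $\tfrac56$ followed by slope $1$ at $0.4$ and is not concave. Your escape ``otherwise the conclusion holds trivially by the derivatives of $u_1,u_2$'' also fails, because neither function witnesses $\bar\beta^m_2\ge 1$.

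The fallback of re-interpolating $\max\{u_1,u_2\}$ concavely at the elicited points does not help either. A concave envelope may lift values above both $u_1$ and $u_2$ at some elicited points, which removes the upper witnesses needed in Lemma~\ref{lemma:mus-u-U_m}. Nothing in that construction pins its slope at $y^m_k$ to $s_k$. The earlier attempts (convex combinations, $\min\{u_1,\ell\}$ with a ``suitable steep slope'') are abandoned before they do any work.

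The missing idea is to truncate by the chord line instead of splicing $u_1,u_2$ in unchanged. Let $\ell(y):=\bar\alpha^m_k+s_k(y-y^m_k)$, and define
\begin{itemize}
\item $v:=\min\{u_1,\ell\}$ on $[0,y^m_k]$,
\item $v:=\ell$ on $[y^m_k,y^m_{k+1}]$,
\item $v:=\min\{u_2,\ell\}$ on $[y^m_{k+1},1]$.
\end{itemize}
Then $v'_-(y^m_k)=\max\{u'_{1,-}(y^m_k),s_k\}\ge s_k$ and $v'_+(y^m_{k+1})=\min\{u'_{2,+}(y^m_{k+1}),s_k\}\le s_k$, so concavity holds at both junctions. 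Monotonicity and the bound $L$ follow because $s_k\ge 0$ (monotonicity of $u_1$) and $s_k$ is at most the chord slope of $u_2$ on $[y^m_k,y^m_{k+1}]$, which is at most $L$.

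The brackets also survive. By concavity and maximality, $u_2\le\ell$ on $[0,y^m_k]$, since $u_2(y^m_k)\le\bar\alpha^m_k$ and $u_2$ has slopes at least $s_k$ there. Likewise $u_1\le\ell$ on $[y^m_{k+1},1]$. Hence:
\begin{itemize}
\item $\min\{u_1,u_2\}(y^m_j)\le v(y^m_j)\le u_1(y^m_j)$ for $j\le k$;
\item $\min\{u_1,u_2\}(y^m_j)\le v(y^m_j)\le u_2(y^m_j)$ for $j\ge k+1$.
\end{itemize}
These brackets also give $v(0)=0$ and $v(1)=1$. Lemma~\ref{lemma:mus-u-U_m} then yields $v\in\mathcal{U}^k_m\cap\mathcal{U}^{k+1}_m$ with $v'_+(y^m_k)=v'_-(y^m_{k+1})=s_k$, which proves both inequalities at once.

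This is essentially the paper's construction in continuous form. The paper defines $\check\alpha_j$ recursively as $\min\{\bar u(y^m_j),\cdot\}$ leftward and $\min\{\tilde u(y^m_{j+1}),\cdot\}$ rightward, and checks $\tilde u(y^m_j)\le\check u(y^m_j)\le\bar u(y^m_j)$ case by case. It frames the argument as a contradiction with a maximizer $\bar u$, which requires Lemmas~\ref{lemma:ambiguity-set-compact} and~\ref{lemma-usc}; the direct version above avoids that attainment step.
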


\begin{lemma}\label{lemma:mus-beta-equal}
For $k=1,\ldots,N_m-1$, 
let $\bar{\beta}^m_k$ and $\underline{\beta}^m_{k+1}$ be defined as in \eqref{eq:mus-def-beta-max-min}. Then
\begin{equation*}
\bar{\beta}^m_k
=\frac{\bar{\alpha}^m_{k+1}-\bar{\alpha}^m_{k}}{y^m_{k+1}-y^m_k} 
\quad 
\Longleftrightarrow
\quad
\underline{\beta}^m_k
=\frac{\bar{\alpha}^m_{k+1}-\bar{\alpha}^m_{k}}{y^m_{k+1}-y^m_k}. 
\end{equation*}
\end{lemma}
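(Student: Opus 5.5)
Throughout, write $s_k:=\frac{\bar\alpha^m_{k+1}-\bar\alpha^m_k}{y^m_{k+1}-y^m_k}$. I read the right-hand side of the statement as $\underline{\beta}^m_{k+1}$, the quantity paired with $\bar\beta^m_k$ in \eqref{eq:mus-def-beta-max-min} and in Lemma \ref{lemma:mus-beta-inequal}. By Lemma \ref{lemma:mus-beta-inequal} we already have $\bar\beta^m_k\ge s_k\ge\underline\beta^m_{k+1}$, so each implication only needs the reverse inequality on the other side. I prove ``$\Rightarrow$'' by contraposition, and the converse follows by the mirror-image argument.

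So assume $\underline\beta^m_{k+1}<s_k$; I will show $\bar\beta^m_k>s_k$. Since $\mathcal{U}_m$ is compact (Theorem \ref{thm:mus-converg}(i)) and $\mathcal{U}^{k+1}_m$ is a closed subset, I pick $v\in\mathcal{U}^{k+1}_m$ with $v'_-(y^m_{k+1})=c<s_k$. If the minimum is not literally attained, a near-minimiser with $c<s_k$ is enough. I also pick any $u\in\mathcal{U}^k_m$. I then define a splice $w$:
\begin{itemize}
\item $w=u$ on $[0,y^m_k]$;
\item $w=v$ on $[y^m_{k+1},1]$;
\item on $[y^m_k,y^m_{k+1}]$, $w(y)=\min\{\bar\alpha^m_k+s'(y-y^m_k),\ \bar\alpha^m_{k+1}+c'(y-y^m_{k+1})\}$.
\end{itemize}
Here $c'\in(c,s_k)$ and $s'>s_k$ are chosen so that the two lines meet inside $(y^m_k,y^m_{k+1})$. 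This is possible because $c'<s_k<s'$.

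Next I verify that $w\in\mathcal{U}_m$. Continuity holds because $w(y^m_k)=\bar\alpha^m_k=u(y^m_k)$ and $w(y^m_{k+1})=\bar\alpha^m_{k+1}=v(y^m_{k+1})$. For concavity I check that the slopes are non-increasing across the two junctions. At $y^m_{k+1}$ I need $v'_+(y^m_{k+1})\le c'$, which holds since $v'_+\le v'_-=c<c'$. At $y^m_k$ I need $u'_-(y^m_k)\ge s'$. Monotonicity is immediate, and the Lipschitz bound needs $s'\le L$. Membership in $\mathcal{U}_m$ then follows from Lemma \ref{lemma:mus-u-U_m}: at each elicited point $y^m_j$, $w$ coincides with $u$ (for $j\le k$) or with $v$ (for $j\ge k+1$), both of which lie in $\mathcal{U}_m$. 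Once $w\in\mathcal{U}_m$, we get $w\in\mathcal{U}^k_m$ with $w'_+(y^m_k)=s'>s_k$, hence $\bar\beta^m_k>s_k$, which is the contraposition.

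The main obstacle is the junction condition at $y^m_k$, namely $u'_-(y^m_k)>s_k$ together with $s_k<L$ (so that some $s'\in(s_k,\min\{u'_-(y^m_k),L\}]$ exists).
\begin{itemize}
\item To get $s_k<L$: if $s_k=L$, then $L\ge\bar\beta^m_k\ge s_k=L$ and $s_k\ge\underline\beta^m_{k+1}$, so every $u\in\mathcal{U}^{k+1}_m$ would be forced to have slope $L$ on $[y^m_k,y^m_{k+1}]$. This would give $\underline\beta^m_{k+1}=s_k$, a contradiction.
\item To get $u'_-(y^m_k)>s_k$, I would first try to choose $u\in\mathcal{U}^k_m$ maximising the left derivative at $y^m_k$.
\item If even that equals $s_k$, concavity forces $u$ to have slope exactly $s_k$ near $y^m_k$ on both sides. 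In that case I would instead tilt the middle piece while staying in $\mathcal{U}_m$, for instance by taking the convex combination $\tfrac12(w_0+v)$ of the linear splice $w_0$ (the splice with middle piece the chord of slope $s_k$) and $v$, and then re-examine the derivatives.
\end{itemize}
This case analysis is where I expect the real work to lie. The converse implication is symmetric: splice at $y^m_{k+1}$ using an element of $\mathcal{U}^k_m$ with right derivative above $s_k$, and lower the left slope at $y^m_{k+1}$.
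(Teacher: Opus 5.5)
Your contrapositive set-up, the middle piece, the junction at $y^m_{k+1}$ and the appeal to Lemma~\ref{lemma:mus-u-U_m} are all fine. The step you flagged, however, is a genuine gap rather than routine case analysis.

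Keeping some $u\in\mathcal{U}^k_m$ unchanged on $[0,y^m_k]$ requires $u'_-(y^m_k)\ge s'>s_k$, and nothing you have guarantees such a $u$ exists. If every $u\in\mathcal{U}^k_m$ had $u'_-(y^m_k)=s_k$, then $u'_+(y^m_k)\le s_k$ for all of them, so $\bar\beta^m_k=s_k$. That is exactly what the contrapositive must exclude, so assuming a steep $u$ exists is circular.

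Your fallback does not repair this. $\tfrac12(w_0+v)$ takes the value $\tfrac12\bigl(\bar\alpha^m_k+v(y^m_k)\bigr)$ at $y^m_k$, which is strictly below $\bar\alpha^m_k$ unless $v(y^m_k)=\bar\alpha^m_k$. So it does not lie in $\mathcal{U}^k_m$ and says nothing about $\bar\beta^m_k$. And when $v(y^m_k)=\bar\alpha^m_k$, $v$ itself already works.

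The missing idea is that Lemma~\ref{lemma:mus-u-U_m} only asks that the values at the elicited points be sandwiched between two members of $\mathcal{U}_m$, not that they coincide with one; you used only the equality case. This lets you bend the left piece instead of hoping for a steep $u$: on $[0,y^m_k]$ use $\min\{u(y),\ \bar\alpha^m_k+s'(y-y^m_k)\}$.
\begin{itemize}
\item This function is concave and nondecreasing, and has left derivative at least $s'$ at $y^m_k$.
\item At each $y^m_j$ with $j<k$ its value is at most $u(y^m_j)$. Where $u$ is active, take $u$ as both sandwich functions.
\item Where the line is active, take $v$ as the lower sandwich function. Concavity of $v$ and $v(y^m_k)\le\bar\alpha^m_k$ give $v(y^m_j)\le \bar\alpha^m_k-v'_-(y^m_k)(y^m_k-y^m_j)\le\bar\alpha^m_k-s'(y^m_k-y^m_j)$, as long as $s'\le v'_-(y^m_k)$.
\end{itemize}

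The room $s_k<s'\le v'_+(y^m_k)$ exists because $v'_+(y^m_k)>s_k$, which is Statement~\ref{stat-2} in the paper. The chord of $v$ over $[y^m_k,y^m_{k+1}]$ has slope at least $s_k$, since $v(y^m_{k+1})=\bar\alpha^m_{k+1}$ and $v(y^m_k)\le\bar\alpha^m_k$. If $v'_+(y^m_k)=s_k$, $v$ would be linear with slope $s_k$ there, contradicting $v'_-(y^m_{k+1})<s_k$.

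This is the paper's construction. The middle piece is the minimum of the lines through $(y^m_k,\bar\alpha^m_k)$ and $(y^m_{k+1},\bar\alpha^m_{k+1})$ with slopes $v'_+(y^m_k)$ and $v'_-(y^m_{k+1})$. The left part is the piecewise-linear truncation given by the recursion \eqref{eq:lemma3-check-left}, sandwiched between $v$ and $\bar u$.

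With $s'\le v'_+(y^m_k)\le L$ the Lipschitz bound is automatic. Normalization at $0$ follows from $0=v(0)\le\bar\alpha^m_k-s'y^m_k$. So your separate argument for $s_k<L$ becomes unnecessary.

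The converse has the mirror-image gap at $y^m_{k+1}$: the right piece must have right derivative at most the new slope $c''<s_k$. Close it the same way. Truncate on $[y^m_{k+1},1]$ by a line through $(y^m_{k+1},\bar\alpha^m_{k+1})$ with slope in $[u'_-(y^m_{k+1}),s_k)$, where $u\in\mathcal{U}^k_m$ has $u'_+(y^m_k)>s_k$ (so $u'_-(y^m_{k+1})<s_k$). Use this $u$ as the lower sandwich function wherever the line is active.
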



{With the two lemmas,}
we are
ready to derive a semi-closed form of $\bar{f}$.
The key point is that, for a fixed interval $\left[y^m_{k},y^m_{k+1}\right]$, $k=1,\ldots,N_m-1$, we use the relationships among $\bar{\alpha}^m_k$, $\bar{\alpha}^m_{k+1}$, $\bar{\beta}^m_k$, and $\underline{\beta}^m_{k+1}$ in Lemmas \ref{lemma:mus-beta-inequal} and \ref{lemma:mus-beta-equal}, to construct a piecewise linear utility function $\hat{u}\in\mathcal{U}_m$, which coincides with $\bar{f}$ in this fixed interval, but not necessarily in the other parts of $[0,1]$ ($\hat{u}$ and $\bar{f}$ may differ over the other areas of the interval $[0,1]$). 
By concatenating all the $\hat{u}$ with one or two pieces within each interval, we derive the upper bound function $\bar{f}$ with at most $2(N_m-1)$ pieces.

\begin{theorem}[Semi-closed form of $\bar{f}$]
\label{thm:mus-upper-bound}
Let $\bar{f}(r_2)$ be defined as in (\ref{eq:mus-upper-lower-def}).
Then $\bar{f}(r_2)$ is piecewise linear and 
concave over $[y^m_k,y^m_{k+1}]$ with the following structure for $k=1,\ldots,N_m-1$:

\begin{itemize}
    
\item[(i)] if  $\bar{\beta}^m_k=\underline{\beta}^m_{k+1}$, then  
\bgeqn 
\label{eq:upper-f-one-piece}
\bar{f}(y)=
\bar{\alpha}^m_k+\frac{\bar{\alpha}^m_{k+1}-\bar{\alpha}^m_k}{y^m_{k+1}-y^m_{k}}(y-y^m_k)
\quad
\text{\rm for}\;\;  y_k^m\leq y \leq y^m_{k+1}. 
\edeqn

\item[(ii)] if $\bar{\beta}^m_k>\underline{\beta}^m_{k+1}$, then 
\begin{equation}
\label{eq:upper-f-two-piece}
\bar{f}(y)=\left\{ 
\begin{aligned}
& \bar{\alpha}^m_k+\bar{\beta}^m_k(y-y^m_k) 
&& \text{\rm for}\;\;  y_k^m\leq y \leq y^*_{m,k},\\
&\bar{\alpha}^m_{k+1}+\underline{\beta}^m_{k+1}(y-y_{k+1}^m) 
&& \text{\rm for} \;\; y^*_{m,k}< y \leq y_{k+1}^m, 
\end{aligned}
\right.
\end{equation}
where
\bgeqn 
\label{eq:y^*_mk}
y^*_{m,k}=\frac{\bar{\alpha}^m_{k+1}-\bar{\alpha}^m_k-\underline{\beta}^m_{k+1}y^m_{k+1}+\bar{\beta}^m_ky^m_k}{\bar{\beta}^m_k-\underline{\beta}^m_{k+1}}\in (y^m_k,y^m_{k+1}).
\edeqn 

\end{itemize}

\end{theorem}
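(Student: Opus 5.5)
Fix an interval $[y^m_k,y^m_{k+1}]$ and let $\ell_k(y):=\bar{\alpha}^m_k+\bar{\beta}^m_k(y-y^m_k)$ and $\ell_{k+1}(y):=\bar{\alpha}^m_{k+1}+\underline{\beta}^m_{k+1}(y-y^m_{k+1})$. The plan is to show $\bar f\le \min\{\ell_k,\ell_{k+1}\}$ on the interval (upper bound) and then to exhibit, for each $y$ in the interval, a function in $\mathcal{U}_m$ attaining this value (lower bound). Lemmas \ref{lemma:mus-beta-inequal} and \ref{lemma:mus-beta-equal} identify $\min\{\ell_k,\ell_{k+1}\}$ with the right-hand sides of \eqref{eq:upper-f-one-piece}--\eqref{eq:upper-f-two-piece}. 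In case (i) the two lemmas force both slopes to equal the chord slope, so $\ell_k=\ell_{k+1}$ is the chord. In case (ii), Lemma \ref{lemma:mus-beta-inequal} gives $\bar\beta^m_k>\text{chord}>\underline\beta^m_{k+1}$, since equality on either side would force equality on the other by Lemma \ref{lemma:mus-beta-equal}. Then $\ell_k-\ell_{k+1}$ is negative at $y^m_k$ and positive at $y^m_{k+1}$, so its root $y^*_{m,k}$ from \eqref{eq:y^*_mk} lies strictly inside, and $\min\{\ell_k,\ell_{k+1}\}$ is the two-piece concave function.

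\textbf{Upper bound.} Take any $u\in\mathcal{U}_m$ and $y\in[y^m_k,y^m_{k+1}]$. I would first show $u(y)\le\ell_k(y)$. If $u(y^m_k)=\bar\alpha^m_k$, then $u\in\mathcal{U}^k_m$, and concavity gives $u(y)\le u(y^m_k)+u'_+(y^m_k)(y-y^m_k)\le \ell_k(y)$. In general $u(y^m_k)<\bar\alpha^m_k$, and this is the main obstacle: $u$ is not in $\mathcal{U}^k_m$, so $u'_+(y^m_k)$ is not controlled by $\bar\beta^m_k$. My first attempt is to modify $u$ by setting $\tilde u:=\max\{u,\hat u\}$ with $\hat u\in\mathcal{U}^k_m$, or better the pointwise minimum of $u$ and a suitable concave majorant, to produce $\tilde u\in\mathcal{U}^k_m$ with $\tilde u\ge u$ on $[y^m_k,y]$. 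Feasibility of $\tilde u$ would be checked via Lemma \ref{lemma:mus-u-U_m}: at every elicited point its value must lie between values of members of $\mathcal{U}_m$. A cleaner route is convexity of $\mathcal{U}_m$. Take $\hat u\in\mathcal{U}^k_m$ attaining $\bar\beta^m_k$, which exists by compactness (Theorem \ref{thm:mus-converg}(i)), and suppose for contradiction that $u(y)>\ell_k(y)$. Then the convex combinations $u_\lambda=(1-\lambda)\hat u+\lambda u\in\mathcal{U}_m$ can be compared with the line through $(y^m_k,\bar\alpha^m_k)$. Using that $\hat u(y^m_k)$ is maximal, I would derive a function in $\mathcal{U}^k_m$, built by taking the concave envelope of the point $(y^m_k,\bar\alpha^m_k)$ together with $u$ on $[y,1]$ and of $\hat u$ elsewhere, whose right derivative at $y^m_k$ exceeds $\bar\beta^m_k$. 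Lemma \ref{lemma:mus-u-U_m} certifies membership, because at elicited points the new function lies between $\hat u$ and $u$ or equals one of them. The bound $u(y)\le\ell_{k+1}(y)$ follows symmetrically using left derivatives at $y^m_{k+1}$ and $\underline\beta^m_{k+1}$.

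\textbf{Attainment.} For $y\le y^*_{m,k}$ (or the whole interval in case (i)), I would take $\hat u\in\mathcal{U}^k_m$ with $\hat u'_+(y^m_k)=\bar\beta^m_k$, and $\check u\in\mathcal{U}^{k+1}_m$ with $\check u'_-(y^m_{k+1})=\underline\beta^m_{k+1}$. I would then define $v$ as $\hat u$ on $[0,y^m_k]$, as $\min\{\ell_k,\ell_{k+1}\}$ on $[y^m_k,y^m_{k+1}]$, and as $\check u$ on $[y^m_{k+1},1]$. Next I verify $v\in\mathcal{U}_{cv}$. Monotonicity and the Lipschitz bound come from the slopes lying in $[0,L]$. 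Concavity at the junctions needs $\hat u'_-(y^m_k)\ge\bar\beta^m_k$ and $\underline\beta^m_{k+1}\ge \check u'_+(y^m_{k+1})$, both of which follow from the concavity of $\hat u$ and $\check u$; the interior kink is concave since $\bar\beta^m_k\ge\underline\beta^m_{k+1}$. Finally, Lemma \ref{lemma:mus-u-U_m} gives $v\in\mathcal{U}_m$. At elicited points left of $y^m_k$, $v=\hat u$; at those right of $y^m_{k+1}$, $v=\check u$. At $y^m_k$ and $y^m_{k+1}$ the values are $\bar\alpha^m_k,\bar\alpha^m_{k+1}$, which are attained by $\hat u$ and $\check u$ respectively. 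There are no elicited points strictly inside the interval. Hence $\bar f\ge v=\min\{\ell_k,\ell_{k+1}\}$ on the interval, which together with the upper bound proves the theorem.
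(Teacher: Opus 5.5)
Your reduction to $\min\{\ell_k,\ell_{k+1}\}$ via Lemmas \ref{lemma:mus-beta-inequal}--\ref{lemma:mus-beta-equal} is correct. Your attainment function $v$ is exactly the paper's $\hat u$ in \eqref{eq:mus-max-utility}, with the same junction checks; note that attaining $\bar\beta^m_k$ and $\underline\beta^m_{k+1}$ also needs the semicontinuity of Lemma \ref{lemma-usc}, not just compactness. Your upper-bound strategy, contradicting the maximality of $\bar\beta^m_k$ with a steeper member of $\mathcal{U}^k_m$, is also the paper's. The gap is that the function you settle on is not concave.

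If $u(y)>\ell_k(y)$, put $s:=\frac{u(y)-\bar\alpha^m_k}{y-y^m_k}>\bar\beta^m_k$; your envelope slope is even larger. Nothing forces $\hat u'_-(y^m_k)\ge s$.
\begin{itemize}
\item If $\hat u$ is differentiable at $y^m_k$, then $\hat u'_-(y^m_k)=\bar\beta^m_k<s$. Keeping $\hat u$ on $[0,y^m_k]$ then creates a convex kink at $y^m_k$.
\item If instead the envelope is also taken over the graph of $\hat u$, it passes strictly above $(y^m_k,\bar\alpha^m_k)$. The function then leaves $\mathcal{U}^k_m$ (indeed $\mathcal{U}_m$) and exceeds $\hat u$ at elicited points left of $y^m_k$. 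So your claim that its values lie ``between $\hat u$ and $u$'' fails for a majorant.
\item The convex-combination route is also a dead end, since $u_\lambda(y^m_k)<\bar\alpha^m_k$ for every $\lambda>0$.
\end{itemize}

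The missing idea is to bend the left part \emph{down}. On $[0,y^m_k]$ use $\min\{\hat u(x),\,\bar\alpha^m_k-s(y^m_k-x)\}$. The paper's recursion \eqref{eq:lemma3-check-left} is a piecewise-linear version of this, with its $\bar u$ in the role of your $\hat u$. This function is concave, increasing and $L$-Lipschitz, since $s\le\frac{u(y)-u(y^m_k)}{y-y^m_k}\le L$. Its left derivative at $y^m_k$ is at least $s$. Two facts make it admissible, both following from concavity of $u$ and $u(y^m_k)\le\bar\alpha^m_k$:
\begin{itemize}
\item $u(y^m_j)\le\bar\alpha^m_k-s(y^m_k-y^m_j)$ for $y^m_j<y^m_k$. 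So $u$ is the lower witness and $\hat u$ the upper witness in Lemma \ref{lemma:mus-u-U_m}; this is the content of Cases 1--3 in the proof of Lemma \ref{lemma:mus-beta-inequal}.
\item $s\,y^m_k\le u(y^m_k)\le\bar\alpha^m_k$, which keeps the value $0$ at the origin.
\end{itemize}

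On the right, the paper's chord from $(y,u(y))$ to $(y^m_{k+1},u(y^m_{k+1}))$ is simplest. Your envelope also works, but you must check that it agrees with $u$ at elicited points $\ge y^m_{k+1}$. This holds because $g(t)=\frac{u(t)-\bar\alpha^m_k}{t-y^m_k}$ is unimodal and $g(y)=s>\bar\beta^m_k\ge g(y^m_{k+1})$. The last inequality uses Lemma \ref{lemma:mus-beta-inequal} together with $u(y^m_{k+1})\le\bar\alpha^m_{k+1}$.

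The mirrored bound $u\le\ell_{k+1}$ needs the same repair. Flatten the part to the right of $y^m_{k+1}$ by taking a minimum with the line of the new, smaller slope; normalization at $1$ again follows from concavity of $u$.
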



From \eqref{eq:upper-f-one-piece} and \eqref{eq:upper-f-two-piece}, we can see that $\bar{f}$ is piecewise linear and piecewise concave over the interval $[0,1]$. 
However, it is not necessarily global concave and thus does not necessarily belong to $\mathcal{U}_m$. 
We say it is a semi-closed form, as the values of $\bar{\alpha}^m_k$, $\underline{\beta}^m_k$, and $\bar{\beta}^m_k$ are all implicitly defined.

The proof of
Theorem \ref{thm:mus-upper-bound} 
involves 
Lemmas \ref{lemma:mus-beta-inequal} and \ref{lemma:mus-beta-equal}, 
whereas the proofs of the latter
require two additional intermediate technical results. 
To improve the readability, we 
put 
all of them in 
a separate section, Section \ref{sec-ec-proof-lemma2}.

\subsection{Semi-closed form of $f$}

With the semi-closed forms of $\bar{f}$ and $\underline{f}$ in the preceding sections, we are ready to state the semi-closed form of $f$ which follows from a combination of Theorems~\ref{thm:mus-lower-bound} and \ref{thm:mus-upper-bound}.

\begin{theorem}\label{thm:mus-semi-f}

Let $f(r_2)$ be defined as in (\ref{eq:mus-f(r_2)-initial}). Then $f(r_2)$ is piecewise linear and piecewise concave with at most two linear pieces in each interval $\left[y^m_{k},y^m_{k+1}\right]$, for $k=1,\ldots,N_m-1$. 
Moreover, 
\begin{equation}\label{eq:mus-semi-closed-form-f}
    f(r_2)=\left\{ 
    \begin{aligned}
    &\bar{\beta}^m_k(r_2-y_k^m)+\bar{\alpha}^m_k-\left[\frac{\underline{\alpha}^m_{k+1}-\underline{\alpha}^m_{k}}{y^m_{k+1}-y^m_{k}}(r_2-y^m_{k})+\underline{\alpha}^m_{k}\right] 
    && \text{\rm for }\; r_2\in [y_k^m,
    y^*_{m,k}],\\
    &\underline{\beta}^m_{k+1}(r_2-y_{k+1}^m)+\bar{\alpha}^m_{k+1}-\left[\frac{\underline{\alpha}^m_{k+1}-\underline{\alpha}^m_{k}}{y^m_{k+1}-y^m_{k}}(r_2-y^m_{k})+\underline{\alpha}^m_{k} \right]
    && \text{\rm for }\;  
    r_2\in (y^*_{m,k},
    y_{k+1}^m],
    \end{aligned}
    \right.
\end{equation}
and $k=1,\ldots,N_m-1$, where $y^*_{m,k}$ is defined by \eqref{eq:y^*_mk} if $\bar{\beta}^m_k>\underline{\beta}^m_{k+1}$, and is set trivially to $y^m_k$ if $\bar{\beta}^m_k=\underline{\beta}^m_{k+1}$. 

\end{theorem}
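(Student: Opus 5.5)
The plan is to obtain Theorem~\ref{thm:mus-semi-f} directly from the identity $f=\bar f-\underline f$ in \eqref{eq:mus-f(r_2)-initial}, working one interval $[y^m_k,y^m_{k+1}]$ at a time, $k=1,\ldots,N_m-1$, with the two semi-closed forms already established.

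\emph{Lower bound.} By Theorem~\ref{thm:mus-lower-bound}, $\underline f=\underline u_{N_m}$. On $[y^m_k,y^m_{k+1}]$ this function is the single affine map
\[
y\mapsto \frac{\underline\alpha^m_{k+1}-\underline\alpha^m_k}{y^m_{k+1}-y^m_k}(y-y^m_k)+\underline\alpha^m_k .
\]
At the left endpoint $y=y^m_k$ the formula \eqref{eq:mus-min-utility} is stated only for $y^m_k<y$. Continuity of $\underline u_{N_m}$ and the value $\underline\alpha^m_k$ there (including $\underline u_{N_m}(0)=0=\underline\alpha^m_1$) show that the affine expression also holds on the closed interval.

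\emph{Upper bound.} Theorem~\ref{thm:mus-upper-bound} describes $\bar f$ on the same interval. By Lemma~\ref{lemma:mus-beta-inequal}, $\bar\beta^m_k\ge\underline\beta^m_{k+1}$, so exactly one of the two cases of that theorem applies.

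\emph{Case $\bar\beta^m_k>\underline\beta^m_{k+1}$.} Subtract the affine expression for $\underline f$ from each branch of \eqref{eq:upper-f-two-piece}. This yields exactly the two lines of \eqref{eq:mus-semi-closed-form-f}, with breakpoint $y^*_{m,k}\in(y^m_k,y^m_{k+1})$ given by \eqref{eq:y^*_mk}.

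\emph{Case $\bar\beta^m_k=\underline\beta^m_{k+1}$.} Set $y^*_{m,k}=y^m_k$. The first line of \eqref{eq:mus-semi-closed-form-f} then concerns only the point $r_2=y^m_k$, where it gives $\bar\alpha^m_k-\underline\alpha^m_k=f(y^m_k)$, which is correct. The second line must reproduce \eqref{eq:upper-f-one-piece}. It suffices to show
\[
\underline\beta^m_{k+1}=\frac{\bar\alpha^m_{k+1}-\bar\alpha^m_k}{y^m_{k+1}-y^m_k}.
\]
This follows from Lemma~\ref{lemma:mus-beta-inequal}, since equality of its two outer terms forces equality with the middle term. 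With this slope, the line $\bar\alpha^m_{k+1}+\underline\beta^m_{k+1}(y-y^m_{k+1})$ is exactly the chord in \eqref{eq:upper-f-one-piece}. This is the only step that is not pure bookkeeping, and I expect it to be the main (though mild) obstacle: one must check that the degenerate convention $y^*_{m,k}=y^m_k$ is consistent, using the sandwich from Lemma~\ref{lemma:mus-beta-inequal} rather than Lemma~\ref{lemma:mus-beta-equal}.

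\emph{Structural claims.} On each subinterval $f$ is the difference of two affine functions, hence affine. So $f$ is piecewise linear with at most two pieces per interval.

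For concavity on $[y^m_k,y^m_{k+1}]$, note that $\bar f$ is concave there by Theorem~\ref{thm:mus-upper-bound} and $\underline f$ is affine there. A concave function minus an affine one is concave, so $f$ is concave on the interval.

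\emph{Consistency of the pieces.} At shared endpoints $y^m_{k+1}$, both adjacent formulas give $\bar\alpha^m_{k+1}-\underline\alpha^m_{k+1}$, so the pieces agree. Together with the two cases above, this assembles \eqref{eq:mus-semi-closed-form-f} on all of $[0,1]$ and completes the argument.
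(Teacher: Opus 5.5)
Your proposal is correct and follows the paper's route: the paper obtains Theorem~\ref{thm:mus-semi-f} simply by combining Theorems~\ref{thm:mus-lower-bound} and \ref{thm:mus-upper-bound}, subtracting the affine $\underline f$ from the one- or two-piece concave $\bar f$ on each interval $[y^m_k,y^m_{k+1}]$. Your explicit check of the degenerate case $\bar\beta^m_k=\underline\beta^m_{k+1}$ is a detail the paper leaves implicit: there you use the sandwich in Lemma~\ref{lemma:mus-beta-inequal} to show that $\underline\beta^m_{k+1}$ equals the chord slope, so the convention $y^*_{m,k}=y^m_k$ is consistent.
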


\begin{remark}\label{remark:thm4}

By Theorem \ref{thm:mus-semi-f}, we conclude that the optimal solution of the maximization problem 
\bgeq
\mathop{\max}\limits_{r_2\in [y^m_k, y^m_{k+1}]} f(r_2)=\mathop{\max}\limits_{r_2\in [y^m_k, y^m_{k+1}]}\left[\max\limits_{u\in \mathcal{U}_{m}}u(r_2)-\min\limits_{u\in \mathcal{U}_{m}}u(r_2)\right]
\edeq 
lies in the finite set $\{y^m_k,y^*_{m,k},y^m_{k+1}\}$ due to the piecewise linear structure of $f(r_2)$. 
Thus, in order to find an optimal solution to problem \eqref{eq:mus-f(r_2)-initial}, it suffices to compute $f(\cdot)$ at $y^m_k, y^*_{m,k}, y^m_{k+1}$ for $k=1,\ldots,N_m-1$, and then identify the one that yields the maximum value. 
We will develop the computational procedures in Section \ref{sec-3-6} based on this observation. 
\end{remark}

\subsection{Computing
$\bar{\alpha}^m_{k}$, $ \underline{\alpha}^m_k$, $\bar{\beta}^m_k$ and $\underline{\beta}^m_{k+1}$}\label{sec:semi-closed-form-f}

In the preceding sections, we have derived a semi-closed form of the non-concave objective function of problem \eqref{eq:mus-f(r_2)-initial}, which significantly facilitates the calculation of the optimal value and the optimal solution of problem \eqref{eq:mus-f(r_2)-initial}. 
However, we have yet to address how to compute $\bar{\alpha}^m_{k}$, $\underline{\alpha}^m_k$, $\bar{\beta}^m_k$, and $\underline{\beta}^m_{k+1}$, defined in \eqref{eq:mus-def-alpha-max} and \eqref{eq:mus-def-beta-max-min}. Each of these tasks corresponds to an infinite-dimensional functional optimization problem. 
In this section, we discuss these issues. 
By \eqref{eq:N-piecewise-utility} and Proposition \ref{prop:mus-max-min-beta}, we prove 
that the maximal (or minimal) utility values and the maximal right (or minimal left) derivatives at the elicited points $y_j^m$, $j=1,\ldots,N_m$, can be attained by some piecewise linear utility functions with breakpoints at $y_j^m$, $j=1,\ldots,N_m$. 
Before presenting the main results, we first introduce the concept of piecewise-linear lower approximation utility functions with pre-given breakpoints.

\begin{definition}[Piecewise-Linear Lower Approximation (PLA)]\label{def:PLU}

Given the ambiguity set $\mathcal{U}_m$ defined in \eqref{eq:U_m-MUS} and the set of elicited points $\mathbb{Y}_m=\{y^m_j\}_{j=1}^{N_m}$ defined in \eqref{eq:Y_m}, 
define a piecewise-linear lower approximation (PLA) operator $\underline{\mathcal{A}}: \mathcal{L}^1([0,1])\to\mathcal{L}^1([0,1])$ such that for each $u\in\mathcal{U}_m$, 
\begin{equation*}
    \underline{\mathcal{A}}u(y)=\left\{ 
    \begin{aligned}
    &0 && \text{\rm for}\quad  y=y^m_1=0,\\
    &\frac{u(y^m_{j+1})-u(y^m_j)}{y^m_{j+1}-y^m_{j}}(y-y^m_{j})+u(y^m_j) && \text{\rm for}\quad  y^m_{j} < y \leq y^m_{j+1},\ j=1,\ldots,N_m-1,\\
    & 1 &&\text{\rm for}\quad  y=y^m_{N_m}=1.
    \end{aligned}
    \right.
\end{equation*}
Let ${\cal U}_m^{\underline{\mathcal{A}}} =\{\underline{\mathcal{A}}u: u\in {\cal U}_m\}$ denote the set of all piecewise-linear lower approximation utility functions defined in this way. 

\end{definition}

By the definition, $\underline{\mathcal{A}} u$
coincides with $u$ over 
$\mathbb{Y}_m$, i.e., $\underline{\mathcal{A}} u(y^m_j) = u(y^m_j)$, $j=1,\ldots,N_m$. 
Thus, we may view $\underline{\mathcal{A}} u$ as a {\em piecewise linear approximation (PLA)} of $u$ and $\mathcal{U}_m^{\underline{\mathcal{A}}}$ as an approximation of ${\cal U}_m$. 
In \cite{guo2024utility}, the authors quantify the errors of PLA by virtue of Hoffman's lemma. 
In this paper, we do not require such quantification as we only use $\underline{\mathcal{A}} u$ to calculate maximum/minimum utility values of the utility functions in ${\cal U}_m$ and the derivatives at the specified breakpoints.
Note that $\underline{\mathcal{A}} u$ preserves monotonicity, concavity, and Lipschitz continuity of $u$. 
If we select $u^1_j=u^2_j=u$ for $j = 1,\dots,N_m$, 
then it follows from Lemma \ref{lemma:mus-u-U_m} that $\underline{\mathcal{A}} u\in\mathcal{U}_m$. 
Furthermore, we have the fact that 
\begin{align}\label{eq:PLU-u}
{u}(y)
&\geq(1-\frac{y-y_{j}^m}{y_{j+1}^m-y_{j}^m}) {u}(y_{j}^m) + \frac{y-y_{j}^m}{y_{j+1}^m-y_{j}^m} {u}(y_{j+1}^m)\nonumber\\
&=\frac{{u}(y_{j+1}^m)-{u}(y_{j}^m)}{y_{j+1}^m-y_{j}^m} (y-y^m_j)+{u}(y_{j}^m) = 
\underline{\mathcal{A}} u(y), 
\ \forall y\in[y^m_j,y^m_{j+1}],\ j=1,\ldots,N_m-1.
\end{align}
The inequality means that $\underline{\mathcal{A}} u(y)$ provides a lower bound for $u(y)$ over $[0,1]$.

For any $u\in\mathcal{U}_m$ and its PLA $\underline{\mathcal{A}} u\in\mathcal{U}_m^{\underline{\mathcal{A}}}$, denote by $\alpha^m:=\left[\alpha^m_1,\ldots,\alpha^m_{N_m}\right]\in\mathbb{R}^{N_m}$ the utility values at the breakpoints in $\mathbb{Y}_m$, and let $\beta^m:=\left[\beta^m_1,\ldots,\beta^m_{N_m-1}\right]\in\mathbb{R}^{N_m-1}$, 
where $\alpha^m_j:=\underline{\mathcal{A}}u({y}^m_j)=u(y^m_j),\ j=1,\ldots,N_m$, and $\beta^m_j:=(\alpha^m_{j+1}-\alpha^m_j)/({y}^m_{j+1}-{y}^m_j),\ j=1,\ldots,N_m-1$. 
Then any function $\underline{\mathcal{A}}u$ in $\mathcal{U}_m^{\underline{\mathcal{A}}}$ can be structured as 
\begin{equation}\label{eq:N-piecewise-utility}
    \underline{\mathcal{A}}u(y)
    =\left\{ 
    \begin{aligned}
    &0 && \text{for}\; y= {y}^m_{1},\\
    &\beta^m_j(y-{y}^m_j)+\alpha^m_j && \text{for}\;{y}^m_j< y \leq{y}^m_{j+1},\ j=1,\ldots,N_m-1,\\
    & 1 && \text{for}\;y={y}^m_{N_m}.\\
    \end{aligned}
    \right.
\end{equation}
By the definition, 
\begin{equation}\label{eq:prop-calcu-f(r2)}
\max\limits_{u\in\mathcal{U}_m}u(y^m_j)=\max\limits_{u\in\mathcal{U}_m^{\underline{\mathcal{A}}}}u(y^m_j),\;
\min\limits_{u\in\mathcal{U}_m}u(y^m_j)=\min\limits_{u\in\mathcal{U}_m^{\underline{\mathcal{A}}}}u(y^m_j),\;
\text{ for } j=1,\ldots,N_m. 
\end{equation}

\subsubsection{Computing $\bar{\alpha}^m_{k}$ and $\underline{\alpha}^m_k$}
\label{subsec:semi-closed-form-f-alpha}

We can use the relationships in \eqref{eq:prop-calcu-f(r2)} to develop tractable reformulations for computing $\bar{\alpha}^m_{k}$ and $\underline{\alpha}^m_k$. Specifically, we can recast $\max\limits_{u\in\mathcal{U}_m^{\underline{\mathcal{A}}}}u(y^m_j)$ as a linear program: 
\begin{subequations}\label{eq:calcu-max-alpha}
\begin{align}
\bar{f}(y^m_j)
=\max\limits_{\alpha^m,\beta^m}\ &\alpha^m_{j}\\
\text{s.t.}\ \ \  
\label{const:mus-pairwise}
& Z_k \cdot \sum\limits_{j=1}^{{N}_m}\mathbbm{1}_{({y}^m_j=r_2^k)}\alpha^m_{j}\geq Z_k \cdot p^k,\ k=1,\ldots,m,\\
\label{const:mus-beta-alpha}
&\beta^m_j=(\alpha^m_{j+1}-\alpha^m_j)/({y}^m_{j+1}-{y}^m_j),\ j=1,\ldots,{N}_m-1,\\
\label{const:mus-concave}
&\beta^m_{j+1} \leq \beta^m_j,\ j=1,\ldots,{N}_m-2, \\
\label{const:mus-lipschitz}
& 0\leq\beta^m_{j}\leq L,\ j=1,\ldots,{N}_m-1, \\
&
\alpha^m_1=0,\ \alpha^m_{{N}_m}=1,\ 
\alpha^m\in\mathbb{R}^{{N}_m},\ \beta^m\in\mathbb{R}^{{N}_m-1}_+.
\label{const:mus-domain}
\end{align}
\end{subequations}
Likewise, since $\underline{f}(y^m_j)=\min\limits_{u\in\mathcal{U}_m}u(y^m_j)=\min\limits_{u\in\mathcal{U}_m^{\underline{\mathcal{A}}}}u(y^m_j)$,  
we may calculate $\underline{f}(y^m_j)$ by solving a linear program: 
\begin{subequations}\label{eq:calcu-min-alpha}
\begin{align}
\underline{f}(y^m_j)
=\min\limits_{\alpha^m,\beta^m}\ &\alpha^m_{j}\\
\text{s.t.}\ \ \  & \eqref{const:mus-pairwise}-\eqref{const:mus-domain},
\end{align}
\end{subequations}
where \eqref{const:mus-pairwise} requires the piecewise linear function constructed by $\alpha^m$ and $\beta^m$ to be consistent with the DM's pairwise comparison preferences, which are obtained from the response to questionnaire in form of 
\eqref{eq:U_m-MUS}; \eqref{const:mus-beta-alpha} reflects the relation between $\alpha^m$ and $\beta^m$; \eqref{const:mus-concave}--\eqref{const:mus-lipschitz} impose shape constraints on the piecewise linear function, including monotonicity, concavity, and Lipschitz continuity. 
\eqref{const:mus-domain} is the normalization constraint. That is, all constraints in \eqref{eq:calcu-max-alpha}/\eqref{eq:calcu-min-alpha} ensure that the piecewise linear utility functions constructed by the optimal $\alpha^m$ and $\beta^m$ belong to the ambiguity set $\mathcal{U}_m^{\underline{\mathcal{A}}}$. 
We denote the optimal value of problem \eqref{eq:calcu-max-alpha} as $\bar{\alpha}^m_j$, and the optimal value of problem \eqref{eq:calcu-min-alpha} as $\underline{\alpha}^m_j$, $j=1, \ldots, N_m$.

\subsubsection{Computing $\bar{\beta}^m_k$ and $\underline{\beta}^m_{k+1}$}
\label{subsec:semi-closed-form-f-beta}

Next, we move on to develop tractable formulations for computing $\bar{\beta}^m_k$ and $\underline{\beta}^m_{k+1}$.
To this end, we need to establish the following relationships:
\bgeqn 
\max\limits_{u\in\mathcal{U}_m} u^{'}_{+}(y^m_j)=\max\limits_{u\in
\mathcal{U}_m^{\underline{\mathcal{A}}}
} u^{'}_{-}(y^m_{j})\quad \text{and} 
\quad \min\limits_{u\in\mathcal{U}_m} u^{'}_{-}(y^m_{j})=\min\limits_{u\in
\mathcal{U}_m^{\underline{\mathcal{A}}}
} u^{'}_{+}(y^m_{j}), \; \text{ for } j=2,\ldots,N_m-1. 
\label{eq:relation-left-right-derivative-u-u_M^N}
\edeqn 
We refer readers to Figure~\ref{fig:compute-beta} 
for a geometric interpretation. 


\begin{proposition}\label{prop:mus-max-min-beta}
Let ${\cal U}_m^{\underline{\mathcal{A}}}$ be defined in Definition \ref{def:PLU}. 
Then equalities in \eqref{eq:relation-left-right-derivative-u-u_M^N} hold with
\bgeqn 
\max\limits_{u\in\mathcal{U}_m} u^{'}_{+}(y^m_1)=L
\quad \text{and}\quad \min\limits_{u\in\mathcal{U}_m} u^{'}_{-}(y^m_{N_m})=0, 
\label{eq:relation-left-right-derivative-u-u_M^N-LR-end}
\edeqn 
where $L$ is the Lipschitz modulus required in \eqref{eq:U_c}.

\end{proposition}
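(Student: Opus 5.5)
The plan is to prove each equality in \eqref{eq:relation-left-right-derivative-u-u_M^N} by two inequalities. The upper bound will come from concavity: a one-sided derivative of a concave function is controlled by chord slopes, and the chord slopes are exactly the slopes of its PLA. The reverse bound will come from an explicit construction. Starting from a PLA function $v=\underline{\mathcal{A}}u\in\mathcal{U}_m^{\underline{\mathcal{A}}}$ with breakpoint values $\alpha^m$ and slopes $\beta^m$, I will bend $v$ on a single interval adjacent to $y^m_j$ without changing its values on $\mathbb{Y}_m$. The bent function then lies in $\mathcal{U}_m$ by the particular case of Lemma \ref{lemma:mus-u-U_m}, once it is checked to be in $\mathcal{U}_{cv}$. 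Throughout, $\mathcal{U}_m\neq\emptyset$ because $u^*\in\mathcal{U}_m$, and the extrema are attained by the compactness in Theorem \ref{thm:mus-converg}(i), or simply by finiteness of the LP data.

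\emph{Maximal right derivative, $2\le j\le N_m-1$.} For any $u\in\mathcal{U}_m$, concavity gives
\[
u'_+(y^m_j)\le u'_-(y^m_j)\le \frac{u(y^m_j)-u(y^m_{j-1})}{y^m_j-y^m_{j-1}}=\beta^m_{j-1}=(\underline{\mathcal{A}}u)'_-(y^m_j).
\]
Hence the left-hand maximum is at most the right-hand one. Conversely, take $v$ with PLA data $(\alpha^m,\beta^m)$. On $[y^m_j,y^m_{j+1}]$ define
\[
\tilde u(y):=\min\bigl\{\alpha^m_j+\beta^m_{j-1}(y-y^m_j),\ \alpha^m_{j+1}+\gamma(y-y^m_{j+1})\bigr\},
\]
where $\gamma:=\beta^m_{j+1}$, or $\gamma:=0$ if $j+1=N_m$. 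Set $\tilde u=v$ elsewhere. The ordering $\gamma\le\beta^m_j\le\beta^m_{j-1}$ implies that the two lines cross inside $[y^m_j,y^m_{j+1}]$. It also implies that $\tilde u$ keeps the values $\alpha^m_j$ and $\alpha^m_{j+1}$, and that its successive slopes $\beta^m_{j-1},\beta^m_{j-1},\gamma,\beta^m_{j+1}$ remain nonincreasing and lie in $[0,L]$. So $\tilde u\in\mathcal{U}_{cv}$ and $\tilde u=v$ on $\mathbb{Y}_m$, hence $\tilde u\in\mathcal{U}_m$ by Lemma \ref{lemma:mus-u-U_m}, with $\tilde u'_+(y^m_j)=\beta^m_{j-1}$.

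\emph{Minimal left derivative.} This is symmetric. For $u\in\mathcal{U}_m$,
\[
u'_-(y^m_j)\ge u'_+(y^m_j)\ge \beta^m_j=(\underline{\mathcal{A}}u)'_+(y^m_j),
\]
which gives one inequality. For the other, on $[y^m_{j-1},y^m_j]$ replace $v$ by the minimum of two lines: the line through $(y^m_{j-1},\alpha^m_{j-1})$ with slope $\beta^m_{j-2}$ (or slope $L$ if $j-1=1$), and the line through $(y^m_j,\alpha^m_j)$ with slope $\beta^m_j$. The same verification applies.

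\emph{Endpoints \eqref{eq:relation-left-right-derivative-u-u_M^N-LR-end}.} The bounds $u'_+(0)\le L$ and $u'_-(1)\ge 0$ are immediate from the Lipschitz bound and monotonicity. For attainment at $y^m_1=0$, use the first construction on $[0,y^m_2]$: $\tilde u(y)=\min\{Ly,\ \alpha^m_2+\beta^m_2(y-y^m_2)\}$. This is valid since $\alpha^m_2\le Ly^m_2$, and it has $\tilde u'_+(0)=L$. For attainment at $y^m_{N_m}=1$, take on $[y^m_{N_m-1},1]$ the function $\min\{\alpha^m_{N_m-1}+\beta^m_{N_m-2}(y-y^m_{N_m-1}),\,1\}$. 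It equals $1$ at $y=1$ because $\beta^m_{N_m-2}\ge\beta^m_{N_m-1}$, and it has left derivative $0$ there.

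The main obstacle I expect is the bookkeeping in the gluing step. One must confirm concavity exactly at the junctions $y^m_j$ and $y^m_{j+1}$, verify that the crossing point lies inside the interval, and handle the degenerate index cases ($j=2$, $j=N_m-1$, and $N_m=2$, where only the boundary slopes $L$ and $0$ are available). Checking these against the LP constraints \eqref{const:mus-concave}--\eqref{const:mus-lipschitz} is where the care is needed.
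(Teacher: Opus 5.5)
Your argument follows the paper's own proof. The chord-slope inequalities, which are correct, give one direction. The other direction comes from exactly the paper's bend on $[y^m_j,y^m_{j+1}]$: the minimum of the line of slope $\beta^m_{j-1}$ through $(y^m_j,\alpha^m_j)$ and the line of slope $\beta^m_{j+1}$ through $(y^m_{j+1},\alpha^m_{j+1})$, plus a $\min\{Ly,\cdot\}$ bend at the left end. You are more explicit than the paper about $j=N_m-1$ and the right end.

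\textbf{Where the construction fails.} The gap is the claim $\tilde u'_+(y^m_j)=\beta^m_{j-1}$, because the two lines can cross exactly at $y^m_j$. Suppose $\gamma=\beta^m_j<\beta^m_{j-1}$, i.e.\ $\beta^m_{j+1}=\beta^m_j<\beta^m_{j-1}$ (or $j+1=N_m$, $\alpha^m_j=1$ and $\beta^m_{j-1}>0$). Then the second line passes through $(y^m_j,\alpha^m_j)$ and lies below the first on $(y^m_j,y^m_{j+1}]$. So $\tilde u=v$ there and $\tilde u'_+(y^m_j)=\beta^m_j$. Your other three bends fail the same way:
\begin{itemize}
\item when $\beta^m_{j-2}=\beta^m_{j-1}>\beta^m_j$ (with $L$ in place of $\beta^m_0$);
\item when $\beta^m_1=\beta^m_2<L$, since then $\min\{Ly,\beta^m_2y\}=\beta^m_2y$;
\item when $\beta^m_{N_m-2}=\beta^m_{N_m-1}>0$.
\end{itemize}
The paper's proof has the same blind spot, and it likewise assumes the maximum over $\mathcal{U}_m$ is attained.

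\textbf{A better choice of $v$ cannot repair this, because that maximum need not exist.} Take $\mathbb{Y}_m=\{0,0.2,0.5,1\}$, $L=10$, and the constraints $u(0.5)\le 0.7$, $u(0.2)\ge 0.3$. These are consistent with error-free answers, e.g.\ for $u^*$ with slopes $2,1,0.6$, and your argument never uses how the queries were chosen.
\begin{itemize}
\item The LP maximum of $\beta_1$ is $2.6$, attained only at $\alpha=(0,0.52,0.7,1)$, whose last three points are collinear.
\item Every $u$ with these values is affine on $[0.2,1]$, so $u'_+(0.2)=0.6$.
\item Every other $u\in\mathcal{U}_m$ has $u(0.2)<0.52$, hence $u'_+(0.2)\le u(0.2)/0.2<2.6$.
\item Yet bending the data $(0,0.52-\epsilon,0.7,1)$ gives elements of $\mathcal{U}_m$ with $u'_+(0.2)=2.6-5\epsilon$.
\end{itemize}
So your remark that the extrema are attained by compactness is false on the $\mathcal{U}_m$ side. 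For concave $u$, $u'_+(y)=\sup_{0<\delta\le 1-y}[u(y+\delta)-u(y)]/\delta$ is only lower semicontinuous in the sup norm, and $u'_-$ is only upper semicontinuous. The same sequence shows that Lemma~\ref{lemma-usc} cannot hold as stated.

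\textbf{What your construction does prove.} It proves the identity with $\sup$ (resp.\ $\inf$) on the left, after an approximation step. Take a convex combination of the LP optimizer with the data of $u^*$. If the configuration is still degenerate, lower $\alpha^m_j$ slightly. This removes the degeneracy while changing the objective arbitrarily little. The only constraint it can violate is one with $Z_k=1$ at $y^m_j$, which is strict at $u^*$ under error-free answers and hence strict after mixing.

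Some slack of this kind is genuinely needed. For a general set of the form \eqref{eq:U_m-MUS}, even the $\sup$ identity fails, e.g.\ if the constraints force every $u$ through three collinear points above $y^m_j,y^m_{j+1},y^m_{j+2}$ whose common slope is smaller than the attainable $\beta^m_{j-1}$.

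\textbf{Endpoints.} These also need an extra argument. If some $u\in\mathcal{U}_m$ satisfies the constraints at interior query points strictly, then $\min\{Ly,\,u(y)+\delta,\,1\}$ with small $\delta>0$ lies in $\mathcal{U}_m$. It has right slope $L$ at $0$ and, if $L>1$, left slope $0$ at $1$. For $L=1$, however, $\mathcal{U}_{cv}=\{\mathrm{id}\}$ and $\min u'_-(1)=1$.
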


\begin{proof}

We only prove the first equalities in \eqref{eq:relation-left-right-derivative-u-u_M^N} and \eqref{eq:relation-left-right-derivative-u-u_M^N-LR-end} as the other equalities can be proven analogously. 
For $j=2,\ldots,N_m-1$, let $\mathcal{U}^*(y_j^m)$ denote the set of optimal solutions to problem $\max\limits_{u\in\mathcal{U}_m} u^{'}_{+}(y^m_j)$, and let $\mathcal{U}_{\underline{\mathcal{A}}}^*(y_j^m)$ be the set of optimal solutions to $\max\limits_{u\in\mathcal{U}_m^{\underline{\mathcal{A}}}} u^{'}_{+}(y^m_{j})$.

Let $\bar{u}\in\mathcal{U}^*(y_j^m)$ and $\underline{\mathcal{A}}\bar{u}\in\mathcal{U}_m^{\underline{\mathcal{A}}}$ be its PLA. 
By \eqref{eq:PLU-u}, $\bar{u}(y)\geq \underline{\mathcal{A}}\bar{u}(y)$, $\forall y\in[0,1]$. 
Moreover, by the concavity of $\bar{u}$ and the fact that $\bar{u}(y^m_j)=\underline{\mathcal{A}}\bar{u}(y^m_j)$, we have
\begin{align}
\max\limits_{u\in\mathcal{U}_m} u^{'}_{+}(y^m_j)=\bar{u}^{'}_{+}(y^m_j)\leq\bar{u}^{'}_{-}(y^m_j)
&=\lim_{\Delta y\to0_+}\frac{\bar{u}(y^m_j)-\bar{u}(y^m_j-\Delta y)}{\Delta y}\nonumber\\
&\leq\lim_{\Delta y\to0_+}\frac{\underline{\mathcal{A}}\bar{u}(y^m_j)-\underline{\mathcal{A}}\bar{u}(y^m_j-\Delta y)}{\Delta y}\nonumber\\
&=(\underline{\mathcal{A}}\bar{u})_{-}^{'}(y^m_j)\leq\max\limits_{u\in\mathcal{U}_{m}^{\underline{\mathcal{A}}}} u^{'}_{-}(y^m_{j}). 
\label{eq:proof-prop2-u'-u_m'-1}
\end{align}
On the other hand, let $\underline{\mathcal{A}}\tilde{u}\in\mathcal{U}^{*}_{\underline{\mathcal{A}}}(y^m_j)\subseteq\mathcal{U}_m^{\underline{\mathcal{A}}}\subseteq\mathcal{U}_m$. 
Note that $\underline{\mathcal{A}}\tilde{u}$ depends on $y_j^m$ but not the other breakpoints. 
We can construct a function $\tilde{u}(\cdot)$
\begin{equation*}
    \tilde{u}(y)=\left\{ 
    \begin{aligned}
    &
    \underline{\mathcal{A}}\tilde{u}(y) &&\text{for} \; y^m_1\leq y < y^m_{j},\\
    &p(y)&&\text{for} \; y^m_j\leq y \leq y^m_{j+1},\\
    &\underline{\mathcal{A}}\tilde{u}(y)&&\text{for} \; y^m_{j+1}< y \leq y^m_{N_m},\\
    \end{aligned}
    \right.
\end{equation*}
where 
$$
p(y)=\min\left\{\frac{\underline{\mathcal{A}}\tilde{u}(y^m_{j})-\underline{\mathcal{A}}\tilde{u}(y^m_{j-1})}{y^m_{j}-y^m_{j-1}}(y-y^m_{j-1})+\underline{\mathcal{A}}\tilde{u}(y^m_{j-1}),\ \frac{\underline{\mathcal{A}}\tilde{u}(y^m_{j+2})-\underline{\mathcal{A}}\tilde{u}(y^m_{j+1})}{y^m_{j+2}-y^m_{j+1}}(y-y^m_{j+1})+\underline{\mathcal{A}}\tilde{u}
(y^m_{j+1})\right\}.
$$
Since $\underline{\mathcal{A}}\tilde{u}$ is monotonically increasing, concave, Lipschitz continuous, and normalized, $\tilde{u}$ has the same properties.
Thus, $\tilde{u}\in\mathcal{U}_{cv}$. 
By definition, $\tilde{u}(y^m_i)=\underline{\mathcal{A}}\tilde{u}(y^m_i)$ for $i=1,\ldots,N_m$. 
Let $u^1_i=u^2_i=\underline{\mathcal{A}}\tilde{u}$ such that $u^1_i(y^m_i)=\tilde{u}(y^m_i)= u^2_i(y^m_i)$. 
Then by Lemma \ref{lemma:mus-u-U_m}, $\tilde{u}\in\mathcal{U}_m$. 
Moreover, 
\bgeqn 
\label{eq:proof-prop2-u'-u_m'-2}
\max\limits_{u\in\mathcal{U}_m} u^{'}_{+}(y^m_j)\geq\tilde{u}^{'}_{+}(y^m_j)=\frac{
\underline{\mathcal{A}}\tilde{u}
(y^m_{j})-
\underline{\mathcal{A}}\tilde{u}
(y^m_{j-1})}{y^m_{j}-y^m_{j-1}}=
(\underline{\mathcal{A}}\tilde{u})_{-}^{'}
(y^m_j)=\max\limits_{u\in
\mathcal{U}_m^{\underline{\mathcal{A}}}
} u^{'}_{-}(y^m_{j}). 
\edeqn 
Combining \eqref{eq:proof-prop2-u'-u_m'-1}-\eqref{eq:proof-prop2-u'-u_m'-2} gives rise to the first equality in \eqref{eq:relation-left-right-derivative-u-u_M^N}. 
See Figure~\ref{fig:compute-beta} for an illustration. 
\begin{figure}[htbp]
    \centering
    \includegraphics[width=0.5\linewidth]{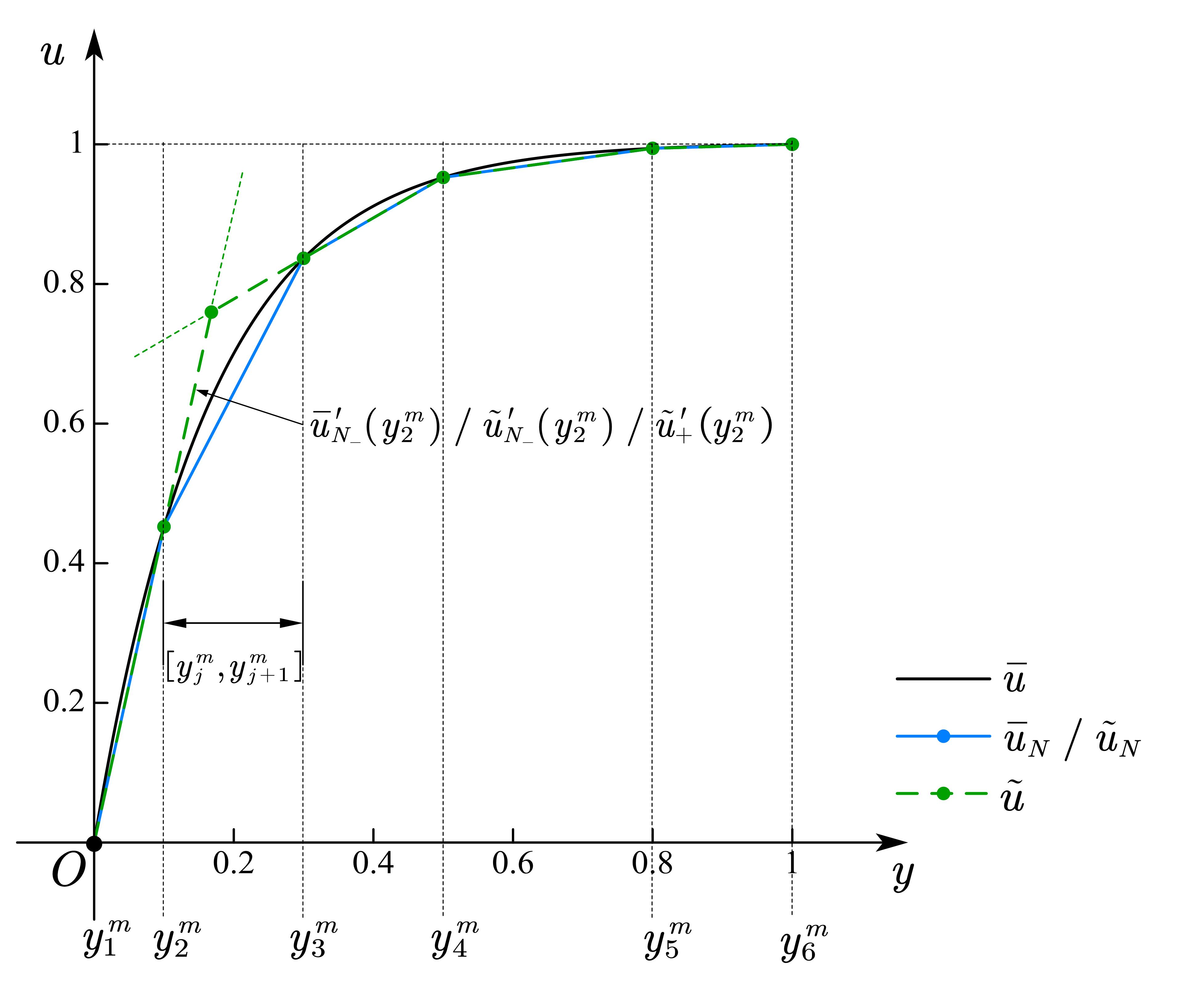}
    \caption{
    Geometric interpretation of the first equality in \eqref{eq:relation-left-right-derivative-u-u_M^N}}
    \label{fig:compute-beta}
\end{figure}

Next, we show the first equality in \eqref{eq:relation-left-right-derivative-u-u_M^N-LR-end}. 
From $\mathcal{U}_{cv}$ in \eqref{eq:U_c}, we have $u^{'}_{+}(y^m_1)\leq L$, $\forall u\in\mathcal{U}_m$.
Then 
\bgeqn 
\label{eq:u'-y_1^m-L}
\max\limits_{u\in\mathcal{U}_m} u^{'}_{+}(y^m_1)\leq L.
\edeqn 
Moreover, for any $\tilde{u}\in\mathop{\arg\max}_{u\in\mathcal{U}_m} u^{'}_{+}(y^m_1)$, we can construct a function 
\begin{equation*}
    \hat{u}(y)=\left\{ 
    \begin{aligned}
    &p(y)&& \text{for}\; y^m_1\leq y \leq y^m_{2},\\
    &\tilde{u}(y)&& 
     \text{for}\; y^m_{2}< y \leq y^m_{N_m},
    \end{aligned}
    \right.
\end{equation*}
where $p(y)=\min\{L(y-y^m_1),\ \tilde{u}^{'}_{-}(y^m_2)(y-y^m_2)+\tilde{u}(y^m_2)\}$. 
It is easy to verify that $\hat{u}$ is monotonically increasing, concave, Lipschitz continuous, and normalized due to the same properties enjoyed by $\tilde{u}$.  
This shows $\hat{u}\in\mathcal{U}_{cv}$. 
Observe also that $\hat{u}(y^m_j)=\tilde{u}(y^m_j)$ for $j=1,\ldots,N_m$. Let $u^1_j=u^2_j=\tilde{u}$ such that $u^1_j(y^m_j)=\hat{u}(y^m_j)= u^2_j(y^m_j)$.
Then we have $\hat{u}\in\mathcal{U}_m$ by virtue of Lemma \ref{lemma:mus-u-U_m}. 
Moreover, 
\bgeqn 
\label{eq:u'-y_1^m-L-a}
\hat{u}^{'}_{+}(y^m_1)=p^{'}_{+}(y^m_1)=L\geq \tilde{u}^{'}_{+}(y^m_1)=\max\limits_{u\in\mathcal{U}_m} u^{'}_{+}(y^m_1). 
\edeqn 
The conclusion follows by combining inequalities \eqref{eq:u'-y_1^m-L}--\eqref{eq:u'-y_1^m-L-a}. 
\end{proof}

Based on Proposition \ref{prop:mus-max-min-beta}, we can calculate the value $\bar{\beta}^m_j:=\max\limits_{u\in\mathcal{U}_m} u^{'}_{+}(y^m_j)=\max\limits_{u\in\mathcal{U}_m^{\underline{\mathcal{A}}}} u^{'}_{-}(y^m_j)$ for $j=2,\ldots,N_m-1$, by solving the following linear program:
\begin{subequations}\label{eq:calcu-max-beta}
\begin{align}
\max\limits_{\alpha^m,\beta^m}\ &\beta^m_{j-1}\\
\text{s.t.}\ & \eqref{const:mus-pairwise}-\eqref{const:mus-domain}. 
\end{align}
\end{subequations}
Likewise, $\underline{\beta}^m_j:=\min\limits_{u\in\mathcal{U}_m} u^{'}_{-}(y^m_{j})=\min\limits_{u\in\mathcal{U}_m^{\underline{\mathcal{A}}}} u^{'}_{+}(y^m_{j})$, $j=2,\ldots,N_m-1$ can be reformulated as 
\begin{subequations}\label{eq:calcu-min-beta}
\begin{align}
\min\limits_{\alpha^m,\beta^m}\ &\beta^m_{j}\\
\text{s.t.}\ & \eqref{const:mus-pairwise}-\eqref{const:mus-domain}. 
\end{align}
\end{subequations}
Constraints in \eqref{eq:calcu-max-beta} and \eqref{eq:calcu-min-beta} ensure that the piecewise linear utility functions constructed by the optimal $\alpha^m$ and $\beta^m$ are included in $\mathcal{U}_m^{\underline{\mathcal{A}}}$, as are those in \eqref{eq:calcu-max-alpha} and \eqref{eq:calcu-min-alpha}. 
The optimal value of problem \eqref{eq:calcu-max-beta} is exactly $\bar{\beta}^m_j$, while the optimal value of problem \eqref{eq:calcu-min-beta} is $\underline{\beta}^m_j$ for $j=2,\ldots,N_m-1$, along with $\bar{\beta}^m_1=L$ and $\underline{\beta}^m_{N_m}=0$.

\subsection{Algorithm for solving problem \eqref{eq:mus-f(r_2)-initial}}
\label{sec-3-6}

\subsubsection{Interval-Based Algorithm for Solving MUS Problem}
 
By Remark \ref{remark:thm4}, we can obtain the optimal value by exhaustively evaluating and comparing the values of $f(\cdot)$ at the points in the finite set $\{y^m_k,y^*_{m,k}, y^m_{k+1}\}$ for $k=1,\ldots,N_m-1$. 
This motivates us to go through intervals $[y^m_k,y^m_{k+1}]$ one by one, calculating $y^*_{m,k}$ specified in \eqref{eq:y^*_mk}, then evaluating $f(\cdot)$ at $y^m_k, y^*_{m,k}, y^m_{k+1}$, and finally identifying the one with the largest function value. 
Algorithm \ref{alg:mus-1} is proposed to execute the procedures.

\normalem
\begin{algorithm}[htbp]
\caption{Interval-Based 
Algorithm for Solving MUS Problem \eqref{eq:mus-f(r_2)-initial}\label{alg:mus-1}}
\IncMargin{2em}
\DontPrintSemicolon
\KwIn{$\mathcal{U}_m, \{y^m_j\}_{j=1}^{N_m}$,\ $\bar{\beta}^m_1=L$,\ $\underline{\beta}^m_{N_m}=0$}
\KwOut{$r_2^{m+1}, \ p^{m+1}$}
\textbf{Initialize:} $r_2^{m+1} \leftarrow 0$, $f(r_2^{m+1})\leftarrow 0$, $p^{m+1}\leftarrow 0$
    
\For{$j=1,\ldots,N_m$}{
Compute $\bar{\alpha}^m_j$ by solving  linear program \eqref{eq:calcu-max-alpha}

Compute $\underline{\alpha}^m_j$ by solving  linear program \eqref{eq:calcu-min-alpha}
}

\For{$j=2,\ldots,N_m-1$}{
Compute $\bar{\beta}^m_j$ by solving  linear program \eqref{eq:calcu-max-beta}

Compute $\underline{\beta}^m_j$ by solving  linear program \eqref{eq:calcu-min-beta}
}

\For{$j=1,\ldots,N_m-1$}{
\If{$\bar{\alpha}^m_j-\underline{\alpha}^m_j>f(r_2^{m+1})$}{
$r_2^{m+1}\leftarrow y^m_j$, 
$p^{m+1}\leftarrow (\bar{\alpha}^m_j+\underline{\alpha}^m_j)/2$, 
$f(r_2^{m+1})\leftarrow\bar{\alpha}^m_j-\underline{\alpha}^m_j$
}

\If{$\bar{\beta}^m_j>\underline{\beta}^m_{j+1}$}{
$y^*_{m,j}\leftarrow \frac{\bar{\alpha}^m_{j+1}-\bar{\alpha}^m_j-\underline{\beta}^m_{j+1}y^m_{j+1}+\bar{\beta}^m_j y^m_j}{\bar{\beta}^m_j-\underline{\beta}^m_{j+1}}$, 
$\alpha^*_{m,j}\leftarrow \bar{\beta}^m_j(y^*_{m,j}-y^m_j)+\bar{\alpha}^m_j$

\If{$\alpha^*_{m,j}-\frac{\underline{\alpha}^m_{j+1}-\underline{\alpha}^m_j}{y^m_{j+1}-y^m_j}(y^*_{m,j}-y^m_j)-\underline{\alpha}^m_{j}>f(r_2^{m+1})$}{
$r_2^{m+1}\leftarrow y^*_{m,j}$, 
$p^{m+1}\leftarrow (\alpha^*_{m,j}+\frac{\underline{\alpha}^m_{j+1}-\underline{\alpha}^m_j}{y^m_{j+1}-y^m_j}(y^*_{m,j}-y^m_j)+\underline{\alpha}^m_{j})/2$, 
$f(r_2^{m+1})\leftarrow\alpha^*_{m,j}-\frac{\underline{\alpha}^m_{j+1}-\underline{\alpha}^m_j}{y^m_{j+1}-y^m_j}(y^*_{m,j}-y^m_j)-\underline{\alpha}^m_{j}$
}
}
}
Return $r_2^{m+1}$ and $p^{m+1}$
\end{algorithm}
\ULforem

\subsubsection{Improved Interval-Based Algorithm}

Moreover, since $f(\cdot)$ is piecewise linear with at most two segments over each interval $[y^m_k,y^m_{k+1}]$ by Theorem \ref{thm:mus-semi-f}, it is unimodal on each interval. 
Thus, we can improve the exhaustive search in Algorithm \ref{alg:mus-1} by comparing the lower and upper bounds of $f$ in each interval and eliminating the intervals that cannot contain the optimal solution.

We consider a two-step mechanism: 
(a) we first eliminate the sub-interval $[y^m_j,y^m_{j+1}]$ that cannot contain the optimal $r_2^{m+1}$ of \eqref{eq:mus-f(r_2)-initial}, based on the values of $\bar{\alpha}^m_j$, $\underline{\alpha}^m_j$, $\bar{\alpha}^m_{j+1}$ and $\underline{\alpha}^m_{j+1}$, $j=1,\ldots,N_m-1$, 
(b) we then perform further analysis and calculations on the remaining sub-intervals as described in Algorithm \ref{alg:mus-1}. 
For any $r_2\in[y^m_j,y^m_{j+1}]$, $j=1,\ldots,N_m-1$, by the monotonicity of $u\in\mathcal{U}_m$, we have 
$$
\bar{f}(r_2)=\max\limits_{u\in\mathcal{U}_m}u(r_2)\leq\max\limits_{u\in\mathcal{U}_m}u(y^m_{j+1})=\bar{f}(y^m_{j+1})
$$
and
$$
\underline{f}(r_2)=\min\limits_{u\in\mathcal{U}_m}u(r_2)\geq\min\limits_{u\in\mathcal{U}_m}u(y^m_j)=\underline{f}(y^m_j). 
$$ 
Thus, 
$$
f(r_2)=\max\limits_{u\in\mathcal{U}_m}u(r_2)-\min\limits_{u\in\mathcal{U}_m}u(r_2)\leq \bar{f}(y^m_{j+1})-\underline{f}(y^m_j).
$$
We then have  
$$
\max\{f(y^m_j),\ f(y^m_{j+1})\}\leq \max\limits_{r_2\in[y^m_j, y^m_{j+1}]}f(r_2) \leq \bar{f}(y^m_{j+1})-\underline{f}(y^m_j), 
$$
which provides a lower bound and an upper bound for $\max\limits_{r_2\in[y^m_j, y^m_{j+1}]}f(r_2)$, $j=1,\ldots,N_m-1$.

Let
$$
{f}_l:=\mathop{\max}\limits_{j=1,\ldots,N_m-1}\left\{\max\{f(y^m_j),f(y^m_{j+1})\}\right\}
$$
represent the maximum lower bound of $\max\limits_{r_2\in\left[y^m_j, y^m_{j+1}\right]} f(r_2)$ across all sub-intervals. 
Then, we denote the set of sub-intervals $\left\{[y^m_{\tau_i},y^m_{\tau_{i}+1}]\right\}_{i=1,\ldots,I_m}\subseteq\left\{\left[y^m_{j},y^m_{j+1}\right]\right\}_{j=1,\ldots,N_m-1}$ with $\tau_i\in\{1,2,\ldots,N_m-1\}$ and $I_m\leq N_m-1$, such that $\bar{f}(y^m_{\tau_i+1})-\underline{f}(y^m_{\tau_i})\geq{f}_l$, $\forall i=1,\ldots,I_m$, and search further for optimal $r_2^{m+1}$ on these selected sub-intervals. 
That means we filter out any sub-interval for which the upper bound of $\max\limits_{r_2\in[y^m_j, y^m_{j+1}]} f(r_2)$ is even smaller than the maximum lower bound ${f}_l$.
In other words, we select the sub-intervals that may contain the optimal $r_2^{m+1}$ to the maximization problem \eqref{eq:mus-f(r_2)-initial}. 
In practice, the number of such sub-intervals selected is usually smaller than the total number of sub-intervals within the domain $[0,1]$, thus improving the computational efficiency. 
Now we are ready to present the improved high-efficiency Algorithm \ref{alg:mus-2} for solving \eqref{eq:mus-f(r_2)-initial}, which generates the parameters $r_2^{m+1}$ and $p^{m+1}$ under the maximum utility split scheme.


\normalem
\begin{algorithm}
\caption{Improved Interval-Based 
Algorithm for Solving MUS Problem \eqref{eq:mus-f(r_2)-initial}\label{alg:mus-2}}
\IncMargin{2em}
\DontPrintSemicolon
\KwIn{$\mathcal{U}_m, \{y^m_j\}_{j=1}^{N_m}$,\ $\bar{\beta}^m_1=L$,\ $\underline{\beta}^m_{N_m}=0$}
\KwOut{$r_2^{m+1}, \ p^{m+1}$}
\textbf{Initialize:} $r_2^{m+1} \leftarrow 0$, $f(r_2^{m+1})\leftarrow 0$, $p^{m+1}\leftarrow 0$
    
\For{$j=1,\ldots,N_m$}{
Compute $\bar{\alpha}^m_j$ by solving the linear program \eqref{eq:calcu-max-alpha}

Compute $\underline{\alpha}^m_j$ by solving the linear program \eqref{eq:calcu-min-alpha}
}

${f}_l\leftarrow \mathop{\max}\limits_{j=1,\ldots,N_m-1}\left\{\max\{\bar{\alpha}^m_j-\underline{\alpha}^m_j,\bar{\alpha}^m_{j+1}-\underline{\alpha}^m_{j+1}\}\right\}$

\For{$j=2,\ldots,N_m-1$}{
\If{$\bar{\alpha}^m_{j+1}-\underline{\alpha}^m_j\geq{f}_l$}{

Compute $\bar{\beta}^m_j$ by solving the linear program \eqref{eq:calcu-max-beta}

Compute $\underline{\beta}^m_j$ by solving the linear program \eqref{eq:calcu-min-beta}
}
}

\For{$j=1,\ldots,N_m-1$}{
\If{$\bar{\alpha}^m_{j+1}-\underline{\alpha}^m_j\geq{f}_l$}{
$\kappa\leftarrow\mathop{\arg\max}_{i=j,j+1} \bar{\alpha}^m_i-\underline{\alpha}^m_i$

\If{$\bar{\alpha}^m_{\kappa}-\underline{\alpha}^m_{\kappa}>f(r_2^{m+1})$}{
$r_2^{m+1}\leftarrow y^m_{\kappa}$, 
$p^{m+1}\leftarrow (\bar{\alpha}^m_{\kappa}+\underline{\alpha}^m_{\kappa})/2$, 
$f(r_2^{m+1})\leftarrow \bar{\alpha}^m_{\kappa}-\underline{\alpha}^m_{\kappa}$
}

\If{$\bar{\beta}^m_j>\underline{\beta}^m_{j+1}$}{
$y^*_{m,j}\leftarrow \frac{\bar{\alpha}^m_{j+1}-\bar{\alpha}^m_j-\underline{\beta}^m_{j+1}y^m_{j+1}+\bar{\beta}^m_j y^m_j}{\bar{\beta}^m_j-\underline{\beta}^m_{j+1}}$, $\alpha^*_{m,j}\leftarrow \bar{\beta}^m_j(y^*_{m,j}-y^m_j)+\bar{\alpha}^m_j$

\If{$\alpha^*_{m,j}-\frac{\underline{\alpha}^m_{j+1}-\underline{\alpha}^m_j}{y^m_{j+1}-y^m_j}(y^*_{m,j}-y^m_j)-\underline{\alpha}^m_{j+1}>f(r_2^{m+1})$}{
$r_2^{m+1}\leftarrow y^*_{m,j}$, 
$p^{m+1}\leftarrow (\alpha^*_{m,j}+\frac{\underline{\alpha}^m_{j+1}-\underline{\alpha}^m_j}{y^m_{j+1}-y^m_j}(y^*_{m,j}-y^m_j)+\underline{\alpha}^m_{j+1})/2$, 
$f(r_2^{m+1})\leftarrow \alpha^*_{m,j}-\frac{\underline{\alpha}^m_{j+1}-\underline{\alpha}^m_j}{y^m_{j+1}-y^m_j}(y^*_{m,j}-y^m_j)-\underline{\alpha}^m_{j+1}$
}
}
}
}
Return $r_2^{m+1}$ and $p^{m+1}$
\end{algorithm}
\ULforem

\section{Proofs of Lemma \ref{lemma:mus-beta-inequal}, Lemma \ref{lemma:mus-beta-equal}
and Theorem \ref{thm:mus-upper-bound}}
\label{sec-ec-proof-lemma2}

{We begin with two intermediate results on the compactness of $\mathcal{U}_m^k$ and the continuity of $u^{'}_+(\cdot)$ w.r.t.~$u$,
which will pave the way for 
the proofs of Lemmas \ref{lemma:mus-beta-inequal} and \ref{lemma:mus-beta-equal}.  }

\begin{lemma}[Compactness of ${\cal U}_m^k$]
\label{lemma:ambiguity-set-compact}
Let $\mathcal{U}_m^k$, $k=1,\ldots,N_m$, be defined as in \eqref{eq:mus-def-U_m^k}.
Then $\mathcal{U}_m^k$ is a compact set under the norm topology. 
\end{lemma}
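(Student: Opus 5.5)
We will show that $\mathcal{U}_m^k$ is a closed subset of the compact set $\mathcal{U}_m$, in the topology induced by the Kolmogorov (infinity) norm. Compactness then follows because a closed subset of a compact set is compact.

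The first ingredient is Theorem~\ref{thm:mus-converg}(i), which says that $\mathcal{U}_m$ is compact under $\|\cdot\|_\infty$. Its proof uses the Arzel\`a--Ascoli theorem, the uniform boundedness coming from normalization and monotonicity, and equicontinuity coming from the common Lipschitz modulus $L$. We invoke this result directly. Since $\mathcal{U}_m^k\subseteq\mathcal{U}_m$ by definition \eqref{eq:mus-def-U_m^k}, it remains only to verify closedness.

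For closedness, take a sequence $\{u_n\}\subseteq\mathcal{U}_m^k$ with $\|u_n-u\|_\infty\to 0$.
\begin{itemize}
\item[(a)] Because $\mathcal{U}_m$ is closed by Theorem~\ref{thm:mus-converg}(i), the limit satisfies $u\in\mathcal{U}_m$.
\item[(b)] Uniform convergence implies pointwise convergence at $y^m_k$. Hence
\begin{equation*}
u(y^m_k)=\lim_{n\to\infty}u_n(y^m_k)=\bar{\alpha}^m_k .
\end{equation*}
\end{itemize}
Therefore $u\in\mathcal{U}_m^k$. Equivalently, $\mathcal{U}_m^k=\mathcal{U}_m\cap \mathrm{ev}_{y^m_k}^{-1}(\{\bar{\alpha}^m_k\})$, where the point-evaluation map $\mathrm{ev}_{y}:u\mapsto u(y)$ is $1$-Lipschitz with respect to $\|\cdot\|_\infty$. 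The preimage of a closed singleton under this continuous map is closed, and so is its intersection with the closed set $\mathcal{U}_m$.

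We should also confirm that $\mathcal{U}_m^k$ is nonempty, so that the later maximization of $u'_+(y^m_k)$ over it is meaningful. The map $u\mapsto u(y^m_k)$ is continuous on the compact set $\mathcal{U}_m$, so it attains its maximum $\bar{\alpha}^m_k$. Any maximizer lies in $\mathcal{U}_m^k$. Nonemptiness also requires $\mathcal{U}_m\neq\emptyset$, which holds because $u^*\in\mathcal{U}_m$ under the no-error assumption.

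No step here is a genuine obstacle; the only point needing care is which topology is meant by ``the norm topology.'' If the $\mathcal{L}^1$ norm is intended rather than the sup norm, we would argue differently. On the uniformly $L$-Lipschitz, normalized class, $\mathcal{L}^1$ convergence implies uniform convergence: a gap of size $\eta$ at some point persists on an interval of length about $\eta/(2L)$, which forces an $\mathcal{L}^1$ distance of order $\eta^2/L$. The two topologies therefore coincide on $\mathcal{U}_m$, and the argument above carries over unchanged. We will state this equivalence briefly in the proof.
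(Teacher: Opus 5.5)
Your proposal is correct and follows essentially the same route as the paper: nonemptiness from attainment of $\max_{u\in\mathcal{U}_m}u(y^m_k)$ on the compact set $\mathcal{U}_m$ (Theorem~\ref{thm:mus-converg}(i)), closedness via continuity of point evaluation, and compactness as a closed subset of a compact set. You also spell out the closedness step that the paper leaves as ``easy to verify,'' and you correctly note that the $\mathcal{L}^1$ and sup-norm topologies coincide on this uniformly Lipschitz class, a harmless clarification.
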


\begin{proof}
By the compactness of $\mathcal{U}_m$ stated in Part (i) of Theorem \ref{thm:mus-converg}, and the continuity of $u(y_k^m)$ in $u$, there exists a $u\in\mathcal{U}_m$ such that $u(y^m_k)=\bar{\alpha}^m_k$. 
This implies that $\mathcal{U}^k_m$ is not empty.
Moreover, it is easy to verify that $\mathcal{U}^k_m$ is closed under the norm topology. 
Together with the boundedness, we obtain the compactness. 
\end{proof}

\begin{lemma}[Upper semi-continuity of $u^{'}_{+}(x)$ in $u$]
\label{lemma-usc}
For any $x\in [0,1)$, $f(u):=u^{'}_{+}(x) $ is upper semi-continuous with respect to $u$ under the topology of Kolmogorov norm. 
\end{lemma}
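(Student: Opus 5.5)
The plan is to argue by sequences, since the Kolmogorov-norm topology is metrizable. Fix $x\in[0,1)$ and a sequence $u_n\to u$ in $\|\cdot\|_\infty$, with all $u_n,u$ in $\mathcal{U}_{cv}$ (or in $\mathcal{U}_m$, which is closed by Part (i) of Theorem \ref{thm:mus-converg}). The goal is to show
$$\limsup_{n\to\infty}(u_n)'_+(x)\le u'_+(x).$$
Passing to a subsequence, I may assume that $(u_n)'_+(x)$ converges to the limsup. Since $0\le (u_n)'_+(x)\le L$, this limit is finite.

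The tool I would use is the monotonicity of difference quotients of concave functions. For every concave $v$ and every admissible $h>0$,
$$\frac{v(x+h)-v(x)}{h}\le v'_+(x)\le v'_-(x)\le \frac{v(x)-v(x-h)}{h}.$$
Each quotient is continuous in $v$ under uniform convergence, because it is a fixed linear combination of point evaluations. So for fixed $h$, every bound of the form $(u_n)'_+(x)\le Q_h(u_n)$, where $Q_h(v)$ is a difference quotient of $v$ at $x$, passes to the limit: $\limsup_n (u_n)'_+(x)\le Q_h(u)$. I would then let $h\downarrow 0$ so that $Q_h(u)\to u'_+(x)$.

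\textbf{Main obstacle.} The issue is which quotient $Q_h$ bounds $(u_n)'_+(x)$ from above. Concavity alone supplies only the backward quotient, and this yields the bound $u'_-(x)$, not $u'_+(x)$. The delicate step is therefore to obtain a bound of the form
$$(u_n)'_+(x)\le \frac{u_n(x+h)-u_n(x)}{h}+o_h(1),$$
uniformly in $n$. This requires that the $u_n$ be \emph{uniformly linear on a right neighbourhood of $x$}. I would try to establish this from the structure actually used downstream: the maximizers in Lemmas \ref{lemma:mus-beta-inequal}--\ref{lemma:mus-beta-equal} may be replaced, via Definition \ref{def:PLU} and Proposition \ref{prop:mus-max-min-beta}, by piecewise-linear functions with breakpoints only in $\mathbb{Y}_m$. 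When $x=y^m_k$, these functions are affine on $[y^m_k,y^m_{k+1}]$. Taking $h<y^m_{k+1}-y^m_k$ then makes the forward quotient equal to $(u_n)'_+(x)$ exactly. Passing to the limit gives $\limsup_n(u_n)'_+(x)=\lim_n \frac{u_n(x+h)-u_n(x)}{h}=\frac{u(x+h)-u(x)}{h}\le u'_+(x)$, which completes the argument on that class. I regard verifying that this reduction is legitimate, namely that the sequences fed into the semicontinuity argument can be taken from $\mathcal{U}_m^{\underline{\mathcal{A}}}$, as the step most likely to need care.
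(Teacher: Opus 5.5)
Your proposal has a genuine gap, and it cannot be closed: the lemma as stated is false, and the obstacle you isolated is exactly where it fails. For concave $u$, $u'_+(x)=\sup_{h>0}\frac{u(x+h)-u(x)}{h}$ is a supremum of $\|\cdot\|_\infty$-continuous functionals. Hence $u\mapsto u'_+(x)$ is \emph{lower} semicontinuous. The upper semicontinuous quantity is $u'_-(x)$, and your backward-quotient bound $\limsup_n(u_n)'_+(x)\le u'_-(x)$ cannot be improved. Concretely, for $L\ge\tfrac{3}{2}$ take $x=\tfrac12$, $x_n=\tfrac12+\tfrac1n$ ($n\ge7$), $c_n=(1-\tfrac{3}{2}x_n)/(1-x_n)\in(0,\tfrac12)$, and
\[
u(y)=\min\bigl\{\tfrac{3}{2}y,\ \tfrac12(y-1)+1\bigr\},\qquad u_n(y)=\min\bigl\{\tfrac{3}{2}y,\ c_n(y-1)+1\bigr\}.
\]
All of these lie in $\mathcal{U}_0=\mathcal{U}_{cv}$, and $\|u_n-u\|_\infty\le|c_n-\tfrac12|\to0$. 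The kink of $u_n$ sits at $x_n>\tfrac12$, so $(u_n)'_+(\tfrac12)=\tfrac{3}{2}$ for every $n$, whereas $u'_+(\tfrac12)=\tfrac12$.

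Your fallback therefore proves a different, essentially trivial statement. On $\mathcal{U}_m^{\underline{\mathcal{A}}}$ with $x=y^m_k$, one has $u'_+(x)=\frac{u(y^m_{k+1})-u(y^m_k)}{y^m_{k+1}-y^m_k}$, a continuous function of two point evaluations. This says nothing about $x\notin\mathbb{Y}_m$ or about general $u\in\mathcal{U}_m$. It also cannot serve the downstream purpose, namely attainment of $\bar\beta^m_k$ in Lemma \ref{lemma:mus-beta-inequal}. Passing from $u$ to $\underline{\mathcal{A}}u$ replaces $u'_+(y^m_k)$ by the forward secant slope, which can be strictly smaller. That is why Proposition \ref{prop:mus-max-min-beta} pairs $\max_{\mathcal{U}_m}u'_+(y^m_j)$ with the maximal \emph{left} derivative over $\mathcal{U}_m^{\underline{\mathcal{A}}}$. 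For example, when $m=0$, $\mathcal{U}_0^{\underline{\mathcal{A}}}$ contains only $y\mapsto y$, whose right derivative at $0$ is $1$, while $\max_{\mathcal{U}_0}u'_+(0)=L$.

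The paper's own proof does not establish upper semicontinuity either. Concave difference quotients decrease in $\delta$, so the sets $\mathcal{L}^\delta_{\ge\eta}(x)$ grow as $\delta\downarrow0$. Their intersection is therefore just the set for the largest $\delta$, not $\mathcal{L}_{\ge\eta}(x)$. The membership step also swaps quantifiers: for fixed $\varepsilon$, the bound $\frac{u_0(x+\delta)-u_0(x)}{\delta}\ge\eta-\varepsilon$ holds only for small $\delta$. What the monotone approximation really gives is that $\{u:u'_+(x)>\eta\}=\bigcup_{\delta>0}\{u:\frac{u(x+\delta)-u(x)}{\delta}>\eta\}$ is open, i.e.\ lower semicontinuity, which is consistent with the counterexample above.
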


\begin{proof}
For fixed $x\in[0,1)$, define the upper level set of utility functions in ${\cal U}_m$ by 
\begin{equation*}
    {\cal L}_{\geq {\eta}}(x) := \{u\in {\cal U}_m: u'_+(x)\geq \eta\},
\end{equation*}
where $\eta\in\mathbb{R}_+$ is a positive number. 
It suffices to show that ${\cal L}_{\geq {\eta}}(x)$ is closed for any $\eta\in [0,L]$ (by convention, we regard empty set as closed).
Let
$$
{\cal L}_{\geq {\eta}}^\delta(x) := \left\{u\in {\cal U}_m: \frac{u(x+\delta)-u(x)}{\delta}\geq \eta\right\}.
$$
Then ${\cal L}_{\geq {\eta}}^\delta(x)$ is a closed subset in ${\cal U}_m$. Moreover, since $u$ is concave and $\delta>0$, $\frac{u(x+\delta)-u(x)}{\delta}$ is monotonically decreasing in $\delta$ and 
$$
{\cal U}_m\supseteq {\cal L}_{\geq\eta}(x) \supseteq {\cal L}_{\geq\eta}^{\delta_1}(x) \supseteq {\cal L}_{\geq\eta}^{\delta_2}(x) 
$$
for any $0<\delta_1<\delta_2$. 
Thus
$\bigcap_{\delta\downarrow 0}{\cal L}_{\geq\eta}^\delta(x)$
is a closed set. 
To complete the proof, we need to show that 
$$
{\cal L}_{\geq\eta}(x) =
\bigcap_{\delta\downarrow 0}
{\cal L}_{\geq\eta}^\delta(x). 
$$
Observe that 
$$
{\cal L}_{\geq\eta}(x) := \bigcap_{\varepsilon\downarrow 0}\{u\in {\cal U}_m: u'_+(x)\geq \eta-\varepsilon\}.
$$
Then for $u_0\in {\cal L}_{\geq\eta}(x)$, $u_0 \in \{u\in {\cal U}_m: u'_+(x)\geq \eta-\varepsilon\}$ for all $\varepsilon$ sufficiently small.
On the other hand, for every $u_0\in {\cal L}_{\geq\eta}(x)$, the concavity of $u_0$ ensures that $\frac{u_0(x+\delta)-u_0(x)}{\delta} \leq (u_0)'_+(x)$ and $\frac{u_0(x+\delta)-u_0(x)}{\delta} \uparrow (u_0)'_+(x)$ as $\delta\downarrow 0$.
Thus
$$
u_0\in \bigcap_{\delta,\varepsilon\downarrow 0}\left\{u\in {\cal U}_m: \frac{u(x+\delta)-u(x)}{\delta}\geq \eta-\varepsilon\right\}=
\bigcap_{\delta\downarrow 0}\left\{u\in {\cal U}_m: \frac{u(x+\delta)-u(x)}{\delta}\geq \eta\right\}.
$$
The proof is complete. 
\end{proof}

\subsection{Proof of Lemma \ref{lemma:mus-beta-inequal}.}
We only prove the first inequality, that is, 
\bgeqn 
\label{eq:beta-bar-m}
\bar{\beta}^m_k=\max\limits_{u\in\mathcal{U}^k_m} u^{'}_{+}(y^m_k)\geq\frac{\bar{\alpha}^m_{k+1}-\bar{\alpha}^m_{k}}{y^m_{k+1}-y^m_k},
\edeqn 
for each fixed $k\in\{1,2,\ldots,N_m-1\}$, because the second inequality can be proved analogously. 
By the compactness of $\mathcal{U}^k_m$ proved in Lemma~\ref{lemma:ambiguity-set-compact} and the upper semi-continuity proved in Lemma \ref{lemma-usc}, we know the maximum in (\ref{eq:beta-bar-m}) is attainable. 
Assume for the sake of a contradiction that there exists a function $\bar{u}\in\mathop{\arg\max}_{u\in\mathcal{U}^k_m} u^{'}_{+}(y^m_k)$ with 
\begin{equation*}
\bar{u}(y^m_k)=\bar{\alpha}^m_k
\quad\text{and}\quad
\bar{u}^{'}_{+}(y^m_k)=\bar{\beta}^m_k,
\end{equation*}
such that $\bar{\beta}^m_k=\bar{u}^{'}_{+}(y^m_k)<\frac{\bar{\alpha}^m_{k+1}-\bar{\alpha}^m_{k}}{y^m_{k+1}-y^m_k}$. 
We will prove that there exists a function $\check{u}\in\mathcal{U}^k_m$ with $\check{u}(y^m_k)=\bar{\alpha}^m_k$, such that 
\begin{equation*}
\check{u}^{'}_{+}(y^m_k)=\frac{\bar{\alpha}^m_{k+1}-\bar{\alpha}^m_{k}}{y^m_{k+1}-y^m_k}>\bar{\beta}^m_k=\bar{u}^{'}_{+}(y^m_k),  
\end{equation*}
which is a contradiction to the assumption that $\bar{u}\in\mathop{\arg\max}_{u\in\mathcal{U}^k_m} u^{'}_{+}(y^m_k)$.

{\underline{\bf Construction of $\boldsymbol{\check{u}(y)}$.}}
Due to the non-emptiness of $\mathcal{U}^{k+1}_m$, we can select $\tilde{u}\in\mathcal{U}^{k+1}_m$ with 
\begin{equation*}
\tilde{u}(y^m_{k+1})=\bar{\alpha}^m_{k+1}=\max\limits_{u\in\mathcal{U}_m}u(y^m_{k+1}).
\end{equation*}
Since $\bar{\alpha}^m_k$ is the maximum utility value at $y^m_k$, we have $\tilde{u}(y^m_k)\leq\bar{\alpha}^m_k= \max_{u\in\mathcal{U}_m}u(y^m_{k})$. 
Let 
\begin{equation}
\label{eq:alpha-beta-k-check}
\check{\alpha}_k=\bar{\alpha}^m_{k}, \quad \check{\alpha}_{k+1}=\bar{\alpha}^m_{k+1},\quad \check{\beta}_k=\frac{\check{\alpha}_{k+1}-\check{\alpha}_k}{y^m_{k+1}-y^m_k}.
\end{equation}
For $j=k-1,k-2,\ldots,1$, let
\begin{equation}\label{eq:lemma3-check-left}
\check{\alpha}_{j}=\min\left\{\bar{u}(y^m_{j}),\ \check{\alpha}_{j+1}-\check{\beta}_{j+1}(y^m_{j+1}-y^m_{j})\right\}, \quad
\check{\beta}_j=\frac{\check{\alpha}_{j+1}-\check{\alpha}_j}{y^m_{j+1}-y^m_j}, 
\end{equation}
and for $j=k+1,k+2,\ldots,N_m-1$, let
\begin{equation}\label{eq:lemma3-check-right}
\check{\alpha}_{j+1}=\min\left\{\tilde{u}(y^m_{j+1}),\ \check{\alpha}_{j}+
\check{\beta}_{j-1}(y^m_{j+1}-y^m_{j})\right\}, 
\quad 
\check{\beta}_j=\frac{\check{\alpha}_{j+1}-\check{\alpha}_j}{y^m_{j+1}-y^m_j}.   
\end{equation}
Let
\begin{equation}\label{eq:check-u}
    \check{u}(y)=\left\{ 
    \begin{aligned}
    & \check{\alpha}_{N_m} && \text{for} \quad y=y^m_{N_m}, \\
    &
    \check{\alpha}_j +\check{\beta}_j (y-y^m_{j})
    &&  \text{for} \quad y^m_{j} \leq y < y^m_{j+1},\ j=N_m-1, N_m-2,\ldots,1. 
    \end{aligned}
    \right.
\end{equation}
See Figure \ref{fig:mus-construct} for an illustration. 
By the construction, $\check{u}$ is piecewise linear and continuous over $[y^m_1,y^m_{N_m}]$ with $\check{u}(y^m_j)=\check{\alpha}_j$ for $j=1,\ldots,N_m$, $\check{u}(y^m_k)=\bar{\alpha}^m_k=\bar{u}(y^m_k)$, $\check{u}(y^m_{k+1})=\bar{\alpha}^m_{k+1}=\tilde{u}(y^m_{k+1})$, and 
$$
\check{u}^{'}_{+}(y^m_k)=\check{\beta}_{k}=\frac{\bar{\alpha}^m_{k+1}-\bar{\alpha}^m_k}{y^m_{k+1}-y^m_k}>\bar{\beta}^m_k=\bar{u}^{'}_{+}(y^m_k).
$$
Next, we show $\check{u}\in\mathcal{U}_m$.

\begin{figure}[htbp]
    \centering
    \includegraphics[width=0.7\linewidth]{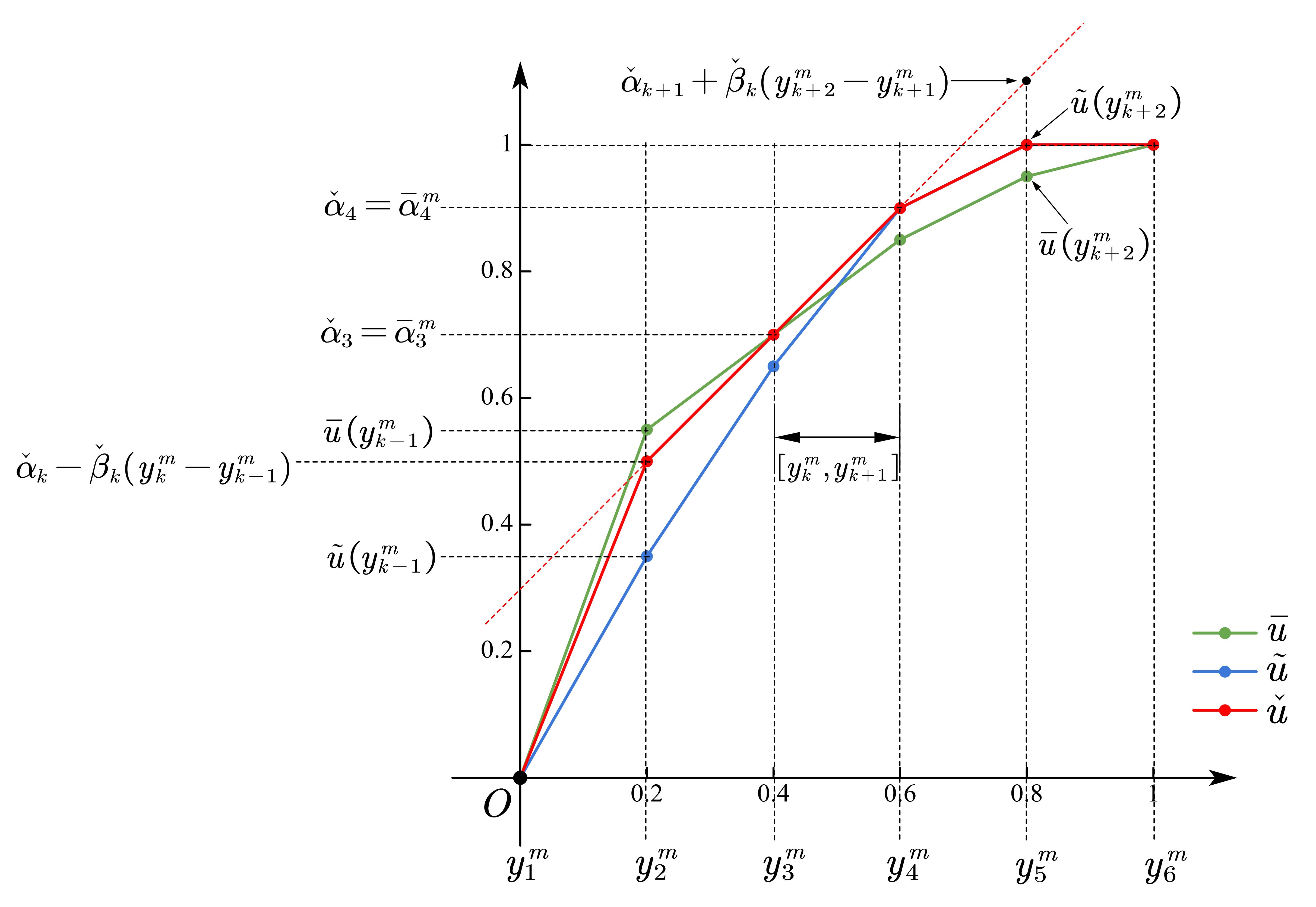}
    \caption{Consider a case where $N_m=6$ and $k=3$. \rm The right derivative of the green curve at $y_k^m$ indicates that
    $\bar{\beta}^m_k
    :=\max\limits_{u\in\mathcal{U}^k_m} u^{'}_{+}(y^m_k)=\bar{u}^{'}_{+}(y^m_k)<\frac{\bar{\alpha}^m_{k+1}-\bar{\alpha}^m_{k}}{y^m_{k+1}-y^m_k}$, which is assumed for a contradiction.
    The red curve illustrates how $\check{u}$ may be constructed. 
    The construction and the proof show that $\check{u}\in {\cal U}_m^k$ and it attains the maximum of  $\max\limits_{u\in\mathcal{U}^k_m} u^{'}_{+}(y^m_k)$, which contradicts the assumption that $\bar{u}$ attains the maximum. 
    Note that $\bar{u}$ and $\tilde{u}$ 
    do not have to be piecewise linear, 
    we plot them piecewise linearly for simplicity.}
    \label{fig:mus-construct}
\end{figure}

{\underline{\bf Monotonicity of $\boldsymbol{\check{u}(y)}$.}}
Due to the piecewise linear structure of $\check{u}$ over the interval $[y^m_1,y^m_{N_m}]$, it suffices to show that $\check{\alpha}_{1}\leq\check{\alpha}_2\leq\cdots\leq\check{\alpha}_{N_m}$ for the monotonicity of $\check{u}$. 
For $j=k+1,k+2,\ldots,N_m$, if $\check{\alpha}_{j}\geq\check{\alpha}_{j-1}$, then by the monotonicity of $\tilde{u}$ and \eqref{eq:lemma3-check-right}, 
\begin{equation*}
\tilde{u}(y^m_{j+1})\geq\tilde{u}(y^m_j)\geq\check{\alpha}_j
\quad\text{and}\quad
\check{\alpha}_{j}+\check{\beta}_{j-1}(y^m_{j+1}-y^m_{j})=\check{\alpha}_{j}+\frac{\check{\alpha}_j-\check{\alpha}_{j-1}}{y^m_j-y^m_{j-1}}(y^m_{j+1}-y^m_{j})\geq\check{\alpha}_j. 
\end{equation*}
Thus $\check{\alpha}_{j+1}=\min\{\tilde{u}(y^m_{j+1}),\ \check{\alpha}_{j}+\check{\beta}_{j-1}(y^m_{j+1}-y^m_{j})\}\geq\check{\alpha}_j$. 
Moreover, $\check{\alpha}_{k+1}=\bar{\alpha}^m_{k+1}\geq\bar{\alpha}^m_k=\check{\alpha}_k$. 
By induction, we have the monotonicity of $\check{u}$ over interval $[y^m_k,y^m_{N_m}]$. 
The monotonicity of $\check{u}$ over $[y^m_1,y^m_k]$ can be established by the monotonicity of $\bar{u}$ and the definition of $\check{\alpha}_{j}$, $\check{\beta}_{j}$ in \eqref{eq:lemma3-check-left} analogously.

{\underline{\bf Concavity of $\boldsymbol{\check{u}(y)}$.}}
Due to the piecewise linear structure of $\check{u}$ within $[y^m_1,y^m_{N_m}]$, the concavity of $\check{u}$ reduces to $\check{\beta}_1\geq\check{\beta}_2\geq\cdots\geq\check{\beta}_{N_m-1}$, where $\check{\beta}_j=\check{u}^{'}_{+}(y^m_j)=\check{u}^{'}_{-}(y^m_{j+1})$, $j=1,\ldots,k-1, k+1,\ldots,N_m-1$, and $\check{\beta}_{k}=\frac{\bar{\alpha}^m_{k+1}-\bar{\alpha}^m_k}{y^m_{k+1}-y^m_k}=\check{u}^{'}_{+}(y^m_{k})=\check{u}^{'}_{-}(y^m_{k+1})$. 
By the definition of $\check{\beta}_j$ and $\check{\alpha}_j$ in \eqref{eq:lemma3-check-left} and \eqref{eq:lemma3-check-right}, 
\begin{align*}
\check{\beta}_j=\frac{\check{\alpha}_{j+1}-\check{\alpha}_j}{y^m_{j+1}-y^m_j}&\leq \frac{\check{\alpha}_{j}+\check{\beta}_{j-1}(y^m_{j+1}-y^m_{j})-\check{\alpha}_j}{y^m_{j+1}-y^m_j}=\check{\beta}_{j-1}, \; \text{for}\;  j=k+1,k+2,\ldots,N_m-1, 
\end{align*}
and 
\begin{align*}
\check{\beta}_{j}=\frac{\check{\alpha}_{j+1}-\check{\alpha}_{j}}{y^m_{j+1}-y^m_j}\geq \frac{\check{\alpha}_{j+1}-\left[\check{\alpha}_{j+1}-\check{\beta}_{j+1}(y^m_{j+1}-y^m_{j})\right]}{y^m_{j+1}-y^m_j}=\check{\beta}_{j+1}, \; \text{for}\;  j=k-1,k-2,\ldots,1.
\end{align*}

{\underline{\bf $\boldsymbol{\check{u}(y)}$ satisfying pairwise comparison constraints.}}
We use Lemma \ref{lemma:mus-u-U_m} to prove that $\check{u}$ satisfies the pairwise comparison constraints in $\mathcal{U}_m$ as in \eqref{eq:U_m-MUS}. 
We do so by finding proper $u^1_j$ and $u^2_j$ in $\mathcal{U}_m$ such that $u^1_j(y^m_{j}) \leq \check{u}(y^m_j) \leq u^2_j(y^m_{j})$ for each $j = 1, \ldots, N_m$. 
The key step is to determine the value $\check{\alpha}_j=\check{u}(y^m_j)$ via \eqref{eq:lemma3-check-left} for $j<k$, which is based on the comparison between $\bar{u}(y^m_j)$ and $\check{\alpha}_{j+1} - \check{\beta}_{j+1}(y^m_{j+1} - y^m_{j})$, and to determine the value $\check{\alpha}_{j+1}=\check{u}(y^m_{j+1})$ via \eqref{eq:lemma3-check-right} for $j> k$, which is based on the comparison between $\tilde{u}(y^m_{j+1})$ and $\check{\alpha}_{j}+\check{\beta}_{j-1}(y^m_{j+1}-y^m_{j})$. 
Before conducting the case-by-case analysis, we summarize a key statement as follows.

\begin{statement}
\label{state:deduction}
Let $\check{\alpha}_j$, $j=1,\ldots,N_m$, be defined as in \eqref{eq:lemma3-check-left} and \eqref{eq:lemma3-check-right}. 
The following assertions hold. 

\begin{enumerate}[label=(\roman*)]
\item 
For $j=k-2,k-3,\ldots,1$, if $\bar{u}(y^m_{j+1})\leq
\check{\alpha}_{j+2}-\check{\beta}_{j+2}(y^m_{j+2}-y^m_{j+1})
$,
then $\bar{u}(y^m_{j})\leq 
\check{\alpha}_{j+1}-\check{\beta}_{j+1}(y^m_{j+1}-y^m_{j})
$.

\item 
For $j=k+2,k+3,\ldots,N_m-1$, if $\tilde{u}(y^m_{j})\leq \check{\alpha}_{j-1}+\check{\beta}_{j-2}(y^m_{j}-y^m_{j-1})$, then $\tilde{u}(y^m_{j+1})\leq \check{\alpha}_{j}+\check{\beta}_{j-1}(y^m_{j+1}-y^m_{j})$.
\end{enumerate}
\end{statement}

\noindent {\bf Proof of the statement.} 
Part (i).
By (\ref{eq:lemma3-check-left}), the inequality 
$\bar{u}(y^m_{j+1})\leq 
\check{\alpha}_{j+2}-\check{\beta}_{j+2}(y^m_{j+2}-y^m_{j+1})
$ 
implies that $\check{\alpha}_{j+1}=\bar{u}(y^m_{j+1})$. 
By the concavity of $\bar{u}$ and the fact that $\check{\alpha}_{j+2}\leq\bar{u}(y^m_{j+2})$, 
\begin{equation*}
\bar{u}^{'}_{-}(y^m_{j+1})\geq\bar{u}^{'}_{+}(y^m_{j+1})\geq\frac{\bar{u}(y^m_{j+2})-\bar{u}(y^m_{j+1})}{y^m_{j+2}-y^m_{j+1}}\geq\frac{\check{\alpha}_{j+2}-\check{\alpha}_{j+1}}{y^m_{j+2}-y^m_{j+1}}=\check{\beta}_{j+1}.  
\end{equation*}
Together with the concavity of $\bar{u}$ and the fact that $\check{\alpha}_{j+1}=\bar{u}(y^m_{j+1})$ and $\bar{u}^{'}_{-}(y^m_{j+1})\geq\check{\beta}_{j+1}$, we have 
$$
\bar{u}(y^m_{j})\leq\bar{u}(y^m_{j+1})-\bar{u}^{'}_{-}(y^m_{j+1})(y^m_{j+1}-y^m_{j})\leq
\check{\alpha}_{j+1}-\check{\beta}_{j+1}(y^m_{j+1}-y^m_{j}). 
$$

Part (ii).
The inequality $\tilde{u}(y^m_{j})\leq \check{\alpha}_{j-1}+\check{\beta}_{j-2}(y^m_{j}-y^m_{j-1})$
implies that $\check{\alpha}_j=\tilde{u}(y^m_j)$. 
By the concavity of $\tilde{u}$ and the fact that $\check{\alpha}_{j-1}\leq\tilde{u}(y^m_{j-1})$, 
$$
\tilde{u}^{'}_{+}(y^m_j)\leq\tilde{u}^{'}_{-}(y^m_j)\leq\frac{\tilde{u}(y^m_j)-\tilde{u}(y^m_{j-1})}{y^m_j-y^m_{j-1}}\leq\frac{\check{\alpha}_j-\check{\alpha}_{j-1}}{y^m_j-y^m_{j-1}}=\check{\beta}_{j-1}. 
$$
By the concavity of $\tilde{u}$ and the the facts $\check{\alpha}_j=\tilde{u}(y^m_j)$ and $\tilde{u}^{'}_{+}(y^m_j)\leq\check{\beta}_{j-1}$, we have
$$
\tilde{u}(y^m_{j+1})\leq\tilde{u}(y^m_j)+\tilde{u}^{'}_{+}(y^m_j)(y^m_{j+1}-y^m_j)\leq\check{\alpha}_j+\check{\beta}_{j-1}(y^m_{j+1}-y^m_j). 
$$ 
This completes the proof of Statement \ref{state:deduction}. 
\hfill $\Box$

First, we focus on $j=k-1,k-2,\ldots,1$ within the sub-horizon $[y^m_1,y^m_k]$. We then consider the satisfaction of pairwise comparison constraints. 
By Statement \ref{state:deduction}, we find that if $\bar{u}(y^m_{j+1}) \leq \check{\alpha}_{j+2} - \check{\beta}_{j+2}(y^m_{j+2} - y^m_{j+1})$ holds for some $j < k-1$, which is equivalent to
$\check{u}(y^m_{j+1}) = \check{\alpha}_{j+1} = \bar{u}(y^m_{j+1})$, then $\check{u}(y^m_{i}) = \check{\alpha}_{i} = \bar{u}(y^m_{i})$ for all $i = 1, \ldots, j+1$.
Based on this observation, we can divide the situation over $[y^m_1,y^m_k]$ into the following three sub-cases.

\underline{Case 1:} $\bar{u}(y^m_{k-1})\leq\check{\alpha}_k-\check{\beta}_k(y^m_k-y^m_{k-1})$. In this case, by the above induction relationship, 
\begin{equation*}
\check{u}(y^m_j)=\check{\alpha}_{j}=\bar{u}(y^m_j) 
\;\text{ for all }\; j=k-1,k-2,\ldots,1. 
\end{equation*}
We let $u^1_j=u^2_j=\bar{u}$ so that $u^1_j(y^m_j)=\bar{u}(y^m_j)\leq
\check{u}(y^m_j)\leq\bar{u}(y^m_j)=u^2_j(y^m_j)$, $j=1,\ldots,k-1$.

\underline{Case 2:} $\check{\alpha}_{j+1}-\check{\beta}_{j+1}(y^m_{j+1}-y^m_{j})<\bar{u}(y^m_{j})$, for all $j=k-1,k-2,\ldots,1$.
In this case, 
\begin{equation*}
\check{u}(y^m_j)=\check{\alpha}_j=\check{\alpha}_{j+1}-\check{\beta}_{j+1}(y^m_{j+1}-y^m_{j}),
\text{ and }
\check{\beta}_j=\frac{\check{\alpha}_{j+1}-\check{\alpha}_j}{y^m_{j+1}-y^m_j}=\frac{\check{\alpha}_{j+1}-\left[\check{\alpha}_{j+1}-\check{\beta}_{j+1}(y^m_{j+1}-y^m_{j})\right]}{y^m_{j+1}-y^m_j}=\check{\beta}_{j+1}
\end{equation*}
for $j=k-1,k-2,\ldots,1$. 
Thus, all $\check{\beta}_j$ equal to $\check{\beta}_k=\frac{\bar{\alpha}^m_{k+1}-\bar{\alpha}^m_k}{y^m_{k+1}-y^m_k}$. 
Considering the concavity of $\tilde{u}$ and the fact that $\tilde{u}(y^m_{k+1})=\bar{\alpha}^m_{k+1}$ and $\tilde{u}(y^m_{k})\leq \bar{\alpha}^m_k$, we have
$$
\tilde{u}^{'}_{-}(y^m_{j+1})\geq\tilde{u}^{'}_{+}(y^m_{j+1})\geq\tilde{u}^{'}_{+}(y^m_{k})\geq\frac{\tilde{u}(y^m_{k+1})-\tilde{u}(y^m_{k})}{y^m_{k+1}-y^m_k}\geq\frac{\bar{\alpha}^m_{k+1}-\bar{\alpha}^m_k}{y^m_{k+1}-y^m_k}=\check{\beta}_k=\check{\beta}_{j+1}, 
$$
$j=k-1,k-2,\ldots,1$. 
Then, for $j=k-1,\ldots,1$, by the concavity of $\tilde{u}$ and $\tilde{u}(y^m_{k})\leq \bar{\alpha}^m_k$, we have 
$$
\tilde{u}(y^m_{j})\leq \tilde{u}(y^m_{j+1})-\tilde{u}^{'}_{-}(y^m_{j+1})(y^m_{j+1}-y^m_{j})\leq\check{\alpha}_{j+1}-\check{\beta}_{j+1}(y^m_{j+1}-y^m_{j})=\check{\alpha}_{j}=\check{u}(y^m_{j}). 
$$
In this case, we let $u^1_{j}=\tilde{u}$ and $u^2_{j}=\bar{u}$, such that $u^1_{j}(y^m_{j})=\tilde{u}(y^m_{j})\leq \check{u}(y^m_{j})\leq\bar{u}(y^m_{j})=u^2_{j}(y^m_{j})$, $j=1,2,\ldots,k-1$.

\underline{Case 3:} there exists $i\in\{2,3,\ldots,k-2\}$, such that $\check{\alpha}_{j+1}-\check{\beta}_{j+1}(y^m_{j+1}-y^m_{j})<\bar{u}(y^m_{j})$ for $j=k-1,$ 
$k-2,\ldots,i$, and $\bar{u}(y^m_{i-1})\leq\check{\alpha}_{i}-\check{\beta}_{i}(y^m_{i}-y^m_{{i-1}})$.
In this case, by the induction relationship in Statement \ref{state:deduction}, we have $\check{u}(y^m_{j})=\check{\alpha}_{{j}}=\bar{u}(y^m_{j})$ for $j=i-1,i-2,\ldots,1$. (similar to Case 1.) 
Moreover, for $j=k-1,k-2,\ldots,i$, applying the same analysis in Case 2, we have $\tilde{u}(y^m_j)\leq\check{u}(y^m_j)\leq\bar{u}(y^m_j)$. 
In this case, we let $u^1_{j}=\tilde{u}$ and $u^2_{j}=\bar{u}$ for $j=i,i+1,\ldots,k-1$, and $u^1_j=u^2_j=\bar{u}$ for $j=1,2,\ldots,i-1$. 
Then, for all $j=1,\ldots,k-1$, $u^1_j(y^m_j)\leq\check{u}(y^m_j)\leq u^2_j(y^m_j)$.

Based on the three cases above, we conclude that there exist $u^1_1,\ldots,u^1_{k-1}\in\mathcal{U}_m$ and $u^2_1,\ldots,u^2_{k-1}\in\mathcal{U}_m$, such that $u^1_j(y^m_{j})\leq \check{u}(y^m_j)\leq u^2_j(y^m_{j})$, $j=1,\ldots,k-1$, 
and $u^1_{k}=u^2_{k}=\bar{u}$ such that $u^1_{k}(y^m_k)=\bar{u}(y^m_k)\leq\check{u}(y^m_k)\leq\bar{u}(y^m_k)=u^2_{k}(y^m_k)$. 
Similarly, we can divide the discussion on sub-horizon $[y^m_{k+1},y^m_{N_m}]$ into three cases by the second argument in Statement \ref{state:deduction}, and select $u^1_{k+1}, u^1_{k+2},\ldots,u^1_{N_m}, u^2_{k+1}, u^2_{k+2},\ldots, u^2_{N_m}$ as either $\tilde{u}\in\mathcal{U}_m$ or $\bar{u}\in\mathcal{U}_m$, such that $u^1_j(y^m_{j})\leq \check{u}(y^m_j)\leq u^2_j(y^m_{j})$, $j=k+1,\ldots,N_m$. 

Before applying Lemma \ref{lemma:mus-u-U_m}, we still need to validate the normalization and Lipschitz continuity of $\check{u}$, which rely on the existence of $u^1_j$ and $u^2_j$, $j=1,\ldots,N_m$.

{\underline{\bf Normalization of $\boldsymbol{\check{u}(y)}$.}}
Since $u^1_1$, $u^2_1$, $u^1_{N_m}$, and $u^2_{N_m}$ coincide with either $\bar{u}$ or $\tilde{u}$, and the latter are normalized with $\bar{u}(y^m_1)=0$, $\bar{u}(y^m_{N_m})=1$, and $\tilde{u}(y^m_1)=0$, $\tilde{u}(y^m_{N_m})=1$, it follows that $\check{u}(y^m_1)\in[u^1_1(y^m_1),u^2_1(y^m_1)]=\{0\}$ and $\check{u}(y^m_{N_m})\in[u^1_{N_m}(y^m_{N_m}), u^2_{N_m}(y^m_{N_m})]=\{1\}$.

{\underline{\bf Lipschitz continuity of $\boldsymbol{\check{u}(y)}$.}}
Since $\check{u}$ is piecewise linear, concave and monotonically increasing, the Lipschitz continuity $\check{u}$ reduces to $\check{\beta}_1 \leq L$, where $L$ is the Lipschitz modulus specified in $\mathcal{U}_{cv}$ (see \eqref{eq:U_c}). 
From the sub-case analysis above, it always holds that $\check{u}(y^m_2) \leq \bar{u}(y^m_2)$. 
Moreover, we have $\check{u}(y^m_1)=0= \bar{u}(y^m_1)$.
Then, by the concavity and Lipschitz continuity of $\bar{u}$, we have
$$
\check{\beta}_1=\frac{\check{\alpha}_2-\check{\alpha}_1}{y^m_2-y^m_1}=\frac{\check{u}(y^m_2)-\check{u}(y^m_1)}{y^m_2-y^m_1}\leq\frac{\bar{u}(y^m_2)-\bar{u}(y^m_1)}{y^m_2-y^m_1}\leq\bar{u}^{'}_{+}(y^m_1)\leq L. 
$$

{\underline{\bf Summary.}}
Since $\check{u}$ is monotonically increasing, concave, Lipschitz continuous, and normalized, we have $\check{u}\in\mathcal{U}_{cv}$. 
Moreover, there exist $u^1_1,\ldots,u^1_{N_m},u^2_1,\ldots,u^2_{N_m}\in\mathcal{U}_m$ such that $u^1_j(y^m_{j})\leq \check{u}(y^m_j)\leq u^2_j(y^m_{j})$ for $j=1,\ldots,N_m$. 
Then, by Lemma \ref{lemma:mus-u-U_m}, $\check{u}\in\mathcal{U}_m$. 
Recall that $\check{u}(y^m_k)=\bar{\alpha}^m_k$, thus $\check{u}\in\mathcal{U}^k_m$. 
The existence of $\check{u}\in\mathcal{U}^k_m$ with $\check{u}^{'}_{+}(y^m_k)=\frac{\bar{\alpha}^m_{k+1}-\bar{\alpha}^m_k}{y^m_{k+1}-y^m_k}>\bar{\beta}^m_k$ 
leads to a contradiction as desired. 
This shows 
$\bar{\beta}^m_k\geq\frac{\bar{\alpha}^m_{k+1}-\bar{\alpha}^m_k}{y^m_{k+1}-y^m_k}$.
\hfill $\Box$

\subsection{Proof of Lemma \ref{lemma:mus-beta-equal}}
We only prove ``$\Longrightarrow$'' as ``$\Longleftarrow$'' can be proved analogously. 
By the compactness of $\mathcal{U}^{k}_m$ and $\mathcal{U}^{k+1}_m$ (see Lemma \ref{lemma:ambiguity-set-compact}), the optimal solution sets $\mathop{\arg\max}\limits_{u\in\mathcal{U}^k_m} u^{'}_{+}(y^m_k)$ and $\mathop{\arg\min}\limits_{u\in\mathcal{U}^{k+1}_m} u^{'}_{-}(y^m_{k+1})$ are both non-empty. 
By Lemma \ref{lemma:mus-beta-inequal}, we have that $\underline{\beta}^m_{k+1}=\min\limits_{u\in\mathcal{U}^{k+1}_m} u^{'}_{-}(y^m_{k+1})\leq\frac{\bar{\alpha}^m_{k+1}-\bar{\alpha}^m_{k}}{y^m_{k+1}-y^m_k}$ always holds. 
Assume for the sake of a contradiction that 
\begin{equation*}
\max\limits_{u\in\mathcal{U}^k_m} u^{'}_{+}(y^m_k)=\frac{\bar{\alpha}^m_{k+1}-\bar{\alpha}^m_{k}}{y^m_{k+1}-y^m_k}
\quad\text{and}\quad
\min\limits_{u\in\mathcal{U}^{k+1}_m} u^{'}_{-}(y^m_{k+1})<\frac{\bar{\alpha}^m_{k+1}-\bar{\alpha}^m_{k}}{y^m_{k+1}-y^m_k}. 
\end{equation*}
Then, there exists a function $\underline{u}\in\mathcal{U}^{k+1}_m$ with $\underline{u}(y^m_{k+1})=\bar{\alpha}^m_{k+1}$ such that $\underline{u}^{'}_{-}(y^m_{k+1})<\frac{\bar{\alpha}^m_{k+1}-\bar{\alpha}^m_{k}}{y^m_{k+1}-y^m_k}$.
We claim that there exists a function $\check{u}\in\mathcal{U}^k_m$ with $\check{u}(y^m_k)=\bar{\alpha}^m_k$ such that 
\begin{equation*}
\check{u}^{'}_{+}(y^m_k)>\frac{\bar{\alpha}^m_{k+1}-\bar{\alpha}^m_{k}}{y^m_{k+1}-y^m_k}=\max\limits_{u\in\mathcal{U}^k_m} u^{'}_{+}(y^m_k),
\end{equation*}
which is a contradiction.

Consider a function $\bar{u}\in\mathop{\arg\max}_{u\in\mathcal{U}^k_m} u^{'}_{+}(y^m_k)$ with $\bar{u}(y^m_{k})=\bar{\alpha}^m_k$ and $\bar{u}^{'}_{+}(y^m_k)=\frac{\bar{\alpha}^m_{k+1}-\bar{\alpha}^m_{k}}{y^m_{k+1}-y^m_k}$. We construct a function: 
\begin{equation}\label{eq:lemma4-check-u}
    \check{u}(y)=\left\{ 
    \begin{aligned}
    &\underline{u}(y) && \text{for} \;y^m_{k+1}\leq y \leq y^m_{N_m},\\
    &p(y)&& \text{for} \;y^m_{k}\leq y < y^m_{k+1},\\
    &\check{\beta}_j (y-y^m_{j})+\check{\alpha}_j && \text{for} \;y^m_{j} \leq y < y^m_{j+1},\ j=k-1,k-2,\ldots,1,\\
    \end{aligned}
    \right.
\end{equation}
where 
$$
p(y)=\min\left\{\underline{u}^{'}_{+}(y^m_k)(y-y^m_k)+\bar{\alpha}^m_k,\ \underline{u}^{'}_{-}(y^m_{k+1})(y-y^m_{k+1})+\bar{\alpha}^m_{k+1}\right\},
$$
and $\check{\beta}_j$ and $\check{\alpha}_j$ are defined in a recursive way as: 
$$
\check{\alpha}_{k-1}=\min\left\{\bar{u}(y^m_{k-1}),\ p(y^m_k)-p^{'}_{+}(y^m_k)(y^m_k-y^m_{k-1})\right\},\quad \check{\beta}_{k-1}=\frac{p(y^m_{k})-\check{\alpha}_{k-1}}{y^m_{k}-y^m_{k-1}},
$$
and for $j=k-2,k-3,\ldots,1$, $\check{\alpha}_{j}$ and $\check{\beta}_{j}$ are defined as in (\ref{eq:lemma3-check-left}). 
By the definition, $\check{u}(y^m_{j})=\underline{u}(y^m_{j})$ for $j=k+1,k+2,\ldots,N_m$, and $\check{u}(y^m_j)=\check{\alpha}^m_j$ for $j=k-1,k-2,\ldots,1$. 
$\check{u}$ is monotonically increasing, concave, and Lipschitz continuous on $[y^m_{k+1},y^m_{N_m}]$ by the corresponding properties of $\underline{u}$. 
Analogous to the discussion in the proof of Lemma \ref{lemma:mus-beta-inequal}, we have that $\check{u}$ is monotonically increasing, concave and Lipschitz continuous on $[y^m_1,y^m_k]$. 
Then, the key point to validate $\check{u}\in\mathcal{U}_{cv}$ is to examine the monotonicity, concavity, and Lipschitz continuity within the interval $[y^m_k,y^m_{k+1}]$.

Recall that $\underline{u}^{'}_{-}(y^m_{k+1})<\frac{\bar{\alpha}^m_{k+1}-\bar{\alpha}^m_{k}}{y^m_{k+1}-y^m_k}$, we have 
\begin{equation*}
    \bar{\alpha}^m_k=\frac{\bar{\alpha}^m_{k+1}-\bar{\alpha}^m_{k}}{y^m_{k+1}-y^m_k}(y^m_k-y^m_{k+1})+\bar{\alpha}^m_{k+1}<\underline{u}^{'}_{-}(y^m_{k+1})(y^m_k-y^m_{k+1})+\bar{\alpha}^m_{k+1},
\end{equation*}
indicating that $\check{u}(y^m_k)=p(y^m_k)=\bar{\alpha}^m_k$. 
By the concavity of $\underline{u}$, 
\begin{equation}\label{eq:inequal-1}
    \underline{u}^{'}_{-}(y^m_{k+1})\leq\frac{\underline{u}(y^m_{k+1})-\underline{u}(y^m_k)}{y^m_{k+1}-y^m_k}\leq\underline{u}^{'}_{+}(y^m_k),
\end{equation}
and by the fact that $\underline{u}(y^m_k)\leq\bar{\alpha}^m_k$, we then have 
\begin{equation*}
    \bar{\alpha}^m_{k+1}=\underline{u}(y^m_{k+1})=\frac{\underline{u}(y^m_{k+1})-\underline{u}(y^m_k)}{y^m_{k+1}-y^m_k}(y^m_{k+1}-y^m_{k})+\underline{u}(y^m_k)\leq\underline{u}^{'}_{+}(y^m_k)(y^m_{k+1}-y^m_{k})+\bar{\alpha}^m_k, 
\end{equation*}
indicating that $\check{u}(y^m_{k+1})=p(y^m_{k+1})=\bar{\alpha}^m_{k+1}=\underline{u}(y^m_{k+1})$. 
Thus, 
$$
\check{u}(y^m_k)=\bar{\alpha}^m_k\leq \bar{\alpha}^m_{k+1}=\check{u}(y^m_{k+1})=\underline{u}(y^m_{k+1}),
$$
implying the continuity and monotonicity of $\check{u}$ over the interval $[y^m_k,y^m_{k+1}]$. 
We then have the overall continuity and monotonicity of $\check{u}$.

Moreover, by \eqref{eq:inequal-1} and the facts $\underline{u}(y^m_{k+1})=\bar{\alpha}^m_{k+1}$, $\underline{u}(y^m_k)\leq\bar{\alpha}^m_{k}$, we have
\begin{equation}\label{eq:inequal-2}
    \underline{u}^{'}_{+}(y^m_k)\geq\frac{\underline{u}(y^m_{k+1})-\underline{u}(y^m_k)}{y^m_{k+1}-y^m_k}\geq\frac{\bar{\alpha}^m_{k+1}-\bar{\alpha}^m_{k}}{y^m_{k+1}-y^m_k}.
\end{equation}
Then, we show in a Statement that at least one of the inequalities in \eqref{eq:inequal-2} holds strictly, under the condition that $\underline{u}^{'}_{-}(y^m_{k+1})<\frac{\bar{\alpha}^m_{k+1}-\bar{\alpha}^m_{k}}{y^m_{k+1}-y^m_k}$.

\begin{statement}\label{stat-2}
  Given the assumption that $\underline{u}^{'}_{-}(y^m_{k+1})<\frac{\bar{\alpha}^m_{k+1}-\bar{\alpha}^m_{k}}{y^m_{k+1}-y^m_k}$, we have 
  $\underline{u}^{'}_{+}(y^m_k)>\frac{\bar{\alpha}^m_{k+1}-\bar{\alpha}^m_{k}}{y^m_{k+1}-y^m_k}.$ 
\end{statement}

\noindent {\bf Proof of the statement.} 
We apply the contradiction by assuming that $\underline{u}(y^m_k)=\bar{\alpha}^m_k$ and $\underline{u}^{'}_{+}(y^m_k)=\frac{\bar{\alpha}^m_{k+1}-\bar{\alpha}^m_{k}}{y^m_{k+1}-y^m_k}$ hold simultaneously.  
By the concavity of $\underline{u}$ and the facts $\underline{u}(y^m_k)=\bar{\alpha}^m_k$, $\underline{u}^{'}_{+}(y^m_k)=\frac{\bar{\alpha}^m_{k+1}-\bar{\alpha}^m_{k}}{y^m_{k+1}-y^m_k}$, we have that the tangent line of $\underline{u}$ at $y^m_k$ is above $\underline{u}$: 
\begin{align*}
\underline{u}(y)
\leq \underline{u}(y^m_k)+\underline{u}^{'}_{+}(y^m_k)(y-y^m_k)
=\bar{\alpha}^m_k+\frac{\bar{\alpha}^m_{k+1}-\bar{\alpha}^m_{k}}{y^m_{k+1}-y^m_k}(y-y^m_k):=L(y),\ \forall y\in[y^m_k,y^m_{k+1}],
\end{align*}
and that the secant line connecting $(y^m_k,\underline{u}(y^m_k))$ and $(y^m_{k+1},\underline{u}(y^m_{k+1}))$ is below $\underline{u}$: 
\begin{align*}
\underline{u}(y)
\geq \underline{u}(y^m_k)+\frac{\underline{u}(y^m_{k+1})-\underline{u}(y^m_k)}{y^m_{k+1}-y^m_k}(y-y^m_k)
=\bar{\alpha}^m_k+\frac{\bar{\alpha}^m_{k+1}-\bar{\alpha}^m_{k}}{y^m_{k+1}-y^m_k}(y-y^m_k):=S(y),
\end{align*}
$\forall y\in[y^m_k,y^m_{k+1}]$. 
We then have $\underline{u}(y)=L(y)=S(y)$, $\forall y\in[y^m_k,y^m_{k+1}]$. 
Thus 
$$
\underline{u}^{'}_{-}(y^m_{k+1})=\frac{\bar{\alpha}^m_{k+1}-\bar{\alpha}^m_{k}}{y^m_{k+1}-y^m_k},
$$
which contradicts the condition $\underline{u}^{'}_{-}(y^m_{k+1})<\frac{\bar{\alpha}^m_{k+1}-\bar{\alpha}^m_{k}}{y^m_{k+1}-y^m_k}.$
Therefore, we have $\underline{u}^{'}_{+}(y^m_k)>\frac{\bar{\alpha}^m_{k+1}-\bar{\alpha}^m_{k}}{y^m_{k+1}-y^m_k}$. 
This completes the proof of Statement \ref{stat-2}.
\hfill $\Box$

Then, $\check{u}$ inherits the properties from $\underline{u}$ that
\begin{equation*}
    \check{u}^{'}_{+}(y^m_k)=p^{'}_{+}(y^m_k)=\underline{u}^{'}_{+}(y^m_k)>\frac{\bar{\alpha}^m_{k+1}-\bar{\alpha}^m_{k}}{y^m_{k+1}-y^m_k}>\underline{u}^{'}_{-}(y^m_{k+1})=p^{'}_{-}(y^m_{k+1})=\check{u}^{'}_{-}(y^m_{k+1}),
\end{equation*}
which means $\check{u}(y)=p(y)$ is a two-piecewise linear concave function within $[y^m_k,y^m_{k+1}]$, with one breakpoint lying in $(y^m_k,y^m_{k+1})$. 
In addition, it is true that $\check{u}^{'}_{+}(y^m_k)>\frac{\bar{\alpha}^m_{k+1}-\bar{\alpha}^m_{k}}{y^m_{k+1}-y^m_k}=\bar{u}^{'}_{+}(y^m_k)=\max\limits_{u\in\mathcal{U}^k_m} u^{'}_{+}(y^m_k)$. 
Moreover, by the concavity of $\underline{u}$ and \eqref{eq:lemma4-check-u}, we have 
$$
\check{u}^{'}_{-}(y^m_{k+1})=\underline{u}^{'}_{-}(y^m_{k+1})\geq\underline{u}^{'}_{+}(y^m_{k+1})=\check{u}^{'}_{+}(y^m_{k+1}),
$$
and by the definition of $\check{\beta}_{k-1}$ and $\check{\alpha}_{k-1}$, we have
\begin{align*}
\check{u}^{'}_{-}(y^m_{k})=\check{\beta}_{k-1}=\frac{p(y^m_{k})-\check{\alpha}_{k-1}}{y^m_{k}-y^m_{k-1}}
\geq \frac{p(y^m_{k})-\left[p(y^m_k)-p^{'}_{+}(y^m_k)(y^m_k-y^m_{k-1})\right]}{y^m_{k}-y^m_{k-1}}=p^{'}_{+}(y^m_k)=\check{u}^{'}_{+}(y^m_{k}). 
\end{align*}
Thus, $\check{u}^{'}_{-}(y^m_{k})\geq\check{u}^{'}_{+}(y^m_{k})>\check{u}^{'}_{-}(y^m_{k+1})\geq\check{u}^{'}_{+}(y^m_{k+1})$, and we have the overall concavity of $\check{u}$.

The satisfaction of pairwise comparison constraints, normalization and Lipschitz continuity of $\check{u}$ can be validated following a proof process analogous to Lemma \ref{lemma:mus-beta-inequal}. We then finally confirm that $\check{u}\in\mathcal{U}_m$. 
Recall that $\check{u}(y^m_k)=\bar{\alpha}^m_k$, so $\check{u}\in\mathcal{U}^k_m$. The existence of $\check{u}\in\mathcal{U}^k_m$ such that $\check{u}^{'}_{+}(y^m_k)>\frac{\bar{\alpha}^m_{k+1}-\bar{\alpha}^m_{k}}{y^m_{k+1}-y^m_k}=\max_{u\in\mathcal{U}^k_m} u^{'}_{+}(y^m_k)$ provides a contradiction, which implies the desired conclusion $\min_{u\in\mathcal{U}^{k+1}_m} u^{'}_{-}(y^m_{k+1})=\frac{\bar{\alpha}^m_{k+1}-\bar{\alpha}^m_{k}}{y^m_{k+1}-y^m_k}$. 
\hfill $\Box$

\subsection{Proof of Theorem \ref{thm:mus-upper-bound}}\label{appd-thm-upper}

Consider the $k$-th interval $[y^m_k,y^m_{k+1}]$, $k=1,\ldots,N_m-1$. By Lemmas \ref{lemma:mus-beta-inequal} and \ref{lemma:mus-beta-equal}, we have one of the following two cases: 
\bgeq 
\text{(a)} \quad \bar{\beta}^m_k>\frac{\bar{\alpha}^m_{k+1}-\bar{\alpha}^m_{k}}{y^m_{k+1}-y^m_k}>\underline{\beta}^m_{k+1} 
\quad \text{or} \quad  \text{(b)}  \quad \bar{\beta}^m_k=\frac{\bar{\alpha}^m_{k+1}-\bar{\alpha}^m_{k}}{y^m_{k+1}-y^m_k}=\underline{\beta}^m_{k+1}. 
\edeq

\noindent
\underline{Case (a).}
We can find that the two linear functions
\bgeq 
f_1(y)=\bar{\beta}^m_k(y-y_k^m)+\bar{\alpha}^m_k\quad  
\text{and}\quad 
f_2(y)=\underline{\beta}^m_{k+1}(y-y_{k+1}^m)+\bar{\alpha}^m_{k+1},
\edeq 
have a unique intersection point at $y^*_{m,k}\in (y^m_k,y^m_{k+1})$ as defined in (\ref{eq:y^*_mk}).

\noindent
\underline{Case (b).}
$f_1(y)$ and $f_2(y)$ coincide over the interval $[y^m_k,y^m_{k+1}]$. 
We then choose a dummy intersection point $y^*_{m,k}=(y^m_k+y^m_{k+1})/2$.

By compactness of $\mathcal{U}^{k}_m$ and $\mathcal{U}^{k+1}_m$ (see Lemma \ref{lemma:ambiguity-set-compact}), the optimal solution sets $\mathop{\arg\max}\limits_{u\in\mathcal{U}^k_m} u^{'}_{+}(y^m_k)$ and $\mathop{\arg\min}\limits_{u\in\mathcal{U}^{k+1}_m} u^{'}_{-}(y^m_{k+1})$ are both non-empty. 
We select a function $\bar{u}\in\mathop{\arg\max}\limits_{u\in\mathcal{U}^k_m} u^{'}_{+}(y^m_k)$ with $\bar{u}(y^m_k)=\bar{\alpha}^m_k$ and $\bar{u}^{'}_{+}(y^m_k)=\bar{\beta}^m_k$, and a function $\underline{u}\in\mathop{\arg\min}\limits_{u\in\mathcal{U}^{k+1}_m} u^{'}_{-}(y^m_{k+1})$ with $\underline{u}(y^m_{k+1})=\bar{\alpha}^m_{k+1}$ and $\underline{u}^{'}_{-}(y^m_{k+1})=\underline{\beta}^m_{k+1}$. 
We then construct a function $\hat{u}$ based on $y^*_{m,k}$, $\bar{u}$ and $\underline{u}$ as:
\begin{equation}\label{eq:mus-max-utility}
    \hat{u}(y)=\left\{ 
    \begin{aligned}
    & \underline{u} (y) && \text{for} \; y_{k+1}^m<y\leq y^m_{N_m},\\
    &\underline{\beta}^m_{k+1}(y-y_{k+1}^m)+\bar{\alpha}^m_{k+1} &&\text{for} \; y^*_{m,k}< y \leq y_{k+1}^m,\\
    &\bar{\beta}^m_k(y-y_k^m)+\bar{\alpha}^m_k && \text{for} \;y_k^m\leq y \leq y^*_{m,k},\\
    &\bar{u}(y) && \text{for} \; y^m_1\leq y< y_k^m.
    \end{aligned}
    \right.
\end{equation}
We claim that 
\bgeq 
\bar{f}(r_2)=\max\limits_{u\in\mathcal{U}_m}u(r_2)=\hat{u}(r_2),\; \forall r_2\in[y^m_k,y^m_{k+1}].
\edeq

First, we show that $\hat{u}\in\mathcal{U}_m$. 
It is obvious that $\hat{u}$ is monotonically increasing, Lipschitz continuous, concave and normalized due to the same properties shared by  $\underline{u}$ and $\bar{u}$ in $\mathcal{U}_m$.
This shows $\hat{u}\in\mathcal{U}_{cv}$. 
By setting $u^1_j=u^2_j=\bar{u}$, $j=1,\ldots,k$ and $u^1_j=u^2_j=\underline{u}$, for $j=k+1,\ldots,N_m$, we assert by Lemma \ref{lemma:mus-u-U_m} that $\hat{u}\in\mathcal{U}_m$.
Second, we show that 
\bgeq 
\hat{u}(y)\geq u(y), \forall y\in[y^m_k,y^m_{k+1}], \text{ and }\;
\forall 
u\in\mathcal{U}_m. 
\edeq
Since
$\hat{u}(y^m_k)=\bar{\alpha}^m_k:=\max\limits_{u\in\mathcal{U}_m}u(y^m_k)$ and $\hat{u}(y^m_{k+1})=\bar{\alpha}^m_{k+1}:=\max\limits_{u\in\mathcal{U}_m}u(y^m_{k+1})$, the inequality holds at $y^m_k$ and $y^m_{k+1}$. 
Thus, we are left to show the inequality over $(y^m_k, y^m_{k+1})$.

Assume for the sake of a contradiction that there exist $\tilde{u}\in\mathcal{U}_m$ and $\tilde{y}\in(y^m_k,y^m_{k+1})$ such that $\tilde{u}(\tilde{y})>\hat{u}(\tilde{y})$.  
Since $y_k^m< y^*_{m,k}< y_{k+1}^m$, we go through two cases: (a-1) $y^m_k<\tilde{y}\leq y^*_{m,k}$ and (b-1) $y^*_{m,k}< \tilde{y}<y^m_{k+1}$. 
In what follows, we focus only on (a-1) since (b-1) can be discussed analogously.

We claim that in this case there accordingly exists a function $\check{u}\in\mathcal{U}^k_m$ with $\check{u}(y^m_k)=\bar{\alpha}^m_k$ such that $\check{u}^{'}_{+}(y^m_k)>\bar{u}^{'}_{+}(y^m_k)=\max\limits_{u\in\mathcal{U}^k_m} u^{'}_{+}(y^m_k)$, which leads to a contradiction. 
We construct the function $\check{u}$ as:  
\begin{equation}\label{eq:check-u-part-2}
    \check{u}(y)=\left\{ 
    \begin{aligned}
    &\tilde{u}(y) && \text{for}\;  y^m_{k+1}\leq y \leq y^m_{N_m},\\
    &\frac{\tilde{u}(y^m_{k+1})-\tilde{u}(\tilde{y})}{y^m_{k+1}-\tilde{y}}(y-y^m_{k+1})+\tilde{u}(y^m_{k+1}) && \text{for}\;\tilde{y}\leq y < y^m_{k+1},\\
    &\frac{\tilde{u}(\tilde{y})-\bar{\alpha}^m_k}{\tilde{y}-y^m_k}(y-y^m_k)+\bar{\alpha}^m_k && \text{for}\;y^m_{k}\leq y <\tilde{y},\\
    &\check{\beta}_j (y-y^m_{j})+\check{\alpha}_j && \text{for}\; y^m_{j} \leq y < y^m_{j+1},\ j=k-1,k-2,\ldots,1,\\
    \end{aligned}
    \right.
\end{equation}
where $\check{\beta}_j$ and $\check{\alpha}_j$ are defined in a recursive way as: 
$$
\check{\alpha}_{k-1}=\min\left\{\bar{u}(y^m_{k-1}),\ \bar{\alpha}^m_k-\frac{\tilde{u}(\tilde{y})-\bar{\alpha}^m_k}{\tilde{y}-y^m_k}(y^m_k-y^m_{k-1})\right\},\quad \check{\beta}_{k-1}=\frac{\bar{\alpha}^m_k-\check{\alpha}_{k-1}}{y^m_{k}-y^m_{k-1}}, 
$$
and for $j=k-2,k-3,\ldots,1$, $\check{\beta}_j$ and $\check{\alpha}_j$ are defined as in (\ref{eq:lemma3-check-left}). 
According to the construction above, it is evident that $\check{u}(y^m_{j})=\tilde{u}(y^m_{j})$ for $j=k+1,k+2,\ldots,N_m$, and $\check{u}(y^m_j)=\check{\alpha}^m_j$ for $j=k-1,k-2,\ldots,1$. 
$\check{u}$ is naturally monotonically increasing, concave, and Lipschitz continuous on $[y^m_{k+1},y^m_{N_m}]$ by the corresponding properties of $\tilde{u}$. 
Analogous to the discussion in the proof of Lemma \ref{lemma:mus-beta-inequal}, we have that $\check{u}$ is monotonically increasing, concave, and Lipschitz continuous on $[y^m_1,y^m_k]$. 
Then, the key point to validate $\check{u}\in\mathcal{U}_{cv}$ is to examine the monotonicity, concavity, and Lipschitz continuity within interval $[y^m_k,y^m_{k+1}]$.

By \eqref{eq:check-u-part-2}, we know that $\check{u}(y)$ is continuous over $[y^m_k,y^m_{k+1}]$ with $\check{u}(y^m_k)=\bar{\alpha}^m_k$, $\check{u}(\tilde{y})=\tilde{u}(\tilde{y})$ and $\check{u}(y^m_{k+1})=\tilde{u}(y^m_{k+1})$. 
Recall that $\tilde{u}(\tilde{y})>\hat{u}(\tilde{y})=\bar{\beta}^m_k(\tilde{y}-y_k^m)+\bar{\alpha}^m_k\geq\bar{\alpha}^m_k$, we have $\check{u}(y^m_k)=\bar{\alpha}^m_k\leq\tilde{u}(\tilde{y})\leq\tilde{u}(y^m_{k+1})=\check{u}(y^m_{k+1})$ by the monotonicity of $\tilde{u}$. 
Thus, we have the overall continuity and monotonicity of $\check{u}$. 
Moreover, by the definition of $\check{\alpha}_{k-1}$ and $\check{\beta}_{k-1}$, we have 
$$
\check{u}^{'}_{-}(y^m_k)=\check{\beta}_{k-1}=\frac{\bar{\alpha}^m_k-\check{\alpha}_{k-1}}{y^m_{k}-y^m_{k-1}}\geq \frac{\bar{\alpha}^m_k-\left[\bar{\alpha}^m_k-\frac{\tilde{u}(\tilde{y})-\bar{\alpha}^m_k}{\tilde{y}-y^m_k}(y^m_k-y^m_{k-1})\right]}{y^m_{k}-y^m_{k-1}}=\frac{\tilde{u}(\tilde{y})-\bar{\alpha}^m_k}{\tilde{y}-y^m_k}=\check{u}^{'}_{+}(y^m_k). 
$$
Considering that $\tilde{u}(\tilde{y})>\bar{\beta}^m_k(\tilde{y}-y_k^m)+\bar{\alpha}^m_k$, by Lemma \ref{lemma:mus-beta-inequal}, we have $\check{u}^{'}_{+}(y^m_k)=\frac{\tilde{u}(\tilde{y})-\bar{\alpha}^m_k}{\tilde{y}-y^m_k}>\bar{\beta}^m_k\geq\frac{\bar{\alpha}^m_{k+1}-\bar{\alpha}^m_{k}}{y^m_{k+1}-y^m_k}$, and $\frac{\bar{\alpha}^m_{k+1}-\bar{\alpha}^m_{k}}{y^m_{k+1}-y^m_k}(\tilde{y}-y^m_{k})+\bar{\alpha}^m_{k}\leq\bar{\beta}^m_k(\tilde{y}-y^m_{k})+\bar{\alpha}^m_{k}$. 
Then, by the inequalities above and the fact $\tilde{u}(\tilde{y})\leq\tilde{u}(y^m_{k+1})\leq\bar{\alpha}^m_{k+1}$, we have
\begin{align*}
\check{u}^{'}_{+}(y^m_k)=\frac{\tilde{u}(\tilde{y})-\bar{\alpha}^m_k}{\tilde{y}-y^m_k}>\frac{\bar{\alpha}^m_{k+1}-\bar{\alpha}^m_{k}}{y^m_{k+1}-y^m_k}
&=\frac{\bar{\alpha}^m_{k+1}-\left[\bar{\alpha}^m_{k+1}-\frac{\bar{\alpha}^m_{k+1}-\bar{\alpha}^m_{k}}{y^m_{k+1}-y^m_k}(y^m_{k+1}-\tilde{y})\right]}{y^m_{k+1}-\tilde{y}}\\
&=\frac{\bar{\alpha}^m_{k+1}-\left[\bar{\alpha}^m_{k}+\frac{\bar{\alpha}^m_{k+1}-\bar{\alpha}^m_{k}}{y^m_{k+1}-y^m_k}(\tilde{y}-y^m_{k})\right]}{y^m_{k+1}-\tilde{y}}\\
&\geq \frac{\bar{\alpha}^m_{k+1}-\left[\bar{\beta}^m_k(\tilde{y}-y_k^m)+\bar{\alpha}^m_k\right]}{y^m_{k+1}-\tilde{y}}\\
&>\frac{\bar{\alpha}^m_{k+1}-\tilde{u}(\tilde{y})}{y^m_{k+1}-\tilde{y}}\geq\frac{\tilde{u}(y^m_{k+1})-\tilde{u}(\tilde{y})}{y^m_{k+1}-\tilde{y}}=\check{u}^{'}_{-}(y^m_{k+1}).  
\end{align*}
Further, by the concavity of $\tilde{u}$, we have 
$$
\check{u}^{'}_{-}(y^m_{k+1})=\frac{\tilde{u}(y^m_{k+1})-\tilde{u}(\tilde{y})}{y^m_{k+1}-\tilde{y}}\geq\tilde{u}^{'}_{-}(y^m_{k+1})\geq\tilde{u}^{'}_{+}(y^m_{k+1})=\check{u}^{'}_{+}(y^m_{k+1}). 
$$
Thus, $\check{u}^{'}_{-}(y^m_{k})\geq\check{u}^{'}_{+}(y^m_{k})>\check{u}^{'}_{-}(y^m_{k+1})\geq\check{u}^{'}_{+}(y^m_{k+1})$, and we have the overall concavity of $\check{u}$.

The satisfaction of pairwise comparison constraints, normalization and Lipschitz continuity can be validated following
a proof analogous to that of Lemma \ref{lemma:mus-beta-inequal}. We then confirm that $\check{u}\in\mathcal{U}_m$. 
Recall that $\check{u}(y^m_k)=\bar{\alpha}^m_k$, so $\check{u}\in\mathcal{U}^k_m$. The existence of $\check{u}\in\mathcal{U}^k_m$ such that $\check{u}^{'}_{+}(y^m_k)>\bar{\beta}^m_k=\max\limits_{u\in\mathcal{U}^k_m} u^{'}_{+}(y^m_k)$ provides a contradiction, which implies the desired conclusion 
$\hat{u}(y)\geq u(y),\ \forall y\in[y^m_k,y^m_{k+1}],\ \forall u\in\mathcal{U}_m.$ 
\hfill $\Box$

\section{Identifying a nominal utility function}
\label{sec:utility}

As we discussed in Section \ref{sec:MUS}, the MUS approach will effectively reduce the ambiguity set ${\cal U}_m$ to a singleton (which is the true utility function) as $m$ goes to infinity. 
In practice, however, it is unlikely to ask the DM unlimited number of questions, either because it is too expensive or because it is simply unrealistic.  
In that case, ${\cal U}_m$ remains a set. 
{While one could formulate a PRO model directly over the ambiguity set ${\cal U}_m$, as studied in \cite{armbruster2015decision,hu2017optimization,guo2024utility}, such worst–case oriented formulations are often observed to produce rather conservative decisions in practice. In this paper, our focus is on preference elicitation rather than on constructing PRO models.} 
It 
{therefore}
might be desirable to identify a so-called {\em nominal utility function} from the set, which is treated as the best 
approximate
utility function to describe the DM's preference given the preference information obtained so far. 
For instance, in robo-advisor systems, the robo-advisor may use the available information about the DM's preferences to identify an approximate nominal utility function and make a recommendation based on it \cite{chen2026robo}.  
A natural way is to choose the nominal utility function  that lies between 
the upper bound function $\bar{f}$ and the lower bound function $\underline{f}$, e.g., $(\bar{f}+\underline{f})/2$. Unfortunately,  
such a function does not lie in ${\cal U}_m$ because $\bar{f}\not\in {\cal U}_m$
in general.
This motivates us to 
use Kantorovich distance to work out 
``the largest'' utility function in ${\cal U}_m$ as a replacement for the upper bound function 
$\bar{f}$. 

\subsection{Smallest utility function in ${\cal U}_m$}\label{sec:lower-bound}

By Theorem \ref{thm:mus-lower-bound}, the uniform lower bound function $\underline{f}$ belongs to $\mathcal{U}_m$, and thus serves as 
a good estimate of the possible smallest utility function in the set. 
By definition, we can also see that 
$\underline{f}$ is the smallest utility function in the sense of Kantorovich distance $\dd_K(\cdot,\cdot)$. The next proposition states this.

\begin{proposition}\label{prop:lower-bound-tight}

Let $\mathcal{U}_m$ 
be defined as in \eqref{eq:U_m-MUS} 
and 
$\underline{f}$ be defined as in \eqref{eq:mus-min-utility}. 
Then 
$\underline{f}$  is the unique optimal 
solution  to problem 
\begin{equation}
\label{eq:smallest-u_m}
\min\limits_{u\in
\mathcal{U}_m
}\ 
\dd_{K}(u,0) := \int_{0}^{1} u(y)dy.
\end{equation}  
\end{proposition}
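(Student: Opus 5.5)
The plan is to first make the objective explicit and then combine pointwise minimality of $\underline{f}$ with continuity. For $u\in\mathcal{U}_m$, Part (ii) of Theorem \ref{thm:mus-converg} (area between curves), applied to $u$ and the zero function, gives $\dd_K(u,0)=\int_0^1 |u(y)-0|\,dy=\int_0^1 u(y)\,dy$, since $u\ge 0$ on $[0,1]$ by monotonicity and $u(0)=0$. Alternatively, with test functions $g\in\mathscr{G}_L$ normalized by $g(0)=0$, integration by parts gives $\int_0^1 g\,du = g(1)-\int_0^1 u(y)g'(y)\,dy$. Hence $\dd_K(u,0)$ is the supremum over $g$ of this expression; the supremum is attained at $g(y)=-y$ or $g(y)=y$, and either choice yields $\int_0^1 u(y)\,dy$ up to the normalization convention the paper adopts. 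I would take $\dd_K(u,0)=\int_0^1 u(y)\,dy$ as the identification stated in \eqref{eq:smallest-u_m}, and spend only a sentence on it.

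For optimality, Theorem \ref{thm:mus-lower-bound} gives $\underline{f}=\underline{u}_{N_m}\in\mathcal{U}_m$, so $\underline{f}$ is feasible for \eqref{eq:smallest-u_m}. By the definition \eqref{eq:mus-upper-lower-def}, $\underline{f}(y)=\min_{v\in\mathcal{U}_m}v(y)\le u(y)$ for every $u\in\mathcal{U}_m$ and every $y\in[0,1]$. Integrating this inequality gives $\int_0^1\underline{f}\le\int_0^1 u$, so $\underline{f}$ is optimal.

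For uniqueness, suppose $u\in\mathcal{U}_m$ is also optimal. Then $u-\underline{f}\ge 0$ on $[0,1]$ and $\int_0^1 (u-\underline{f})\,dy=0$. Both functions are Lipschitz continuous, hence continuous, so $u-\underline{f}$ is a nonnegative continuous function with zero integral. Therefore $u-\underline{f}\equiv 0$, i.e.\ $u=\underline{f}$.

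The only genuinely delicate step is the identification of $\dd_K(u,0)$ with $\int_0^1 u$, because the zero function is not normalized and Part (ii) was stated only for pairs in $\mathcal{U}_m$. I would handle this directly through the integration-by-parts representation above, or simply read \eqref{eq:smallest-u_m} as defining the objective. After that identification, the optimality and uniqueness arguments are routine.
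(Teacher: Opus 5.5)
Your optimality and uniqueness arguments are exactly the paper's: $\underline{f}$ is feasible by Theorem \ref{thm:mus-lower-bound}, you integrate the pointwise inequality $u\ge\underline{f}$, and uniqueness follows because $u-\underline{f}$ is continuous, nonnegative and has zero integral.

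One correction concerns your preliminary remark. The integration-by-parts route does \emph{not} give $\int_0^1 u$. With $g(0)=0$ you get $\int_0^1 g\,du=\int_0^1 g'(y)\bigl(1-u(y)\bigr)\,dy$. The supremum of its absolute value, attained at $g(y)=y$, is therefore $1-\int_0^1 u(y)\,dy$. Without the normalization $g(0)=0$, constant test functions make the supremum $+\infty$.

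Under that literal reading the problem would become maximizing $\int_0^1 u$. Its solution is $u^L_m$, which need not be unique, so the proposition would fail. You should drop that alternative. Instead, take $\dd_K(u,0):=\int_0^1 u(y)\,dy$ as the definition made in \eqref{eq:smallest-u_m}; the paper reads it as the area between the graph of $u$ and the horizontal axis. With that reading your proof is complete.
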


\begin{proof}

By Theorem \ref{thm:mus-lower-bound}, 
$
\underline{f}\in
\mathcal{U}_m$. 
Moreover, since
$\underline{f}(r_2)=\min\limits_{u\in\mathcal{U}_m}u(r_2)$, then 
\begin{equation}
\label{eq:smallest-u_m-u_m-low}
u(y)\geq \underline{f}
(y),\ \forall y\in[0,1],\ \forall u\in\mathcal{U}_m. 
\end{equation}
Thus,
$\dd_K(u,0)=\int_0^1 u(y)dy\geq\int_0^1 
\underline{f}(y)dy=\dd_K(\underline{f}
,0)$ for all $u\in \mathcal{U}_m$, which implies that  
$\underline{f}$ is an optimal solution to problem \eqref{eq:smallest-u_m}. To show the uniqueness of the solution, we assume for the sake of a contradiction that there exists $\hat{u}\in\mathcal{U}_m$ such that $\hat{u}\neq \underline{f}
$ and $\dd_K(\hat{u},0)=\dd_K(\underline{f}
,0)$.  Then $\hat{u}$ must satisfy \eqref{eq:smallest-u_m-u_m-low} and there exists at least one point 
$\hat{y}\in (0,1)$ such that $\hat{u}(\hat{y})>\underline{f}(\hat{y})$. Since both 
$\hat{u}$ and $\underline{f}$ are continuous, then
there exists an open neighborhood of $\hat{y}$, written ${\cal N}(\hat{y})$, 
within $[0,1]$ such that
$\hat{u}(y)>\underline{f}
(y)$, $\forall y\in  {\cal N}(\hat{y})$.
Consequently, we have
\bgeq
\dd_K(\hat{u},0)-\dd_K(\underline{f},0)
=\int_0^1 [\hat{u}(y)-\underline{f}(y)]dy\geq 
\int_{{\cal N}(\hat{y})} 
[\hat{u}(y)-\underline{f}(y)]dy>0
\edeq
a contradiction. 
\end{proof}

By the proposition, we can set $u^S_m:=\underline{f}$ as the smallest utility function in ${\cal U}_m$, 
which is uniquely determined by the current ambiguity set ${\cal U}_m$. 

The Kantorovich distance between $u_m^S$ and a uniform lower bound zero-valued function $0$ is essentially the area 
between 
the curve of $u_m^S$ and the horizontal axis. 
Since $u_m^S=\underline{f}$,
then 
it is also a solution to problem $\min\limits_{u\in\mathcal{U}_m} \dd_{I}(u,0)$. 
However, we cannot use the latter to define 
the smallest utility function because its solution is usually not unique. 
This is because the Kolmogorov 
distance (infinity norm) is determined 
at a single point where the two utility function values have largest gap,
without capturing the difference elsewhere, 
see Example 1 in \citet{liu2025preference} for
an 
illustration.
 
Note that  we can compute $u^S_m=\underline{f}$ 
 as in \eqref{eq:mus-min-utility} where
$ \underline{\alpha}^m_k$ can be obtained 
by solving 
optimization problems \eqref{eq:calcu-min-alpha} for $k = 1,\ldots,N_m$. 
Alternatively, we can compute the value of $ \underline{\alpha}^m_k$
for $k = 1, \ldots, N_m$ all at once by solving the following optimization problem: 
\begin{subequations}\label{eq:lower-bound-re-LP}
\begin{align}
\min\limits_{\alpha^m, \beta^m} & \sum_{j=1}^{N_m-1}\frac{1}{2}\left(y^m_{j+1}-y^m_j\right)\left(\alpha^m_{j+1}+\alpha^m_{j}\right)\\
\text{s.t.}\
& \eqref{const:mus-pairwise}-\eqref{const:mus-domain}.  
\end{align}
\end{subequations}
This is based on the fact that the lower bound $\underline{f}$ is attained by an $(N_m-1)$-piecewise linear functions: 
\bgeqn
\label{eq:smallest-A-B}
{\rm (A)} \quad   \displaystyle\min\limits_{u\in \mathcal{U}_m}\int_{0}^{1} u(y)dy
\quad
\Longleftrightarrow
\quad    
{\rm (B)}\quad
\displaystyle\min\limits_{u\in \mathcal{U}_m^{\underline{\mathcal{A}}}}\int_{0}^{1} u(y)dy
\edeqn



\subsection{Largest utility function in ${\cal U}_m$}\label{sec:upper-bound}

As we mentioned earlier,  
$\bar{f}\not\in {\cal U}_m$
in general, we must find an alternative way to 
figure out the largest utility function. Here we do so by solving 
\begin{equation}\label{eq:upper-bound-initial-integral}
{\rm (A)} \quad \max\limits_{u\in\mathcal{U}_m}
\dd_{K}(u,0):= 
\int_{0}^{1} u(y)dy.
\end{equation}
We use $u^L_m$ to denote the optimal solution where the superscript indicates ``largest''.

The remainder of this section focus on developing a tractable formulation for solving \eqref{eq:upper-bound-initial-integral}. 
Our basic idea is to show that \eqref{eq:upper-bound-initial-integral} is equivalent to a program with the same objective but with the feasible set restricted to a specific class of piecewise linear utility functions within ${\cal U}_m$, and then to reformulate the latter into a program with a linear objective and second-order cone constraints.

Let $\mathcal{U}_m$ be defined as in \eqref{eq:U_m-MUS}, and define $\mathbb{Y}_m=\{y^m_j\}_{j=1}^{N_m}$ as in \eqref{eq:Y_m}. 
For each $u\in\mathcal{U}_m$, let 
$$
\hat{u}_j(y):=u^{'}_{+}(y^m_j)(y-y^m_j)+u(y^m_j)
$$
be a tangent line of $u$ at $y^m_j$ based on the right derivative of $u$ at $y^m_j$, for $j=1,\ldots,N_m$, where $u^{'}_{+}(y^m_{N_m}):=0$ and $\hat{u}_{N_m}:=1$. 
By the monotonicity, concavity, and Lipschitz continuity of $u$, we know that 
$$
L\geq u^{'}_{+}(y^m_{j})\geq u^{'}_{+}(y^m_{j+1})\geq0, \; {\rm for }\; 
j=1,\ldots,N_m-1. 
$$
Consequently, the two adjacent tangent lines $\hat{u}_j(y)$ and $\hat{u}_{j+1}(y)$ either intersect at a unique point within the interval $[y^m_j,y^m_{j+1}]$, when $u^{'}_{+}(y^m_{j})> u^{'}_{+}(y^m_{j+1})$, or coincide in the interval $[y^m_j,y^m_{j+1}]$, when $u^{'}_{+}(y^m_{j})= u^{'}_{+}(y^m_{j+1})$. 
In the latter case, we set a dummy intersection point $(y^m_j+y^m_{j+1})/2$ without affecting the piecewise linear structure in \eqref{eq:PUA}. Based on the above discussions, we denote the intersection point of the two adjacent tangent lines $\hat{u}_j(y)$ and $\hat{u}_{j+1}(y)$ by $z_{j+1}$ for $j=1,\ldots,N_m-1$, where $\hat{u}_j(z_{j+1})=\hat{u}_{j+1}(z_{j+1})$. 
Let $z_1=0$, $z_{N_m+1}=1$ and define $\mathbb{Z}:=\{{z}_j\}_{j=1,\ldots,{N}_m+1}$, satisfying 
\begin{equation}\label{eq:breakpoints-z}
    y^m_j\leq z_{j+1}\leq y^m_{j+1},\ j=1,\ldots,N_m-1,\ 0\leq z_j\leq z_{j+1}\leq1,\ j=1,\ldots,N_m. 
\end{equation}

\begin{definition}[Piecewise-Linear Upper Approximation (PUA)]\label{def:PUA}
Let $\mathcal{U}_m$ and $\mathbb{Y}_m=\{y^m_j\}_{j=1}^{N_m}$ be defined as in \eqref{eq:U_m-MUS} and \eqref{eq:Y_m} respectively.
Define operator 
$\overline{\mathcal{A}}:\mathcal{L}^1([0,1])\to\mathcal{L}^1([0,1])$ 
which maps each $u\in\mathcal{U}_m$ to a piecewise-linear upper approximation (PUA) as follows:
\begin{equation}\label{eq:PUA}
    \overline{\mathcal{A}}u(y)=\left\{ 
    \begin{aligned}
    & 0 && \text{\rm for}\; y=z_1,\\
    & u^{'}_{+}(y^m_j)(y-y^m_j)+u(y^m_j) && \text{\rm for}\; {z}_j< y \leq{z}_{j+1},\ j=1,\ldots,N_m,\\
    & 1 &&\text{\rm for}\; 
    y={z}_{N_m+1},
    \end{aligned}
    \right.
\end{equation}
where $z_1=0$, $z_{N_m+1}=1$ and 
\begin{equation*}
z_{j+1}
=\left\{\begin{aligned}
& \frac{u(y^m_{j+1})-u(y^m_j)-u^{'}_{+}(y^m_{j+1})y^m_{j+1}+u^{'}_{+}(y^m_j)y^m_j}{u^{'}_{+}(y^m_j)-u^{'}_{+}(y^m_{j+1})} && \text{\rm if } u^{'}_{+}(y^m_{j})> u^{'}_{+}(y^m_{j+1})\\
& \frac{1}{2}(y^m_j+y^m_{j+1}) && \text{\rm if } u^{'}_{+}(y^m_{j})=u^{'}_{+}(y^m_{j+1})
\end{aligned}\right.,\ {\rm \rm for} \;
j=1,\ldots,N_m-1.
\end{equation*}
Figure \ref{fig:N-N+1} 
illustrates what $ \overline{\mathcal{A}}u$ may look like.
Let $\mathcal{U}_m^{\overline{\mathcal{A}}}:=\{\overline{\mathcal{A}}u: u\in\mathcal{U}_m\}$ denote the set of all piecewise-linear upper approximation utility functions in ${\cal U}_m$.  
\end{definition}


\begin{theorem}
\label{thm:upper-bound-re-N+1}

Let 
$\mathcal{U}_m^{\overline{\mathcal{A}}}$ be defined as above, and $\mathcal{U}_m^{\underline{\mathcal{A}}}$ be defined as in Definition \ref{def:PLU}. 
Consider programs 
\bgeq 
{\rm (B)}\quad
  \displaystyle\max\limits_{u\in 
  \mathcal{U}_m^{\overline{\mathcal{A}}}
  }\int_{0}^{1} u(y)dy
  \quad {\rm \rm and}
  \quad
{\rm (C)}\quad
\displaystyle\max\limits_{u\in 
\mathcal{U}_m^{\overline{\mathcal{A}}},\ 
\underline{u}\in
\mathcal{U}_m^{\underline{\mathcal{A}}}
}&\; \displaystyle\int_{0}^{1} u(y)dy \\
 {\rm s.t.}&\; u(y_j^m)=\underline{u}(y_j^m),\ j=1,\ldots,N_m.
\edeq
Then the following assertions hold.

\begin{enumerate}

\item[(i)] $\mathcal{U}_m^{\overline{\mathcal{A}}}
\subseteq\mathcal{U}_m$.

\item[(ii)] (A) is equivalent to (B)
in 
terms of the optimal values and optimal solutions.

\item[(iii)] (B) is equivalent to (C) in 
terms of the optimal values and optimal solutions.

\end{enumerate}
\end{theorem}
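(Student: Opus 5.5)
Part (i) is the foundation and I will establish it first. Fix $u\in\mathcal{U}_m$ and write $\overline{\mathcal{A}}u$ as the lower envelope of the tangent lines $\hat u_j$, $j=1,\ldots,N_m$, restricted piecewise to $(z_j,z_{j+1}]$. Concavity of $u$ gives $u'_+(y^m_j)\ge u'_+(y^m_{j+1})$, so the slopes of $\overline{\mathcal{A}}u$ are nonincreasing. Together with continuity at the $z_{j+1}$ (by the definition of the intersection points), this makes $\overline{\mathcal{A}}u$ concave. Its slopes lie in $[0,L]$, which gives monotonicity and the Lipschitz bound.

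For the normalization, I use $\overline{\mathcal{A}}u(0)=0$ and, on $(z_{N_m},1]$, the piece $\hat u_{N_m}\equiv 1$. I then check that the interpolation conditions hold. Since each tangent line $\hat u_j$ lies above $u$ (concavity) and equals $u(y^m_j)$ at $y^m_j$, and $y^m_j\in[z_j,z_{j+1}]$ by \eqref{eq:breakpoints-z}, we get $\overline{\mathcal{A}}u(y^m_j)=u(y^m_j)$. The endpoint case $j=1$ uses $u(0)=0$.

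Once $\overline{\mathcal{A}}u\in\mathcal{U}_{cv}$ and it agrees with $u\in\mathcal{U}_m$ on $\mathbb{Y}_m$, the particular case of Lemma \ref{lemma:mus-u-U_m} yields $\overline{\mathcal{A}}u\in\mathcal{U}_m$. The delicate point is the last segment: continuity between $\hat u_{N_m-1}$ and the constant $1$ at $z_{N_m}$. Here I need $\hat u_{N_m-1}(1)\ge 1$, which holds by the tangent inequality, and the value $1$ at $y^m_{N_m}=1$.

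For (ii), I compare pointwise. For every $u\in\mathcal{U}_m$ and $y\in[0,1]$ the tangent-line inequality gives $\overline{\mathcal{A}}u(y)\ge u(y)$, since every tangent line dominates $u$. Hence $\int\overline{\mathcal{A}}u\ge\int u$, so $\sup(B)\ge\sup(A)$. Part (i) gives the reverse inequality because $\mathcal{U}_m^{\overline{\mathcal{A}}}\subseteq\mathcal{U}_m$. The optimal values therefore coincide, and any optimizer of (B) is feasible and optimal for (A).

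Conversely, if $u^*$ solves (A), then $\int(\overline{\mathcal{A}}u^*-u^*)=0$ with a continuous nonnegative integrand. This forces $u^*=\overline{\mathcal{A}}u^*$, so $u^*\in\mathcal{U}_m^{\overline{\mathcal{A}}}$ and it solves (B). Existence of an optimizer of (A) follows from the compactness in Theorem \ref{thm:mus-converg}(i) together with the continuity of the integral under the sup norm.

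For (iii), I map feasible points of (B) and (C) to each other. Given $u=\overline{\mathcal{A}}v$ feasible for (B), set $\underline{u}:=\underline{\mathcal{A}}v\in\mathcal{U}_m^{\underline{\mathcal{A}}}$. Both $u$ and $\underline{u}$ agree with $v$ on $\mathbb{Y}_m$, so $(u,\underline{u})$ is feasible for (C) with the same objective. Conversely, the $u$-component of any feasible pair of (C) lies in $\mathcal{U}_m^{\overline{\mathcal{A}}}$, so it is feasible for (B), and the linking constraint is redundant for the objective. Thus the optimal values agree, and optimal solutions correspond through projection and lifting.

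I expect the main obstacle to be the verification in (i) that the piecewise definition really is the lower envelope. Specifically, one must show that $z_{j+1}\in[y^m_j,y^m_{j+1}]$ and that the $z_j$ are ordered, including the degenerate equal-slope case with the dummy midpoint. I would handle this from the tangent inequalities $\hat u_j(y^m_{j+1})\ge u(y^m_{j+1})=\hat u_{j+1}(y^m_{j+1})$ and $\hat u_{j+1}(y^m_j)\ge \hat u_j(y^m_j)$, which locate the crossing between $y^m_j$ and $y^m_{j+1}$.
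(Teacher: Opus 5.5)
Your proposal is correct and follows the paper's proof essentially step by step. Part (i) uses preservation of monotonicity, concavity and Lipschitz continuity together with interpolation on $\mathbb{Y}_m$ and Lemma~\ref{lemma:mus-u-U_m}; part (ii) uses $\mathcal{U}_m^{\overline{\mathcal{A}}}\subseteq\mathcal{U}_m$ with the pointwise bound $\overline{\mathcal{A}}u\ge u$; part (iii) pairs each feasible $u$ with its piecewise-linear lower approximation. Your extra checks fill in details the paper leaves implicit: that $z_{j+1}\in[y^m_j,y^m_{j+1}]$, that (A) has a maximizer by compactness, and, most usefully, that any maximizer $u^*$ of (A) satisfies $u^*=\overline{\mathcal{A}}u^*$, which is what justifies equivalence of optimal solutions and not just of optimal values.
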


\begin{proof}

\underline{Part (i).}
By the definition, $\overline{\mathcal{A}}u(y^m_j)=u(y^m_j)$, for $j=1,\ldots,N_m$. 
Note that $\overline{\mathcal{A}}u$ preserves monotonicity, concavity, and Lipschitz continuity of $u$. 
We select $u^1_j=u^2_j=u$ for $j=1,\ldots,N_m$, then it follows by Lemma \ref{lemma:mus-u-U_m} that $\overline{\mathcal{A}}u\in\mathcal{U}_m$.

\underline{Part (ii).}
Denote the optimal value of $(A)$ and $(B)$ by $v_A$ and $v_B$, respectively. 
We show $v_A=v_B$. 
Since $\mathcal{U}_m^{\overline{\mathcal{A}}}\subseteq\mathcal{U}_m$, we have $v_A\geq v_B$.
Conversely, for any ${u}\in\mathcal{U}_m$, the concavity and Lipschitz continuity of ${u}$ imply that $0\leq{u}^{'}_{+}(y^m_{j+1})\leq{u}^{'}_{+}(y)\leq{u}^{'}_{+}(y^m_{j})\leq L$ for any $y\in[y^m_j,y^m_{j+1}],\ j=1,\ldots,N_m-1$. 
Thus, $\overline{\mathcal{A}}{u}(y)\geq{u}(y)$, $\forall y\in[y^m_j,y^m_{j+1}],\ j=1,\ldots,N_m-1$, which means that $\overline{\mathcal{A}}{u}$ provides an upper bound of ${u}$ over $[0,1]$. 
Taking $\tilde{u}$ as an optimal solution of $(A)$, we conclude from the above that $v_B\geq\int_{0}^{1} \overline{\mathcal{A}}\tilde{u}(y)dy\geq\int_{0}^{1} \tilde{u}(y)dy=v_A$. 
See Figure \ref{fig:N-N+1} for a geometric illustration.

\begin{figure}[htbp]
    \centering
    \subfigure[
    $u$ is smooth at $y_j^m$
    ]{
    \label{fig:N-N+1-smooth}
    \includegraphics[width=0.4\linewidth]{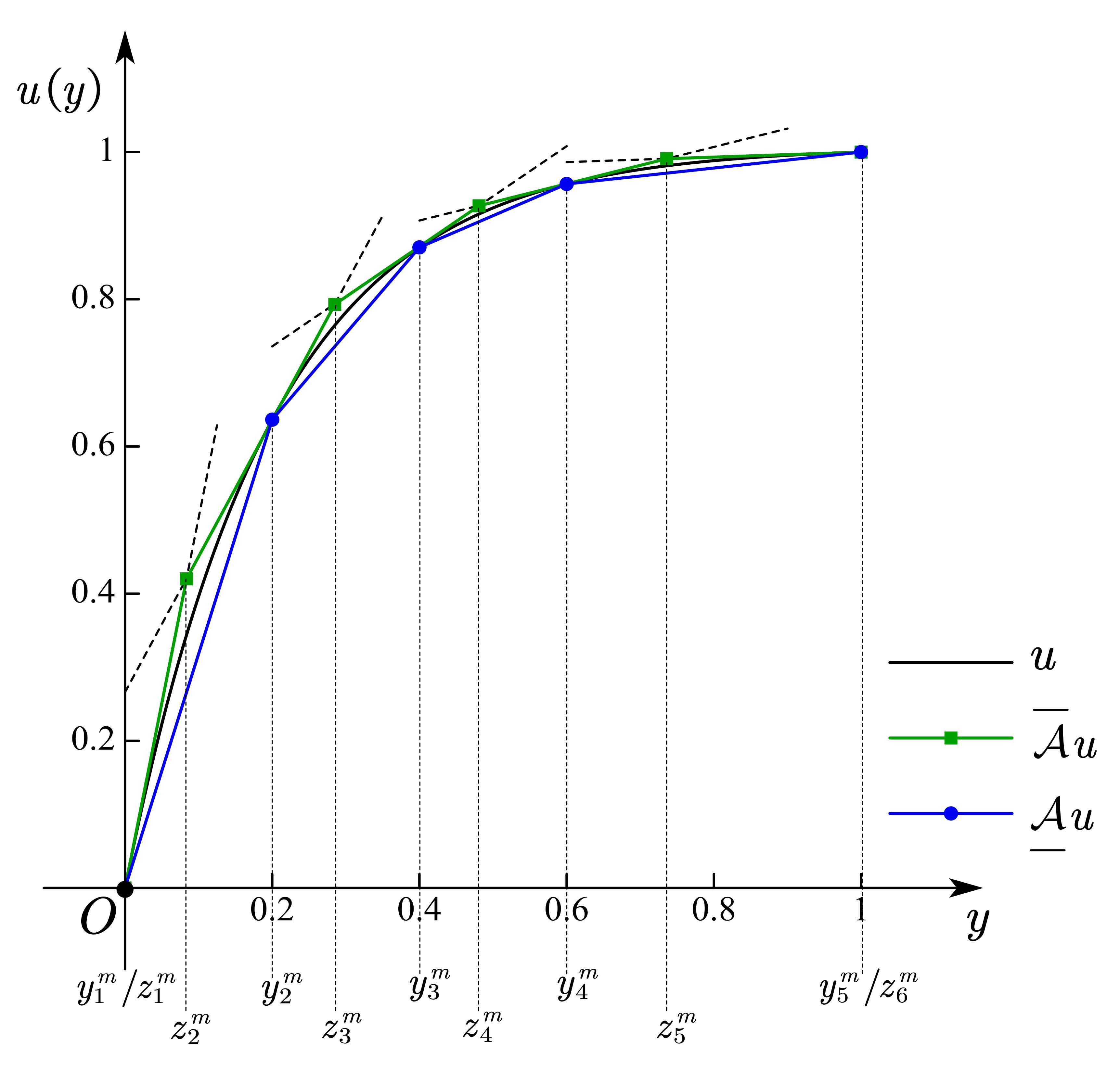}
    }
    \quad\quad
    \subfigure[
    $u$ is non-smooth at $y_j^m$ 
    ]{
    \label{fig:N-N+1-nonsmooth}
    \includegraphics[width=0.4\linewidth]{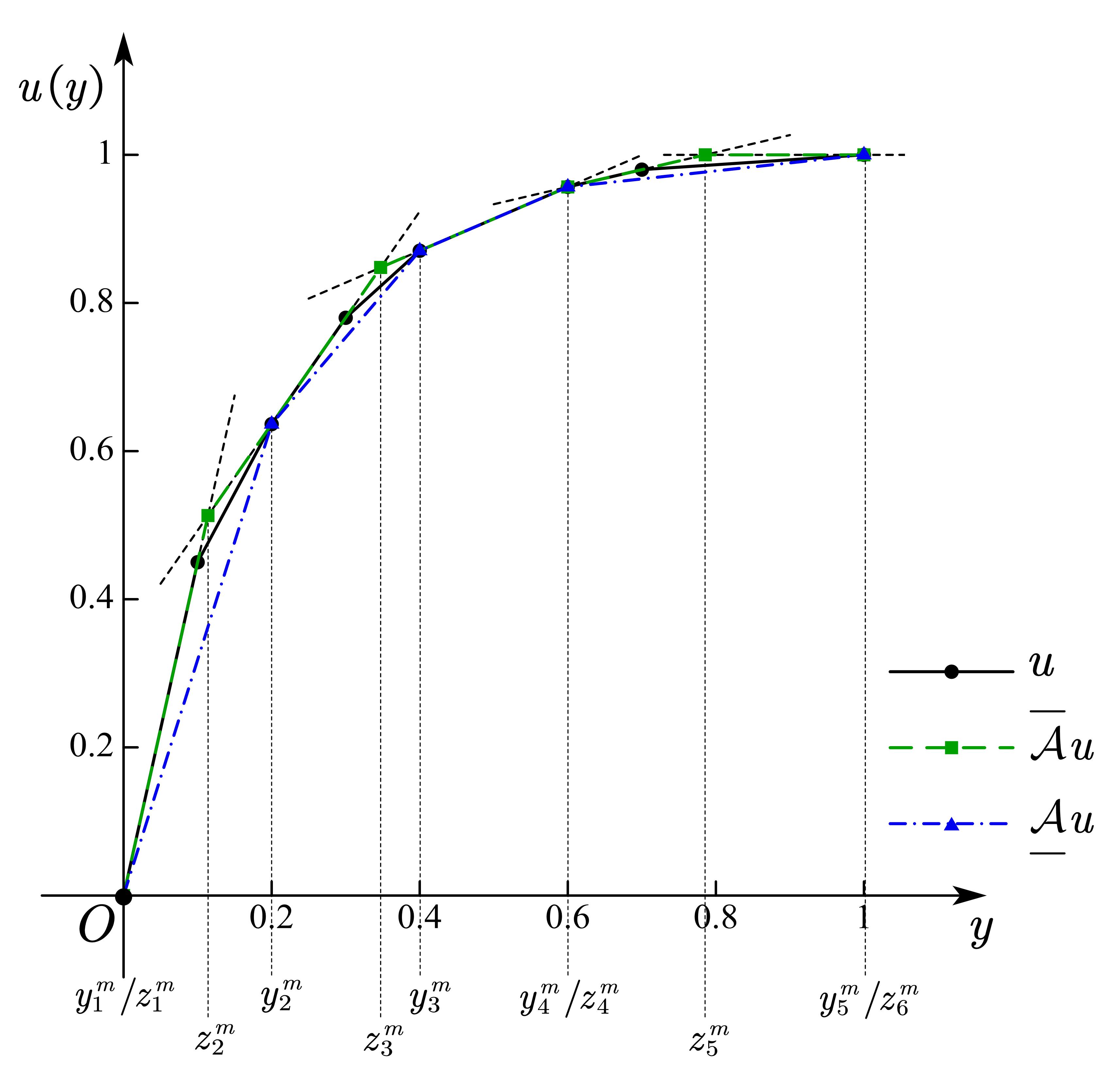}
    }
    \caption{Geometric interpretation of Theorem \ref{thm:upper-bound-re-N+1} with $N_m=5$. 
    We can observe that $\underline{\mathcal{A}}u=\underline{\mathcal{A}}\overline{\mathcal{A}}u=\overline{\mathcal{A}}\underline{\mathcal{A}}u$. 
    }
    \label{fig:N-N+1}
\end{figure}

\underline{Part (iii).}
For any $u\in\mathcal{U}_m^{\overline{\mathcal{A}}}$, consider its PLA $\underline{\mathcal{A}}u$ as $\underline{u}$, which satisfies $u(y_j^m)= \underline{u}(y_j^m)$ for $j=1,\cdots,N_m$ by definition, then $\underline{u}\in\mathcal{U}_m^{\underline{\mathcal{A}}}$. 
Thus, the constraints $\underline{u}\in \mathcal{U}_m^{\underline{\mathcal{A}}}$ with $u(y_j^m)=\underline{u}(y_j^m)$, for $j=1,\ldots,N_m$ do not impose additional constraint on the feasible set $\mathcal{U}_m^{\overline{\mathcal{A}}}$, which means that programs (B) and (C) have an identical feasible set with respect to variable $u$.
The equivalence is therefore straightforward. 
\end{proof}


By Theorem \ref{thm:upper-bound-re-N+1}, we can solve program (A) by solving program (C).
For $\overline{\mathcal{A}}u\in\mathcal{U}_m^{\overline{\mathcal{A}}}$, let ${\alpha}_j:=\overline{\mathcal{A}}u(y^m_j)$, $j=1,\ldots,N_m+1$, ${\beta}_j=({\alpha}_{j+1}-{\alpha}_{j})/(z_{j+1}-z_j)$, $j=1,\ldots,N_m$, and define 
\begin{equation*}
    \overline{\mathcal{A}}u(z)
    =\left\{ 
    \begin{aligned}
    &0 && \text{for } \; z= {z}_{1}=0,\\
    &{\beta}_j(z-{z}_j)+{\alpha}_j && \text{for } \;{z}_j< z \leq{z}_{j+1},\ j=1,\ldots,N_m,\\
    & 1 && \text{for } \; z={z}_{N_m+1}=1.
    \end{aligned}
    \right.
\end{equation*}
Then we can write down program (C) in detail: 
\begin{subequations}\label{eq:upper-bound-re-1}
\begin{align}
\max\limits_{\alpha, \beta,z, {\alpha}^m, {\beta}^m}\
& \sum_{j=1}^{N_m}\frac{1}{2}(z_{j+1}-z_j)(\alpha_{j+1}+\alpha_{j}) \\ 
\text{s.t.}\ 
\label{const:upper-bound-re-1-N+1-passing score}
& \alpha_{j+1}-{\alpha}^m_{j}=\beta_{j}(z_{j+1}-{y}^m_j),\ j=1,\ldots,{N}_m,\\
\label{const:upper-bound-re-1-N+1-end point}
&z_{1}=0,\ z_{{N}_m+1}=1,\\
\label{const:upper-bound-re-1-N+1-z}
& y^m_j\leq z_{j+1}\leq y^m_{j+1},\ j=1,\ldots,N_m-1,\\
\label{const:upper-bound-re-1-N+1-slope}
& \beta_{j}=(\alpha_{j+1}-\alpha_j)/(z_{j+1}-z_j),\ j=1,\ldots,{N}_m,\\
\label{const:upper-bound-re-1-N+1-concave}
& 0\leq\beta_{j+1}\leq \beta_{j}\leq L,\ j=1,\ldots,{N}_m-1,\\
& \alpha_{1} =0,\ \alpha_{N_m+1}=1, 
\label{const:upper-bound-re-1-N+1-domain}
\ \alpha \in \mathbb{R}^{{N}_m+1},\ \beta\in\mathbb{R}_+^{{N}_m},\\
\label{const:upper-bound-re-1-N-pairwise}
& \eqref{const:mus-pairwise}-\eqref{const:mus-domain}, 
\end{align}
\end{subequations}
where \eqref{const:upper-bound-re-1-N+1-passing score} guarantees that the optimal solution shares the same value with $\underline{u}\in\mathcal{U}_m^{\underline{\mathcal{A}}}$ at ${y}^m_j,\ j=1,\ldots,{N}_m$; 
\eqref{const:upper-bound-re-1-N+1-end point} and \eqref{const:upper-bound-re-1-N+1-z} ensure the unknown breakpoints $\{z_j\}_{j=1}^{N_m+1}$ satisfy \eqref{eq:breakpoints-z}; 
\eqref{const:upper-bound-re-1-N+1-slope}--\eqref{const:upper-bound-re-1-N+1-domain} specify the basic shape of the largest utility function $u$ in a piecewise-linear form constructed by $\alpha$ and $\beta$, as described in $\mathcal{U}_{cv}$; 
\eqref{const:upper-bound-re-1-N-pairwise} reflects that $\underline{u}\in\mathcal{U}_m^{\underline{\mathcal{A}}}$.

\begin{proposition}\label{prop:upper-bound-final}
Problem 
\eqref{eq:upper-bound-initial-integral}
can be reformulated as a second-order cone-constrained program:
\begin{subequations}\label{eq:upper-bound-step2-re-socp}
\begin{align}
\max\limits_{\beta,\lambda,{\alpha}^m,{\beta}^m}\
& \frac{1}{2}\left(2-\beta_{{N}_m}-\sum_{j=2}^{{N}_m}\lambda_j\right)\\
{\rm s.t.}\ &  
\left[\begin{array}{c}
     2\left({\alpha}^m_{j}-{\alpha}^m_{j-1}+\beta_{j-1}{y}^m_{j-1}-\beta_{j}{y}^m_{j}\right)\\
     \lambda_j-(\beta_{j-1}-\beta_{j})\\
     \lambda_j+(\beta_{j-1}-\beta_{j})
\end{array}\right] \in \text{\rm SOC}_{3},\ j=2,\ldots,{N}_m,\\ 
& {\beta}^m_{j}\leq \beta_{j}\leq {\beta}^m_{j-1},\ j=2,\ldots,{N}_m-1,\\
& \beta_{1}\geq{\beta}^m_{1},\ \beta_{N_m}\leq {\beta}^m_{N_m-1},\\
& 0\leq\beta_{j+1}\leq \beta_{j}\leq L,\ j=1,\ldots,{{N}_m-1},\\
& \eqref{const:mus-pairwise}-\eqref{const:mus-domain}, \\
& \beta\in\mathbb{R}_+^{N_m},\ 
\lambda\in\mathbb{R}_+^{{N}_m-1},
\end{align}
\end{subequations}
where $\mathrm{SOC}_3 := \left\{ v \in \mathbb{R}^3 \,\middle|\, \left\| [v_1, v_2]^\top \right\| \le v_3 \right\}$ denotes the 3-dimensional second-order cone.
\end{proposition}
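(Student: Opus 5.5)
The plan is to start from the equivalence (A)$\Leftrightarrow$(B)$\Leftrightarrow$(C) of Theorem~\ref{thm:upper-bound-re-N+1}. That reduces \eqref{eq:upper-bound-initial-integral} to program \eqref{eq:upper-bound-re-1}. I will then eliminate the auxiliary variables $\alpha$ and $z$ and rewrite the objective in terms of the tangent slopes $\beta_j$ and the breakpoint values $\alpha^m_j$ only.

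\textbf{Variables.} I reinterpret the variables of \eqref{eq:upper-bound-re-1} as follows. $\beta_j$ is the slope of the tangent piece through $(y^m_j,\alpha^m_j)$, valid on $(z_j,z_{j+1}]$. The unknown breakpoints are given, for $j=2,\dots,N_m$ and whenever $\beta_{j-1}>\beta_j$, by the intersection of consecutive tangents,
$$z_j=\frac{N_j}{\beta_{j-1}-\beta_j},\qquad N_j:=\alpha^m_j-\alpha^m_{j-1}+\beta_{j-1}y^m_{j-1}-\beta_j y^m_j .$$

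\textbf{Objective.} Rather than summing trapezoids, I integrate by parts. Since $u(0)=0$ and $u(1)=1$,
$$\int_0^1u\,dy=1-\int_0^1 y\,u'(y)\,dy=1-\tfrac12\sum_{j=1}^{N_m}\beta_j\,(z_{j+1}^2-z_j^2).$$
Summation by parts with $z_1=0$ and $z_{N_m+1}=1$ gives
$$\sum_{j=1}^{N_m}\beta_j\,(z_{j+1}^2-z_j^2)=\beta_{N_m}-\sum_{j=2}^{N_m}(\beta_{j-1}-\beta_j)\,z_j^2 .$$
Substituting $z_j$ turns each summand into $N_j^2/(\beta_{j-1}-\beta_j)$. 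The objective therefore becomes
$$\tfrac12\Big(2-\beta_{N_m}-\sum_{j=2}^{N_m}\frac{N_j^2}{\beta_{j-1}-\beta_j}\Big).$$
Each term $N_j^2/(\beta_{j-1}-\beta_j)$ is a quadratic-over-linear function, which is jointly convex. I introduce epigraph variables $\lambda_j\ge N_j^2/(\beta_{j-1}-\beta_j)$. Each such inequality is equivalent to the rotated cone condition $\|(2N_j,\ \lambda_j-(\beta_{j-1}-\beta_j))\|\le\lambda_j+(\beta_{j-1}-\beta_j)$, which is exactly the stated $\mathrm{SOC}_3$ membership. Because $\lambda_j$ enters the maximized objective with a negative sign, it is tight at any optimum, so the optimal values and solutions coincide.

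\textbf{Constraint on $z$.} Next I translate \eqref{const:upper-bound-re-1-N+1-z} into the linear bounds on $\beta$. The condition $y^m_{j-1}\le z_j\le y^m_j$ says that tangent $j-1$ still lies above the chord of the PLA at $y^m_j$, and tangent $j$ at $y^m_{j-1}$. Under concavity this holds iff the tangent slope at each interior breakpoint is sandwiched between the adjacent chord slopes, $\beta^m_j\le\beta_j\le\beta^m_{j-1}$, with the one-sided versions $\beta_1\ge\beta^m_1$ and $\beta_{N_m}\le\beta^m_{N_m-1}$ at the ends. I will check this equivalence directly by writing the tangent-at-$y^m_j$ values in terms of $N_j$. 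The remaining pieces carry over unchanged: the monotonicity, concavity and Lipschitz constraints $0\le\beta_{j+1}\le\beta_j\le L$, and the PLA feasibility \eqref{const:mus-pairwise}--\eqref{const:mus-domain} for $(\alpha^m,\beta^m)$.

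\textbf{Degenerate case (main obstacle).} The hardest part is when $\beta_{j-1}=\beta_j$, where the closed form for $z_j$ divides by zero and the paper uses a dummy intersection point. Here I will argue the following.
\begin{itemize}
\item With $\beta_{j-1}=\beta_j$, the cone constraint forces $N_j=0$, which is precisely the condition that the two tangent lines coincide.
\item The corresponding term $(\beta_{j-1}-\beta_j)z_j^2$ in the summation-by-parts identity vanishes, so the choice of dummy point is irrelevant.
\item Conversely, every feasible point of the SOCP yields a feasible PUA function in $\mathcal{U}_m^{\overline{\mathcal{A}}}$ with the same objective value.
\end{itemize}
This two-way correspondence, together with Theorem~\ref{thm:upper-bound-re-N+1}, completes the equivalence.
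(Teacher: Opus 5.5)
Your proposal is correct and follows essentially the paper's own route: you eliminate $z_j$ and $\alpha_j$ via the tangent-intersection formula, convert $y^m_{j-1}\le z_j\le y^m_j$ into the linear bounds on $\beta$, reduce the objective to $\tfrac12\bigl(2-\beta_{N_m}-\sum_{j=2}^{N_m}N_j^2/(\beta_{j-1}-\beta_j)\bigr)$, and pass to epigraph variables and the rotated cone, with your integration-by-parts computation and your treatment of $\beta_{j-1}=\beta_j$ filling in details the paper leaves implicit. Two small fixes: the summation-by-parts identity should read $\sum_{j=1}^{N_m}\beta_j(z_{j+1}^2-z_j^2)=\beta_{N_m}+\sum_{j=2}^{N_m}(\beta_{j-1}-\beta_j)z_j^2$ (plus, not minus, which is what your final objective actually uses), and in the converse direction an SOCP-feasible point yields a function in $\mathcal{U}_m$ but not necessarily in $\mathcal{U}_m^{\overline{\mathcal{A}}}$ (e.g.\ if $\beta_{N_m}>0$, since the PUA fixes the last slope at $0$), which still suffices because (A) and (B) have the same optimal value.
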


\begin{proof}

We start from the reformulation \eqref{eq:upper-bound-re-1} of \eqref{eq:upper-bound-initial-integral}. 
From \eqref{const:upper-bound-re-1-N+1-passing score} and \eqref{const:upper-bound-re-1-N+1-slope}, we obtain the following relations: 
\begin{equation}\label{eq:closed-z}
z_j = \frac{{\alpha}^m_{j}-{\alpha}^m_{j-1}+\beta_{j-1}{y}^m_{j-1}-\beta_{j}{y}^m_j}{\beta_{j-1}-\beta_{j}},\ j=2,\ldots,{N}_m,
\end{equation}
\begin{equation}\label{eq:closed-alpha}
\alpha_{j} = {\alpha}^m_{j-1}+\beta_{j-1}\frac{{\alpha}^m_{j}-{\alpha}^m_{j-1}-\beta_{j}{y}^m_j+\beta_{j}{y}^m_{j-1}}{\beta_{j-1}-\beta_{j}},\ j=2,\ldots,{N}_m. 
\end{equation}
Then, \eqref{const:upper-bound-re-1-N+1-z} is equivalent to
\begin{equation*}
y^m_{j-1}\leq \frac{{\alpha}^m_{j}-{\alpha}^m_{j-1}+\beta_{j-1}{y}^m_{j-1}-\beta_{j}{y}^m_j}{\beta_{j-1}-\beta_{j}} \leq y^m_j,\ j=2,\ldots,{N}_m,
\end{equation*}
i.e.,  
\begin{equation}\label{eq:closed-beta-const}
{\beta}^m_{j}\leq \beta_{j}\leq {\beta}^m_{j-1},\ j=2,\ldots,{N}_m-1,\ \beta_{1}\geq{\beta}^m_{1},\ \beta_{N_m}\leq {\beta}^m_{N_m-1}. 
\end{equation}
Substituting equations \eqref{eq:closed-z}, \eqref{eq:closed-alpha} and \eqref{eq:closed-beta-const} into problem \eqref{eq:upper-bound-re-1}, it can be reformulated as 
\begin{subequations}
\label{eq:upper-bound-step2-re-frac}
\begin{align}
\max\limits_{\beta,{\alpha}^m,{\beta}^m}\
& \frac{1}{2}\left[2-\beta_{{N}_m}-\sum_{j=2}^{{N}_m}\frac{\left({\alpha}^m_{j}-{\alpha}^m_{j-1}+\beta_{j-1}{y}^m_{j-1}-\beta_{j}{y}^m_{j}\right)^2}{\beta_{j-1}-\beta_{j}}\right]\\
\text{s.t.}\ 
\label{const:beta-z}
& {\beta}^m_{j}\leq \beta_{j}\leq {\beta}^m_{j-1},\ j=2,\ldots,{N}_m-1,\\
& \beta_{1}\geq{\beta}^m_{1},\ \beta_{N_m}\leq {\beta}^m_{N_m-1},\\
& 0\leq\beta_{j+1}\leq \beta_{j}\leq L,\ j=1,\ldots,{{N}_m-1},\\
& \eqref{const:mus-pairwise}-\eqref{const:mus-domain}, \\
\label{const:N-domain}
& \beta\in\mathbb{R}_+^{N_m}.  
\end{align}
\end{subequations}
Since \eqref{eq:upper-bound-step2-re-frac} is a fractional programming, we introduce auxiliary variables $\lambda_j,\ j=2,\ldots,{N}_m$ and formulate \eqref{eq:upper-bound-step2-re-frac} as
\begin{subequations}\label{eq:upper-bound-step2-re-qcqp}
\begin{align}
\max\limits_{\beta,\lambda,{\alpha}^m,{\beta}^m}\
& \frac{1}{2}\left(2-\beta_{{N}_m}-\sum_{j=2}^{{N}_m}\lambda_j\right)\\
\label{eq:const--qcqp}
\text{s.t.}\ 
&\left({\alpha}^m_{j}-{\alpha}^m_{j-1}+\beta_{j-1}{y}^m_{j-1}-\beta_{j}{y}^m_{j}\right)^2\leq \lambda_j\left(\beta_{j-1}-\beta_{j}\right),\ j=2,\ldots,{N}_m,\\
& \eqref{const:beta-z}-\eqref{const:N-domain}.
\end{align}
\end{subequations}
The constraint \eqref{eq:const--qcqp} is equivalent to 
$$
4({\alpha}^m_{j}-{\alpha}^m_{j-1}+\beta_{j-1}{y}^m_{j-1}-\beta_{j}{y}^m_{j})^2 + (\lambda_j-(\beta_{j-1}-\beta_{j}))^2\leq (\lambda_j+(\beta_{j-1}-\beta_{j}))^2, \ j=2,\ldots,{N}_m, 
$$
which can be rewritten as the following second-order cone constraint, leading to the desired conclusion: 
$$
\left[\begin{array}{c}
     2\left({\alpha}^m_{j}-{\alpha}^m_{j-1}+\beta_{j-1}{y}^m_{j-1}-\beta_{j}{y}^m_{j}\right)\\
     \lambda_j-(\beta_{j-1}-\beta_{j})\\
     \lambda_j+(\beta_{j-1}-\beta_{j})
\end{array}\right] \in \text{SOC}_{3},\ j=2,\ldots,{N}_m. 
$$
The proof is complete.
\end{proof}

Denote the $\beta$-component of the optimal solution to problem \eqref{eq:upper-bound-step2-re-socp} by $\beta^L_m=[\beta^{L}_{m,1},\ldots,\beta^{L}_{m,{N}_m}]$. 
Then we can use it to calculate optimal $z$ and $\alpha$ via equations \eqref{eq:closed-z} and \eqref{eq:closed-alpha} and denote them by $\mathbb{Z}_m=\{z^{m}_j\}_{j=1}^{{N}_m+1}$ and $\alpha^{L}_m=[\alpha^{L}_{m,1},\ldots,\alpha^{L}_{m,{N}_m+1}]$, respectively. 
We can then use them to construct a piecewise linear utility function
\begin{equation}\label{eq:largest-u}
    u^L_m(y)=\left\{ 
    \begin{aligned}
    &0 && \text{for }\; y= z^{m}_{1},\\
    &\beta^{L}_{m,j}(y-{z}^{m}_j)+\alpha^{L}_{m,j} && \text{for } \; {z}^{m}_j< y \leq{z}^{m}_{j+1},\ j=1,\ldots,N_m,\\
    & 1 && \text{for }\; y={z}^{m}_{N_m+1}.\\
    \end{aligned}
    \right.
\end{equation}
which is the tight upper bound for the utility function in $\mathcal{U}_m$ under the Kantorovich norm. 
Unlike $u_m^S$ in 
\eqref{eq:mus-min-utility}, 
$u_m^L$ is in general 
not necessarily a point-wise upper bound for all utility functions in $\mathcal{U}_m$, 
that is, there might exist a utility function $\hat{u}\in\mathcal{U}_m$ and a point $\hat{y}\in[0,1]$ such that $u^L_m(\hat{y}) < \hat{u}(\hat{y})$. 

\subsection{Identification of the nominal utility}
\label{sec:center-estimation}
Based on the smallest and largest utility functions $u_m^L, u_m^S$ in ${\cal U}_m$ discussed in the preceding subsections, we are ready to discuss how to identify an appropriate nominal utility function that lies between them by solving the following program: 
\begin{equation}\label{eq:center-minmax} 
\min\limits_{u\in \mathcal{U}_m}\max\{\dd_K(u,u^L_m),\dd_K(u,u^S_m)\}.
\end{equation}
We will show in Theorem \ref{thm:opt-nominal} that $u^C_m:=\frac{1}{2}(u^L_m+u^S_m)$ is an optimal solution to problem \eqref{eq:center-minmax} and that the Kantorovich ball $\mathbb{B}_K(u^C_m,3r)$ with $r=\frac{1}{2} \dd_K(u^S_m,u^L_m)$ contains the ambiguity set $\mathcal{U}_m$. 
To prove this major conclusion, we derive a closed-form formulation of the Kantorovich distance between two piecewise linear functions in Theorem \ref{thm:closed-form-kantor}.

Since $u^L_m$ is the $N_m$-piecewise linear function with breakpoints in $\mathbb{Z}_m=\{z^m_j\}_{j=1}^{N_m+1}$,  
and $u^S_m$ is the $(N_m-1)$-piecewise linear function with breakpoints in $\mathbb{Y}_m=\{y^m_j\}_{j=1}^{N_m}$,  
we rewrite, for convenience in the following discussion, both functions in a unified piecewise linear structure. 
Let $\mathcal{L}_m:=\mathbb{Z}_m\cup\mathbb{Y}_m$, and denote by $\widetilde{N}_m:=|\mathcal{L}_m|$ the cardinality of the set $\mathcal{L}_m$, where $N_m+1\leq \widetilde{N}_m\leq 2N_m+1$. 
Let $\mathbb{X}_m:=\{x^m_j\}_{j=1,\ldots,\widetilde{N}_m}$ be the ordered sequence of points in $\mathcal{L}_m$ with fixed $x^m_1=0$ and $x^m_{\widetilde{N}_m}=1$. 
Then, $u^S_m$ and $u^L_m$ can be interpreted as $(\widetilde{N}_m-1)$-piecewise linear functions with breakpoints in $\mathbb{X}_m=\{x^m_j\}_{j=1}^{\widetilde{N}_m}$. 
Accordingly, we use $\tilde{\beta}^S_m,  \tilde{\beta}^L_m\in\mathbb{R}_+^{\widetilde{N}_m-1}$ to denote the vectors of slopes of $u^S_m$ and $u^L_m$ respectively, and use $\tilde{\alpha}^S_m, \tilde{\alpha}^L_m\in\mathbb{R}^{\widetilde{N}_m}$ to denote the utility function values at the breakpoints in $\mathbb{X}_m$ respectively.

We consider a subset $\mathcal{U}_m^{\widetilde{N}_m}\subseteq\mathcal{U}_m$, which consists of piecewise linear utility functions with breakpoints in $\mathbb{X}_m=\{x^m_j\}_{j=1}^{\widetilde{N}_m}$. 
Under these circumstances, 
\begin{equation*}
\int_{0}^{1}g(z)du(z)=\sum\limits_{j=1}^{\widetilde{N}_m-1}\beta_j
\int_{x^m_j}^{x^m_{j+1}}g(t)dt,
\end{equation*}
and 
\begin{equation}\label{eq:Kantorovich--piecewise}
\dd_K(u,\tilde{u})=\sup\limits_{g\in\mathscr{G}_L}\sum\limits_{j=1}^{\widetilde{N}_m-1}(\beta_j-\tilde{\beta}_j)\int_{x^m_j}^{x^m_{j+1}}g(t)dt,
\end{equation}
where $g\in\mathscr{G}_L$ as defined in \eqref{eq:G-kant-a}, $\beta_j$ ($\tilde{\beta_j}$, resp.) is the slope of $u$ ($\tilde{u}$, resp.) in the interval $[x^m_j,x^m_{j+1}]$, $j=1,\ldots,\widetilde{N}_m-1$, and $u,\ \tilde{u}\in\mathcal{U}_m^{\widetilde{N}_m}$.

\begin{theorem}\label{thm:closed-form-kantor}
Given piecewise linear functions $u,\ \tilde{u}\in\mathcal{U}_m^{\widetilde{N}_m}$ with slopes $\beta=[\beta_1,\ldots,\beta_{\widetilde{N}_m-1}]$ and $\tilde{\beta}=[\tilde{\beta}_1,\ldots,\tilde{\beta}_{\widetilde{N}_m-1}]$ respectively, and with function values at breakpoints $\alpha=[\alpha_1,\ldots,\alpha_{\widetilde{N}_m}]$ and $\tilde{\alpha}=[\tilde{\alpha}_1,\ldots,\tilde{\alpha}_{\widetilde{N}_m}]$ respectively, where $\beta_j=\frac{\alpha_{j+1}-\alpha_j}{x^m_{j+1}-x^m_j}$ and $\tilde{\beta}_j=\frac{\tilde{\alpha}_{j+1}-\tilde{\alpha}_j}{x^m_{j+1}-x^m_j}$, $j=1,\ldots,\widetilde{N}_m-1$,  
define $c_j=|\tilde{\alpha}_{j+1}-\alpha_{j+1}+\tilde{\alpha}_j-\alpha_j|$ and $d_j=|\alpha_{j+1}-\tilde{\alpha}_{j+1}+\tilde{\alpha}_j-\alpha_j|$, for $j=1,\ldots,\widetilde{N}_m-1$. 
Then the optimal value of \eqref{eq:Kantorovich--piecewise} is
\bgeq 
\dd_K(u, \tilde{u})=\frac{1}{4}\sum_{j=1}^{\widetilde{N}_m-1} (x^m_{j+1}-x^m_j) \left[\max\{c_j, d_j\}+\frac{c_j^2}{\max\{c_j, d_j\}}\right].
\edeq
\end{theorem}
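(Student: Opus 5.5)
The plan is to bypass the supremum over $\mathscr{G}_L$ in \eqref{eq:Kantorovich--piecewise} and use Part (ii) of Theorem \ref{thm:mus-converg}. For $u,\tilde u\in\mathcal{U}_m^{\widetilde{N}_m}\subseteq\mathcal{U}_m$, that result gives
\[
\dd_K(u,\tilde u)=\int_0^1|u(y)-\tilde u(y)|\,dy=\sum_{j=1}^{\widetilde N_m-1}\int_{x^m_j}^{x^m_{j+1}}|u(y)-\tilde u(y)|\,dy .
\]
If one prefers a self-contained argument, the same identity follows directly from \eqref{eq:Kantorovich--piecewise}. Integrate by parts using $g(0)=0$ and $u(0)-\tilde u(0)=u(1)-\tilde u(1)=0$, so that
\[
\int_0^1 g\,d(u-\tilde u)=-\int_0^1 g'(t)\,\bigl(u(t)-\tilde u(t)\bigr)\,dt .
\]
Taking $|g'|\le 1$ with $g'=-\operatorname{sign}(u-\tilde u)$ attains the bound $\int_0^1|u-\tilde u|$.

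The problem then reduces to computing each interval integral. On $[x^m_j,x^m_{j+1}]$ both functions are affine, so $e:=u-\tilde u$ is affine. Write $h_j:=x^m_{j+1}-x^m_j$, $a:=\alpha_j-\tilde\alpha_j$ and $b:=\alpha_{j+1}-\tilde\alpha_{j+1}$. Then $c_j=|a+b|$ and $d_j=|b-a|$.

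\emph{Case 1: $ab\ge 0$.} Here $e$ does not change sign on the interval, so the integral is $h_j|a+b|/2=h_jc_j/2$. In this case $c_j\ge d_j$, so $\max\{c_j,d_j\}=c_j$, and the claimed expression $\tfrac14 h_j\,(c_j+c_j^2/c_j)=h_jc_j/2$ agrees.

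\emph{Case 2: $ab<0$.} Now $e$ vanishes at an interior point dividing the interval in the ratio $|a|:|b|$. The integral is the sum of two triangle areas:
\[
\frac{h_j}{2}\,\frac{a^2+b^2}{|a-b|}.
\]
In this case $d_j>c_j$, and the identity $c_j^2+d_j^2=2(a^2+b^2)$ gives $\tfrac14 h_j\,(d_j+c_j^2/d_j)=h_j(a^2+b^2)/(2d_j)$, which matches. Summing over $j$ yields the stated formula.

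The only delicate point is the degenerate interval with $a=b=0$, where $\max\{c_j,d_j\}=0$. I would state explicitly the convention that the term $c_j^2/\max\{c_j,d_j\}$ is read as $0$ there, so the summand is $0$, consistent with the zero integral. The main step needing care is justifying the area representation from Part (ii) (equivalently, attainability of the supremum by the sign-function choice of $g'$). Apart from that, the argument is elementary bookkeeping with the algebraic identity $c_j^2+d_j^2=2(a^2+b^2)$.
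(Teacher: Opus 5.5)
Your proof is correct, but it runs in the opposite direction from the paper's.

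The paper does not use the area identity in its proof. It starts from the QCQP representation of the supremum over $\mathscr{G}_L$ in \citet[Proposition~6]{liu2025preference}, with variables $w_j=\int_{x^m_j}^{x^m_{j+1}}g(t)\,dt$ and $p_j=g(x^m_j)$. It rewrites this in the increments $q_j=p_{j+1}-p_j$, passes to the Lagrangian dual, and eliminates $\lambda_j$ through the equality constraints. It then solves the resulting decoupled problems $\min_{s\ge|\beta_j-\tilde\beta_j|}\{a_j s+b_j/s\}$ in closed form, which gives $\lambda^*_j+\mu^*_j=\max\{c_j,d_j\}/(x^m_{j+1}-x^m_j)$. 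The interpretation of $\dd_K(u,\tilde u)$ as the area between the two graphs is only read off afterwards, in Remark~\ref{remark:compute-kantor}.

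You take that area identity as your starting point. You get it either from Part (ii) of Theorem~\ref{thm:mus-converg}, which rests on an external result and so involves no circularity, or from your integration-by-parts argument with the witness $g(t)=-\int_0^t\operatorname{sign}(u(s)-\tilde u(s))\,ds$. The theorem then reduces to noticing that $\tfrac14 h_j\bigl(\max\{c_j,d_j\}+c_j^2/\max\{c_j,d_j\}\bigr)$ is a single expression covering both the trapezoid case $ab\ge 0$ and the two-triangle case $ab<0$.

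Your route has several advantages:
\begin{itemize}
\item it is shorter and self-contained;
\item it needs no strong-duality justification for the QCQP;
\item it explains where the unusual form of the formula comes from;
\item it treats explicitly the degenerate case $a=b=0$, where the formula reads $0/0$ and the paper leaves the convention implicit.
\end{itemize}
The paper's route, in return, stays inside the optimization framework of \citet{liu2025preference} and delivers the optimal dual multipliers as a by-product.
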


\begin{proof}

Let 
\bgeq 
w_j :=\int_{x^m_{j}}^{x^m_{j+1}} g(t)dt, \ {\rm for} \; j=1,\dots,\widetilde{N}_m-1,\ {\rm \rm and}\
p_j:=g(x^m_j), \ {\rm for} \; j=2,\dots,\widetilde{N}_m, 
\edeq 
and $p_1=g(x^m_1)=g(0)=0$. 
By \citet[Proposition 6]{liu2025preference}, we have the following quadratically constrained quadratic program (QCQP) to calculate the Kantorovich distance
\begin{subequations}\label{eq:Kant-qcqp}
\begin{align}
\noalign{$\dd_K(u, \tilde{u})=$}
\max\limits_{\substack{w_1, \ldots, w_{\widetilde{N}_m-1}\\ p_1, \ldots, p_{\widetilde{N}_m}}}\ &\sum_{j=1}^{\widetilde{N}_m-1}  (\beta_j-\tilde{\beta}_j)w_j\\
\text{s.t.}\  &  
 w_{j}\leq \frac{1}{4} (x^m_{j+1}- x^m_{j})^2 -\frac{1}{4}( p_{j+1}-p_{j})^2 + \frac{1}{2}(x^m_{j+1}- x^m_{j})( p_{j+1}+p_{j}),\\
&-w_{j}\leq \frac{1}{4}(x^m_{j+1}- x^m_{j})^2 -\frac{1}{4}(p_{j+1}-p_{j})^2 - \frac{1}{2}(x^m_{j+1}- x^m_{j})(p_{j+1}+p_{j}),\\
&\text{for}  \ j=1,\ldots,\widetilde{N}_m-1. \nonumber
\end{align}
\end{subequations}
Considering that problem \eqref{eq:Kant-qcqp} is convex but not strictly convex, we take a variable transformation to $q_{j}=p_{j+1}-p_j$, $j=1,\ldots,\widetilde{N}_m-1$ and subsequently formulate \eqref{eq:Kant-qcqp} as
\begin{subequations}\label{eq:Kant-qcqp-x}
\begin{align}
\noalign{$\dd_K(u, \tilde{u})=$}
\max\limits_{\substack{w_1, \ldots, w_{\widetilde{N}_m-1}\\ q_1, \ldots, q_{\widetilde{N}_m-1}}}\ &\sum_{j=1}^{\widetilde{N}_m-1}  (\beta_j-\tilde{\beta}_j)w_j\\
\text{s.t.}\  &  
 w_{j}\leq \frac{1}{4} (x^m_{j+1}- x^m_{j})^2 -\frac{1}{4}(q_{j})^2 + \frac{1}{2}(x^m_{j+1}- x^m_{j})(2\sum_{i=1}^{j-1}q_i+q_j ),\\
&-w_{j}\leq \frac{1}{4}(x^m_{j+1}- x^m_{j})^2 -\frac{1}{4}(q_{j})^2 - \frac{1}{2}(x^m_{j+1}- x^m_{j})(2\sum_{i=1}^{j-1}q_i+q_j ),\\
&\text{for}  \ j=1,\ldots,\widetilde{N}_m-1.\nonumber
\end{align}
\end{subequations}
For computational efficiency, we directly consider the Lagrangian dual of the strictly convex QCQP problem \eqref{eq:Kant-qcqp-x}:
\begin{subequations}\label{dual K}
\begin{align}
\noalign{$\dd_K(u, \tilde{u})=$}
\min\limits_{\lambda,\mu}\
& \frac{1}{4}\sum_{j=1}^{\widetilde{N}_m-1} (\lambda_j+\mu_j)({x}^m_{j+1}-{x}^m_{j})^2+\sum_{j=1}^{\widetilde{N}_m-1}\frac{\left(\frac{(\lambda_j-\mu_j)({x}^m_{j+1}-{x}^m_j)}{2}+\sum\limits_{i=j+1}^{\widetilde{N}_m-1}(\lambda_i-\mu_i)({x}^m_{i+1}-{x}^m_i)\right)^2}{\lambda_j+\mu_j} \\
\text{s.t.}\ \  
\label{cons:kant-dual-1}
& \beta_{j}-\tilde{\beta}_{j}-\lambda_j+\mu_j=0,\ j=1,\ldots,\widetilde{N}_m-1,\\
\label{cons:kant-dual-2}
& \frac{1}{2}\sum_{j=1}^{\widetilde{N}_m-1}(\lambda_j-\mu_j)({x}^m_{j+1}-{x}^m_j)=0,\\
& \lambda_j\geq0,\ \mu_j\geq0,\ j=1,\ldots,\widetilde{N}_m-1. 
\end{align}
\end{subequations}

From \eqref{cons:kant-dual-1} and \eqref{cons:kant-dual-2}, we can see that if $\frac{1}{2}\sum_{j=1}^{\widetilde{N}_m-1}(\beta_{j}-\tilde{\beta}_{j})({x}^m_{j+1}-{x}^m_j)\neq 0$, then problem \eqref{dual K} is infeasible. 
Thus we focus on the case when the equality holds. 
This problem is decomposable. 
Moreover, by eliminating the variables 
\begin{equation}\label{eq:change-lambda}
\lambda_j= \beta_j-\tilde{\beta}_j+\mu_j,\ j=1,\ldots,\widetilde{N}_m-1, 
\end{equation}
via the equality constraint \eqref{cons:kant-dual-1}, 
we can solve \eqref{dual K} by solving ${\widetilde{N}_m-1}$ univariate sub-programs:
\begin{equation*}
\min\limits_{
\substack{\mu_j\geq 0,\\
\beta_{j}-\tilde{\beta}_{j}+\mu_j\geq 0
}} 
a_j(2\mu_j+ \beta_{j}-\tilde{\beta}_{j})+\frac{b_j}{2\mu_j+ \beta_{j}-\tilde{\beta}_{j}},  
\end{equation*}
where $a_j=\frac{1}{4}({x}^m_{j+1}-{x}^m_{j})^2 $ and 
\begin{align*}
b_j&={\left(\frac{(\beta_{j}-\tilde{\beta}_{j})({x}^m_{j+1}-{x}^m_j)}{2}+\sum_{i=j+1}^{\widetilde{N}_m-1}(\beta_{i}-\tilde{\beta}_{i})({x}^m_{i+1}-{x}^m_i)\right)^2}\\
&=\left(\frac{\alpha_{j+1}-\alpha_j-\tilde{\alpha}_{j+1}+\tilde{\alpha}_j}{2}+\sum_{i=j+1}^{\widetilde{N}_m-1}(\alpha_{i+1}-\alpha_i-\tilde{\alpha}_{i+1}+\tilde{\alpha}_i)\right)^2=\frac{1}{4}(\tilde{\alpha}_{j+1}-\alpha_{j+1}+\tilde{\alpha}_j-\alpha_j)^2, 
\end{align*}
for $j=1,\ldots,\widetilde{N}_m-1$, and obtain
\begin{equation}\label{eq:kantor-closed}
\lambda_j^*+\mu_j^*
=\left\{\begin{array}{ll}
   \sqrt{\frac{b_j}{a_j}}\  & \text{if}\ \ \sqrt{\frac{b_j}{a_j}}\geq |\beta_{j}-\tilde{\beta}_{j}| \\
   |\beta_j-\tilde{\beta_j}|\  & \text{if}\ \ \sqrt{\frac{b_j}{a_j}}< |\beta_{j}-\tilde{\beta}_{j}|
\end{array}
\right.=\max\left\{\sqrt{ \frac{b_j}{a_j}},\ |\beta_{j}-\tilde{\beta}_{j}|\right\},
\ j=1,\ldots,\widetilde{N}_m-1.  
\end{equation}
Substituting the optimal $\lambda_j^*$, $\mu_j^*$ from \eqref{eq:change-lambda} and \eqref{eq:kantor-closed} back into \eqref{dual K} yields the final result
\begin{align*}
\noalign{$\dd_K(u, \tilde{u})$}
=&\sum_{j=1}^{\widetilde{N}_m-1} \left(a_j\cdot\max\left\{\sqrt{ \frac{b_j}{a_j}},\ |\beta_{j}-\tilde{\beta}_{j}|\right\}+\frac{b_j}{\max\left\{\sqrt{\frac{b_j}{a_j}},\ |\beta_{j}-\tilde{\beta}_{j}|\right\}}\right)\\
=&\sum_{j=1}^{\widetilde{N}_m-1}\left(\frac{1}{4}({x}^m_{j+1}-{x}^m_{j})^2 \cdot \max\left\{\left|\frac{\tilde{\alpha}_{j+1}-\alpha_{j+1}+\tilde{\alpha}_j-\alpha_j}{{x}^m_{j+1}-{x}^m_{j}}\right|,\left|\frac{\alpha_{j+1}-\alpha_j-\tilde{\alpha}_{j+1}+\tilde{\alpha}_j}{{x}^m_{j+1}-{x}^m_{j}}\right|\right\}+\right.\\
& \left.\frac{(\tilde{\alpha}_{j+1}-\alpha_{j+1}+\tilde{\alpha}_j-\alpha_j)^2/4}{\max\left\{\left|\frac{\tilde{\alpha}_{j+1}-\alpha_{j+1}+\tilde{\alpha}_j-\alpha_j}{{x}^m_{j+1}-{x}^m_{j}}\right|,\left|\frac{\alpha_{j+1}-\alpha_j-\tilde{\alpha}_{j+1}+\tilde{\alpha}_j}{{x}^m_{j+1}-{x}^m_{j}}\right|\right\}}\right)\\
=&\frac{1}{4}\sum_{j=1}^{\widetilde{N}_m-1}({x}^{m}_{j+1}-{x}^{m}_{j}) \cdot\bigg[\max\{|\tilde{\alpha}_{j+1}-\alpha_{j+1}+\tilde{\alpha}_j-\alpha_j|, |\alpha_{j+1}-\tilde{\alpha}_{j+1}+\tilde{\alpha}_j-\alpha_j|\}+\\
&\frac{(\tilde{\alpha}_{j+1}-\alpha_{j+1}+\tilde{\alpha}_j-\alpha_j)^2}{\max\{|\tilde{\alpha}_{j+1}-\alpha_{j+1}+\tilde{\alpha}_j-\alpha_j|, |\alpha_{j+1}-\tilde{\alpha}_{j+1}+\tilde{\alpha}_j-\alpha_j|\}}\bigg]. 
\end{align*}
The proof is complete. 
\end{proof}

\begin{example}\label{example:compute-kantor}
We present two examples to intuitively illustrate the closed-form solution of the Kantorovich distance between two piecewise linear functions. In each example, we consider two functions $u$, $\tilde{u}$, defined on $[0,1]$, with three linear pieces between the $N=4$ breakpoints $y_1,y_2,y_3,y_4$. 
Denote the function value at breakpoints by $\alpha_i=u(y_i)$ and $\tilde{\alpha}_i=\tilde{u}(y_i)$ for $i=1,\ldots,N$.  
We assume $\alpha_1=\tilde{\alpha}_1=0$ and $\alpha_4=\tilde{\alpha}_4=1$.

In the first example, the two functions exhibit a first-order dominance relationship, i.e., $\alpha_i\geq \tilde{\alpha}_i$, $i=1,\ldots,N$. 
Then, according to Theorem \ref{thm:closed-form-kantor}, we observe that 
{\setlength{\abovedisplayskip}{0.3em}
 \setlength{\belowdisplayskip}{0.3em}
\bgeq 
\max\left\{|\tilde{\alpha}_{i+1}-\alpha_{i+1}+\tilde{\alpha}_i-\alpha_i|,|\alpha_{i+1}-\tilde{\alpha}_{i+1}+\tilde{\alpha}_i-\alpha_i|\right\}
&=|\tilde{\alpha}_{i+1}-\alpha_{i+1}+\tilde{\alpha}_i-\alpha_i|\\
&=\alpha_{i+1}-\tilde{\alpha}_{i+1}+\alpha_i-\tilde{\alpha}_i,
\edeq 
}
for $i=1,\ldots,N-1$, resulting in  $\dd_K(u,\tilde{u})=\sum_{i=1}^{N-1} e_i$ with 
{\setlength{\abovedisplayskip}{0.3em}
 \setlength{\belowdisplayskip}{0.3em}
\bgeq
e_i=\frac{1}{2}(y_{i+1}-y_i)(\alpha_{i+1}-\tilde{\alpha}_{i+1}+\alpha_i-\tilde{\alpha}_i), 
\edeq
}
which is exactly the area of the difference of their epigraphs.

In the second example, the two functions intersect at a point other than the endpoints, which is 
\bgeq 
(y_j+\frac{(\alpha_j-\tilde{\alpha}_j)(y_{j+1}-y_j)}{\tilde{\alpha}_{j+1}-\tilde{\alpha}_j-\alpha_{j+1}+\alpha_{j}},\ \frac{\tilde{\alpha}_{j+1}\alpha_j-\alpha_{j+1}\tilde{\alpha}_j}{\tilde{\alpha}_{j+1}-\tilde{\alpha}_j-\alpha_{j+1}+\alpha_{j}})
\edeq
with $j=2$, shown as the point $I$ in Figure \ref{fig:compute-kantor}. 
Within the interval $[y_i,y_{i+1}]$, $i=1,\ldots,j-1,j+1,\ldots,N-1$, either $u(y)\geq \tilde{u}(y)$ or $\tilde{u}(y)\geq u(y)$, thus $\max\{|\tilde{\alpha}_{i+1}-\alpha_{i+1}+\tilde{\alpha}_i-\alpha_i|,|\alpha_{i+1}-\tilde{\alpha}_{i+1}+\tilde{\alpha}_i-\alpha_i|\}=|\alpha_{i+1}-\tilde{\alpha}_{i+1}+\alpha_i-\tilde{\alpha}_i|$. We denote $e_i=\frac{1}{2}(y_{i+1}-y_i)|\alpha_{i+1}-\tilde{\alpha}_{i+1}+\alpha_i-\tilde{\alpha}_i|$, which also represents the area of the difference between the epigraphs of $u$ and $\tilde{u}$ within $[y_i,y_{i+1}]$. 
Unlike the previous case, within the interval $[y_j,y_{j+1}]$, $u$ and $\tilde{u}$ intersect, with $\alpha_j>\tilde{\alpha}_j$ and $\tilde{\alpha}_{j+1}>\alpha_{j+1}$, thus 
{\setlength{\abovedisplayskip}{0.3em}
 \setlength{\belowdisplayskip}{0.3em}
\bgeq 
\max\{|\tilde{\alpha}_{j+1}-\alpha_{j+1}+\tilde{\alpha}_j-\alpha_j|,|\alpha_{j+1}-\tilde{\alpha}_{j+1}+\tilde{\alpha}_j-\alpha_j|\}
& =|\alpha_{j+1}-\tilde{\alpha}_{j+1}+\tilde{\alpha}_j-\alpha_j|\\
&=\alpha_j-\tilde{\alpha}_j+\tilde{\alpha}_{j+1}-\alpha_{j+1}.
\edeq
}
Then we denote
\begin{align*}
e_j&=\frac{1}{4}(y_{j+1}-y_j)\left[\alpha_j-\tilde{\alpha}_j+\tilde{\alpha}_{j+1}-\alpha_{j+1}+\frac{(\tilde{\alpha}_{j+1}-\alpha_{j+1}+\tilde{\alpha}_j-\alpha_j)^2}{\alpha_j-\tilde{\alpha}_j+\tilde{\alpha}_{j+1}-\alpha_{j+1}}\right]\\
&=\frac{1}{2}(y_{j+1}-y_j)\frac{(\alpha_j-\tilde{\alpha}_j)^2+(\tilde{\alpha}_{j+1}-\alpha_{j+1})^2}{\tilde{\alpha}_{j+1}-\tilde{\alpha}_j-\alpha_{j+1}+\alpha_j}\\
&=\frac{1}{2}\left[\frac{(\alpha_j-\tilde{\alpha}_j)(y_{j+1}-y_j)}{\tilde{\alpha}_{j+1}-\tilde{\alpha}_j-\alpha_{j+1}+\alpha_{j}}\right] (\alpha_j-\tilde{\alpha}_j)+\frac{1}{2}\left[y_{j+1}-\left(y_j+\frac{(\alpha_j-\tilde{\alpha}_j)(y_{j+1}-y_j)}{\tilde{\alpha}_{j+1}-\tilde{\alpha}_j-\alpha_{j+1}+\alpha_{j}}\right)\right](\tilde{\alpha}_{j+1}-\alpha_{j+1}), 
\end{align*}
which is exactly the area of the two triangles constructed by $\max\{u(y),\ \tilde{u}(y)\}-\min\{u(y),\ \tilde{u}(y)\}$ over the interval $[y_j,y_{j+1}]$. 
Therefore, we have $\dd_K(u,\tilde{u})=\sum_{i=1}^{N-1} e_i$ by Theorem \ref{thm:closed-form-kantor}. 

\begin{figure}[htbp]
    \centering
    \subfigure
    {
    \includegraphics[width=0.42\linewidth]{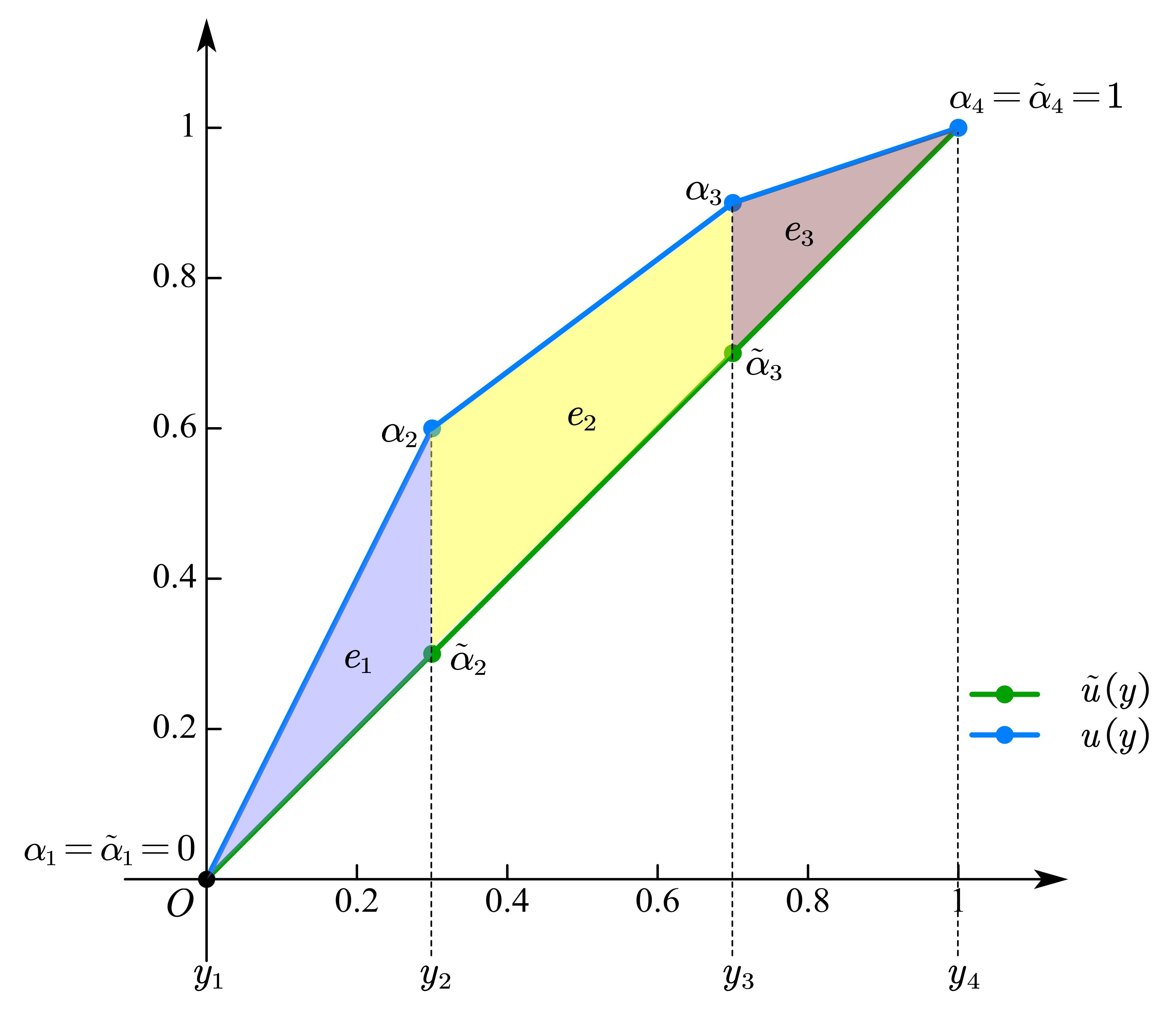}
    }
    \quad
    \subfigure
    {
    \includegraphics[width=0.42\linewidth]{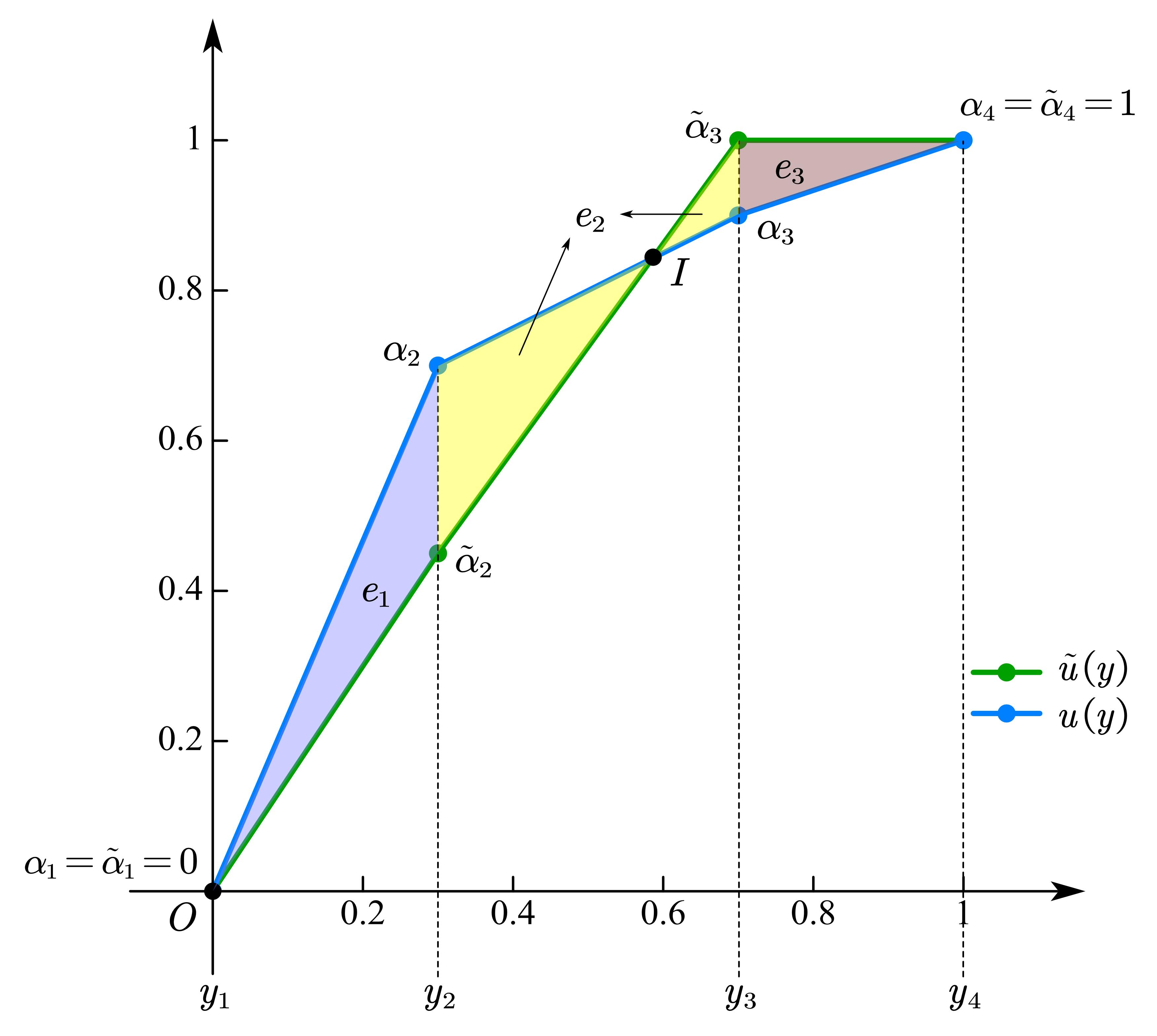}
    }
    \caption{Two intuitive examples of the computation of the Kantorovich distance}
    \label{fig:compute-kantor}
\end{figure}

\end{example}

\begin{remark}\label{remark:compute-kantor}
Observing from Theorem \ref{thm:closed-form-kantor} and Example \ref{example:compute-kantor},
we can conclude that, given piecewise linear functions $u,\ \tilde{u}\in\mathcal{U}_m^{\widetilde{N}_m}$, the Kantorovich distance $\dd_K(u,\tilde{u})$ is exactly the area of the difference between $\operatorname{epi}(u)\cup \operatorname{epi}(\tilde{u})$ and $\operatorname{epi}(u)\cap \operatorname{epi}(\tilde{u})$, where $\operatorname{epi}(\cdot)$ denotes the epigraph of a given function. 
See Part (ii) in Theorem \ref{thm:mus-converg}. 
Thus
\bgeqn 
\label{eq:dd_K-u-tilde-u}
\dd_K(u,\tilde{u})=\int_0^1 \left(\max\{u(y),\tilde{u}(y)\}-\min\{u(y),\tilde{u}(y)\}\right)dy.
\edeqn 
\end{remark}

\begin{theorem}\label{thm:opt-nominal}
Let $u^L_m$ be one largest utility function in the ambiguity set $\mathcal{U}_m$ defined in \eqref{eq:largest-u}, and $u^S_m$ be one smallest utility function in the ambiguity set $\mathcal{U}_m$ defined in \eqref{eq:mus-min-utility}. 
Let $u^C_m:=\frac{1}{2}(u^L_m+u^S_m)$, i.e., 
\begin{equation*}
    u^C_m(y)=\left\{ 
    \begin{aligned}
    &0 && {\rm for} \quad y= x^{m}_{1},\\
    &\frac{1}{2}(\tilde{\beta}^{L}_{m,j}+\tilde{\beta}^{S}_{m,j})(y-{x}^{m}_j)+\frac{1}{2}(\tilde{\alpha}^{L}_{m,j}+\tilde{\alpha}^{S}_{m,j}) && {\rm for}\quad {x}^{m}_j< y \leq{x}^{m}_{j+1},\ j=1,\ldots,\widetilde{N}_m-1,\\
    & 1 && {\rm for}\quad y={x}^{m}_{\widetilde{N}_m}.\\
    \end{aligned}
    \right.
\end{equation*}
Then the following assertions hold. 
\begin{itemize}
\item[(i)] $u^S_m(y)\leq u^C_m(y)\leq u^L_m(y)$, $\forall y\in[0,1]$, and $\dd_K(u^C_m,u^L_m)=\dd_K(u^C_m,u^S_m)=\frac{1}{2} \dd_K(u^S_m,u^L_m)$.

\item[(ii)] 
$u^C_m$ is an optimal solution of problem \eqref{eq:center-minmax}.

\item[(iii)] Let $r=\frac{1}{2} \dd_K(u^S_m,u^L_m)$. Then 
\bgeq 
\dd_K(u,u^C_m)\leq 3r,\; \forall u\in {\cal U}_m, 
\edeq 
i.e., ${\cal U}_m\subseteq \mathbb{B}_K(u^C_m,3r)$ under the Kantorovich distance.
\end{itemize}

\end{theorem}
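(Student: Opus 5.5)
The whole argument runs through the area formula for the Kantorovich distance, \eqref{eq:dd_K-u-tilde-u} (equivalently Part (ii) of Theorem \ref{thm:mus-converg}): for $u,v\in\mathcal{U}_m$, $\dd_K(u,v)=\int_0^1|u(y)-v(y)|dy$. We also use two facts about the extremal functions. First, $u^S_m=\underline{f}\le u$ pointwise for every $u\in\mathcal{U}_m$, by Theorem \ref{thm:mus-lower-bound}. Second, $u^L_m\in\mathcal{U}_m$, by Theorem \ref{thm:upper-bound-re-N+1}(i); it maximizes $\int_0^1u$ over $\mathcal{U}_m$ by Theorem \ref{thm:upper-bound-re-N+1}(ii)--(iii) together with Proposition \ref{prop:upper-bound-final}. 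Finally, $\mathcal{U}_m$ is convex, since it is defined by linear constraints on a convex class; hence $u^C_m\in\mathcal{U}_m$.

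For Part (i), $u^S_m\le u^L_m$ pointwise because $u^L_m\in\mathcal{U}_m$ and $u^S_m$ is the pointwise minimum. Averaging gives $u^S_m\le u^C_m\le u^L_m$. It follows that $u^L_m-u^C_m=u^C_m-u^S_m=\tfrac12(u^L_m-u^S_m)\ge0$. Integrating with the area formula yields $\dd_K(u^C_m,u^L_m)=\dd_K(u^C_m,u^S_m)=\tfrac12\dd_K(u^S_m,u^L_m)=r$.

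For Part (ii), fix any $u\in\mathcal{U}_m$. The triangle inequality for the semi-distance $\dd_K$ gives $2r=\dd_K(u^S_m,u^L_m)\le \dd_K(u,u^S_m)+\dd_K(u,u^L_m)\le 2\max\{\dd_K(u,u^L_m),\dd_K(u,u^S_m)\}$. So the objective of \eqref{eq:center-minmax} is at least $r$ at every feasible point. By Part (i), $u^C_m$ is feasible and attains exactly $r$, so it is optimal.

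For Part (iii), take $u\in\mathcal{U}_m$. Since $u\ge u^S_m$, $\dd_K(u,u^S_m)=\int_0^1(u-u^S_m)=\int_0^1u-\int_0^1u^S_m$. Because $u^L_m$ maximizes $\int_0^1 u$ over $\mathcal{U}_m$, this is at most $\int_0^1(u^L_m-u^S_m)=2r$. The triangle inequality then gives $\dd_K(u,u^C_m)\le \dd_K(u,u^S_m)+\dd_K(u^S_m,u^C_m)\le 2r+r=3r$.

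The main point to check carefully is the membership $u^L_m\in\mathcal{U}_m$ together with its optimality for \eqref{eq:upper-bound-initial-integral}. This relies on the reconstruction \eqref{eq:largest-u} from the SOCP solution coinciding with $\overline{\mathcal{A}}u$ for some $u\in\mathcal{U}_m$, which is exactly what Theorem \ref{thm:upper-bound-re-N+1} and Proposition \ref{prop:upper-bound-final} provide, so we invoke them directly. Note that $u^L_m$ need not dominate every $u$ pointwise. Part (iii) therefore deliberately uses only the integral bound, which is why the constant is $3r$ rather than $r$.
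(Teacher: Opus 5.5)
Your proposal is correct and follows essentially the same route as the paper. Part (i) uses the pointwise sandwich plus the area formula, Part (ii) uses the triangle inequality, and Part (iii) uses the bound $\dd_K(u,u^S_m)=\int_0^1(u-u^S_m)\,dy\le 2r$ coming from the optimality of $u^L_m$, followed by the triangle inequality. The only differences are cosmetic. You obtain $u^C_m\in\mathcal{U}_m$ from convexity of $\mathcal{U}_m$ rather than from Lemma~\ref{lemma:mus-u-U_m} with $u^1_j=u^S_m$ and $u^2_j=u^L_m$, and you argue (ii) and (iii) directly rather than by contradiction.
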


\begin{proof}
\underline{Part (i). }
By Theorem \ref{thm:mus-lower-bound}, $u^S_m(y)\leq u(y)$, $\forall y\in[0,1],\ \forall u\in\mathcal{U}_m$. 
Thus, $u^S_m(y)\leq u^L_m(y)$ and $u^S_m(y)\leq \frac{1}{2}(u^S_m(y)+ u^L_m(y))\leq u^L_m(y)$, $\forall y\in[0,1]$. By Theorem \ref{thm:closed-form-kantor} (see \eqref{eq:dd_K-u-tilde-u}), we can easily establish $\dd_K(u^C_m,u^L_m)=\dd_K(u^C_m,u^S_m)=\frac{1}{2} \dd_K(u^S_m,u^L_m)$.

\underline{Part (ii). }
Note that $u^C_m=\frac{1}{2}(u^S_m+u^L_m)$ preserves monotonicity, concavity, and Lipschitz continuity of both $u^S_m$ and $u^L_m$. 
Thus, $u^C_m\in\mathcal{U}_{cv}$ evidently. 
We select $u^1_j=u^S_m$ and $u^2_j=u^L_m$ for $j=1,\ldots,N_m$, and it then follows from Lemma \ref{lemma:mus-u-U_m} that $u^C_m\in\mathcal{U}_m$.  
Assume for the sake of a contradiction that there exists a function $\hat{u}\in\mathcal{U}_m$ such that $\max\{\dd_K(\hat{u},u^L_m),\dd_K(\hat{u},u^S_m)\}<\frac{1}{2} \dd_K(u^S_m,u^L_m)$. 
Then, 
\begin{equation*}
\dd_K(\hat{u},u^L_m)+\dd_K(\hat{u},u^S_m)\leq2\max\{\dd_K(\hat{u},u^L_m),\dd_K(\hat{u},u^S_m)\}< \dd_K(u^S_m,u^L_m), 
\end{equation*}
which contradicts the triangle inequality of the Kantorovich metric.

\underline{Part (iii). }
Assume for the sake of a contradiction that there exists a function $\tilde{u}\in\mathcal{U}_m$ such that $\dd_K(\tilde{u},u^S_m)>2r$. 
Then we would have 
\begin{align*}
\dd_K(\tilde{u},0)=\dd_K(\tilde{u},u^S_m)+\dd_K(u^S_m,0)&
>2r+\dd_K(u^S_m,0)\\
&=\dd_K(u^L_m,u^S_m)+\dd_K(u^S_m,0)=\dd_K(u^L_m,0),
\end{align*}
a contradiction to the selection of $u^L_m$ in \eqref{eq:upper-bound-initial-integral}. 
Thus, $\dd_K(u,u^S_m)\leq2r$, $\forall u\in\mathcal{U}_m$. 
By the triangle inequality of the Kantorovich distance, 
\begin{align*}
\dd_K(u,u^C_m)\leq\dd_K(u,u^S_m)+\dd_K(u^S_m,u^C_m)\leq2r+\dd_K(u^S_m,u^C_m)=3r,\; \forall u\in\mathcal{U}_m. 
\end{align*}
Further, we present an example in Remark~\ref{remark:counter-example} demonstrating that there exists a utility function $\hat{u}\in\mathcal{U}_m$ such that $\dd_K(\hat{u},u^C_m)=3r$, see Figure~\ref{fig:center-conuter-example}. 
\end{proof}

It might be helpful to note that we cannot establish
\bgeq 
\dd_K(u,u^C_m)\leq r,\; \forall u\in {\cal U}_m, 
\edeq 
or equivalently 
${\cal U}_m\subseteq \mathbb{B}_K(u^C_m,r)$ under the Kantorovich distance.
In what follows,
we 
provide an example of $\mathcal{U}_m$ and $u^C_m$ such that $\mathbb{B}_K(u^C_m,3r)$ is the smallest Kantorovich ball containing $\mathcal{U}_m$.

\begin{remark}\label{remark:counter-example}

 
Consider for an example that
\begin{align*}
\mathcal{U}_{cv} = \{ & u\in\mathcal{L}^1([0,1]) \mid 
\begin{aligned}[t]
    & u_+^{'}(y)\geq 0,\ u_-^{'}(y)\geq 0,\ u_+^{'}(y)\leq u_-^{'}(y),\ \forall y\in[0,1], \\
    & u_-^{'}(y_2)\leq u_+^{'}(y_1),\ \forall 0\leq y_1<y_2\leq 1,\ u(0)=0,\ u(1)=1,\ \mathrm{Lip}(u)\leq 10\},
\end{aligned} \\
\mathcal{U}_m = \{ & u\in\mathcal{U}_{cv} \mid u(0.5)\leq 0.75, u(0.6)=0.8\}.
\end{align*}
We can identify 
\begin{equation*}
u^S_m(y)=\left\{
\begin{aligned}
    & \frac{4}{3}y && {\rm for}\; 0\leq y<0.6,\\
    & 0.5y+0.5 && {\rm for}\; 0.6\leq y\leq1, 
\end{aligned}
\right. 
\quad {\rm and }\quad
u^L_m(y)=\left\{
\begin{aligned}
    & \frac{4}{3}y && {\rm for}\; 0\leq y<0.75,\\
    & 1 && {\rm for}\; 0.75\leq y\leq1, 
\end{aligned}
\right. 
\end{equation*}
by \eqref{eq:lower-bound-re-LP} and \eqref{eq:upper-bound-step2-re-socp} respectively. Notice that the largest utility $u^L_m$ is not unique. 
Thus
\begin{equation*}
u^C_m(y)=\left\{
\begin{aligned}
    & \frac{4}{3}y && {\rm for}\; 0\leq y<0.6,\\
    & \frac{11}{12}(y-0.6)+0.8 && {\rm for}\; 0.6\leq y<0.75,\\
    & 0.25y+0.75 && {\rm for}\; 0.75\leq y\leq1. 
\end{aligned}
\right.
\end{equation*}
See Figure~\ref{fig:center-conuter-example} for an illustration.
We can calculate by \eqref{eq:dd_K-u-tilde-u} that 
\begin{equation*}
r:=\dd_K(u^C_m,u^L_m)=\dd_K(u^C_m,u^S_m)=0.0125,\; {\rm and }\quad \dd_K(u^L_m,u^S_m)=2r=0.025. 
\end{equation*} 
On the other hand, we construct a utility function 
\begin{equation*}
\hat{u}(y)=\left\{
\begin{aligned}
    & 1.5y && {\rm for}\; 0\leq y<0.5,\\
    & 0.5y+0.5 && {\rm for}\; 0.5\leq y\leq1. 
\end{aligned}
\right.
\end{equation*}
It is easy to verify that $\hat{u}\in\mathcal{U}_m$. 
However, $\dd_K(\hat{u},u^C_m)=0.0375=3r$ by \eqref{eq:dd_K-u-tilde-u}, which means that the Kantorovich ball $\mathbb{B}_K(u^C_m,r)$ centered at $u^C_m$ with radius $r$, i.e., $\mathbb{B}_K(u^C_m,r)=\{u\in\mathcal{U}_m\mid \dd_K(u,u^C_m)\leq r\}$ does not cover the ambiguity set $\mathcal{U}_m$. 
Nevertheless, we are guaranteed that $\mathcal{U}_m\subseteq\mathbb{B}_K(u^C_m,3r)$. 
Indeed, $\mathbb{B}_K(u^C_m,3r)$ is the smallest Kantorovich ball centered at $u^C_m$ containing $\mathcal{U}_m$.

\begin{figure}[htbp]
\centering
    \includegraphics[width=0.7\linewidth]{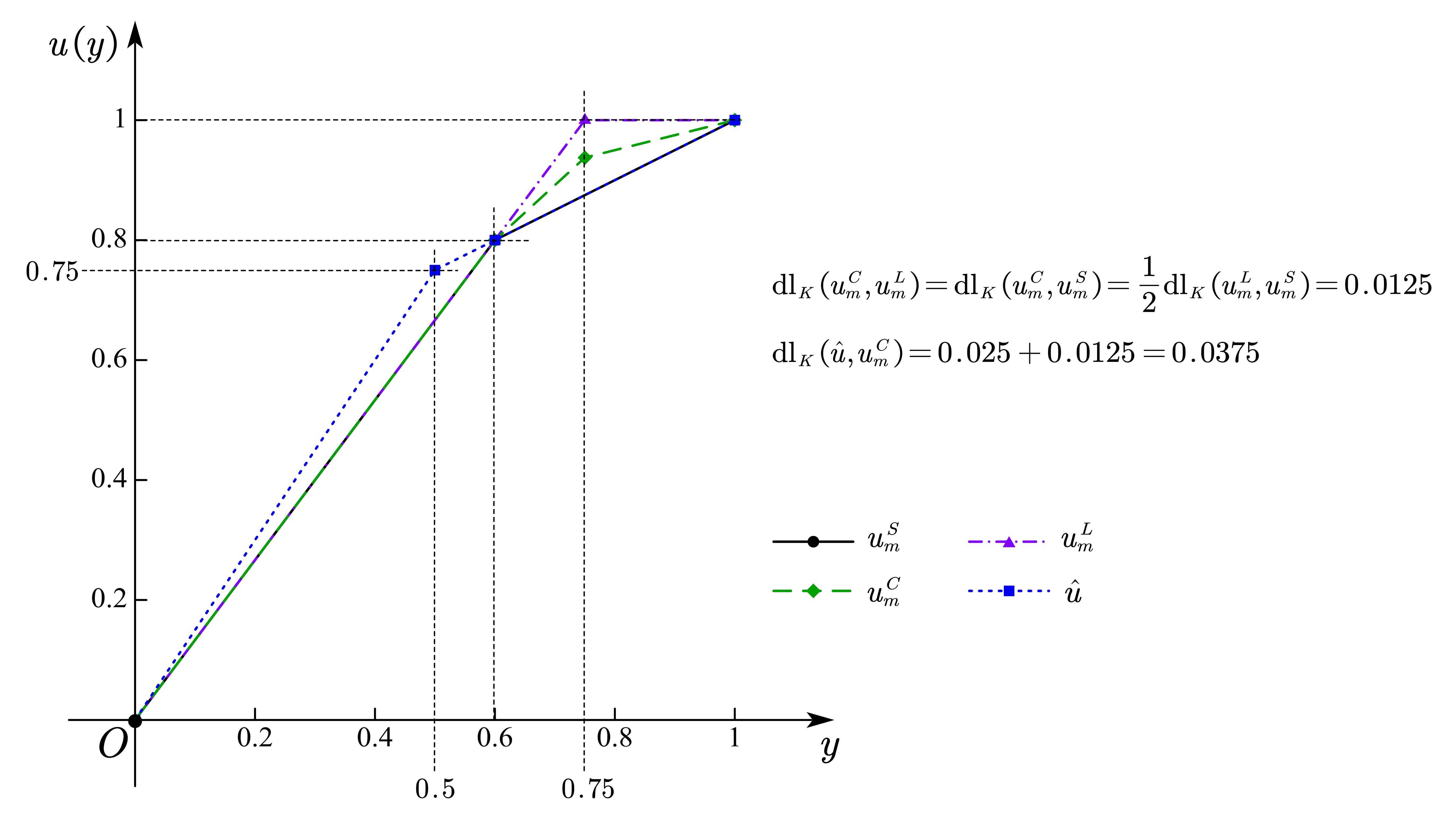}
    \caption{An illustrative example of the selection of $u^C_m$}
    \label{fig:center-conuter-example}
\end{figure}

\end{remark}

\section{Extensions}\label{sec-extensions}

The MUS scheme discussed in the preceding sections primarily focuses on concave utility functions. This naturally raises the question of whether the approach can be extended to non-concave functions. In this section, we outline how the method may be adapted to general non-concave utility functions, with particular emphasis on S-shaped utility functions, which are widely used in behavioral economics.


\subsection{General utility functions}
\label{sec:ec}

Let $\mathcal{U}_{IC}\subseteq\mathcal{L}^1([0,1])$ be an ambiguity set comprising utility functions that are monotonically increasing, Lipschitz continuous with modulus bounded by $L$ and normalized:
\begin{equation}\label{eq:U_ec}
\mathcal{U}_{IC}:= \left\{ u\in\ \mathcal{L}^1([0,1])\mid \
 u^{'}_{+}(y)\geq0,\ u^{'}_{-}(y)\geq0,\ \forall y\in [0,1],
 \ u(0)=0,\ u(1)=1,\ \text{Lip}(u)\leq L \right\}. 
\end{equation}
Let ${\mathcal{U}}^{IC}_0 :=\mathcal{U}_{IC}$ be the initial ambiguity set
and 
${\mathcal{U}}^{IC}_m\subseteq\mathcal{U}_{IC}$ be the ambiguity set updated with paired gambling questionnaires generated by the MUS scheme, $m=1,2,\cdots$, i.e., 
\begin{equation}\label{eq:U_ec-m}
    \mathcal{U}^{IC}_m =\left\{u\in\mathcal{U}_{IC}\mid Z_k\cdot u(r_2^{m})\geq Z_k\cdot p^k,\ k=1,\ldots,m\right\}.  
\end{equation} 
Analogous to the MUS in concave case, we use  
the questionnaires in the form of \eqref{eq:W-Y} as in Section~\ref{sec:questionnaire}.
Let $\mathcal{S}_m :=\{0\}\cup \bigcup\limits_{k=1}^{m}\{r_2^k\}\cup\{1\}$ and let $\mathbb{Y}_m:= \{{y}^m_{j}\}_{j=1,\ldots,N_m}$ be the set of all points in $\mathcal{S}_m$ sorted in increasing order with fixed $y^m_1=0$ and $y^m_{N_m}=1$. 
For each $r_2\in[0,1]$, we define the upper bound and lower bound functions of the ambiguity set $\mathcal{U}^{IC}_m$ as: 
\begin{equation}\label{eq:U_ec-upper-lower-func}
\bar{f}^{IC}(r_2):=\max\limits_{u\in\mathcal{U}^{IC}_m} u(r_2), 
\;\;\text{and}\;\;
\underline{f}^{IC}(r_2):=\min\limits_{u\in\mathcal{U}^{IC}_m} u(r_2).  
\end{equation}
A key step of the MUS is to identify 
$r_2$ which solves
the following maximization problem
\begin{equation*}
\mathop{\max}\limits_{r_2\in [0,1]} f^{IC}(r_2):= \left[\max\limits_{u\in \mathcal{U}_{m}^{IC}}u(r_2)-\min\limits_{u\in \mathcal{U}_{m}^{IC}}u(r_2)\right]=\bar{f}^{IC}(r_2)-\underline{f}^{IC}(r_2). 
\end{equation*} 
Similar to the concave utility function case, 
the problems in \eqref{eq:U_ec-upper-lower-func} 
also admit closed-form solutions. Both $\underline{f}^{IC}$ and $\bar{f}^{IC}$ consist of two pieces between any pair of adjacent elicited points in $\mathbb{Y}_m$, with slopes 
equal 
either to zero or 
to 
the corresponding Lipschitz modulus $L$, as stated in the following theorem. 
\begin{theorem}[Semi-closed forms of $\underline{f}^{IC}$ and  $\bar{f}^{IC}$]\label{thm:ex-upper-lower}
Consider the ambiguity set  ${\mathcal{U}}^{IC}_m\subseteq\mathcal{U}_{IC}$ generated by the MUS scheme, as in \eqref{eq:U_ec-m}. 
The lower bound function $\underline{f}^{IC}$ and the upper bound function $\bar{f}^{IC}$ defined in \eqref{eq:U_ec-upper-lower-func} are both piecewise linear with the following respective structures over $[y^m_k,y^m_{k+1}]$ for $k=1,\ldots,N_m-1$: 

\begin{equation}\label{eq:U_ex-mus-min-utility}
\underline{f}^{IC}(y)
    =\left\{ 
    \begin{aligned}
    &\underline{\alpha}^{IC}_{m,k} && {\rm for} \; y^m_k\leq y \leq \underline{y}^{m}_{k},\\
    &\underline{\alpha}^{IC}_{m,k+1}+L(y-y^m_{k+1}) && {\rm for} \;\underline{y}^{m}_{k} < y \leq y^m_{k+1},
    \end{aligned}
    \right.
\end{equation}

\begin{equation}\label{eq:U_ex-mus-max-utility}
\bar{f}^{IC}(y)
    =\left\{ 
    \begin{aligned}
    &\bar{\alpha}^{IC}_{m,k} + L(y-y^m_k) && {\rm for} \; y^m_k\leq y \leq \bar{y}^{m}_{k},\\
    &\bar{\alpha}^{IC}_{m,k+1}&& {\rm for} \;\bar{y}^{m}_{k} < y \leq y^m_{k+1},
    \end{aligned}
    \right.
\end{equation}
where $\underline{\alpha}^{IC}_{m,k}:=\min\limits_{u\in\mathcal{U}^{IC}_m}u(y^m_k)$ and $\bar{\alpha}^{IC}_{m,k}:=\max\limits_{u\in\mathcal{U}^{IC}_m}u(y^m_{k})$; 
$\underline{y}^m_k:=y^m_{k+1}-\left(\underline{\alpha}^m_{k+1}-\underline{\alpha}^m_k\right)/{L}\in[y^m_k,y^m_{k+1}]$ and $\bar{y}^m_k=y^m_k+\left(\bar{\alpha}^m_{k+1}-\bar{\alpha}^m_k\right)/{L}\in[y^m_k,y^m_{k+1}]$; $L$ is the uniform Lipschitz modulus in \eqref{eq:U_ec}.
See Figure \ref{fig:EC-upper-lower} for an illustration. 
Therefore, $f^{IC}=\bar{f}^{IC}-\underline{f}^{IC}$ is also a piecewise linear function with at most three pieces over each  $[y^m_k,y^m_{k+1}]$, $k=1,\ldots,N_m-1$.    
\end{theorem}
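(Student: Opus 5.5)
The plan has three parts: a separability lemma for $\mathcal{U}^{IC}_m$, a pointwise bound showing each candidate function bounds every member of the set, and a construction showing the bound is attained. I first establish an analogue of Lemma \ref{lemma:mus-u-U_m} for $\mathcal{U}^{IC}_m$. Its proof is identical, because the constraints defining $\mathcal{U}^{IC}_m$ involve only the values $u(y^m_j)$. The statement is: for $u\in\mathcal{U}_{IC}$, we have $u\in\mathcal{U}^{IC}_m$ whenever, for each $j$, $u(y^m_j)$ lies between the values of two members of $\mathcal{U}^{IC}_m$ at $y^m_j$. Compactness of $\mathcal{U}^{IC}_m$ (Arzelà–Ascoli, as in Theorem \ref{thm:mus-converg}(i)) guarantees that $\bar\alpha^{IC}_{m,k}$ and $\underline\alpha^{IC}_{m,k}$ are attained. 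The ordering $\underline\alpha^{IC}_{m,k}\le\underline\alpha^{IC}_{m,k+1}$ follows from monotonicity. The bound $\underline\alpha^{IC}_{m,k+1}-\underline\alpha^{IC}_{m,k}\le L(y^m_{k+1}-y^m_k)$ follows by applying Lipschitz continuity to a minimizer at $y^m_k$, which gives $\underline\alpha_{k+1}\le u(y_{k+1})\le \underline\alpha_k+L\Delta$. Together these show $\underline y^m_k,\bar y^m_k\in[y^m_k,y^m_{k+1}]$, and the same argument applies to $\bar\alpha$.

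\textbf{Pointwise bounds.} I fix $y\in[y^m_k,y^m_{k+1}]$ and $u\in\mathcal{U}^{IC}_m$. Monotonicity gives $u(y)\ge u(y^m_k)\ge\underline\alpha^{IC}_{m,k}$, and Lipschitz continuity gives $u(y)\ge u(y^m_{k+1})-L(y^m_{k+1}-y)\ge \underline\alpha^{IC}_{m,k+1}-L(y^m_{k+1}-y)$. Hence $u(y)$ is at least the maximum of these two lower bounds, and this maximum is exactly the right-hand side of \eqref{eq:U_ex-mus-min-utility}. Symmetrically, $u(y)\le\min\{\bar\alpha^{IC}_{m,k+1},\ \bar\alpha^{IC}_{m,k}+L(y-y^m_k)\}$, which is the right-hand side of \eqref{eq:U_ex-mus-max-utility}.

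\textbf{Attainment.} This is the main step. Define $\underline u$ on all of $[0,1]$ by gluing the interval formulas \eqref{eq:U_ex-mus-min-utility} for $k=1,\dots,N_m-1$. I then check three things.
\begin{enumerate}
\item The glued function is continuous at each breakpoint, because the formula takes the value $\underline\alpha_k$ at $y^m_k$.
\item It is nondecreasing, since each interval piece has slope $0$ or $L$.
\item It has Lipschitz modulus $L$ and satisfies $\underline u(0)=0$, $\underline u(1)=1$, since $\underline\alpha_1=0$ and $\underline\alpha_{N_m}=1$.
\end{enumerate}
Thus $\underline u\in\mathcal{U}_{IC}$. Since $\underline u(y^m_j)=\underline\alpha^{IC}_{m,j}$, the separability lemma, applied with a minimizer at $y^m_j$ on both sides, yields $\underline u\in\mathcal{U}^{IC}_m$. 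Therefore $\underline f^{IC}=\underline u$ everywhere, and it is not only a pointwise infimum.

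For $\bar f^{IC}$ the same gluing works with \eqref{eq:U_ex-mus-max-utility}: the result is again in $\mathcal{U}_{IC}$ and matches $\bar\alpha^{IC}_{m,j}$ at the nodes, so it lies in $\mathcal{U}^{IC}_m$ and attains the upper bound at every $y$. Concavity is not required in $\mathcal{U}_{IC}$, which is why the upper envelope attains the bound here, in contrast with Theorem \ref{thm:mus-upper-bound}. The obstacle I expect is only bookkeeping. The paper's statement writes $\underline\alpha^m_k$ in the formulas for $\underline y^m_k$ and $\bar y^m_k$, and I will interpret these as the $IC$ quantities. I also need to handle degenerate breakpoints such as $\underline y^m_k=y^m_k$, where the formulas still agree.

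Finally, $f^{IC}$ is the difference of two functions that are each linear on two subintervals of $[y^m_k,y^m_{k+1}]$, with breaks at $\underline y^m_k$ and $\bar y^m_k$ respectively. The union of these breakpoints splits the interval into at most three pieces, on each of which $f^{IC}$ is linear.
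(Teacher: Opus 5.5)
Your proposal is correct and follows essentially the same route as the paper. The paper also builds the piecewise linear candidate from the nodal extremes, checks that it lies in $\mathcal{U}_{IC}$ and then in $\mathcal{U}^{IC}_m$ via Lemma~\ref{lemma:mus-u-U_m} (valid unchanged), and gets the reverse inequality from monotonicity and $L$-Lipschitz continuity. The differences are cosmetic: the paper applies those two properties to the envelope $\underline{f}^{IC}$ itself rather than to each $u\in\mathcal{U}^{IC}_m$, and it leaves the $\bar{f}^{IC}$ case and attainment of the nodal extremes implicit, whereas you treat both explicitly.
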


\begin{figure}[htbp]
    \centering
\includegraphics[width=0.55\linewidth]{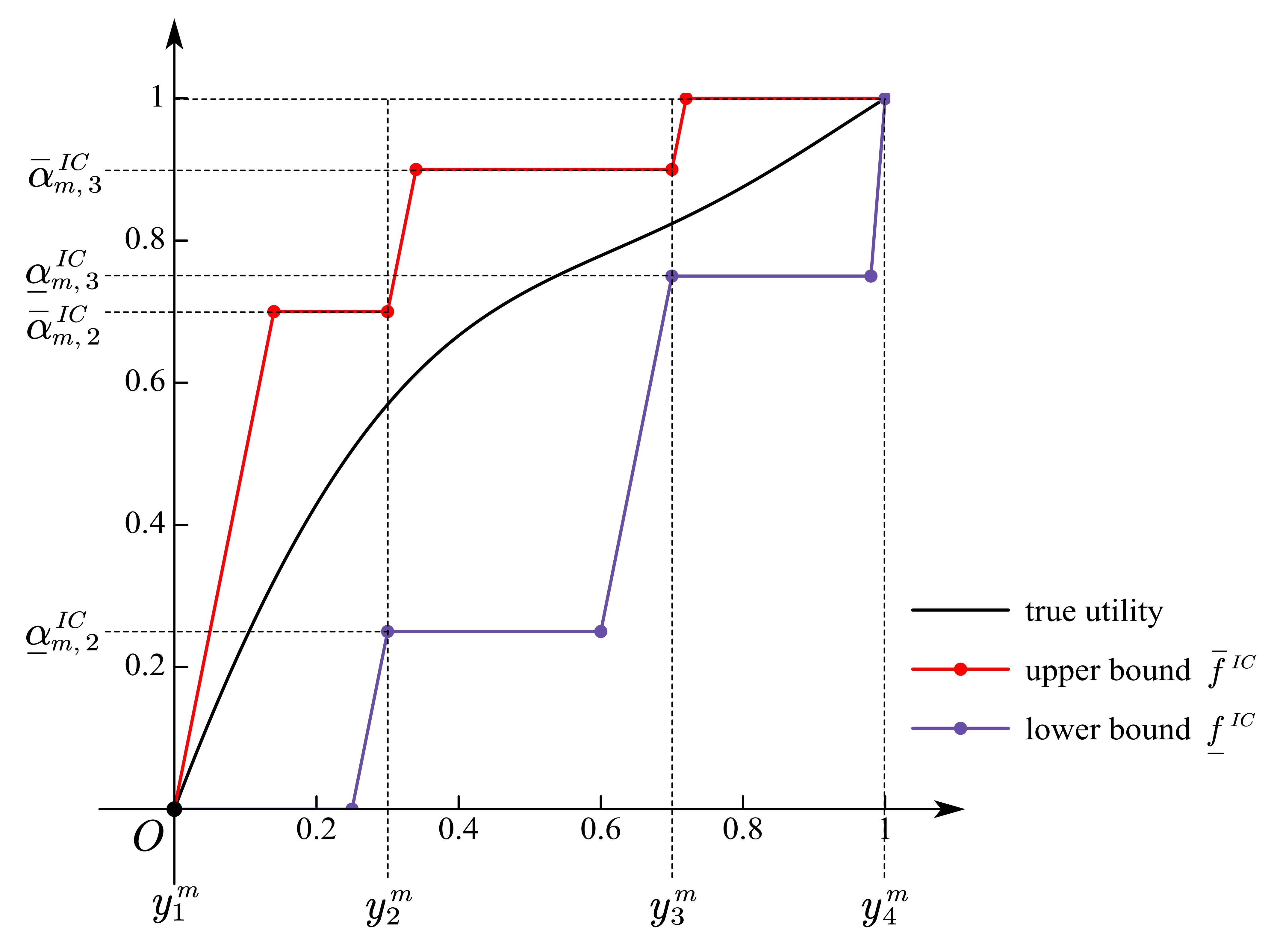}
    \caption{Illustration of 
    the lower bound function $\underline{f}^{IC}$ and the upper bound function $\bar{f}^{IC}$ with Lipschitz modulus $L=5$.
    }
    \label{fig:EC-upper-lower}
\end{figure}

\begin{proof}

We only prove the semi-closed form of $\underline{f}^{IC}$ as in \eqref{eq:U_ex-mus-min-utility}, because the the semi-closed form of $\bar{f}^{IC}$ as in \eqref{eq:U_ex-mus-max-utility} can be 
derived analogously. 
Define a function $\underline{u}\in\mathcal{L}^1([0,1])$ with the following structure over $[y^m_k,y^m_{k+1}]$, $k=1,\ldots,N_m-1$: 
\begin{equation}\label{eq:pf-thm8-u}
\underline{u}(y)
    =\left\{ 
    \begin{aligned}
    &\underline{\alpha}^{IC}_{m,k} && {\rm for} \; y^m_k\leq y \leq \underline{y}^{m}_{k},\\
    &\underline{\alpha}^{IC}_{m,k+1}+L(y-y^m_{k+1}) && {\rm for} \;\underline{y}^{m}_{k} < y \leq y^m_{k+1}, 
    \end{aligned}
    \right.
\end{equation}
where $\underline{y}^{m}_{k}$
is defined as in \eqref{eq:U_ex-mus-max-utility}.
We proceed with the proof in two steps:
first $\underline{u}\in\mathcal{U}^{IC}_m$, and then $\underline{f}^{IC}(y):=\min\limits_{u\in\mathcal{U}^{IC}_m} u(y)=\underline{u}(y)$, $\forall y\in[0,1]$.

\noindent
\underline{Step 1}. 
Since each $u\in {\cal U}^{IC}_m$ is monotonically increasing and the min operation preserves the monotonicity,
then
$\underline{f}^{IC}$ is increasing over $[0,1]$, i.e, $\underline{f}^{IC}(y^m_k)=\underline{u}(y^m_k)=\underline{\alpha}^{IC}_{m,k}\leq\underline{\alpha}^{IC}_{m,k+1}=\underline{u}(y^m_{k+1})=\underline{f}^{IC}(y^m_{k+1})$, $k=1,\ldots,N_m-1$. 
Moreover, for any $0\leq y_1\leq y_2\leq1$, since 
\begin{align*}
\left|\underline{f}^{IC}(y_2)-\underline{f}^{IC}(y_1)\right|
=\left|\min\limits_{u\in\mathcal{U}^{IC}_m}u(y_2)-
\min\limits_{u\in\mathcal{U}^{IC}_m}u(y_1)\right|
\leq\max\limits_{u\in\mathcal{U}^{IC}_m}\left|u(y_2)-
u(y_1)\right| \leq L|y_2-y_1|,
\end{align*}
$\underline{f}^{IC}$ is Lipschitz continuous with modulus bounded by $L$. 
Then, $\left|\underline{\alpha}^{IC}_{m,k+1}-\underline{\alpha}^{IC}_{m,k}\right|\leq L |y^m_{k+1}-y^m_k|$ and 
$\underline{y}^{m}_{k}=y^m_{k+1}-\left(\underline{\alpha}^{IC}_{m,k+1}-\underline{\alpha}^{IC}_{m,k}\right)/{L}\in[y^m_k,y^m_{k+1}]$, which implies that \eqref{eq:pf-thm8-u} is well-defined. 
From \eqref{eq:pf-thm8-u}, it is obvious that $\underline{u}$ is monotonically increasing and Lipschitz continuous with modulus $L$, thus, $\underline{u}\in\mathcal{U}_{IC}$. 
Furthermore, let $u^1_j,u^2_j\in\arg\min_{u\in\mathcal{U}^{IC}_m}u(y^m_j)$. 
Then $u^1_j(y^m_{j})=\underline{u}(y^m_{j})= u^2_j(y^m_{j})$, for $j=1,\ldots,N_m$.  
By Lemma \ref{lemma:mus-u-U_m}, $\underline{u}\in\mathcal{U}^{IC}_m$(
Lemma \ref{lemma:mus-u-U_m} remains valid when $\mathcal U_{cv}$ is replaced by
$\mathcal U^{IC}$),
and we indeed have $u\in\mathcal U^{IC}_m$.

\noindent
\underline{Step 2}.
Since $\underline{u}\in\mathcal{U}^{IC}_m$, we have $\underline{f}^{IC}(y)=\min\limits_{u\in\mathcal{U}^{IC}_m}u(y)\leq \underline{u}(y)$, $\forall y\in[0,1]$. 
Thus it suffices to show that $\underline{u}(y)\leq \underline{f}^{IC}(y)$. 
For any $y\in[y^m_{k}, \underline{y}^{m}_{k}]$, by the monotonicity of $\underline{f}^{IC}$, it is obvious that 
\begin{align*}
    \underline{f}^{IC}(y)\geq\underline{f}^{IC}(y^m_{k})=\underline{\alpha}^{IC}_{m,k}=\underline{u}(y). 
\end{align*}
For any $y\in[\underline{y}^{m}_{k}, y^m_{k+1}]$, by the monotonicity and Lipschitz continuity of $\underline{f}^{IC}$, 
we have 
\begin{align*}
\underline{f}^{IC}(y)\geq\underline{f}^{IC}(y^m_{k+1})
{-} L(y^m_{k+1}-y)=\underline{\alpha}^{IC}_{m,k+1}
{+}  
L(y-y^m_{k+1})=\underline{u}(y). 
\end{align*}
The two inequalities hold for $k=1,\ldots,N_m-1$.
\end{proof}

Observe that both $\underline{f}^{IC}$ and $\bar{f}^{IC}$ belong to $\mathcal{U}_m^{IC}$.
This implies that one can identify a unique utility function within $\mathcal{U}_m^{IC}$ that acts as their uniform or pointwise upper (lower) bound. 
This coincides with the lower-bound result (Theorem~\ref{thm:mus-lower-bound}) in the concave case $\mathcal{U}_m$, but not with the corresponding upper-bound result (Theorem~\ref{thm:mus-upper-bound}). 
The underlying mathematical reason is that the maximum or minimum of two increasing or Lipschitz-continuous functions still preserves these properties, whereas the maximum of two concave functions is generally no longer concave—unlike the minimum. 
This suggests that under incomplete information about the true utility function, the utility preference against the worst-case scenario of two risk-averse investors may still exhibits a risk-averse consensus, while their attitudes toward the best-case scenario may differ. 
Beyond risk-aversion, it is more common to reach consensus on rationality or bounded rationality.



\subsection{S-shaped utility functions}\label{sec:s-shape}


We now turn to the case where the true utility function is S-shaped.
Let $\mathcal{U}_{S}\subseteq\mathcal{L}^1([-1,1])$ be an ambiguity set of normalized utility functions that are monotonically increasing, Lipschitz continuous with modulus bounded by $L$,  convex over $[-1,0]$ and concave over $[0,1]$, i.e., 
\begin{align}\label{eq:U_s}
\nonumber
\mathcal{U}_{S}:= \{u\in\mathcal{L}^1([-1,1])\mid \ &
 u^{'}_{+}(y)\geq0, u^{'}_{-}(y)\geq0,\forall y\in
 [-1,1], 
 u(-1)=-1, u(0)=0, u(1)=1, \\
 \nonumber
&\text{Lip}(u)\leq L, u^{'}_{+}(y)\leq u^{'}_{-}(y), \forall y\in[0,1],u^{'}_{-}(y_2)\leq u^{'}_{+}(y_1), \forall 0\leq y_1<y_2\leq1,\\
&u^{'}_{+}(y)\geq u^{'}_{-}(y), \forall y\in[-1,0],u^{'}_{-}(y_2)\geq u^{'}_{+}(y_1), \forall -1\leq y_1<y_2\leq0\}.  
\end{align}
Compared to the ambiguity set $\mathcal{U}_{cv}$ in \eqref{eq:U_c}, we extend the domain of utility functions to $[-1,1]$ in order to account for losses, and we assume that $y=0$ is the 
prespecified reference point for the S-shaped utility function.

The MUS scheme for $S$-shaped utility functions specifies a paired gambling questionnaire $(W_m,Y_m)$ as in \eqref{eq:W-Y} by setting $r^m_1=-1$ and $r^m_3=1$ for all $m\in\mathbb{Z}$, thus $\mathbb{E}[u(W_m)]=p^m u(r^m_3)+(1-p^m) u(r^m_1)=2p^m-1$ and $\mathbb{E}[u(Y_m)]=u(r^m_2)$. 
Let ${\mathcal{U}}^S_0 :=\mathcal{U}_{S}$ and let ${\mathcal{U}}^{S}_m\subseteq\mathcal{U}_{S}$ be the ambiguity set updated with paired gambling questionnaires generated by the MUS scheme, $m=1,2,\cdots$, i.e., 
\begin{equation}\label{eq:U_s-m}
    \mathcal{U}^{S}_m =\left\{u\in\mathcal{U}_{S}\mid Z_k\cdot u(r_2^{m})\geq Z_k\cdot (2p^k-1),\ k=1,\ldots,m\right\}. 
\end{equation} 
Let $\mathcal{S}_m :=\{-1,0,1\}\cup \bigcup\limits_{k=1}^{m}\{r_2^k\}$ and 
$\mathbb{Y}_m:= \{{y}^m_{j}\}_{j=1,\ldots,N_m}$ be the set of all points in $\mathcal{S}_m$ sorted in increasing order with fixed $y^m_1=-1$, $y^m_{N_m}=1$ and $y^m_{j_0}=0$ 
for some $j_0\in\{2,\ldots,N_m-1\}$.

For each $r_2\in[-1,1]$, we define the upper bound and lower bound functions of the ambiguity set $\mathcal{U}^{S}_m$ as: 
\begin{equation}\label{eq:U_s-upper-lower-func}
\bar{f}^{S}(r_2):=\max\limits_{u\in\mathcal{U}^{S}_m} u(r_2), 
\;\;\text{and}\;\;
\underline{f}^{S}(r_2):=\min\limits_{u\in\mathcal{U}^{S}_m} u(r_2),  
\end{equation}
thus, the 
main computational 
task in the MUS scheme 
is 
to solve 
the following problem to determine $r^{m+1}_2$
\begin{equation}
    \mathop{\max}\limits_{r_2\in [-1,1]} f^{S}(r_2):= =\bar{f}^{S}(r_2)-\underline{f}^{S}(r_2) = \left[\max\limits_{u\in \mathcal{U}_{m}^{S}}u(r_2)-\min\limits_{u\in \mathcal{U}_{m}^{S}}u(r_2)\right], 
\end{equation} 
and specifies $p^{m+1}$ as
\begin{equation}
p^{m+1}=\frac{1}{2}\left[\frac{1}{2}\left( \bar{f}^S(r^{m+1}_2) + \underline{f}^S (r^{m+1}_2)\right)+1\right].
\end{equation}



\begin{theorem}[Semi-closed forms of $\underline{f}^{S}$ and  $\bar{f}^{S}$]\label{thm:s-upper-lower}
Consider the ambiguity set  ${\mathcal{U}}^{S}_m\subseteq\mathcal{U}_{S}$ generated by the MUS scheme, as in \eqref{eq:U_s-m}. 
Let
\begin{align*}
&
\underline{\alpha}^{S}_{m,k}:=\min\limits_{u\in\mathcal{U}^{S}_m}u(y^m_k),\; 
\bar{\alpha}^{S}_{m,k}:=\max\limits_{u\in\mathcal{U}^{S}_m}u(y^m_{k}),\; k=1,\ldots,N_m,\\
& \underline{\beta}^{S}_{m,k}:=\min\limits_{u\in\{u\in\mathcal{U}^{S}_m\mid u(y^m_k)=\underline{\alpha}^{S}_{m,k}\}} u^{'}_{+}(y^m_{k}),\; 
\bar{\beta}^{S}_{m,k+1}:=\max\limits_{u\in\{u\in\mathcal{U}^{S}_m\mid 
{u(y^m_{k+1})=\underline{\alpha}^{S}_{m,k+1}}
\}} u^{'}_{-}(y^m_{k+1}),\; k=1,\ldots,j_0-1,\\
& \bar{\beta}^{S}_{m,k}:=\max\limits_{u\in\{u\in\mathcal{U}^{S}_m\mid u(y^m_k)=\bar{\alpha}^{S}_{m,k}\}} u^{'}_{+}(y^m_k),\;
\underline{\beta}^{S}_{m,k+1}:=\min\limits_{u\in\{u\in\mathcal{U}^{S}_m\mid 
{u(y^m_{k+1})=\bar{\alpha}^{S}_{m,k+1}}
\}}u^{'}_{-}(y^m_{k+1}),\; k=j_0,\ldots,N_m-1. 
\end{align*}
Then the following assertions hold. 
See Figure \ref{fig:SS-upper-lower-two-piece} for an illustration. 
\begin{itemize}
 
\item[(i)] The lower bound function $\underline{f}^{S}$ is piecewise linear and concave over $[0,1]$ as: 
\begin{equation}
\label{eq:U_s-lower-1}
\underline{f}^{S}(y)=
\left\{
\begin{aligned}
    & 0 && {\rm for} \; y=0,\\
    &\frac{\underline{\alpha}^{S}_{m,k+1}-\underline{\alpha}^{S}_{m,k}}{y^m_{k+1}-y^m_{k}}(y-y^m_{k})+\underline{\alpha}^{S}_{m,k} && {\rm for} \;y^m_{k} < y \leq y^m_{k+1},\ k=j_0,\ldots,N_m-1,\\
    & 1 && {\rm for} \;y=1,\\
\end{aligned}
\right. 
\end{equation}
and is piecewise linear and convex over $[y^m_k,y^m_{k+1}]\subset[-1,0]$ for $k=1,\ldots,j_0-1$ with the following structure: 
\begin{itemize}

\item 
If  $\underline{\beta}^{S}_{m,k}=\bar{\beta}^{S}_{m,k+1}$, then  
\bgeqn 
\label{eq:U_s-lower-2}
\underline{f}^{S}(y)=
\underline{\alpha}^{S}_{m,k}+\frac{\underline{\alpha}^{S}_{m,k+1}-\underline{\alpha}^{S}_{m,k}}{y^m_{k+1}-y^m_{k}}(y-y^m_k)
\quad
\text{\rm for}\;\;  y_k^m\leq y \leq y^m_{k+1}. 
\edeqn

\item 
If $\underline{\beta}^{S}_{m,k}<\bar{\beta}^{S}_{m,k+1}$, then 
\begin{equation}
\label{eq:U_s-lower-3}
\underline{f}^{S}(y)=\left\{ 
\begin{aligned}
& \underline{\alpha}^{S}_{m,k}+\underline{\beta}^{S}_{m,k}(y-y^m_k) 
&& \text{\rm for}\;\;  y_k^m\leq y \leq y^*_{m,k},\\
&\underline{\alpha}^{S}_{m,k+1}+\bar{\beta}^{S}_{m,k+1}(y-y_{k+1}^m) 
&& \text{\rm for} \;\; y^*_{m,k}< y \leq y_{k+1}^m, 
\end{aligned}
\right.
\end{equation}
where
\begin{equation*}
y^*_{m,k}=\frac{\underline{\alpha}^{S}_{m,k}-\underline{\alpha}^{S}_{m,k+1}-\underline{\beta}^{S}_{m,k}y^m_{k}+\bar{\beta}^{S}_{m,k+1} y^m_{k+1}}{\bar{\beta}^{S}_{m,k+1}-\underline{\beta}^{S}_{m,k}}\in (y^m_k,y^m_{k+1}). 
\end{equation*}

\end{itemize}

\item[(ii)]
The upper bound function $\bar{f}^{S}$ is piecewise linear and convex over $[-1,0]$ as: 
\begin{equation}\label{eq:U_s-upper-1}
\bar{f}^{S}(y)=
\left\{
\begin{aligned} 
    & -1 && {\rm for} \; y=-1,\\
    &\frac{\bar{\alpha}^{S}_{m,k+1}-\bar{\alpha}^{S}_{m,k}}{y^m_{k+1}-y^m_{k}}(y-y^m_{k})+\bar{\alpha}^{S}_{m,k} && {\rm for} \;y^m_{k} < y \leq y^m_{k+1},\ k=1,\ldots,j_0-1,\\
    & 0 && {\rm for} \;y=0,\\
\end{aligned}
\right.    
\end{equation}
and is piecewise linear and concave over $[y^m_k,y^m_{k+1}]\subset[0,1]$ for $k=j_0,\ldots,N_m-1$ with the following structure: 

\begin{itemize}

\item 
If  $\bar{\beta}^{S}_{m,k}=\underline{\beta}^{S}_{m,k+1}$, then  
\bgeqn 
\label{eq:U_s-upper-2}
\bar{f}^{S}(y)=
\bar{\alpha}^{S}_{m,k}+\frac{\bar{\alpha}^{S}_{m,k+1}-\bar{\alpha}^{S}_{m,k}}{y^m_{k+1}-y^m_{k}}(y-y^m_k)
\quad
\text{\rm for}\;\;  y_k^m\leq y \leq y^m_{k+1}. 
\edeqn

\item 
If $\bar{\beta}^{S}_{m,k}>\underline{\beta}^{S}_{m,k+1}$, then 
\begin{equation}
\label{eq:U_s-upper-3}
\bar{f}^{S}(y)=\left\{ 
\begin{aligned}
& \bar{\alpha}^{S}_{m,k}+\bar{\beta}^{S}_{m,k}(y-y^m_k) 
&& \text{\rm for}\;\;  y_k^m\leq y \leq y^*_{m,k},\\
&\bar{\alpha}^{S}_{m,k+1}+\underline{\beta}^{S}_{m,k+1}(y-y_{k+1}^m) 
&& \text{\rm for} \;\; y^*_{m,k}< y \leq y_{k+1}^m, 
\end{aligned}
\right.
\end{equation}
where
\bgeqn 
y^*_{m,k}=\frac{\bar{\alpha}^{S}_{m,k+1}-\bar{\alpha}^{S}_{m,k}-\underline{\beta}^{S}_{m,k+1}y^m_{k+1}+\bar{\beta}^{S}_{m,k}y^m_k}{\bar{\beta}^{S}_{m,k}-\underline{\beta}^{S}_{m,k+1}}\in (y^m_k,y^m_{k+1}).
\edeqn 

\end{itemize}

\item[(iii)] $f^{S}=\bar{f}^{S}-\underline{f}^{S}$ is also a piecewise linear function, with at most three pieces over each interval $[y^m_k,y^m_{k+1}]$, $k=1,\ldots,N_m-1$. 
\end{itemize}
\end{theorem}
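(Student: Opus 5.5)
The plan is to reduce Theorem~\ref{thm:s-upper-lower} to the concave results already proved, using the separation at the reference point $y^m_{j_0}=0$ and a reflection symmetry. Since $0\in\mathbb{Y}_m$ and every $u\in\mathcal{U}_S$ satisfies $u(0)=0$, a function on $[-1,1]$ lies in $\mathcal{U}_S$ exactly when its restriction to $[0,1]$ is increasing, concave, $L$-Lipschitz with $u(0)=0,u(1)=1$, and its restriction to $[-1,0]$ is increasing, convex, $L$-Lipschitz with $u(-1)=-1,u(0)=0$. There is no joint constraint across $0$: concavity and convexity are imposed separately on the two sides, and the Lipschitz bound is checked piecewise. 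Each elicitation constraint involves a single point $r_2^k$, which lies on one side. Hence $\mathcal{U}^S_m$ is the product $\mathcal{U}^{-}_m\times\mathcal{U}^{+}_m$ of the two restricted ambiguity sets. As a result, $\bar f^S$ and $\underline f^S$ on $[0,1]$ are the max and min over $\mathcal{U}^{+}_m$ alone, and on $[-1,0]$ they are the max and min over $\mathcal{U}^{-}_m$. The quantities $\bar\beta^S,\underline\beta^S$ defined in the theorem likewise depend only on the relevant factor. The first step is to verify this product decomposition, checking that the pasted function stays $L$-Lipschitz and monotone, which is immediate.

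On $[0,1]$, $\mathcal{U}^+_m$ is exactly an ambiguity set of the form \eqref{eq:U_m-MUS}, with elicitation thresholds $2p^k-1$ in place of $p^k$ and breakpoints $\mathbb{Y}_m\cap[0,1]$. The proofs of Lemma~\ref{lemma:mus-u-U_m}, Theorem~\ref{thm:mus-lower-bound} and Theorem~\ref{thm:mus-upper-bound} (with Lemmas~\ref{lemma:mus-beta-inequal}, \ref{lemma:mus-beta-equal}) never use the specific value of the thresholds. I will therefore invoke them verbatim to obtain \eqref{eq:U_s-lower-1} and \eqref{eq:U_s-upper-2}--\eqref{eq:U_s-upper-3}.

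On $[-1,0]$ I plan to use the reflection $T u(y):=-u(-y)$, which maps increasing convex $L$-Lipschitz functions on $[-1,0]$ with $u(-1)=-1,u(0)=0$ bijectively onto increasing concave $L$-Lipschitz functions on $[0,1]$ normalized by $0,1$. A constraint $Z_k u(r_2^k)\ge Z_k(2p^k-1)$ becomes $(-Z_k)\,Tu(-r_2^k)\ge(-Z_k)(1-2p^k)$, which is again of the MUS form. Thus $T\mathcal{U}^-_m$ is a concave ambiguity set with breakpoints $-\mathbb{Y}_m\cap[0,1]$. Under $T$, maxima become minima, so $\underline f^S(y)=-\overline{f}_T(-y)$ and $\bar f^S(y)=-\underline{f}_T(-y)$. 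Right derivatives at $y$ become left derivatives at $-y$, and the interval orientation flips. Translating Theorem~\ref{thm:mus-lower-bound} gives \eqref{eq:U_s-upper-1}. Translating Theorem~\ref{thm:mus-upper-bound} gives \eqref{eq:U_s-lower-2}--\eqref{eq:U_s-lower-3}, with $\underline\beta^S_{m,k}$ and $\bar\beta^S_{m,k+1}$ arising from the $\bar\beta$ and $\underline\beta$ of the reflected problem. The breakpoint $y^*_{m,k}$ is the intersection of the two lines, and its formula is the stated one.

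The main obstacle is this bookkeeping. I must check carefully that the reflected $\bar\beta_j,\underline\beta_{j+1}$ correspond exactly to the theorem's $\underline\beta^S_{m,k}$ (a minimum of right derivatives among minimizers at $y^m_k$) and $\bar\beta^S_{m,k+1}$ (a maximum of left derivatives among minimizers at $y^m_{k+1}$). I must also confirm that the inequality $\bar\beta\ge\underline\beta$ flips to $\underline\beta^S\le\bar\beta^S$, giving the dichotomy stated.

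Part (iii) then follows. On each interval, one of $\bar f^S,\underline f^S$ is linear (the piecewise-linear interpolant side) and the other has at most two linear pieces. The difference therefore has at most two pieces on each interval, and in particular at most three.
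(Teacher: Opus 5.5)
Your proposal is correct and follows essentially the same route as the paper: the paper also decomposes $\mathcal{U}^S_m=\mathcal{U}^{S-}_m\oplus\mathcal{U}^{S+}_m$, applies Theorems~\ref{thm:mus-lower-bound} and~\ref{thm:mus-upper-bound} directly on $[0,1]$, and treats $[-1,0]$ via the reflection $\mathcal{R}u(y)=-u(-y)$, which swaps maxima and minima. Your proof also supplies details the paper leaves implicit: you track how the one-sided derivatives, and hence the $\beta$-quantities, transform under the reflection, and you prove part (iii) directly, since on each interval one of the two bounds is the linear interpolant, so $f^S$ has at most two pieces there.
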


\begin{figure}[htbp]
\subfigure[$\bar{\beta}^S_{m,5}=\underline{\beta}^S_{m,6}=\frac{\bar{\alpha}^S_6-\bar{\alpha}^S_5}{y^m_6-y^m_5}$ and $\underline{\beta}^S_{m,2}=\bar{\beta}^S_{m,3}=\frac{\underline{\alpha}^S_{m,3}-\underline{\alpha}^S_{m,2}}{y^m_3-y^m_2}$]{
    \includegraphics[width=0.47\linewidth]{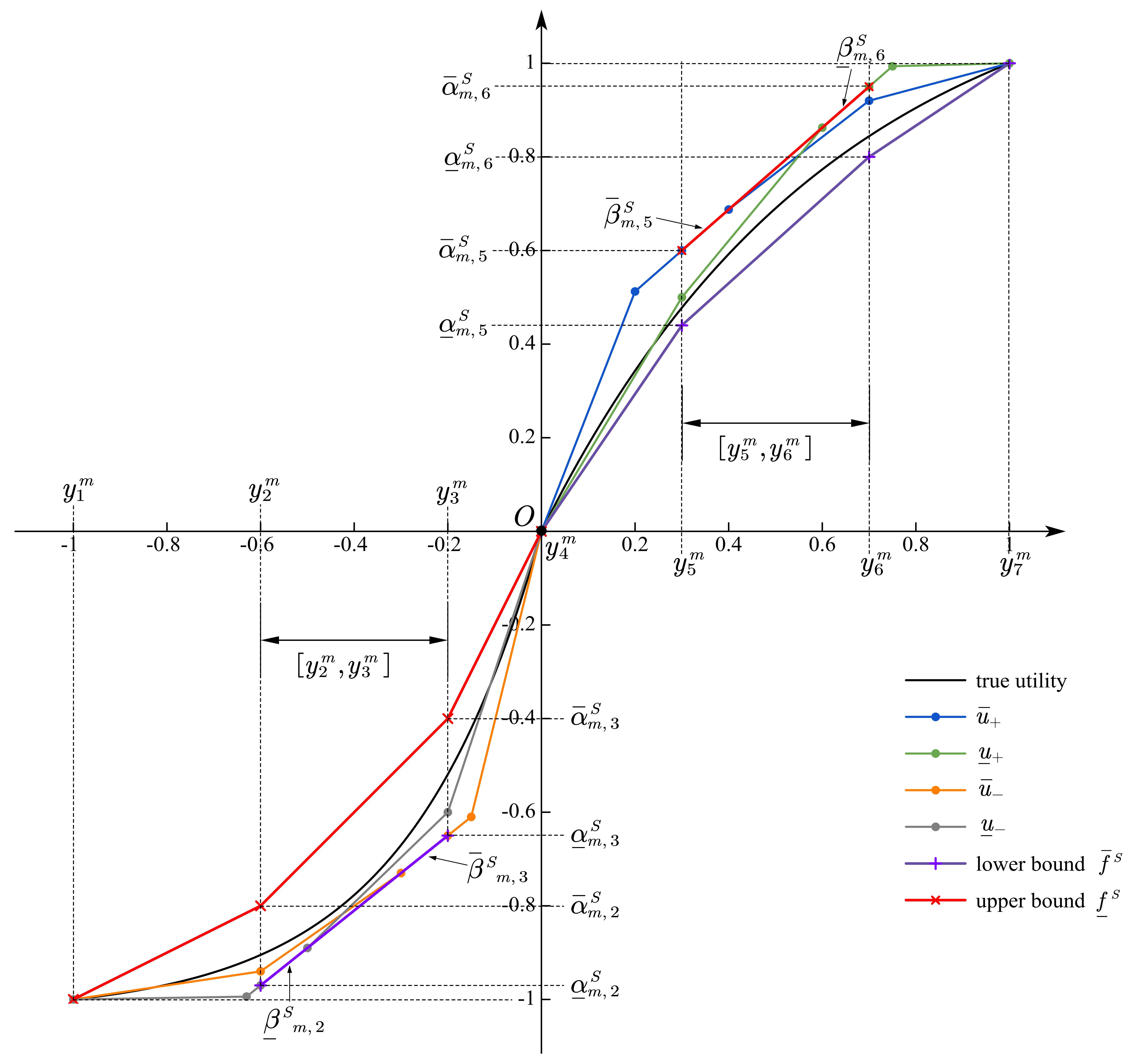}
}
\quad
\subfigure[$\bar{\beta}^S_{m,5}>\frac{\bar{\alpha}^S_6-\bar{\alpha}^S_5}{y^m_6-y^m_5}>\underline{\beta}^S_{m,6}$ and $\underline{\beta}^S_{m,2}<\frac{\underline{\alpha}^S_{m,3}-\underline{\alpha}^S_{m,2}}{y^m_3-y^m_2}<\bar{\beta}^S_{m,3}$]{
    \includegraphics[width=0.47\linewidth]{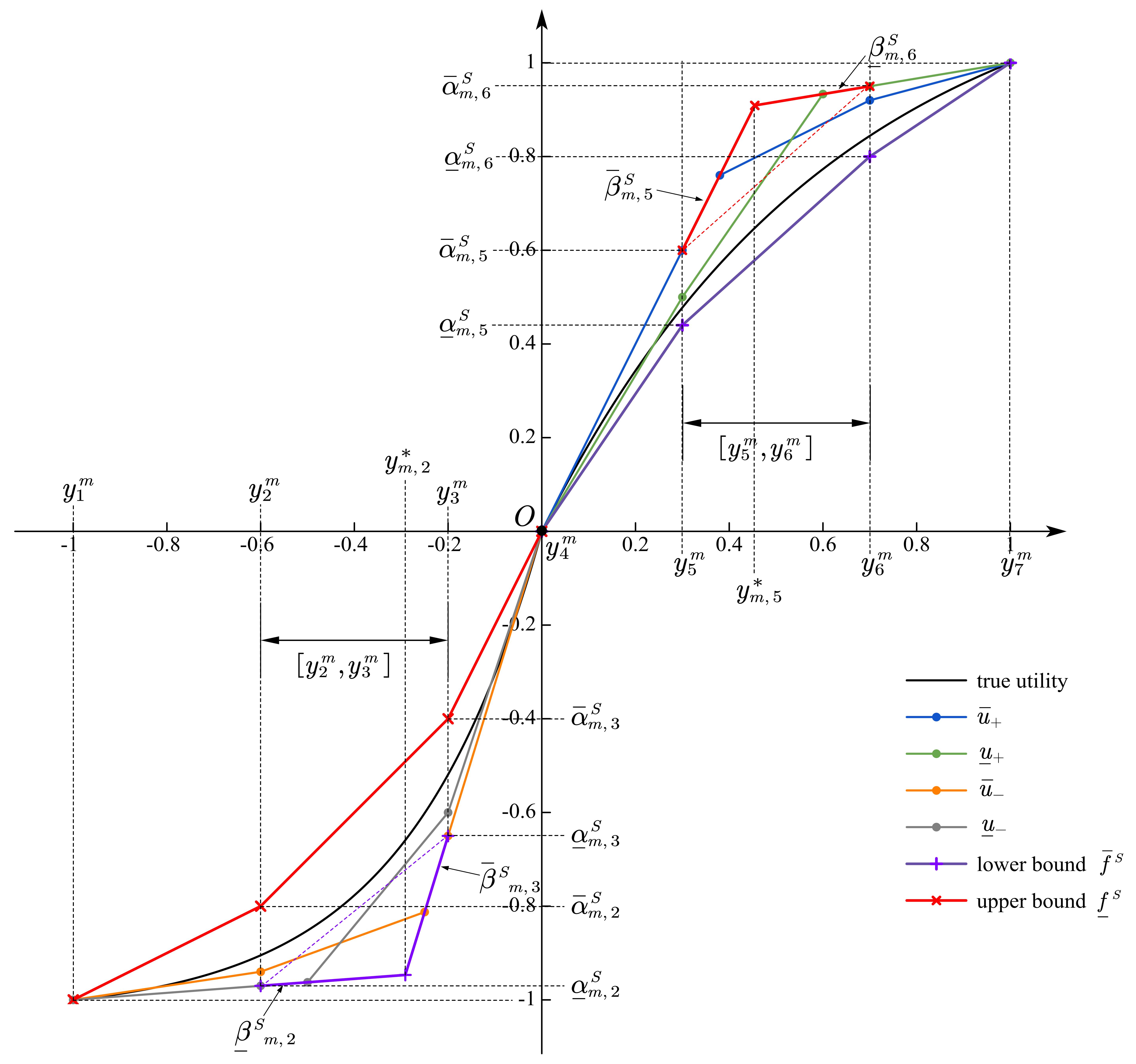}
}
\caption{
Illustration of the graphs of the lower bound function $\underline{f}^S$ over intervals $[y^m_2,y^m_3]$ and $[0,1]$, and the upper bound function $\bar{f}^S$ over intervals $[-1,0]$ and $[y^m_5,y^m_6]$. \rm $\underline{f}^S$ is either linear or convex comprising two linear pieces over $[y^m_2,y^m_3]$ depending on the relationship between $\underline{\beta}^S_{m,2}$ and $\bar{\beta}^S_{m,3}$, and the purple segment over $[y^m_2,y^m_3]$ is the graph of $\underline{f}^S$ restricted to $[y^m_2,y^m_3]$, which is constructed from $\bar{u}_{-}$ and $\underline{u}_{-}$; $\bar{f}^S$ is either linear or concave comprising two linear pieces over $[y^m_5,y^m_6]$ depending on the relationship between $\bar{\beta}^S_{m,5}$ and $\underline{\beta}^S_{m,6}$, and the red segment over $[y^m_5,y^m_6]$ is the graph of $\bar{f}^S$ restricted to $[y^m_5,y^m_6]$, which is constructed from $\bar{u}_{+}$ and $\underline{u}_{+}$. 
There are two additional cases: 
(c) $\bar{\beta}^S_{m,5}>\frac{\bar{\alpha}^S_6-\bar{\alpha}^S_5}{y^m_6-y^m_5}>\underline{\beta}^S_{m,6}$ and $\underline{\beta}^S_{m,2}=\bar{\beta}^S_{m,3}=\frac{\underline{\alpha}^S_{m,3}-\underline{\alpha}^S_{m,2}}{y^m_3-y^m_2}$; and 
(d) $\bar{\beta}^S_{m,5}=\underline{\beta}^S_{m,6}=\frac{\bar{\alpha}^S_6-\bar{\alpha}^S_5}{y^m_6-y^m_5}$ and $\underline{\beta}^S_{m,2}<\frac{\underline{\alpha}^S_{m,3}-\underline{\alpha}^S_{m,2}}{y^m_3-y^m_2}<\bar{\beta}^S_{m,3}$. 
These cases can be easily visualized by combining the figures from cases (a) and (b), and are therefore omitted here. }
\label{fig:SS-upper-lower-two-piece}
\end{figure}

\begin{proof}

{To improve the clarity of the proof, we first introduce the following definitions. }

\begin{definition}
Define operator $\oplus: \mathcal{L}^1([-1,0]) \times \mathcal{L}^1([0,1])\to\mathcal{L}^1([-1,1])$ which maps each pair of convex and concave functions $(u^{-},u^{+})$ to a S-shaped function as follows: 
\begin{equation*}
(u^{-}\oplus u^{+})(y)=
\left\{
\begin{aligned}
    & u^{-}(y) && {\rm for} \;\; -1\leq y \leq 0,\\
    & u^{+}(y) && {\rm for} \;\; 0< y \leq 1. 
\end{aligned}
\right.
\end{equation*}
Let $\mathcal{U}_{S}^{+}:=\mathcal{U}_{cv}$ be defined in \eqref{eq:U_c}, and let 
\begin{align*}
\mathcal{U}_{S}^{-}:= \{ u\in\   \mathcal{L}^1([-1,0])\mid \ &
 u^{'}_{+}(y)\geq0,\ u^{'}_{-}(y)\geq0,\ \forall y\in[-1,0],\ u(-1)=-1,\ u(0)=0,\ \text{Lip}(u)\leq L,\\
& u^{'}_{+}(y)\geq u^{'}_{-}(y),\ \forall y\in[-1,0],\ 
u^{'}_{-}(y_2)\geq u^{'}_{+}(y_1),\ \forall -1\leq y_1<y_2\leq0 \}. 
\end{align*}
Then, it holds that  $\mathcal{U}_S=\left\{u^{-}\oplus u^{+}\mid u^{-}\in\mathcal{U}_S^{-},\ u^{+}\in\mathcal{U}_S^{+}\right\}:=\mathcal{U}_S^{-}\oplus\mathcal{U}_S^{+}$. 
Moreover,
since $\mathbb{Y}_m= \{{y}^m_{j}\}_{j=1,\ldots,N_m}$ is the set of all points in $\mathcal{S}_m=\{-1,0,1\}\cup \bigcup\limits_{k=1}^{m}\{r_2^k\}$ sorted in increasing order with fixed $y^m_1=-1$, $y^m_{j_0}=0$ and $y^m_{N_m}=1$, 
let $r_2^{k_j}\in\mathcal{S}_m$ be such that $y^m_j=r_2^{k_j}\in\mathbb{Y}_m$ for some $k_j\in\{1,2,\ldots,N_m\}$. Then $\mathcal{U}^{S}_m$ in \eqref{eq:U_s-m} can be recast as 
\begin{equation*}
    \mathcal{U}^{S}_m=\left\{u\in\mathcal{U}_S\mid Z_{k_j} \cdot u(y^m_{j})\geq Z_{k_j}\cdot (2p^{k_j}-1),\ j=1,\ldots,N_m \right\}. 
\end{equation*} 
Let 
\begin{equation}\label{eq:U_m_s-}
\mathcal{U}_m^{S-}:=\{u\in\mathcal{U}_S^{-}\mid Z_{k_j} \cdot u(y^m_{j})\geq Z_{k_j}\cdot(2p^{k_j}-1),\ 
j=1,\ldots,j_0\},  
\end{equation}
\begin{equation}\label{eq:U_m_s+}
\mathcal{U}_m^{S+}:=\{u\in\mathcal{U}_S^{+}\mid Z_{k_j} \cdot u(y^m_{j})\geq Z_{k_j}\cdot (2p^{k_j}-1),\ j=j_0,\ldots,N_m\}.
\end{equation}
Then we have 
$\mathcal{U}^{S}_m=\mathcal{U}_m^{S-}\oplus\mathcal{U}_m^{S+}$. 

\end{definition}
With the operator defined above, for
any $r_2\in[-1,1]$, the upper and lower bound functions defined in \eqref{eq:U_s-upper-lower-func} can be recast as 
\begin{equation}
\bar{f}^{S}(r_2)=\max\limits_{u\in \mathcal{U}^{S}_{m}}u(r_2)= \max\limits_{u^{-}\in \mathcal{U}_{m}^{S-},u^{+}\in\mathcal{U}_{m}^{S+}}\ (u^{-}\oplus u^{+})(r_2)=
\left\{
\begin{aligned}
& \max_{u\in\mathcal{U}_m^{S-}} u(r_2) && {\rm for}\;\; -1\leq r_2\leq 0,\\
& \max_{u\in\mathcal{U}_m^{S+}} u(r_2) && {\rm for}\;\; 0< r_2\leq 1,
\end{aligned}
\right.
\end{equation}
and
\begin{equation}
\underline{f}^{S}(r_2)=\min\limits_{u\in \mathcal{U}^{S}_{m}}u(r_2)=\min\limits_{u^{-}\in \mathcal{U}_{m}^{S-},u^{+}\in\mathcal{U}_{m}^{S+}}(u^{-}\oplus u^{+})(r_2)=
\left\{
\begin{aligned}
& \min_{u\in\mathcal{U}_m^{S-}} u(r_2) && {\rm for}\;\; -1\leq r_2\leq 0,\\
& \min_{u\in\mathcal{U}_m^{S+}} u(r_2) && {\rm for}\;\; 0< r_2\leq 1,
\end{aligned}
\right.
\end{equation}
which indicates that we can consider the two bound functions over $[-1,0]$ and $[0,1]$ separately.

\begin{definition}\label{def:rotate}
Define a one-to-one mapping $\mathcal{R}: \mathcal{L}^1([-1,0])\to\mathcal{L}^{1}([0,1])$ that maps each $u\in\mathcal{U}_m^{S-}$ to a function $\mathcal{R}u$ in $\mathcal{U}^{+}_S$ as follows: 
\begin{equation*}
    \mathcal{R}u(y)=-u(-y), \ \forall y\in[0,1]. 
\end{equation*}
Since each $u\in\mathcal{U}_m^{S-}$ is monotonically increasing, convex, and Lipschitz continuous with modulus bounded by $L$ over $[-1,0]$, it follows directly that $\mathcal{R}u$ is monotonically increasing, concave, and Lipschitz continuous over $[0,1]$. 
Let
\begin{equation}\label{eq:U_rm}
\mathcal{U}_m^{\mathcal{R}-}:=\left\{\mathcal{R}u:\  u\in\mathcal{U}_m^{S-}\right\}
=\left\{u\in\mathcal{U}_{cv} \mid -Z_{k_j} \cdot u(-y^m_{j})\geq Z_{k_j}\cdot(2p^{k_j}-1),\ 
j=1,\ldots,j_0\right\}. 
\end{equation}

\end{definition}


With the above preparations, we prove the theorem in two parts.

\vspace{0.3cm}
\noindent
\underline{Part (i): Proof of the semi-closed form of $\underline{f}^{S}$ in \eqref{eq:U_s-lower-1}--\eqref{eq:U_s-lower-3}}

First, we have $\underline{f}^{S}(y)=\min\limits_{u\in\mathcal{U}_m^{S+}} u(y)$ for $y\in[0,1]$. 
Since $\mathcal{U}_m^{S+}$ in \eqref{eq:U_m_s+} and $\mathcal{U}_m$ in \eqref{eq:U_m-MUS-0} possess the same structural properties, determining $\underline{f}^{S}$ on $[0,1]$ is equivalent to determining the lower bound function $\underline{f}$ studied in Section~\ref{sec:lower-bound-f}. Hence, by Theorem~\ref{thm:mus-lower-bound}, $\underline{f}^{S}$ takes the form given in \eqref{eq:U_s-lower-1} on $[0,1]$.

Next, for $y\in[-1,0]$, we have
\[
\underline{f}^{S}(y)=\min_{u\in\mathcal{U}_m^{S-}} u(y)
=-\max_{u\in\mathcal{U}_m^{S-}}\bigl(-u(y)\bigr)
=-\max_{u\in\mathcal{U}_m^{S-}}\mathcal{R}u(-y)
=-\max_{u\in\mathcal{U}_m^{\mathcal{R}-}} u(-y),
\]
where the last equality follows from the correspondence between $\mathcal{U}_m^{S-}$ and $\mathcal{U}_m^{\mathcal{R}-}$ in \eqref{eq:U_rm}. Therefore, it remains to characterize $\max\limits_{u\in\mathcal{U}_m^{\mathcal{R}-}} u(y)$
for $y\in[0,1]$. Because $\mathcal{U}_m^{\mathcal{R}-}$ in \eqref{eq:U_rm} and $\mathcal{U}_m$ in \eqref{eq:U_m-MUS-0} again share identical structural features, determining $\underline{f}^{S}$ on $[-1,0]$ reduces to determining the upper bound function $\bar{f}$ on $[0,1]$ as analyzed in Section~\ref{sec:upper-bound-f}. By Theorem~\ref{thm:mus-upper-bound}, $\underline{f}^{S}$ has the form stated in \eqref{eq:U_s-lower-2} or \eqref{eq:U_s-lower-3} on each interval $[y_k^m,\,y_{k+1}^m]$, for $k=1,\ldots,j_0-1$.


\vspace{0.3cm}

\noindent
\underline{Part (ii): Proof of the semi-closed form of $\bar{f}^{S}$ in \eqref{eq:U_s-upper-1}--\eqref{eq:U_s-upper-3}}

First, we have $\bar{f}^{S}(y)=\max_{u\in\mathcal{U}_m^{S+}} u(y)$ for $y\in[0,1]$.  
Since $\mathcal{U}_m^{S+}$ in \eqref{eq:U_m_s+} and $\mathcal{U}_m$ in \eqref{eq:U_m-MUS-0} possess identical structural properties, determining $\bar{f}^{S}$ on $[0,1]$ is equivalent to determining the upper bound function $\bar{f}$ studied in Section~\ref{sec:upper-bound-f}. Consequently, by Theorem~\ref{thm:mus-upper-bound}, $\bar{f}^{S}$ takes the form given in \eqref{eq:U_s-upper-2} or \eqref{eq:U_s-upper-3} on each interval $[y_k^m,\,y_{k+1}^m]$, for $k=j_0,\ldots,N_m-1$.

Next, for $y\in[-1,0]$, we have
\[
\bar{f}^{S}(y)=\max_{u\in\mathcal{U}_m^{S-}} u(y)
=-\min_{u\in\mathcal{U}_m^{S-}}\bigl(-u(y)\bigr)
=-\min_{u\in\mathcal{U}_m^{S-}}\mathcal{R}u(-y)
=-\min_{u\in\mathcal{U}_m^{\mathcal{R}-}} u(-y),
\]
where the last equality follows from the correspondence between $\mathcal{U}_m^{S-}$ and $\mathcal{U}_m^{\mathcal{R}-}$ in \eqref{eq:U_rm}. Thus, it remains to characterize $\min\limits_{u\in\mathcal{U}_m^{\mathcal{R}-}} u(y)$
for $y\in[0,1]$. Because $\mathcal{U}_m^{\mathcal{R}-}$ in \eqref{eq:U_rm} and $\mathcal{U}_m$ in \eqref{eq:U_m-MUS-0} again share identical structural properties, determining $\bar{f}^{S}$ on $[-1,0]$ reduces to determining the lower bound function $\underline{f}$ on $[0,1]$ as analyzed in Section~\ref{sec:lower-bound-f}. By Theorem~\ref{thm:mus-lower-bound}, $\bar{f}^{S}$ therefore has the form stated in \eqref{eq:U_s-upper-1} on $[-1,0]$.
\end{proof}

Since $\underline{f}^S$ is piecewise linear and piecewise convex over the interval $[-1,0]$, and $\bar{f}^S$ is piecewise linear and piecewise concave over the interval $[0,1]$, neither $\underline{f}^S$ nor $\bar{f}^S$ necessarily belongs to $\mathcal{U}^S_m$. 
Note 
that in the definition of the initial  ambiguity set \eqref{eq:U_s}, we 
implicitly assume that the true utility function changes from  convex to concave
at $0$. This assumption aligns with
the consensus in behavioral economics that a DM tends to be risk-averse in the domain of gains and risk-seeking in the domain of losses.
In practical applications, the turning point (also referred to as the reference point or reflection point in prospect theory \citep{prospect}) may not be known in advance, or it may vary dynamically (see, e.g., \citet{SCY15}). Consequently, it is necessary to identify the reference point through pairwise comparisons before implementing the MUS. Since this issue falls outside the scope of the current paper, we leave it for future research.


\section{Numerical results}
\label{sec:numerical}

We have carried out some comparative numerical analyses on the proposed MUS
scheme in comparison with RUS \citep{armbruster2015decision}, RRUS \citep{armbruster2015decision}, and the polyhedral method \citep{zhang2025modified}, by performing a number of tests on an academic example in Section \ref{sec-academic} and a robo-advisor problem in Section \ref{sec:robo-advisor}. 
All tests are carried out using Python 3.9.17 and PyCharm 2024.1.4 on a PC with 32GB RAM and a 3.5 GHz Intel Core i5-13600KF processor. 
We use GUROBI to solve the problems in Section~\ref{sec-academic} and COPT for those in Section~\ref{sec:robo-advisor}.

\subsection{An academic example}\label{sec-academic}

We consider a DM whose preferences are described by the true utility function 
\begin{equation}
\label{eq:u*-true-test}
    u^*(y)=\frac{1}{1-e^{-6}}(1-e^{-6 y}),\quad \forall y\in[0,1],
\end{equation}
referred to as $u_{\rm true}$, which is monotonically increasing, concave, Lipschitz continuous and normalized to $[0,1]$.
The Lipschitz modulus of utility functions in $\mathcal{U}_{cv}$ is set to $L = 10$.

\subsubsection{Convergence of the nominal utility function to the true utility function}

\begin{figure}[tp]
    \centering
    \subfigure
    {
    \includegraphics[width=0.47\linewidth]{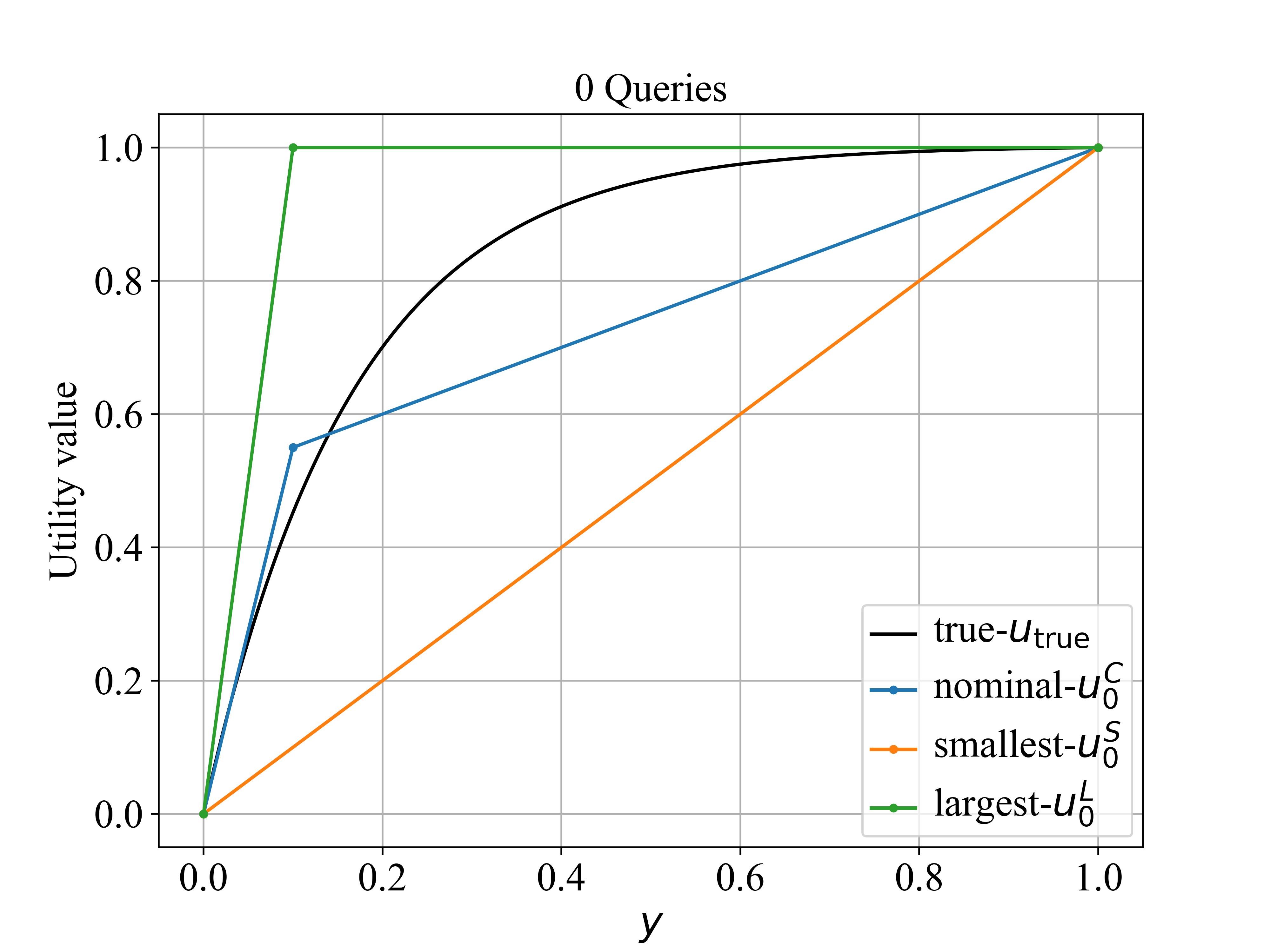}
    }
    \;
    \subfigure
    {
    \includegraphics[width=0.47\linewidth]{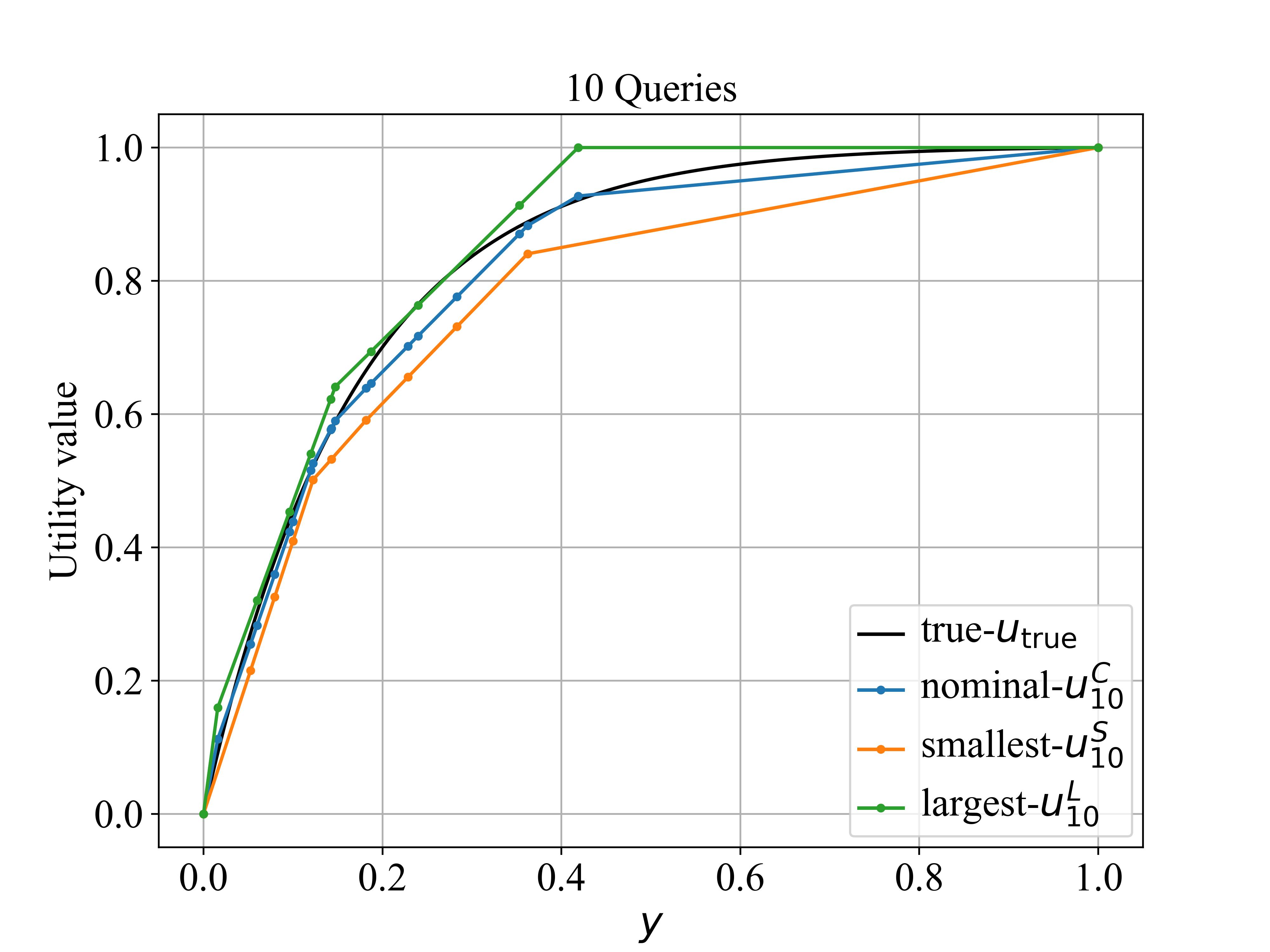}
    }\\
    \subfigure
    { 
    \includegraphics[width=0.47\linewidth]{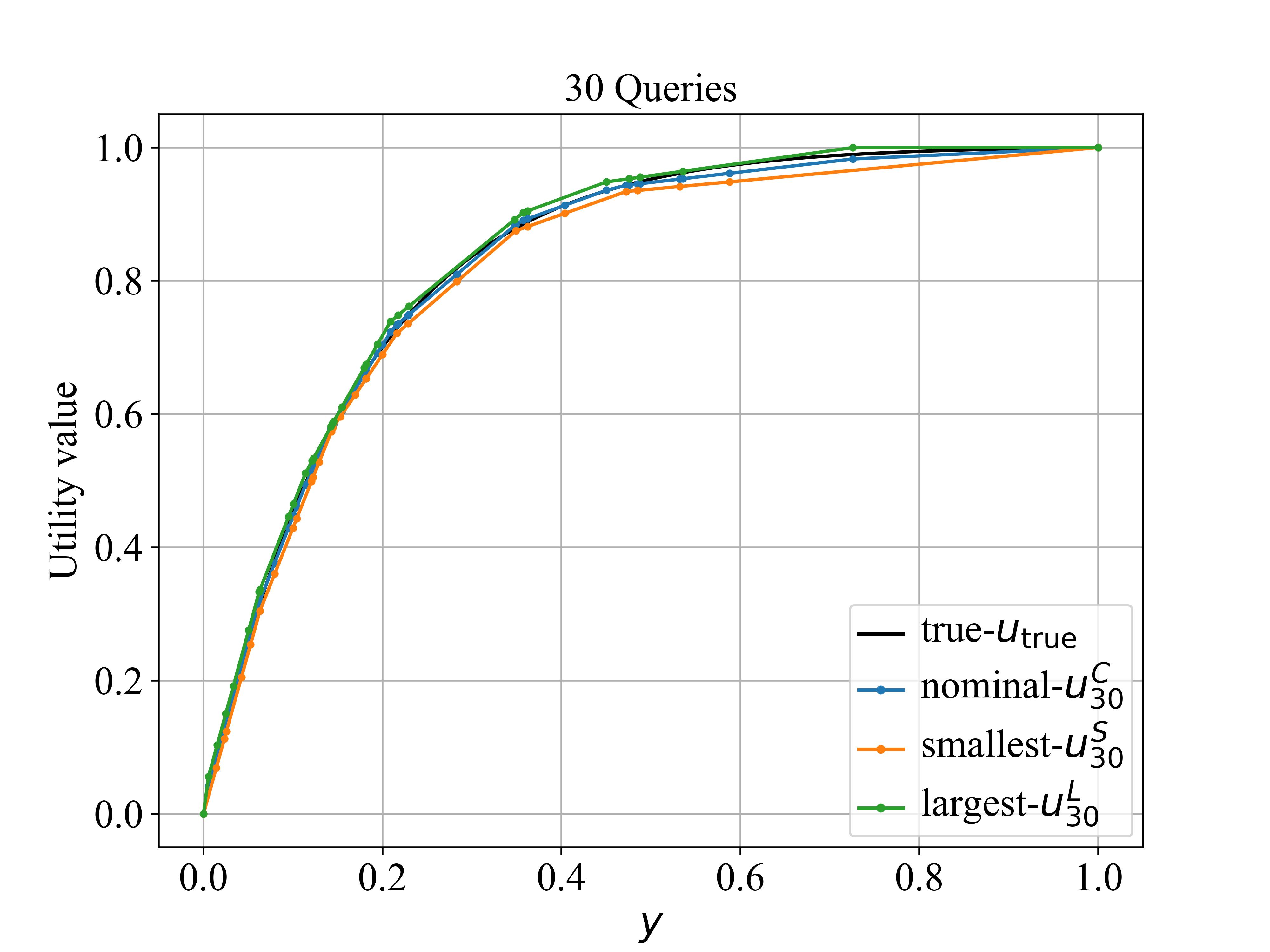}
    }
    \;
    \subfigure
    {
    \includegraphics[width=0.47\linewidth]{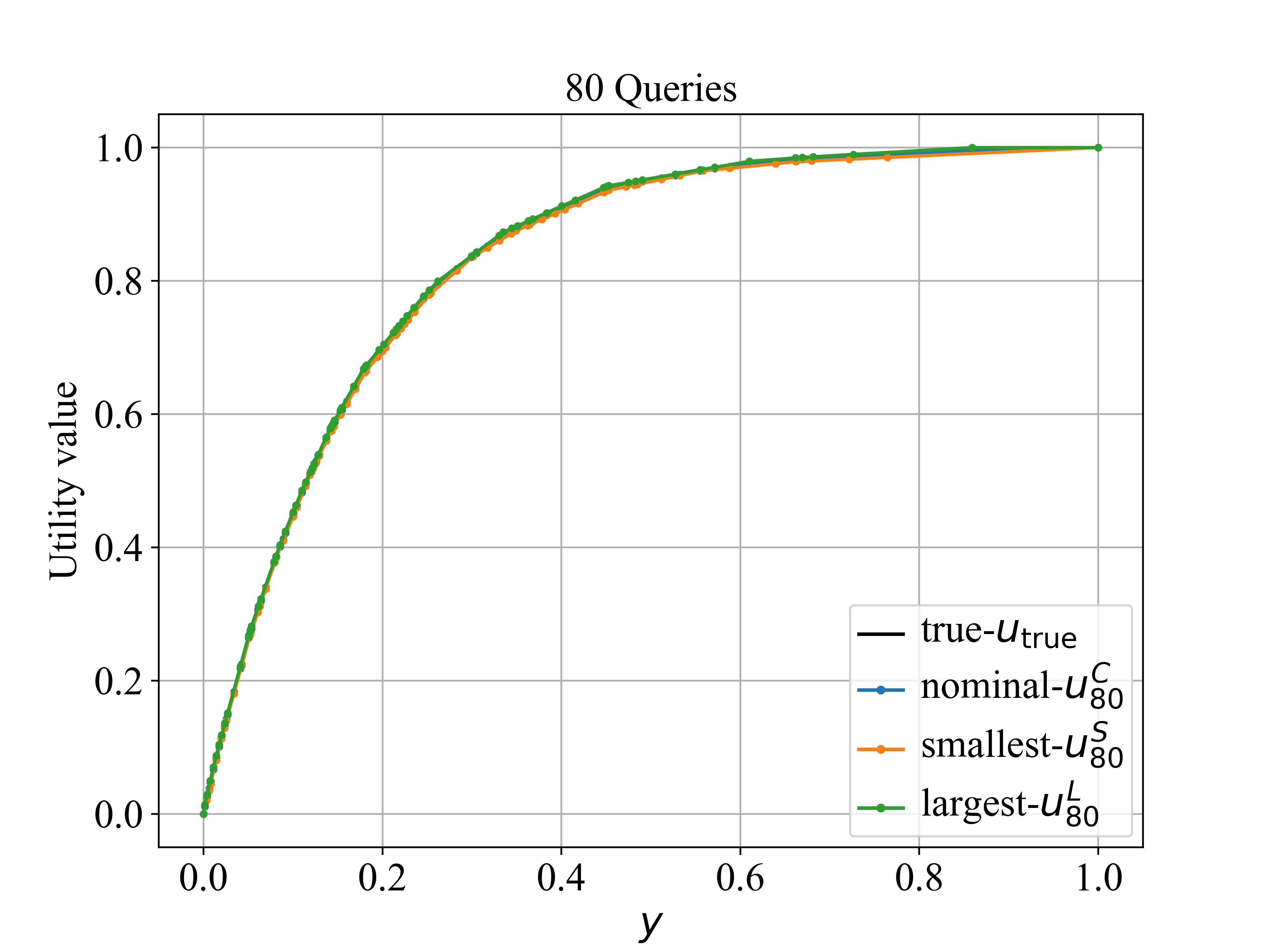}
    }
    \caption{
    Convergence of the smallest, the largest, and the nominal utility functions of $\mathcal{U}_m$ as $m$ varies from $0$ to $10, 30$ and $80$.
    }
    \label{fig:num-mus-fit}
\end{figure}

The aim of the tests in this subsection is to report
the preference elicitation procedure under the MUS scheme proposed in Section~\ref{sec:MUS}, by examining the nominal utility functions identified from the resulting ambiguity sets. 
We use the MUS scheme to adaptively generate a sequence of queries to 80 during the
interaction with the DM.
At the 0-, 10-, 30- and 80-th round of interaction,
we compute 
the smallest, largest, and nominal utility functions as described in Section~\ref{sec:utility}, and present the results in Figure~\ref{fig:num-mus-fit}. 
We observe that the smallest, largest, and nominal utility functions all converge to the true utility function as the number of queries increases.
This is because the ambiguity set converges to the true utility function, as theoretically guaranteed by Theorem \ref{thm:mus-converg}, and all three elicited utility functions are selected from this set. 
The smallest utility functions consistently lie below the true utility function, while the largest ones lie above it.

\subsubsection{MUS versus RUS, RRUS, and polyhedral method}\label{sec:num-mus-vs}

In this subsection, we compare the elicitation efficiency of the proposed MUS scheme with that of the RUS and RRUS schemes, as well as the polyhedral method in terms of convergence of the elicited ambiguity set
as the number of queries increases. 
We start with the initial ambiguity set $\mathcal{U}_0:=\mathcal{U}_{cv}$ and use MUS, RUS, RRUS, and polyhedral methods 
to adaptively generate a sequence of 50 queries to interact with the DM.\footnotemark 
After each interaction, we identify the smallest, largest, and nominal utility functions from the current ambiguity set $\mathcal{U}_m$, which is updated using adaptive queries generated by each of the four methods.  
We use the following distances to measure convergence of the elicited ambiguity set.

\footnotetext{
When applying the polyhedral method, we impose additional constraints on the vector of increments of the piecewise linear utility function in the initial polyhedron. These constraints correspond to the concavity and Lipschitz continuity of utility functions considered in this paper. 
We consider two cases of the modified polyhedral method: the first uses an initial breakpoint set of $[0, 0.25, 0.75, 1]$ (referred to as $\text{Poly}_1$), and the second uses an initial breakpoint set of $[0, \frac{1}{L}, 1]$ (referred to as $\text{Poly}_2$).} 

\begin{itemize}
    \item \underline{$\dd_K(u^L,u^S)$ and $\dd_I(u^L,u^S)$}, 
    which 
    measure
    the Kantorovich distance and Kolmogorov distance  
    between
    the largest utility function and the smallest utility function
 in $\mathcal{U}_m$.  
 Figures \ref{fig:dist-large-small} depicts
 reduction of the distances as the number of queries increases. 
 We can see that MUS displays
 fastest reduction in both distances; $\dd_K(u^L,u^S)$ is monotonically decreasing whereas 
 $\dd_I(u^L,u^S)$ is not, this is because the largest utility function is defined in terms of Kantorovich distance and it is not reduced pointwise; RUS outperforms RRUS purely because of the concave structure of the utility function;
 the performance of polyhedral method depends on the choice of  initial set of breakpoints. As we can see that the method starting with
 initial set of breakpoints $[0, \frac{1}{L}, 1]$ displays much faster convergence than the same method beginning with
  $[0, 0.25, 0.75, 1]$.

\begin{figure}[htbp]
    \centering
    \subfigure
    []
    {
    \label{fig:kanto-large-small}
    \includegraphics[width=0.45\linewidth]{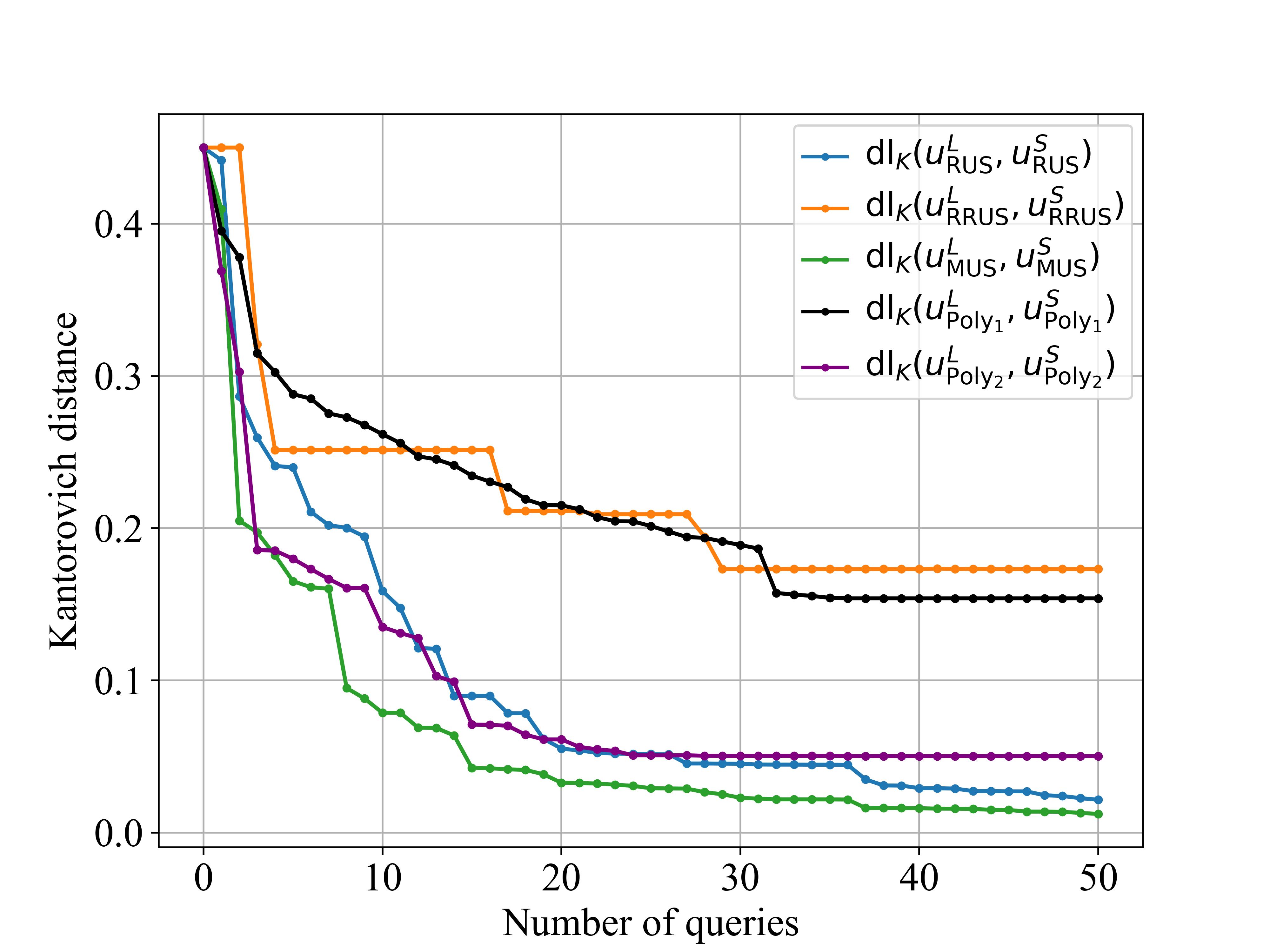}
    }
    \;
    \subfigure
    []
    {
    \label{fig:kolmo-large-small}
    \includegraphics[width=0.45\linewidth]{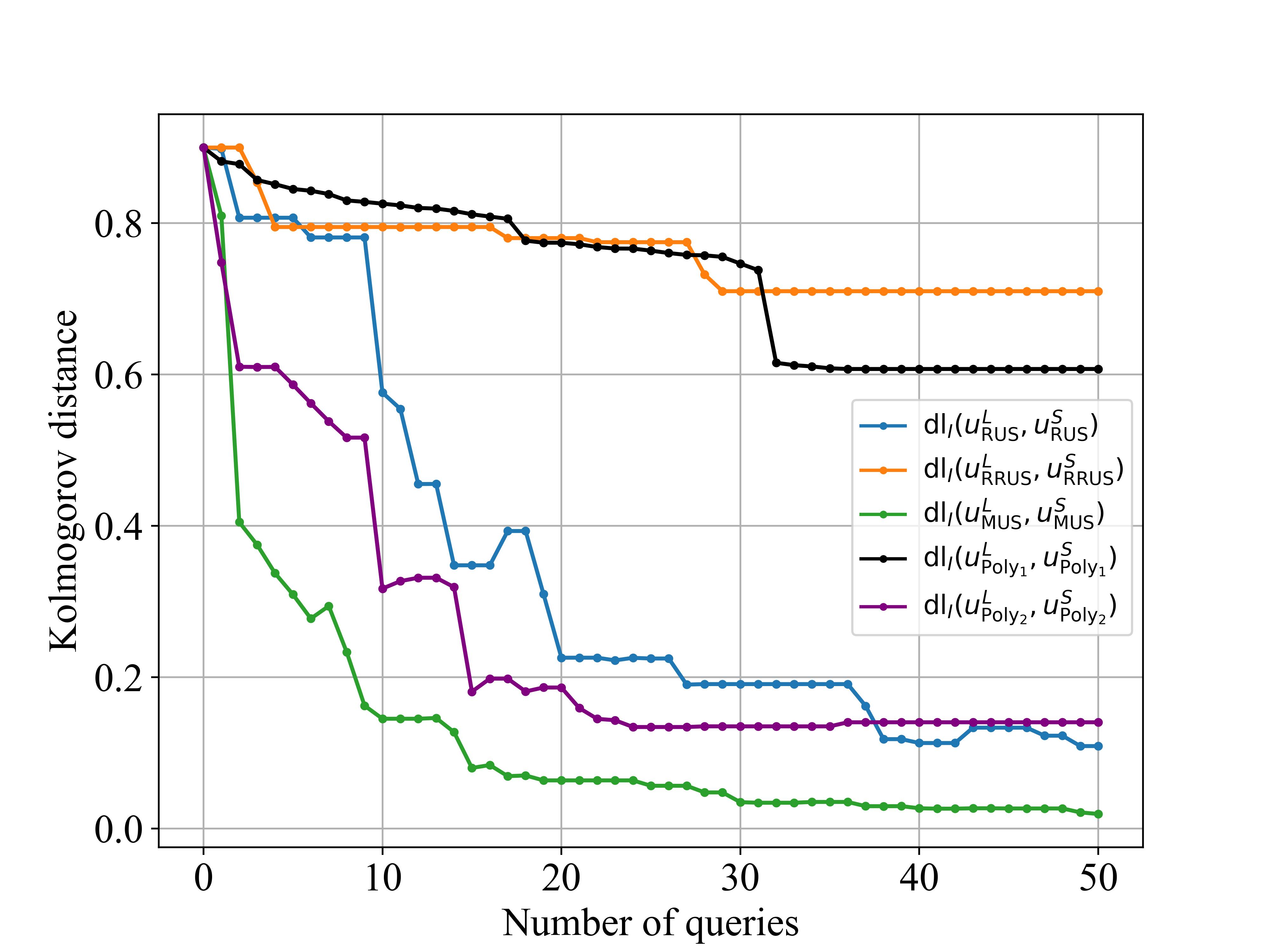}
    }
    \caption{Changes of the Kantorovich distance and the Kolmogorov distance between the largest utility functions and the smallest utility functions in ${\cal U}_m$ as $m$ increases, 
    with 
    queries generated by MUS, RUS, RRUS, and Poly methods respectively.}
    \label{fig:dist-large-small}
\end{figure}
 
    \item \underline{$\dd_K(u^C,u_{\text{true}})$ and $\dd_I(u^C,u_{\text{true}})$}, which  quantify the distance between the nominal utility function and the true utility function. Figure \ref{fig:dist-center-true} depicts
 reduction of the distances as $m$ increases.
 We can see tendency 
 of reduction in both
  distances 
 despite neither is
monotonically decreasing.
This is 
because the {\em relative location} of $u_{\text{true}}$
 in ${\cal U}_m$ varies as $m$ increases albeit the size of the ambiguity set is reduced at each iteration (after a cut). 
 
\begin{figure}[htbp]
    \centering
    \subfigure
    []
    {
    \label{fig:kantor-center-true}
    \includegraphics[width=0.45\linewidth]{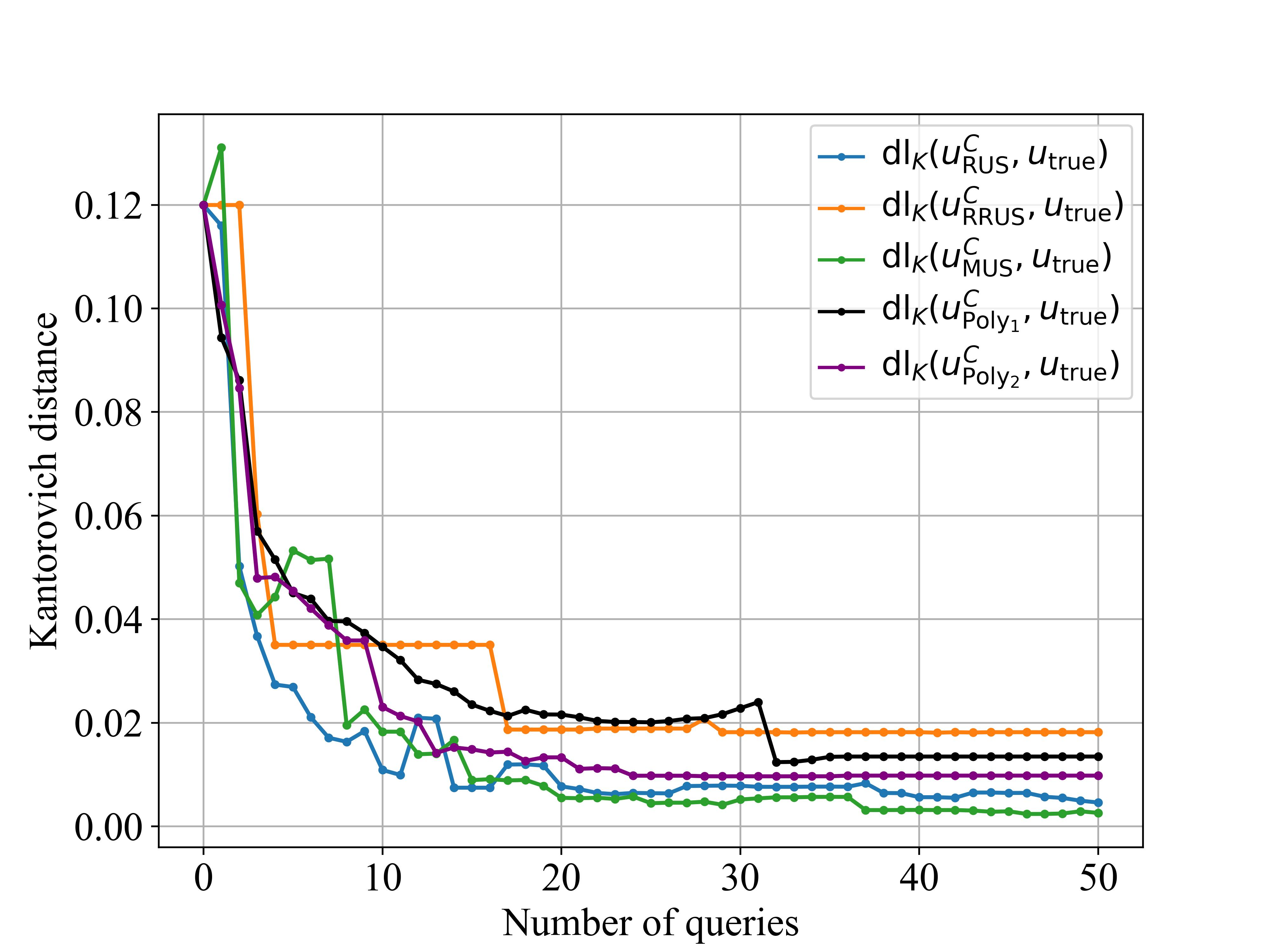}
    }
    \;
    \subfigure
    []
    {
    \label{fig:kolmo-center-true}
    \includegraphics[width=0.45\linewidth]{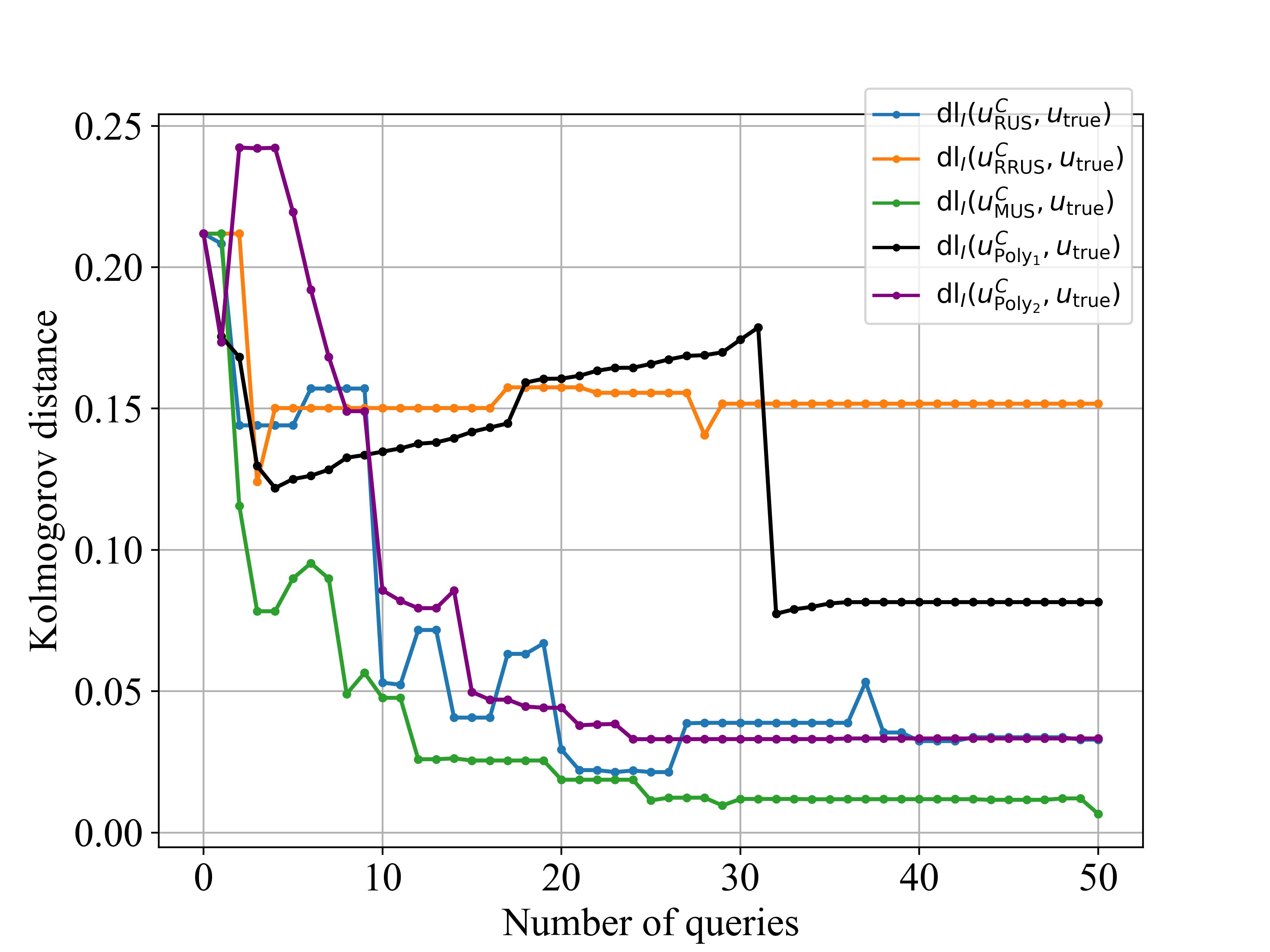}
    }
    \caption{Comparison of 
    MUS, RUS, RRUS and Poly methods
    in terms of
    the Kantorovich distance and Kolmogorov distance 
    between the nominal utility functions and true utility function as 
    the numbers of queries increases.
    }
    \label{fig:dist-center-true}
\end{figure}

    \item \underline{$\dd_K(u^S,u_{\text{true}})$ and $\dd_I(u^S,u_{\text{true}})$}, which  quantify the distance between the smallest utility function and the true utility function. Figure \ref{fig:dist-small-true} 
    depicts reduction of the distances. Both distances are monotonically decreasing because the smallest utility function increases pointwise as $m$ increases (the smallest utility function coincide with $\underline{f}$).

\begin{figure}[htbp]
    \centering
    \subfigure
    []
    {
    \label{fig:kanto-small-true}
    \includegraphics[width=0.45\linewidth]{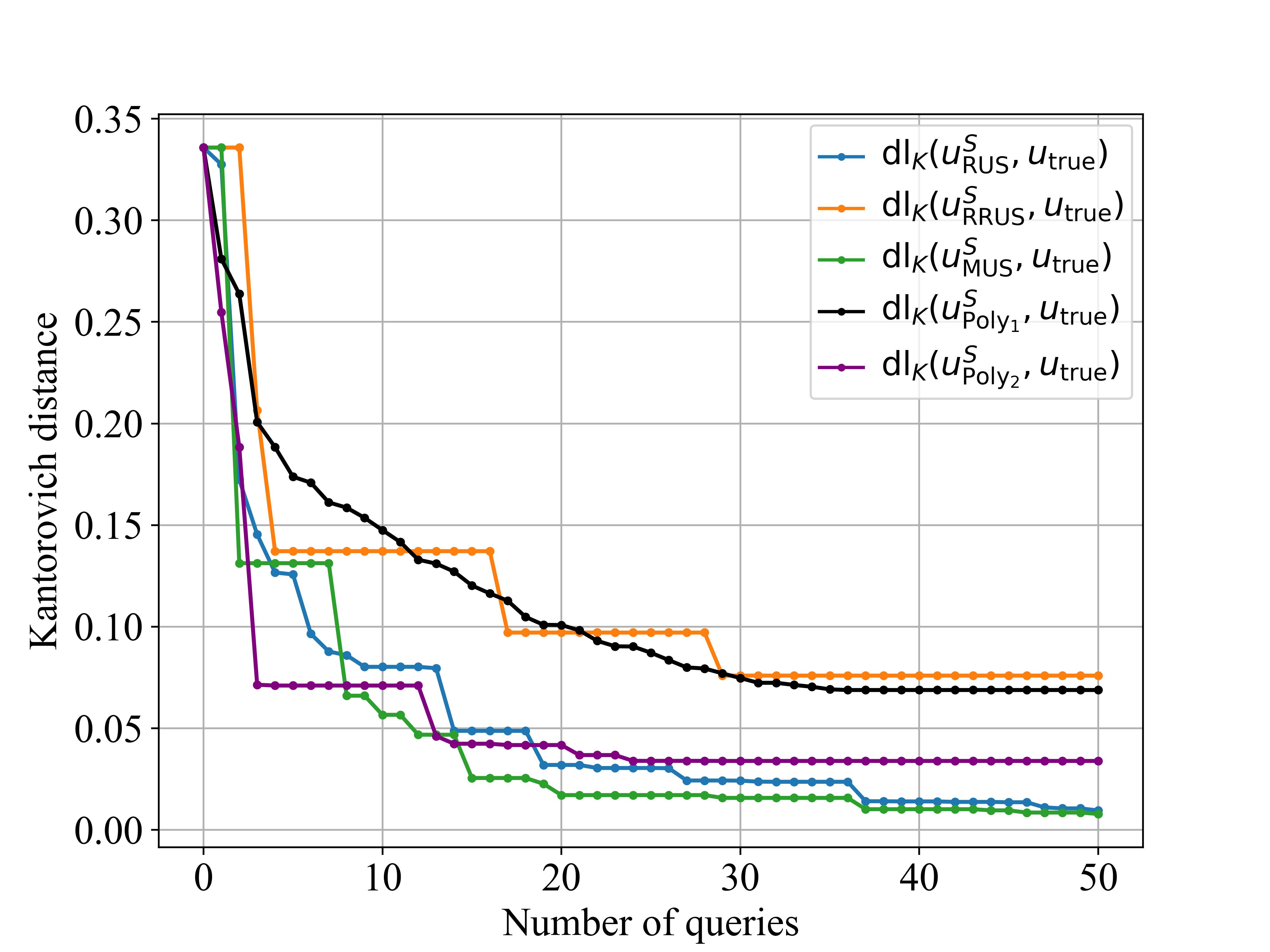}
    }
    \;
    \subfigure
    []
    {
    \label{fig:kolmo-small-true}
    \includegraphics[width=0.45\linewidth]{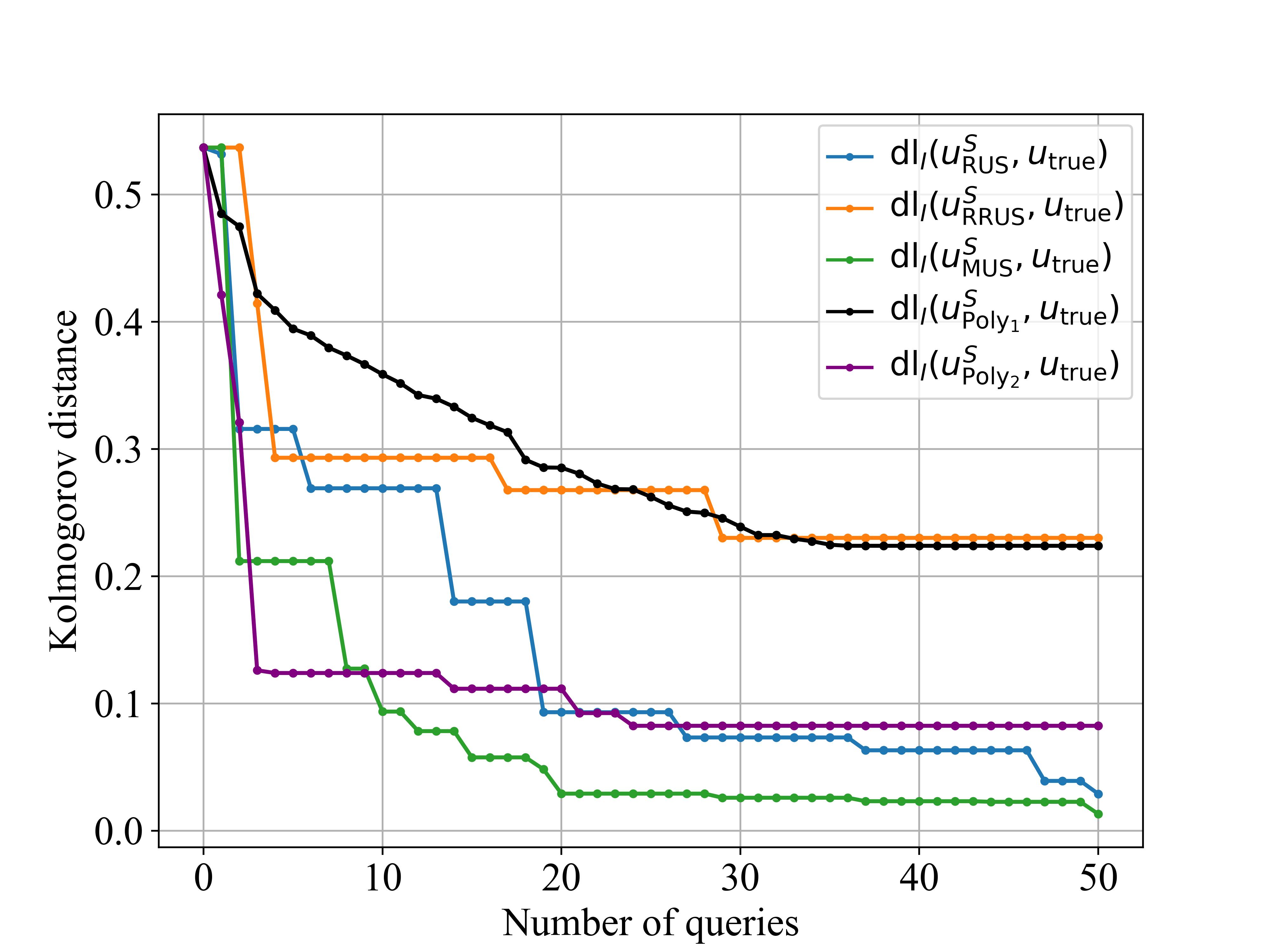}
    }
    \caption{Kantorovich distance and Kolmogorov distance between the smallest utility functions and the true utility function, 
    with increasing numbers of queries generated by MUS, RUS, RRUS, and Poly methods.}
    \label{fig:dist-small-true}
\end{figure}

    \item \underline{$\dd_K(u^L,u_{\text{true}})$ and $\dd_I(u^L,u_{\text{true}})$}, which  quantify the distance between the largest utility function and the true utility function. Figure \ref{fig:dist-large-true}
    depicts reduction of the distances.
$\dd_K(u^L,u_{\text{true}})$ is monotonically decreasing whereas 
 $\dd_I(u^L,u_{\text{true}})$ is not.
 Unlike $\dd_I(u^S,u_{\text{true}})$
 where $u^S$ provides a uniform lower bound in $\mathcal{U}_m$, 
 $u^L$ is not a uniform upper bound, as discussed at the end of Section~\ref{sec:upper-bound}.
 This is the main reason behind non-monotonicity of $\dd_K(u^L,u^S)$ in Figure~\ref{fig:kolmo-large-small}. 
 In particular, the largest utility function is not improved
 in the first 
 20 iteration 
 under RRUS and 
 $\text{Poly}_1$ schemes. 
It might be helpful to explain why the largest utility function is not improved within first 20 queries generated by RRUS and $\text{Poly}_1$ methods (see Figure \ref{fig:dist-large-true}). We make some comments on their poor performance.
Consider a piecewise linear function with Lipschitz modulus $L$ in $\mathcal{U}_{cv}$:
\begin{equation*}
    u^L_0(y)=\left\{ 
    \begin{aligned}
    &Ly && {\rm for} \quad 0\leq y \leq \frac{1}{L},\\
    & 1 && {\rm for} \quad \frac{1}{L}<y\leq 1,\\
    \end{aligned}
    \right.
\end{equation*}
which is exactly the largest utility function in $\mathcal{U}_0:=\mathcal{U}_{cv}$. 
Assume $u^L_m=u^L_0$ for some $m\in\{0,1,\cdots\}$, and consider the next pairwise comparison query $(W_{m+1}, Y_{m+1})$ as defined in \eqref{eq:W-Y}.  There are three 
cases that 
$u^L_{m+1}$ remains unchanged from $u^L_m$.
\begin{enumerate}
\item [(a)] If $r_1^{m+1}\geq\frac{1}{L}$, then $u^L_m(r_1^{m+1})=u^L_m(r_2^{m+1})=u^L_m(r_3^{m+1})=1$, and and subsequently
$$
(1-p^{m+1})u^L_m(r_1^{m+1})+p^{m+1}u^L_m(r_3^{m+1})=1=u^L_m(r_2^{m+1}),
$$
i.e., $\mathbb{E}[u^L_m(W_{m+1})]=\mathbb{E}[u^L_m(Y_{m+1})]$  holds. In this case, regardless of the DM’s choice between $W_{m+1}$ and $Y_{m+1}$, $u^L_m$ is consistent with the observed preference.  

\item[(b)] If $r_2^{m+1}>\frac{1}{L}>r_1^{m+1}$, then $u^L_m(r_2^{m+1})=u^L_m(r_3^{m+1})=1>u^L_m(r_1^{m+1})$, and we have 
$$
(1-p^{m+1})u^L_m(r_1^{m+1})+p^{m+1}u^L_m(r_3^{m+1})\leq1=u^L_m(r_2^{m+1}),
$$
i.e., $\mathbb{E}[u^L_m(W_{m+1})]\leq\mathbb{E}[u^L_m(Y_{m+1})]$ always holds. In this case, if the DM prefers $Y_{m+1}$ over $W_{m+1}$, the preference is consistent with $u^L_m$. 

\item[(c)] If $Z_{m+1}\cdot\mathbb{E}[u^L_m(W_{m+1})]\geq Z_{m+1}\cdot\mathbb{E}[u^L_m(Y_{m+1})]$, where $Z_{m+1}=1$ if the DM prefers $W_{m+1}$ over $Y_{m+1}$, and $Z_{m+1}=-1$ otherwise, then $u^L_m$ still satisfies the corresponding pairwise comparison preference.
\end{enumerate}
The first 21 queries generated by RRUS all fall into either case (a) or (b), 
keeping $u^L_m$ unchanged from $u^L_0$ until $m = 22$. 
This is because $r_1^m$ is selected with probability $\frac{L-1}{L}$ to exceed $\frac{1}{L}$, thus falling into case (a). 
For $\text{Poly}_1$, the first 31 queries all fall into cases (a), (b), or (c), among which 27 fall into cases (a) and (b). 
In comparison, $\text{Poly}_2$ performs much better with even fewer initial breakpoints. 
The key point is that a prior selection of the breakpoint $\frac{1}{L}$ in $\text{Poly}_2$ plays an essential role in our problem setting. 
It indicates that the polyhedral method heavily depends on the proper selection of the initial breakpoints. 
In contrast, the queries generated by RUS never fall into case (a) because $r_1^{m+1}$ is fixed at $0$, 
and those of MUS never fall into either case (a) or (b), which significantly improves their performance in eliciting the largest utility function.

\begin{figure}[tbp]
    \centering
    \subfigure
    []
    {
    \label{fig:kanto-large-true}
    \includegraphics[width=0.45\linewidth]{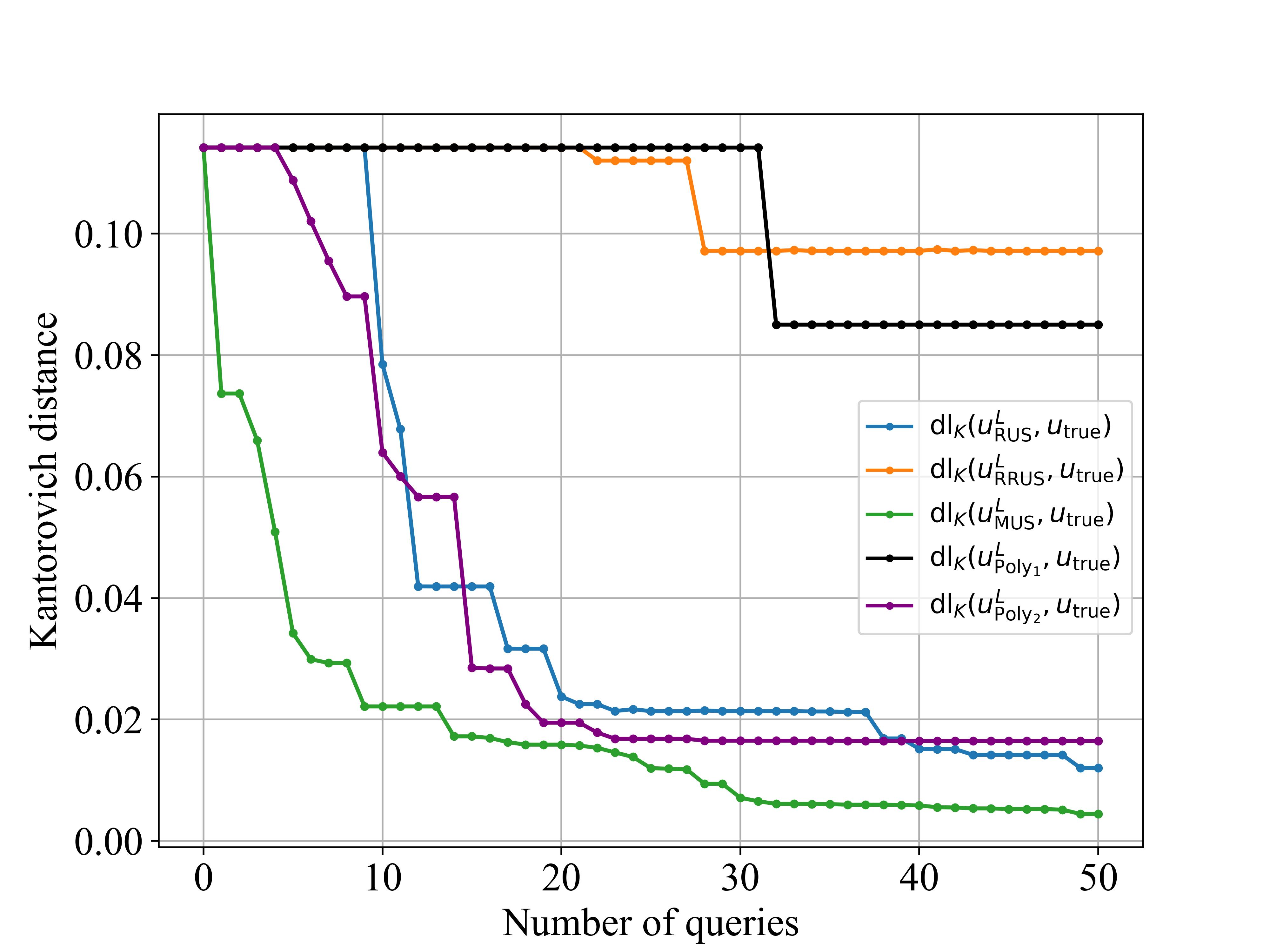}
    }
    \;
    \subfigure
    []
    {
    \label{fig:kolmo-large-true}
    \includegraphics[width=0.45\linewidth]{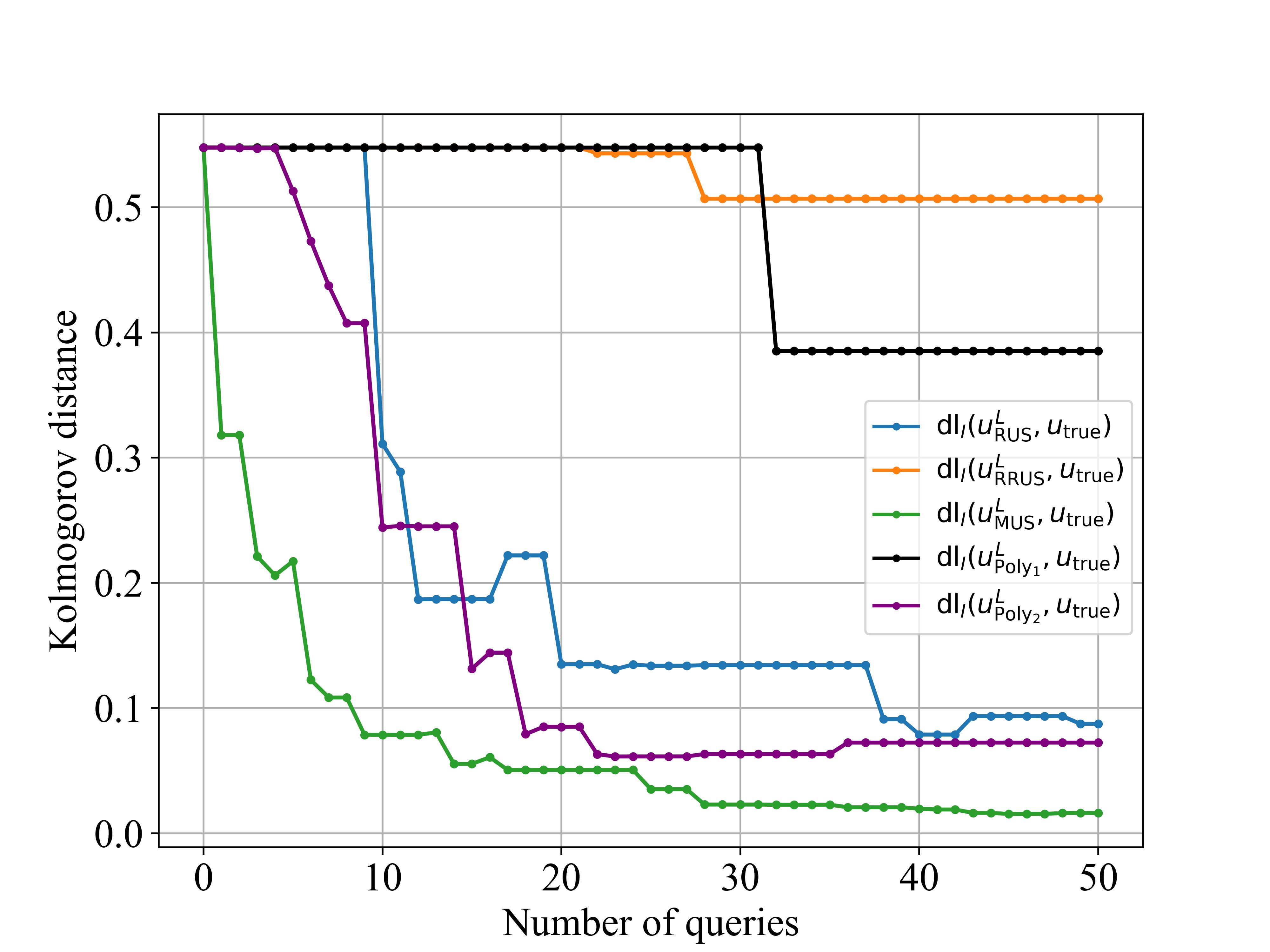}
    }
    \caption{Comparison of the Kantorovich distance and Kolmogorov distance between the largest utility functions and the true utility function, 
    with increasing numbers of queries generated by MUS, RUS, RRUS, and Poly methods.}
    \label{fig:dist-large-true}
\end{figure}

\end{itemize}

From Figures \ref{fig:dist-large-small}, \ref{fig:dist-center-true}, \ref{fig:dist-small-true}, and \ref{fig:dist-large-true}, we can draw the following conclusions, which confirm our expectations: the smallest, largest, and nominal utility functions obtained from the MUS method are all closer to the true utility function than those derived from the other three benchmark methods; the ambiguity set derived from MUS-generated queries exhibits the most rapid convergence.

We also compare the computational complexity of the four methods in terms of CPU time.
Table \ref{tab:CPU time} reports the CPU time required by the four methods to generate 10, 20, and 40 queries, respectively. 
RUS and RRUS are the most time-efficient as they choose $r_1$, $r_2$ and $r_3$ deterministically or randomly 
without requiring to solve an optimization problem as MUS or 
polyhedral method.
Compared to the polyhedral method, 
the CPU time 
of MUS is much shorter
for this specific concave structured utility function. 


\begin{table}[htbp]
\centering
\caption{The CPU time (s) of MUS, RUS, RRUS, and polyhedral methods for generating different numbers of queries}
\begin{tabular}{cccccc}
\toprule
    \multicolumn{1}{c}{Number of queries} & \multicolumn{1}{c}{MUS} & \multicolumn{1}{c}{RUS} & \multicolumn{1}{c}{RRUS} & \multicolumn{1}{c}{$\text{Poly}_1$} & \multicolumn{1}{c}{$\text{Poly}_2$} \\
\midrule
    10    & 2.67  & 0.32  & 0.56  & 17.97  & 15.57  \\
    20    & 23.65  & 1.10  & 2.15  & 104.64  & 80.13  \\
    40    & 249.82  & 4.11  & 8.37  & 6469.62   &  1058.87\\
\bottomrule
\end{tabular}
\label{tab:CPU time}
\end{table}

\subsection{Convergence in the case of S-shaped utility functions}

As discussed in Section~\ref{sec:s-shape}, the MUS scheme readily extends to the settings where the true utility function is S-shaped. 
We consider a DM whose preferences are represented by the true utility function 
\begin{equation}
\label{eq:u*-true-test-s}
    u^*_S(y) = 
    \begin{cases} 
    -\frac{1}{1-e^{-7}}(1-e^{7y}), & {\rm for}\;\; y \in [-1, 0),\\
    \frac{1}{1-e^{-3}}(1-e^{-3y}), & {\rm for}\;\; y \in [0, 1], 
    \end{cases}
\end{equation} 
which is monotonically increasing, Lipschitz continuous, convex on $[-1,0]$, concave on $[0,1]$, and normalized to $[-1,1]$.
The Lipschitz modulus of utility functions in $\mathcal{U}_{S}$ is set to $L = 10$.

In this subsection, we
examine convergence of the corresponding elicited ambiguity set $\mathcal{U}^S_m$ introduced in \eqref{eq:U_s-m}, and compare the elicitation efficiency of the proposed MUS scheme with that of the RUS and RRUS schemes as the number of queries increases. 
We start from the initial ambiguity set $\mathcal{U}^S_0:=\mathcal{U}_S$. 
Using the MUS method,
implemented in its general non-concave formulation, 
as well as the RUS and RRUS methods, we adaptively generate 70 queries to interact with the DM, and after each query we update $\mathcal{U}^S_m$ and determine the smallest and largest nominal utility functions it contains.

At the 0-, 10-, 30- and 70-th round of interaction, we elicit the smallest, largest, and the nominal S-shaped utility functions in $\mathcal{U}^S_m$, and present the results in Figure \ref{fig:num-ss-mus-fit}. 
We can also observe that, the smallest, largest, and nominal utility functions all converge to the S-shaped true utility as the number of queries increases. 
Moreover, 
We compute the Kantorovich distance and Kolmogorov distance between the largest utility function and the smallest utility function in $\mathcal{U}^S_m$, denoted as $\dd_K(u^L,u^S)$ and $\dd_I(u^L,u^S)$ respectively. Figure \ref{fig:s-shape-dist-large-small} illustrates the reduction of these distances as the number of queries increases. 
The results show that when the underlying true utility is S-shaped, MUS still achieves the fastest reduction in both metrics, demonstrating the most rapid convergence against other two benchmark schemes.

\begin{figure}[htbp]
    \centering
    \subfigure
    {
    \includegraphics[width=0.45\linewidth]{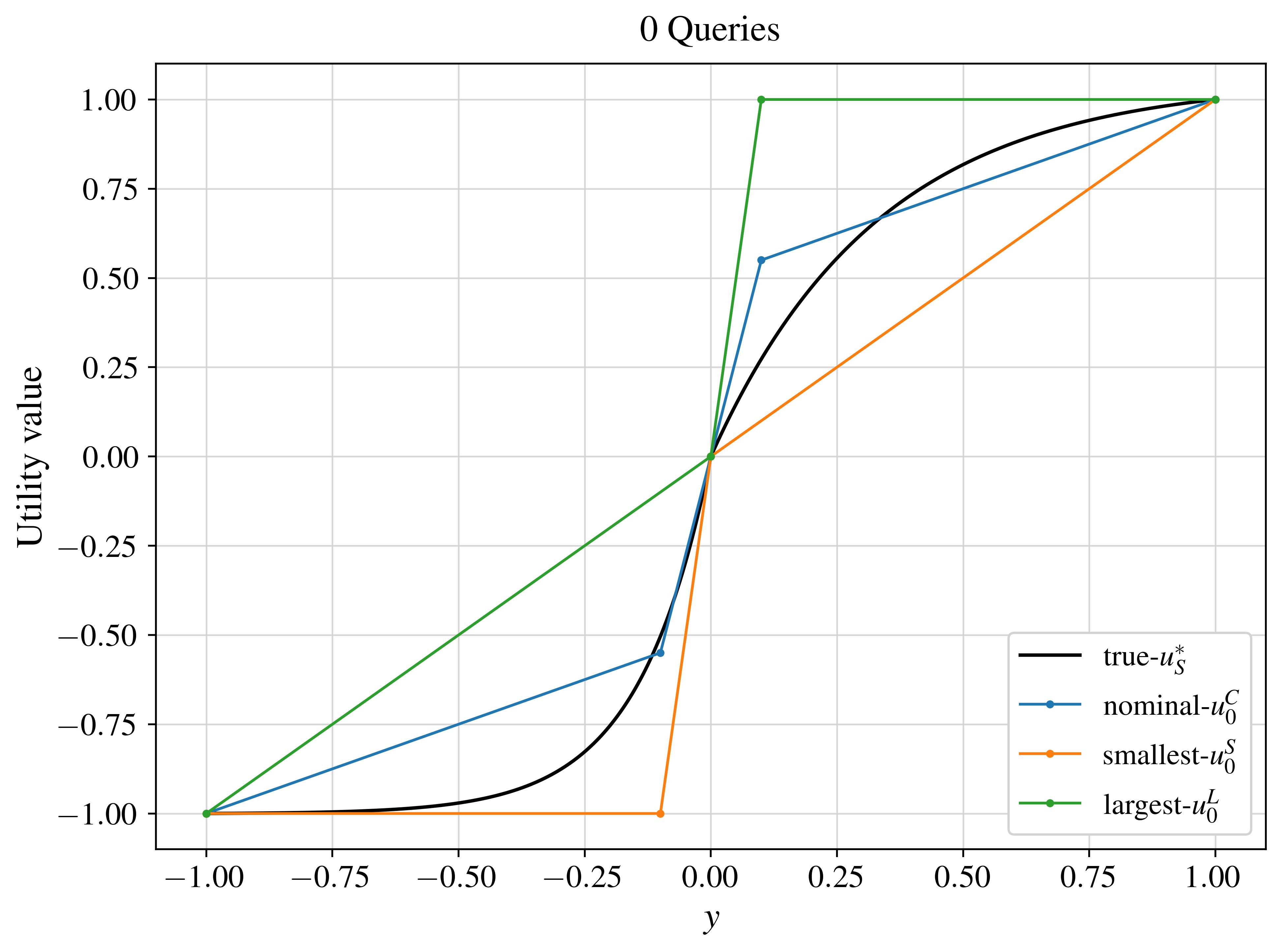}
    }
    \;
    \subfigure
    {
    \includegraphics[width=0.45\linewidth]{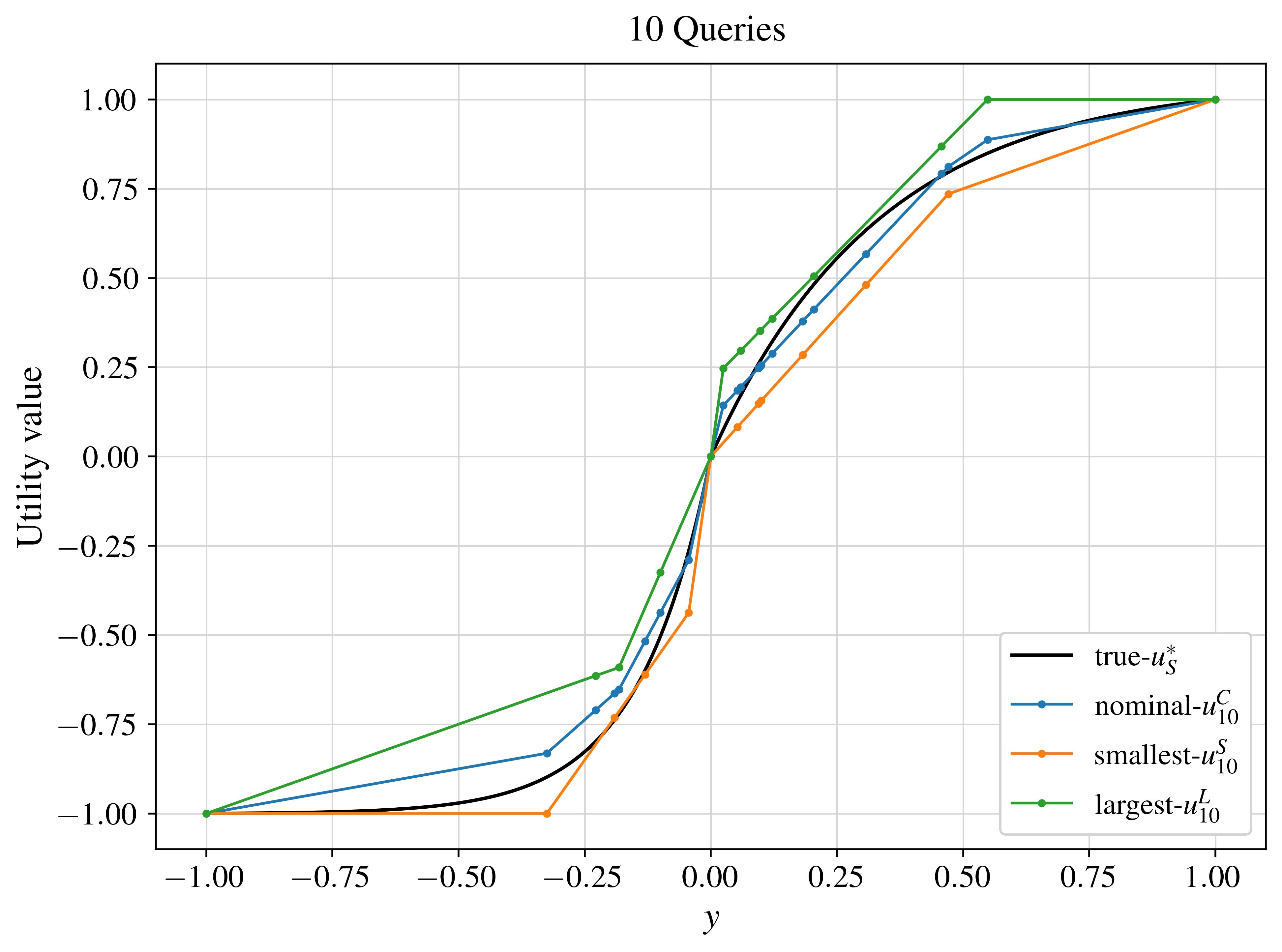}
    }\\
    \subfigure
    { 
    \includegraphics[width=0.45\linewidth]{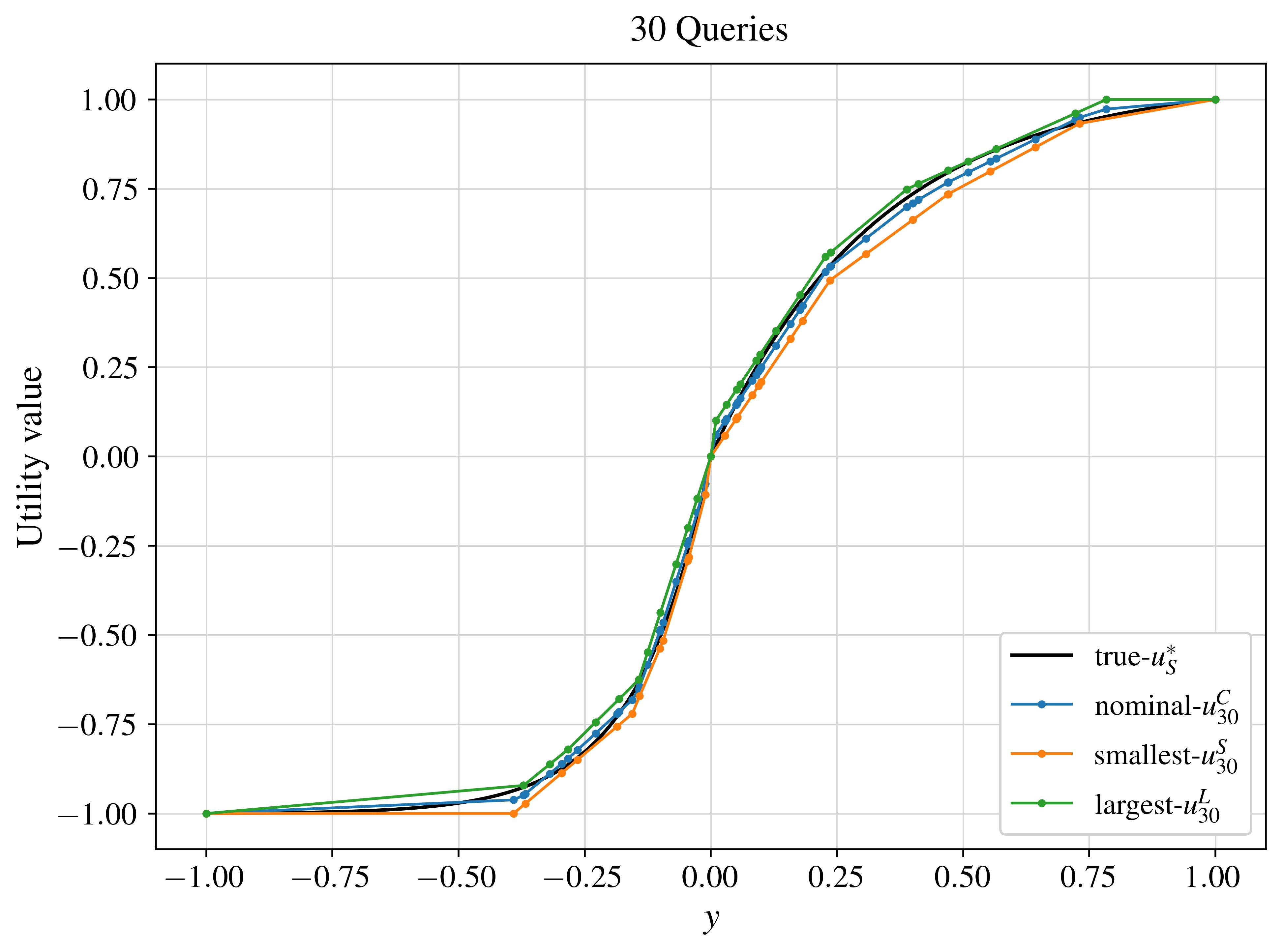}
    }
    \;
    \subfigure
    {
    \includegraphics[width=0.45\linewidth]{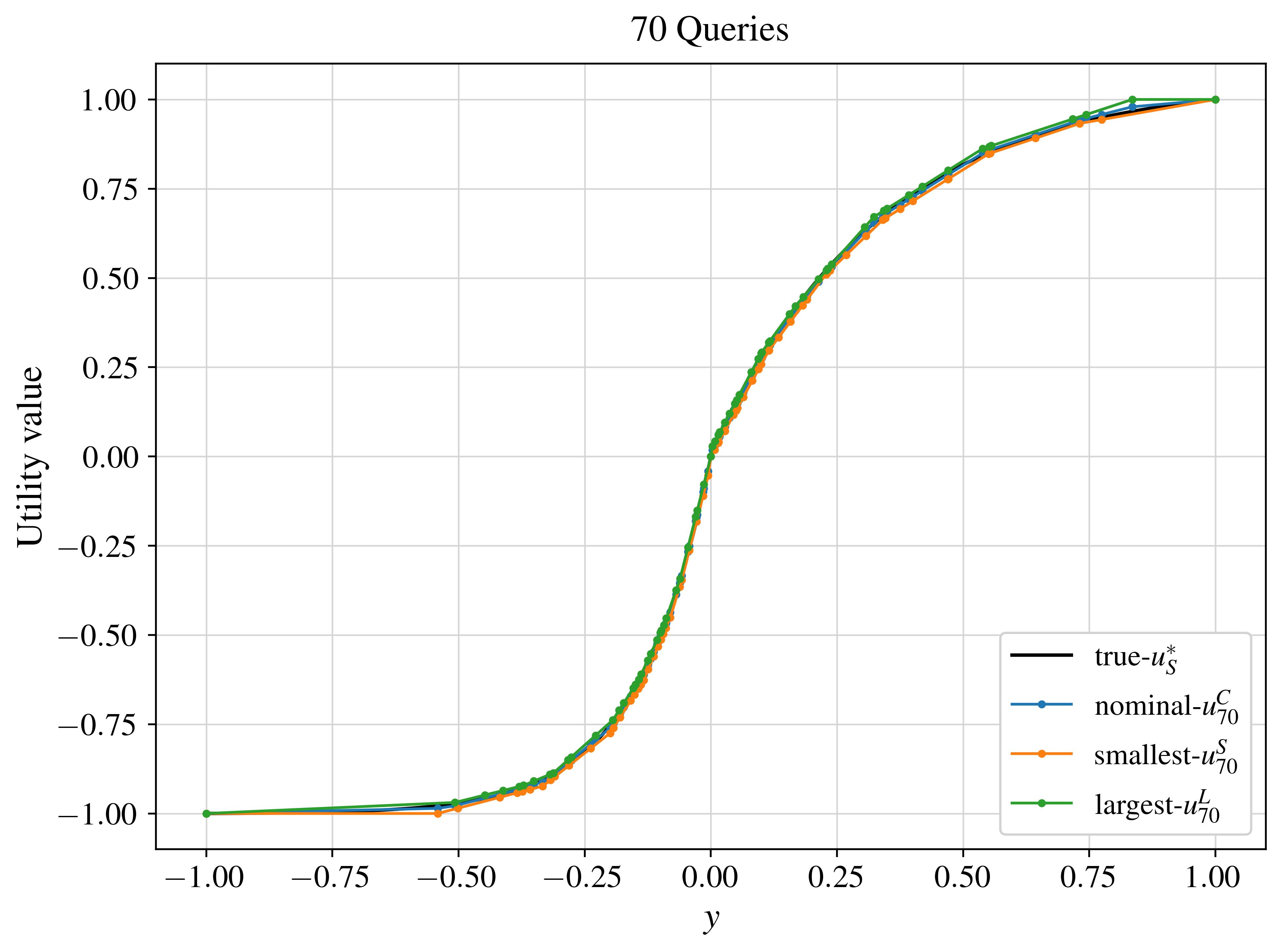}
    }
    \caption{Convergence of the smallest, the largest, and the nominal utility functions of $\mathcal{U}^{S}_m$ as $m$ varies from $0$ to $10, 30$ and $70$.
    }
    \label{fig:num-ss-mus-fit}
\end{figure}

\begin{figure}[htbp]
    \centering
    \subfigure
    []
    {
    \label{fig:s-shape-kanto-large-small}
    \includegraphics[width=0.45\linewidth]{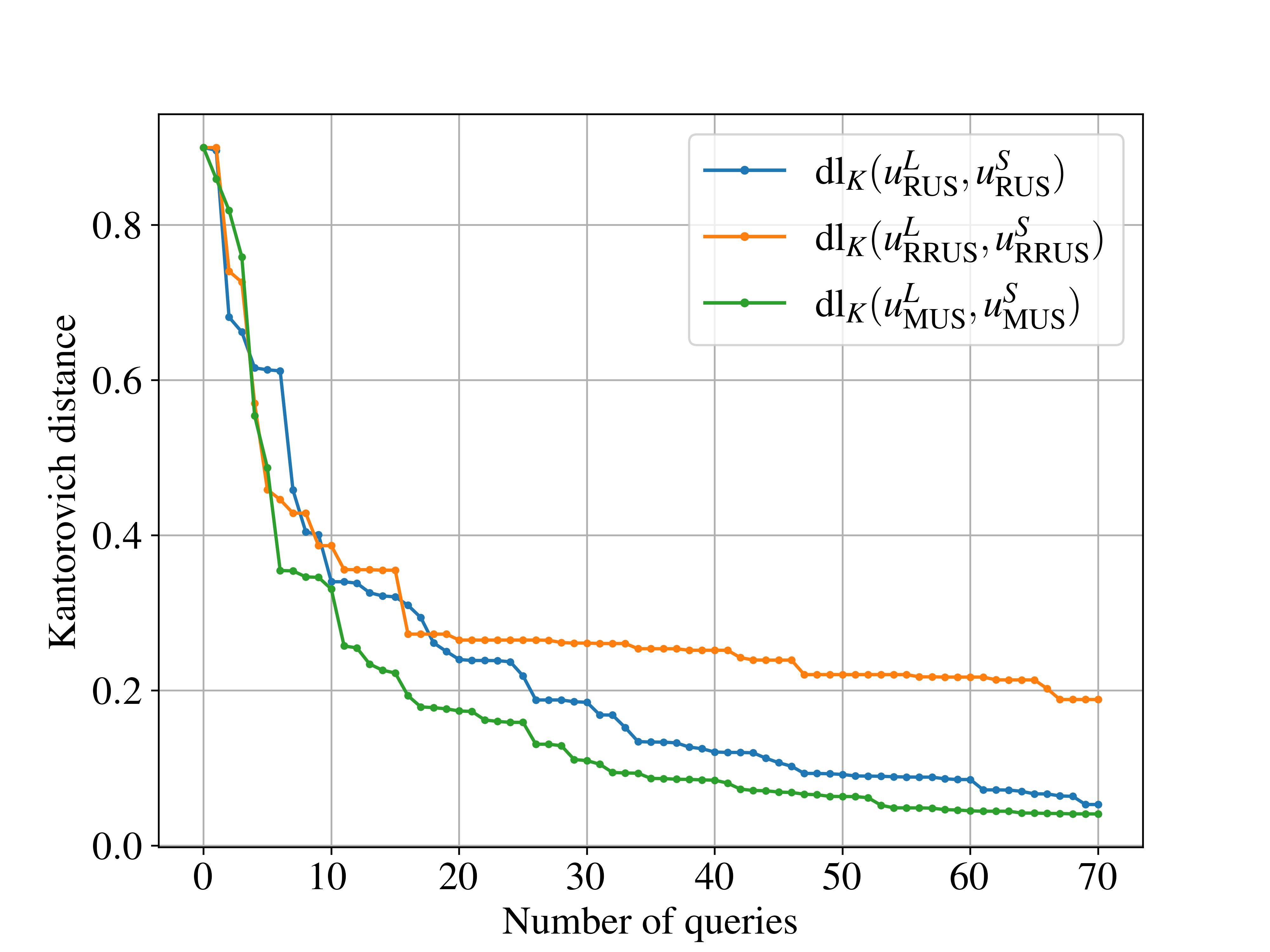}
    }
    \;
    \subfigure
    []
    {
    \label{fig:s-shape-kolmo-large-small}
    \includegraphics[width=0.45\linewidth]{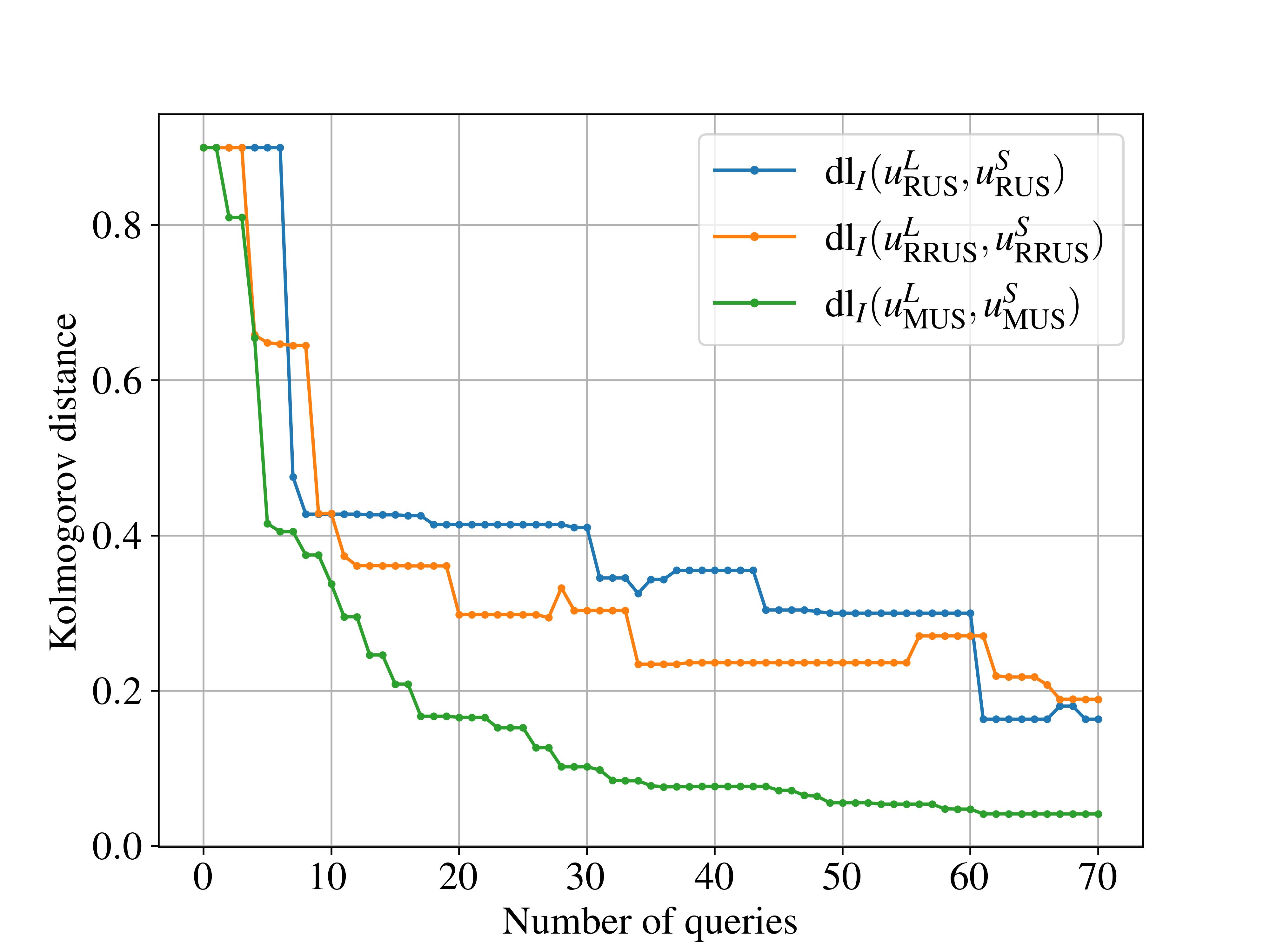}
    }
    \caption{Changes of the Kantorovich distance and the Kolmogorov distance between the largest utility functions and the smallest utility functions in ${\cal U}^S_m$ as $m$ increases, 
    with queries generated by MUS, RUS, and RRUS methods respectively.}
    \label{fig:s-shape-dist-large-small}
\end{figure}

\subsection{A robo-advisor problem}\label{sec:robo-advisor}

A robo-advisor usually refers to an automated system that provides financial advice and investment management online with minimal human intervention~\citep{chen2026robo,faloon2017individualization}. 
Typically, the robo-advisor interacts dynamically with the user to learn the user's risk preference from her/his choices or feedback, and then offers personalized investment advice based on a portfolio optimization model. 
We assume the user visits the robo-advisor repeatedly. 
Thus, the adaptive preference elicitation mechanism proposed in this paper is suitable for the robo-advisor setting. 
We embed the proposed MUS scheme into such a system and examine its effectiveness.

\subsubsection{Mechanism of the robo-advisor system}

We assume that the user's preference can be characterized by a true utility function $u^*$ 
as defined in \eqref{eq:u*-true-test},
  which is unobservable to the robo-advisor.  
The robo-advisor stores all the user's answers to queries from previous visits (forming the ambiguity set $\mathcal{U}_{m}$) and maintains a nominal utility function $u^C_m\in\mathcal{U}_m$. 
At each visit (say $(m+1)$-th), the robo-advisor generates a new pairwise comparison query $(W_{m+1},Y_{m+1})$ based on $\mathcal{U}_m$ using the MUS scheme (see Section \ref{sec:MUS}) and presents it to the user. 
If the user responds, it is assumed that she/he chooses between the lotteries based on her/his true utility values, without response error. 
After receiving the response, the robo-advisor updates the ambiguity set from $\mathcal{U}_m$ to $\mathcal{U}_{m+1}$ by adding a new pairwise comparison constraint, and identifies a new nominal utility function $u^C_{m+1}$ (see Section~\ref{sec:utility}). 
{If no response is received, both the ambiguity set and nominal utility remain unchanged. }
Finally, using the nominal utility function and historical return data of risky assets, the robo-advisor solves a portfolio optimization problem that maximizes expected nominal utility (see \eqref{eq:port_org}), and recommends a personalized portfolio to the user.  
Figure \ref{fig:flowchart-robo-advisor} depicts how the system operates.

\begin{figure}[htbp]
    \centering
    \includegraphics[width=0.8\textwidth]{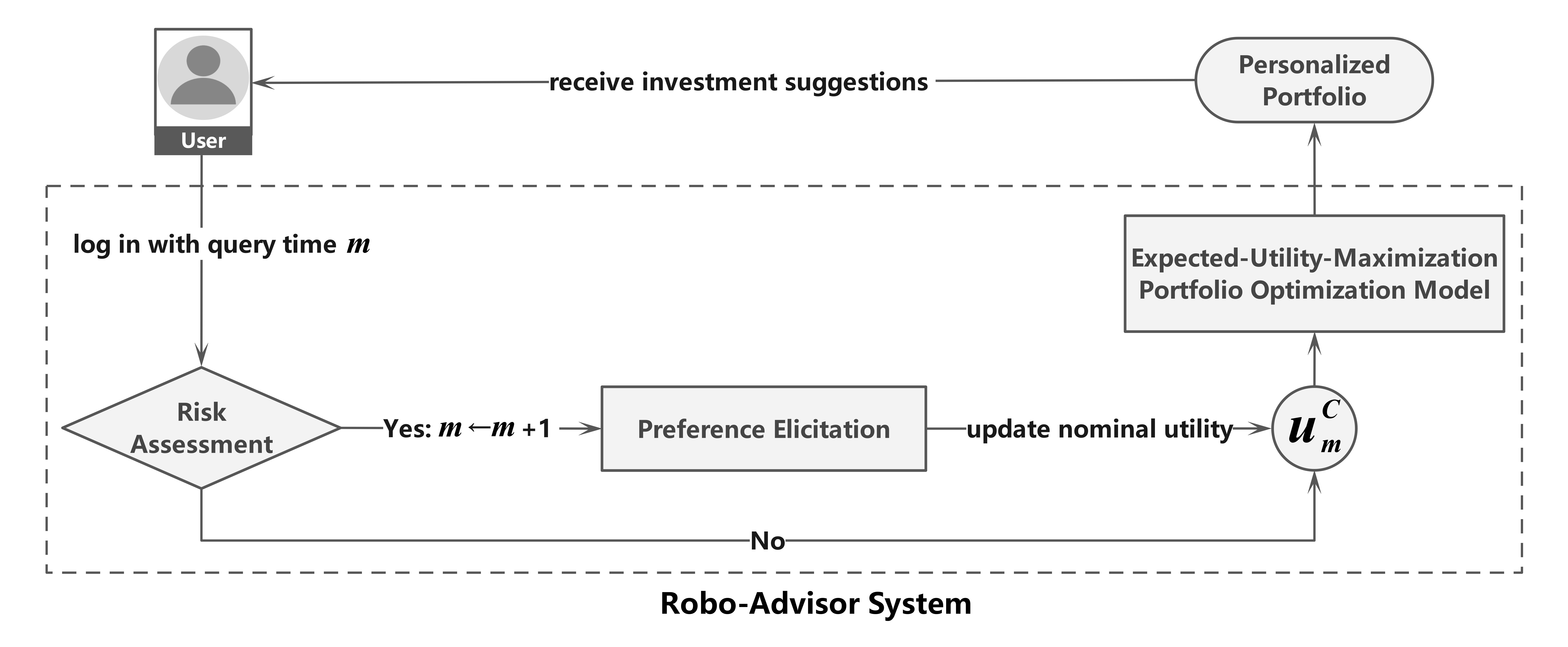}
    \caption{Flowchart of the robo-advisor system.}
    \label{fig:flowchart-robo-advisor}
\end{figure}

Assume that the user specifies a maximal outcome $\bar{b}$ of interest and an initial investment budget $W_0$. 
We consider a simple portfolio optimization problem for the user. 
Suppose there are $S$ risky assets with random return rates $\xi_1, \xi_2, \ldots, \xi_S$, and a risk-free asset with return rate $\xi_0 = 0$. 
Denote $\xi := [\xi_0, \xi_1, \ldots, \xi_S]$. 
Let $x:=[x_0,x_1,\ldots,x_S]^\top\in\mathcal{X}$ be the portfolio, where the decision space $\mathcal{X}$ is a closed convex set defined as 
\begin{equation*}
    \mathcal{X}:=\left\{x\in\mathbb{R}^{S+1}\ \bigg|\ 
    \sum_{s=0}^{S}x_s=W_0,\ 0\leq x_0\leq W_0, \  0\leq x_s\leq c_s W_0,\ s=1,\ldots,S \right\}. 
\end{equation*}
Here, 
each $c_s$ denotes the maximum proportion allocable to the $s$-th risky asset. 
To fit into the MUS-based preference elicitation procedure and the corresponding computational scheme proposed in the preceding sections, 
we assume that all realizations of the return rates satisfy $\xi_s\geq-1$, a.e., for all $s=1,\ldots,S$, and that the maximum outcome $\bar{b}$ is sufficiently large such that $x^{\top}(\textbf{1}+\xi)/\bar{b}\leq1$ for all $x\in\mathcal{X}$ and realizations of $\xi$, where $\mathbf{1}$ denotes the vector of ones. 
The robo-advisor determines the optimal portfolio by maximizing the user's expected nominal utility of the total wealth relative to the maximal outcome (i.e., $x^{\top}(\mathbf{1}+\xi)/\bar{b}$), formulated as 
\begin{equation}\label{eq:port_org}
    \mathop{\max}\limits_{x\in \mathcal{X}}\ \mathbb{E}\left[u^C_m(x^{\top}(\mathbf{1}+\xi)/\bar{b})\right].   
\end{equation}
To solve problem \eqref{eq:port_org} computationally, 
assume we have collected $T$ i.i.d. historical return rate samples of $\xi$, denoted as $\xi^{(1)},\xi^{(2)},\ldots,\xi^{(T)}$. 
The empirical distribution of $\xi$ is given by $\mathbb{P}[\xi=\xi^{(t)}]=\frac{1}{T}$, $t=1,\ldots,T$. 
Then, \eqref{eq:port_org} is approximated by
\begin{equation}\label{eq:port_saa}
    \max\limits_{x\in\mathcal{X}}\frac{1}{T}\sum_{t=1}^T u^C_m(x^{\top}(\mathbf{1}+\xi^{(t)})/\bar{b}).
\end{equation}
Note that $u^C_m$ is a piecewise linear function with slopes $\beta^{C}_m=[\beta_{m,1}^{C},\ldots,\beta_{m,\widetilde{N}_m-1}^{C}]$ and utility values $\alpha^{C}_m=[\alpha_{m,1}^{C},\ldots,\alpha_{m,\widetilde{N}_m}^{C}]$. By the concavity, 
\begin{equation*}
u^C_m(y)=\min\limits_{j=1,\ldots,\widetilde{N}_m-1}\left\{ \beta^{C}_{m,j}(y-{x}^m_j)+\alpha_{m,j}^{C}\right\} . 
\end{equation*}
By introducing auxiliary variables $\eta=[\eta_1, \eta_2, \ldots, \eta_T]^\top$, 
we can reformulate \eqref{eq:port_saa} as a linear program: 
\begin{subequations}\label{eq:port_final}
\begin{align}
\max\limits_{x\in \mathbb{R}^{S+1},\ \eta\in\mathbb{R}^T}\
& \frac{1}{T}\sum_{t=1}^{T} \eta_t \\
 \text{s.t.}\ \qquad  & \eta_t\leq \beta_{m,j}^{C}\left[x^{\top}(\mathbf{1}+\xi^{(t)})/\bar{b}-{x}^m_j\right]+\alpha_{m,j}^{C},\ t=1,\ldots, T,\ j=1,\ldots, \widetilde{N}_m-1,\\
& \sum_{s=0}^S x_s=W_0,\\
&0\leq x_0\leq W_0,\ 0\leq x_s\leq c_s W_0 ,\ s=1, \ldots, S.
\end{align}
\end{subequations}

\subsubsection{Investment simulation by the robo-advisor system}

In this set of tests, we simulate the long-term preference elicitation and investment process by the robo-advisor system introduced in the previous subsection where the queries are generated by 
the MUS, RUS, RRUS, and polyhedral methods.
We mainly compare the difference of the out-of-sample return of the portfolio recommended by the robo-advisor system, compared to the portfolio return derived from the true utility function of the user.


\textbf{Dataset.}
We consider $S = 5$ risky assets in the U.S. financial market, considered in \citet{gomez2024multi}: 
XLE and XLK represent the energy and technology sectors within S\&P 500 Index; SPDR Gold Shares (GLD) tracks the performance of gold bullion; IEF is a fund for long-term (7–10 years) Treasury bonds; and USDU tracks the performance of the U.S. dollar. 
Additionally, a cash account with zero return is included as the risk-free asset. 
We collect weekly return rates of the five risky assets from January 5th, 2020, to 31st of December, 2023\footnote{The data were downloaded from https://cn.investing.com/ on June 7th, 2025.}. 

The user's true utility function $u^*$ is defined as in \eqref{eq:u*-true-test}, and the true optimal portfolio is determined by solving 
$\max\limits_{x\in\mathcal{X}}\frac{1}{T}\sum_{t=1}^{T}u^*[x^{\top}(1+\xi^{(t)})/\bar{b}]$.
The feasible portfolio set $\mathcal{X}$ is specified by setting the maximal outcome $\bar{b}=100,000$, the initial investment budget $W_0=10,000$, the maximal investment proportion $c_s=0.5$ for the two stock-based assets (XLE and XLK), and $c_s=0.9$ for the remaining three less-risky assets. 
The investment process is simulated using a rolling-window strategy with 30 weeks of in-sample data (i.e., $T=30$), and the portfolio is rebalanced every 4 weeks.
For simplicity, we assume no transaction cost.

For every 4 weeks, a query is generated by the examined methods and provided to the user.
Considering different cooperation level of different types of users in real-world questionnaire surveys, we set a response rate parameter
$\varepsilon \in [0,1]$. 
It means that at any round of interaction between the user and the robo-advisor, the user answers the query with probability $\varepsilon$.
The larger $\varepsilon$,
the more queries the user answers totally during the investment process.
In the case that the user does not answer the query, the ambiguity set and the nominal utility function keep no updating during the next 4 weeks and the same query would be asked again 4 weeks later.
If the user answers the query, the robo-advisor updates the ambiguity set as well as the nominal utility function.
In both cases, the robo-advisor system compute a portfolio based on the current nominal utility function $u^C_m$ and make investment with it.


We conduct four case studies with $\varepsilon = 0, 0.25, 0.5, 1$ and compute five sequences of out-of-sample weekly return rates derived by the robo-advisor system with queries generated from
MUS, RUS, RRUS, $\text{Poly}_1$, and $\text{Poly}_2$, respectively. We take the true utility to compute the true optimal portfolio and compute its out-of-sample weekly return rate series as a benchmark.
We then compute the Average Relative Deviation (ARD) and the Mean Squared Error (MSE) between the out-of-sample weekly return rates of the true optimal portfolio and those derived by the robo-advisor system equipped by MUS, RUS, RRUS, $\text{Poly}_1$, and $\text{Poly}_2$ methods. 
The results are presented in Table~\ref{tab:dynamic-port-ARD}.

\begin{table}[htbp]
\centering
\caption{Average Relative Deviation and Mean Squared Error between the weekly return rates of portfolios generated by the user's true utility maximization model of and those by the robo-advisor equipped with the MUS, RUS, RRUS, $\text{Poly}_1$, and $\text{Poly}_2$ methods under different response rates. 
}
\begingroup\footnotesize
\setlength{\tabcolsep}{1.5pt}
    \begin{tabular}{lc|rrrrr|rrrrr}
\toprule[1pt]
&    & \multicolumn{5}{c|}{ARD}  & \multicolumn{5}{c}{MSE} \\
\midrule
Response rate & Answered queries & \multicolumn{1}{c}{MUS} & \multicolumn{1}{c}{RUS} & \multicolumn{1}{c}{RRUS} & \multicolumn{1}{c}{$\text{Poly}_1$} & \multicolumn{1}{c|}{$\text{Poly}_2$} & \multicolumn{1}{c}{MUS} & \multicolumn{1}{c}{RUS} & \multicolumn{1}{c}{RRUS} & \multicolumn{1}{c}{$\text{Poly}_1$} & \multicolumn{1}{c}{$\text{Poly}_2$} \\
\midrule[0.5pt]
$\varepsilon=0$ & 0     & 1.0000 & 1.0000 & 1.0000 & 1.0000 & 1.0000 & 5.86E-04 & 5.86E-04 & 5.86E-04 & 5.86E-04 & 5.86E-04 \\
$\varepsilon=0.25$ & 11    & 0.3295 & 0.9346 & 0.8505 & 0.7507 & 0.6430 & 9.12E-05 & 3.54E-04 & 4.27E-04 & 2.99E-04 & 2.43E-04 \\
$\varepsilon=0.5$ & 22    & 0.3177 & 0.6844 & 0.7663 & 0.8912 & 0.4755 & 4.60E-05 & 2.85E-04 & 3.73E-04 & 3.66E-04 & 1.63E-04 \\
$\varepsilon=1$ & 44    & 0.2613 & 0.3860 & 0.7611 & 0.7264 & 0.4527 & 3.47E-05 & 1.53E-04 & 3.16E-04 & 3.36E-04 & 1.10E-04 \\
\bottomrule[1pt]
\end{tabular}
\endgroup
\label{tab:dynamic-port-ARD}%
\end{table}%

From Table \ref{tab:dynamic-port-ARD}, we observe that: 
(a) The ARD of the five methods with 0 answered queries are all close to 1. 
This is because the portfolios are derived from the initial nominal utility function $u^C_0$ without being updated over the investment horizon.  
From Figure \ref{fig:num-mus-fit}, we find the marginal utility of $u^C_0$ to the left of 0.1 is much higher than that to the right.  
Notice the setting $W_0/\bar{b}=0.1$, 
the utility $u^C_0$ is maximized by allocating all wealth to the risk-free asset, resulting in a zero return rate. 
(b) Both the ARD and MSE derived from MUS, RUS, RRUS, $\text{Poly}_1$, and $\text{Poly}_2$ generally decrease as $\varepsilon$ increases, indicating that the more queries the user answers, 
the closer the weekly returns of the portfolio provided by the robo-advisor 
are to 
those generated by 
the true utility maximization model of the user. 
This reflects the fact that 
the nominal utility function elicited by the robo-advisor becomes closer to the true utility function as the number of queries increases, as already validated in Figure \ref{fig:num-mus-fit}.  
(c) The ARD and MSE derived from the MUS method are always smaller than those from the benchmark methods, which coincides with the utility learning results in Figures \ref{fig:dist-large-small}--\ref{fig:dist-large-true}.

\section{Conclusion}\label{sec:conclusion}

In this paper, we propose a new approach (MUS) to generate  pairwise comparison questionnaires for preference elicitation.
Unlike the existing RUS scheme, the outcome of the deterministic lottery is non-randomly selected from the point where the graph of the set-value mapping ${\cal U}_m$ (the currently elicited ambiguity set of utility functions) displays the largest range. 
Under some moderate conditions, we demonstrate that the elicitation procedure based on MUS reduces the graph of ${\cal U}_m$ to the true utility function as the number of questionnaires goes to infinity and explicitly derive the complexity of the procedure. 
A key step towards practical implementation of the MUS approach is to develop a tractable computational algorithm, which enables us to compute the point of the maximum range, and we have managed to do so by deriving a closed form of the upper bound function and lower bound function in the ambiguity set.
Preliminary numerical tests show that the MUS method is noticeably more effective than RUS, RRUS, and polyhedral methods.
We begin our discussion with concave utility functions and then extend it to general utility functions particularly 
the S-shaped ones.
Finally, we apply the proposed new preference elicitation approach to a robo-advisor system.

An important assumption
in the implementation of the proposed MUS is that
there is no error occurring 
in the process of preference elicitation. 
This assumption may be undesirable in some practical applications where elicitation errors are inevitable either due to measurement/quantification errors 
or due to DM's erroneous responses \citep{guo2024utility}.  
In that case, we 
may consider random errors as in \cite{saure2019ellipsoidal}
to obtain a probabilistic 
convergence to the true utility function. 
We leave this for future exploration.
Another interesting direction  for future research is elicitation of reference point 
in $S$-shaped utility function case. 
As we commented earlier, it will significantly 
enlarge application of MUS if it is incorporated with a strategy to identify the reference point when a decision maker's utility function change from convex to concave.

\backmatter

\section*{Declarations}
\begin{itemize}
\item Competing Interests: We declare no relevant financial or non-financial conflict of interest. 
\item Availability of data: The synthetic data generated and analyzed during the current study are available from the corresponding author upon reasonable request. 
\end{itemize}

\section*{Acknowledgements}

\begin{itemize}
\item This work was funded by the National Key R\&D Program of China (No. 2022YFA1004000), National Natural Science Foundation of China (No. 12371324) and RGC grant (No. 14204624).
\end{itemize}

\bibliography{ref}

\end{document}